\theoremstyle{plain}
\newtheorem{thm}{Theorem}[section]
\newtheorem{lem}[thm]{Lemma}
\newtheorem{prop}[thm]{Proposition}
\newtheorem{cor}[thm]{Corollary}
\theoremstyle{definition}
\newtheorem{defi}{\textbf{Definition}}[section]
\theoremstyle{remark}
\newtheorem{rem}{Remark}[section]
\newtheorem*{rem*}{Remark}
\newcommand\R{\mathbb{R}}
\newcommand\Sn{\mathbb{S}^{n-1}}
\newcommand\N{\mathbb{N}}
\DeclareMathOperator{\vspan}{span}
\DeclareMathOperator{\diag}{diag}
\newcommand\upd{\textup{d}}
\newcommand\upT{\textup{T}}
\newcommand\upp{\textup{per}}
\newcommand\upe{\textup{e}}
\newcommand\upDir{\textup{Dir}}
\newcommand\upPF{\textup{PF}}
\newcommand\upF{\textup{F}}
\newcommand{\vect}[1]{\bm{\mathbf{#1}}} % Bold math symbol, upright if possible and italic otherwise
\newcommand\veL{\vect{L}} % Bold L
\newcommand\veu{\vect{u}} % Bold u
\newcommand\vev{\vect{v}} % Bold v
\newcommand\vew{\vect{w}} % Bold w
\newcommand\vez{\vect{0}} % Bold 0
\newcommand\veo{\vect{1}} % Bold 1
\newcommand\vei{\vect{\infty}} % Bold infinity (yes yes yes, indeed)
\newcommand\caP{\mathcal{P}} % Calligraphic P
\newcommand\caC{\mathcal{C}} % Calligraphic C
\newcommand\cbQ{\vect{\mathcal{Q}}} % Calligraphic & bold Q
\newcommand\dcbP{\diag(\vect{\caP})} % diag(calligraphic & bold P)
\newcommand\clOmper{\overline{\Omega_\upp}} % closure of Omega_per
\newcommand\FPH{F\"{o}ldes--Pol\'{a}\v{c}ik's Harnack } % Short for FPH inequality
\title[Principal eigenvalues of space-time periodic cooperative operators]{Generalized principal eigenvalues of space-time periodic, weakly coupled, cooperative, parabolic operators}
\author{L\'{e}o Girardin}
\address[L. G.]{CNRS, ICJ UMR5208, \'{E}cole Centrale de Lyon, INSA Lyon, Universit\'{e} Claude Bernard Lyon 1, Universit\'{e} Jean Monnet,
69622 Villeurbanne, France}
\email{leo.girardin@math.cnrs.fr}
\author{Idriss Mazari-Fouquer}
\address[I. M.-F.]{CEREMADE, UMR CNRS 7534, Universit\'{e} Paris-Dauphine, Universit\'{e} PSL, Place du Mar\'{e}chal De Lattre De Tassigny, 75775 Paris cedex 16, France}
\email{mazari@ceremade.dauphine.fr}
\begin{document}
\begin{abstract}
This paper is concerned with generalizations of the notion of principal eigenvalue in the context of space-time 
periodic cooperative operators. When the spatial domain is the whole space, the Krein--Rutman theorem
cannot be applied and this leads to more sophisticated constructions and to the notion of generalized principal eigenvalues.
These are not unique in general and we focus on a one-parameter family corresponding to principal eigenfunctions that are 
space-time periodic multiplicative perturbations of exponentials of the space variable. Besides existence and uniqueness properties
of such principal eigenpairs, we also prove various dependence and optimization results illustrating how known 
results in the scalar setting can, or cannot, be extended to the vector setting. We especially prove an optimization property
on minimizers and maximizers among mutation operators valued in the set of bistochastic matrices that is, to the best of our knowledge, new.
\end{abstract}

\keywords{principal eigenvalues, space-time periodicity, cooperative systems}
\subjclass[2010]{35K40, 35K57, 47A13, 47A75, 49J20, 92D25.}
\maketitle
\tableofcontents{}

\section{Introduction}
In recent years, the study of principal eigenvalues has proved very fruitful, especially (but not exclusively) for the 
study of several biological phenomena. Indeed, these eigenvalues encode several informations that are crucial in the understanding of 
population dynamics. Although the scalar case is 
now rather well understood, several problems remain open in the case of systems. In this paper, we propose a systematic approach for the 
case of parabolic linear operators with space-time periodic coefficients that satisfy a sufficiently strong form of maximum principle, and we offer several contributions to their spectral analysis and optimization. 

%\subsection{Organization of the paper}
The remainder of Section 1 is devoted to a detailed introduction (scope, motivations, notations, definitions, main results and
applications to semilinear systems). Section 2 is devoted to technical preliminaries. Section 3 contains the proofs.

\subsection{Scope of the paper}
The goal of this first subsection is to present in a succinct fashion the mathematical objects at hands and their interest for applications, with a special emphasis on population dynamics. 
More details on possible applications can be found in Subsection \ref{sec:motivation}.

Formally, this paper is concerned with eigenvalues of linear operators of the form 
\[
    \cbQ:\veu\mapsto\dcbP\veu -\veL\veu,
\]
where: $\veu:\R\times\R^n\to\R^N$ is a vector-valued function of size $N\in\N^*$, with a time variable $t\in\R$ and a
space variable $x\in\R^n$, $n\in \N^*$ being the spatial dimension; 
each operator of the family $\vect{\caP}=(\caP_i)_{i\in[N]}$, where $[N]=\N\cap[1,N]$, has the form
\[
    \caP_i:u\mapsto\partial_t u -\nabla\cdot\left(A_i\nabla u\right)+q_i\cdot\nabla u,
\]
with $A_i:\R\times\R^n\to\R^{n\times n}$ and $q_i:\R\times\R^n\to\R^n$ periodic functions of $t$ and $x$, respectively square matrix-valued and
vector-valued;
$\veL:\R\times\R^n\to\R^{N\times N}$ is a square matrix-valued periodic function of $t$ and $x$. 

The standing assumptions on $\vect{\caP}$ and $\veL$ are the following.
\begin{enumerate}[label=$({\mathsf{A}}_{\arabic*})$]
    \item \label{ass:ellipticity} The family $(A_i)_{i\in[N]}$ is \textit{uniformly elliptic}:
    \[
        0<\min_{i\in[N]}\min_{y\in\Sn}\min_{(t,x)\in\R\times\R^n}\left(y\cdot A_i(t,x)y\right).
    \]
    \item \label{ass:cooperative} The matrix $\underline{\veL}\in\R^{N\times N}$, whose entries are 
    \[
        \underline{l}_{i,j}=\min_{(t,x)\in\R\times\R^n}l_{i,j}(t,x)\quad\text{for all }(i,j)\in[N]^2,
    \]
    is \textit{essentially nonnegative}: its off-diagonal entries are nonnegative.
    \item \label{ass:irreducible} The matrix $\overline{\veL}\in\R^{N\times N}$, whose entries are 
    \[
        \overline{l}_{i,j}=\max_{(t,x)\in\R\times\R^n}l_{i,j}(t,x)\quad\text{for all }(i,j)\in[N]^2,
    \]
    is \textit{irreducible}: it does not have a stable subspace of the form
    $\vspan(\vect{e}_{i_1},\dots,\vect{e}_{i_k})$, where $k\in[N-1]$, $i_1,\dots,i_k\in[N]$ and $\vect{e}_i=(\delta_{ij})_{j\in[N]}$.
    By convention, $[0]=\emptyset$ and $1\times 1$ matrices are irreducible, even if zero.
    \item \label{ass:smooth_periodic} The coefficients $\veL$, $(A_i)_{i\in [N]}$, $(q_i)_{i\in [N]}$ are H\"{o}lder-continuous 
    and periodic in their variables: there exists $\delta\in(0,1)$ such that 
    $\veL\in\caC^{\delta/2,\delta}_\upp(\R\times\R^n,\R^{N\times N})$ and, for any $i\in[N]$,
    $A_i\in\caC^{\delta/2,1+\delta}_\upp(\R\times\R^n,\R^{n\times n})$ and
    $q_i\in\caC^{\delta/2,\delta}_\upp(\R\times\R^n,\R^n)$. Moreover, $A_i=A_i^\upT$ for each $i\in[N]$.
\end{enumerate}

The precise definition of the functional spaces appearing in \ref{ass:smooth_periodic} will be clarified in Section \ref{Se:Notations} below.
As usual in such a smooth and generic framework, the symmetry of the diffusion matrices can be assumed without loss of generality\footnote{Indeed, if $A_i$ is not symmetric, then we can write it as the sum of its symmetric part 
$A_i^{\textup{sym}}=\frac12(A_i+A_i^\upT)$ 
and its skew-symmetric part $A_i^{\textup{skew}}=\frac12(A_i-A_i^\upT)$. The operator $\nabla\cdot(A_i^{\textup{skew}}\nabla)$ acting 
on the space of functions of class $\caC^2$ can be rewritten as an advection operator $a_i\cdot\nabla$, so that
$-\nabla\cdot(A_i\nabla)+q_i\cdot\nabla=-\nabla\cdot(A_i^{\textup{sym}}\nabla)+(q_i-a_i)\cdot\nabla$.  
The operator on the right-hand side has the same structure and has ``gained'' the symmetry of its diffusion matrix.}.
No symmetry assumption is made on $\veL$ and the irreducibility of $\overline{\veL}$ in \ref{ass:irreducible} is equivalent
to the irreducibility of the space-time average of $\veL$.

The linear partial differential operator with space-time periodic coefficients $\cbQ$ is 
\textit{weakly coupled} (namely, coupled only in the zeroth order term \cite{Protter_Weinberger}) and, by virtue of
\ref{ass:ellipticity}, \ref{ass:cooperative} and \ref{ass:irreducible} respectively, it is \textit{uniformly parabolic},
\textit{cooperative} (namely, satisfying the so-called Kamke condition \cite{Cantrell_Cosner_03})
and \textit{fully coupled} (namely, coupled in such a way that the system $\cbQ\veu=\vev$ contains 
no independent subsystem \cite{Sweers_1992,Bai_He_2020}).
%These properties make it possible to apply a form of the maximum principle (\textit{cf.} Subsection \ref{sec:maximum_principle}) that
%will be crucial in the analysis. 
A prototypical example of coupling matrix is: 
\begin{equation*}
    \veL=\begin{pmatrix} 0 & 1 \\ 1 & 0 \end{pmatrix}.
\end{equation*}
The dynamics associated to this coupling matrix are indeed cooperative: the presence of $u_2$ is favorable to $u_1$, and conversely.
More generally, \ref{ass:cooperative} and \ref{ass:irreducible} together imply that if, initially, all components are nonnegative
and one is positive, then this component will help out the growth of the others, and all will ultimately be positive. 
This loose statement is a form of maximum principle, that will be stated rigorously in Subsection \ref{sec:maximum_principle}
and that will be crucial in the analysis.  
%It should be noted that if $\veL$ is a bistochastic matrix, it can be interpreted as mutation matrix, the entry $l_{i,j}$ standing for the frequency of mutation from trait $i$ to trait $j$.

Such operators, and their eigenvalues, have natural interpretations, in particular when considered from the point of view of 
population dynamics. For a wide class of reaction--diffusion models,
the long-time behavior of a population $\veu$, and in particular its ability to thrive in a given environment, is at least partially 
governed by the sign of the principal eigenvalue of the operator obtained by linearizing the model around the steady state 
$\veu=\vez$ (\textit{cf.} Subsection \ref{sec:motivation}).
In this paper, we set out to establish rigorously a number of analytical results on (generalized) principal eigenvalues, 
such as existence, characterization, asymptotic behaviors, 
to provide counter-examples to properties that are known to hold in the case of scalar equations,
and then to investigate related optimization problems. 
Indeed, interpreting the eigenvalue as a survival criterion has triggered a wide interest in spectral optimization: 
it makes sense to try and design the environment, or the interaction between individuals, in a way that optimizes the 
eigenvalue to ensure the survival or, conversely, extinction of the species. This point of view was adopted for instance in 
\cite{Berestycki_Ham_1} in the case of scalar equations. From the interpretation perspective, the goal of 
\cite{Berestycki_Ham_1} was to optimize the resources distribution. In the present paper, we consider similar questions 
(\textit{i.e.} how to distribute resources in a domain) in the case of systems and we also investigate the question of optimal 
interaction between individuals. The latter class of results refers for instance to the optimization of mutation strategies and 
proves to be mathematically more challenging.

\subsection{Motivations}\label{sec:motivation}
The linear parabolic system $\cbQ\veu =\vez$ can be understood as the linearization at the homogeneous steady state 
$\vez$ of a semilinear reaction--diffusion system $\dcbP\veu(t,x) = \vect{f}(t,x,\veu(t,x))$. In this interpretation, $\veL(t,x)$
denotes the Jacobian matrix $\textup{D}_{\veu}\vect{f}(t,x,\vez)$.

As we have already alluded to, from a modeling viewpoint, such systems appear for instance in population dynamics, in models
using growth terms such that, if $\veu(t,x)$ is in the positive cone of $\R^N$, then so is 
\begin{equation*}
    \veL(t,x)\veu(t,x)-\vect{f}(t,x,\veu(t,x))=\textup{D}_{\veu}\vect{f}(t,x,\vez)\veu(t,x)-\vect{f}(t,x,\veu(t,x)). 
\end{equation*}
In recent years, these growth terms have been referred to as Fisher--KPP, or simply KPP, reaction terms, since they generalize
 the standard scalar Fisher--KPP reaction term \cite{KPP_1937,Fisher_1937}.
In general they are not cooperative and in particular they do not satisfy the comparison principle.
Non-cooperative Fisher--KPP systems whose linearization around $\vez$ is nonetheless cooperative have been the object of 
a growing literature in the past few years, especially in the case of two components $N=2$ (see, \textit{e.g.}, 
\cite{Girardin_2016_2,Girardin_2016_2_add,Girardin_2018,Girardin_2017,Girardin_Griette_2020,Griette_Raoul,Alfaro_Griette,Cantrell_Cosner_Yu_2018,Cantrell_Cosner_Yu_2019,Morris_Borger_Crooks,Brown_Zhang_03,Hei_Wu_2005,Bouguima_Feikh_Hennaoui_2008,Henaoui_2012,Ma_Chen_Lu_2014,Griette_Matano_2021}).
They arise as models for populations structured in age classes or phenotypical trait classes
\cite{Girardin_2016_2,Cantrell_Cosner_Yu_2018,Cantrell_Cosner_Martinez_2020,Elliott_Cornel,Griette_Raoul_}.
In this context, the sign of the principal eigenvalue of the linearization at $\vez$ indicates, at least in
simple spatio-temporal settings, whether small populations survive and persist or, on the contrary, go extinct. 
It turns out that, for such models, population persistence is generically equivalent to small population persistence,
and this makes the study of the principal eigenvalue even more crucial.

When the underlying model is a population structured with respect to a phenotypical trait, then $\veL$ typically takes the form
$\veL = \diag(r_i) + \vect{M}$, where each $r_i>0$ is an intrinsic growth rate and the matrix $\vect{M}$ is a mutation matrix; 
in the simplest case $\vect{M}$ is a discrete Laplacian with Neumann boundary conditions:
\begin{equation}\label{eq:discrete_Lap}
    \vect{M}= \mu
    \begin{pmatrix}
    -1 & 1 & 0 & \dots & 0\\
    1 & -2 & \ddots & \ddots & \vdots\\
    0 & \ddots & \ddots & \ddots & 0\\
    \vdots & \ddots & \ddots & -2 & 1\\
    0 & \dots & 0 & 1 & -1
    \end{pmatrix}
\end{equation}
where $\mu>0$ is a mutation rate.

When the underlying model is a population structured with respect to age, then $\veL$ is a diagonally perturbed Leslie matrix:
\begin{equation}\label{eq:Leslie}
    \vect{L}=-\diag(d_i+a_i)+
    \begin{pmatrix}
    b_1 & b_2 & b_3 & \dots & b_{N}\\
    a_1 & 0 & 0 & \dots & 0\\
    0 & a_2 & 0 & \ddots & \vdots\\
    \vdots & \ddots & \ddots & \ddots & \vdots\\
    0 & \dots & 0 & a_{N-1} & 0
    \end{pmatrix}
\end{equation}
where each $d_i\geq 0$ is a death rate, each $a_i>0$ an aging rate and each $b_i\geq 0$ a birth rate with $b_N>0$.

Each one of these models can be understood as a discretized version of some nonlocal equation \cite{Girardin_2016_2}.

The second example \eqref{eq:Leslie} above explains in particular why we do not make any \textit{a priori} assumption on the symmetry of $\veL$.

Let us also point out that the periodic cooperative operators we consider find applications in the chemistry of nuclear reactor cores
\cite{Allaire_Hutridurga_2015,Capdeboscq_2002}. Due to our long-term goals (see Subsection \ref{sec:relation_with_semilinear}), 
in this paper, we favor a population dynamics interpretation.

\subsection{Notations}\label{Se:Notations}

In the whole paper, $\N$ is the set of nonnegative integers, which contains $0$.

We fix once and for all $n+1$ positive numbers $T, L_1, \dots, L_n\in \R_+^*$. For the sake of brevity, we use the notations
$L=(L_1,\dots,L_n)$, $(0,L)=(0,L_1)\times\dots\times(0,L_n)$ and $|[0,L]|=\prod_{\alpha=1}^n L_\alpha$. Unless otherwise specified, 
temporal and spatial periodicities refer to, respectively, $T$-periodicity with respect to $t$ and $L_\alpha$-periodicity with respect to 
$x_\alpha$ for each $\alpha\in[n]$ (or $L$-periodicity with respect to $x$ for short). The space-time periodicity cell $(0,T)\times(0,L)$ 
is denoted $\Omega_\upp$ and its volume is $T|[0,L]|$.

Vectors in $\R^N$ and matrices in $\R^{N\times N}$ are denoted in bold font. 
Functional operators are denoted in calligraphic typeface (bold if they act on functions valued in $\R^N$).
Functional spaces, \textit{e.g.} $\mathcal{W}^{1,\infty}(\R\times\R^n,\R^N)$, 
are also denoted in calligraphic typeface. A functional space $\mathcal{X}$ denoted with a subscript $\mathcal{X}_\upp$, 
$\mathcal{X}_{t-\upp}$ or $\mathcal{X}_{x-\upp}$ is restricted to functions that are space-time periodic, time periodic or space periodic respectively.

For clarity, H\"{o}lder spaces of functions with $k\in\mathbb{N}\cup\{0\}$ derivatives that are all
H\"{o}lder-continuous with exponent $\alpha\in(0,1)$ are denoted $\caC^{k+\alpha}$; when the domain is $\R\times\R^n$, it 
should be unambiguously understood that $\caC^{k+\alpha,k'+\alpha'}$ denotes the set of functions that have $k$ 
$\alpha$-H\"{o}lder-continuous derivatives in time and $k'$ $\alpha'$-H\"{o}lder-continuous derivatives in space.

For any two vectors $\veu,\vev\in\R^N$, $\veu\leq\vev$ means $u_i\leq v_i$ for all 
$i\in[N]$, $\veu<\vev$ means $\veu\leq\vev$ together with $\veu\neq\vev$ and $\veu\ll\vev$ means $u_i<v_i$ for all $i\in[N]$. If 
$\veu\geq\vez$, we refer to $\veu$ as \textit{nonnegative}; if $\veu>\vez$, as \textit{nonnegative nonzero}; if $\veu\gg\vez$, as 
\textit{positive}. The sets of all nonnegative, nonnegative nonzero, positive vectors are respectively denoted $[\vez,\vei)$,
$[\vez,\vei)\backslash\{\vez\}$ and $(\vez,\vei)$. The vector whose entries are all equal to $1$ is denoted by $\veo$ and this never refers to
an indicator function.
Similar notations and terminologies might be used in other dimensions and for matrices. The identity matrix is denoted $\vect{I}$.

Similarly, a function can be nonnegative, nonnegative nonzero, positive. For clarity, a positive function is a function with only positive values.

To avoid confusion between operations in the state space $\R^N$ and operations in the spatial domain $\R^n$, 
Latin indexes $i,j,k$ are assigned to vectors and matrices of size $N$ whereas Greek indexes $\alpha,\beta,\gamma$ 
are assigned to vectors and matrices of size $n$. 
We use mostly subscripts to avoid confusion with algebraic powers, but when both Latin and Greek indexes are involved, we 
move the Latin ones to a superscript position, \textit{e.g.} $A^i_{\alpha,\beta}(t,x)$.
We denote scalar products in $\R^N$ with the transpose operator, $\veu^\upT\vev=\sum_{i=1}^N u_i v_i$,
and scalar products in $\R^n$ with a dot, $x\cdot y =\sum_{\alpha=1}^n x_\alpha y_\alpha$. 

For any vector $\veu\in\R^N$, $\diag(\veu)$, $\diag(u_i)_{i\in[N]}$ or $\diag(u_i)$ for short refer to the diagonal matrix in $\R^{N\times N}$
whose $i$-th diagonal entry is $u_i$. These notations can also be used if $\veu$ is a function valued in $\R^N$.

Finite dimensional Euclidean norms are denoted $|\cdot |$ whereas the notation $\|\cdot \|$ is reserved for norms in functional spaces.

The notation $\circ$ is reserved in the paper for the Hadamard product (component-wise product of vectors or matrices)
and never refers to the composition of functions.

Finally, when the focus of the paper is on the dependence of an eigenvalue on (a parameter of) the underlying
operator, and when the context is unambiguous, we write with a slight abuse of notation the eigenvalue as 
a function of the varying parameter (\textit{e.g.}, an eigenvalue $\lambda$ of the operator $\cbQ$ might be 
denoted $\lambda(\cbQ)$, $\lambda(A_1)$, $\lambda(q_1,\dots,q_n)$, $\lambda(\veL)$, and so on).

\subsection{Generalized principal eigenvalues in space-time periodic media}

In \cite{Nadin_2007}, Nadin analyzed the scalar case $N=1$. Following previous efforts
\cite{Berestycki_Ros_1,Berestycki_Nir,Berestycki_Ham_1,Hess_1991},
he introduced and studied the following quantities:
\begin{equation*}
    \lambda_1 = \sup\left\{ \lambda\in\R\ |\ \exists u\in\caC^{1,2}_{t-\upp}(\R\times\R^n,(0,\infty))\ \mathcal{Q}u\geq \lambda u \right\},
\end{equation*}
\begin{equation*}
    \lambda_1' = \inf\left\{ \lambda\in\R\ |\ \exists u\in\mathcal{W}^{1,\infty}\cap\caC^{1,2}_{t-\upp}(\R\times\R^n,(0,\infty))\ \mathcal{Q}u\leq \lambda u \right\}.
\end{equation*}
These two quantities turn out to be eigenvalues of $\mathcal{Q}$ (in the sense that associated eigenfunctions exist), and are referred to as 
\textit{generalized principal eigenvalues} (their eigenfunctions are referred to as \textit{generalized principal eigenfunctions}). 
Due to the lack of compactness in the spatial variable, the existence of these eigenvalues cannot be directly deduced from the 
Krein--Rutman theorem \footnote{The Krein--Rutman theorem, which deals with the existence and simplicity of the principal eigenvalue of positivity preserving operators, is of crucial importance in the study of reaction--diffusion equations. We refer, for a statement and applications to reaction--diffusion equations, to Cantrell--Cosner \cite[Theorem 2.12]{Cantrell_Cosner_03} or Lam--Lou \cite[Appendix B]{Lam_Lou_2022}.}. 
However, they can be related with classical Krein--Rutman principal eigenvalues: 
the first one, $\lambda_1$, is the limit of the principal eigenvalues associated with the time periodic problem with 
Dirichlet boundary conditions in a sequence of growing balls; the second one, $\lambda_1'$, coincides with the principal eigenvalue of 
the space-time periodic problem. Actually, both eigenvalues are related to the family $\left(\lambda_{1,z}\right)_{z\in\R^n}$ of 
principal eigenvalues of the space-time periodic problems associated with the operators
\begin{equation*}
    \mathcal{Q}_z:u\mapsto \upe_{-z}\mathcal{Q}\left(\upe_ z u\right)\quad\text{where }\upe_{\pm z}:x\mapsto\upe^{\pm z\cdot x},
\end{equation*}
which can be expanded as
\begin{equation*}
    \mathcal{Q}_z u=\mathcal{Q}u -(A+A^\upT)z\cdot\nabla u -(z\cdot Az + \nabla\cdot(Az)-q\cdot z)u.
\end{equation*}
Since $\mathcal{Q}(\upe_{z}u)=\lambda_{1,z}\upe_{z}u$, $\lambda_{1,z}$ can be understood as the principal eigenvalue of $\mathcal{Q}$ acting on
the set $\upe_{z}.\caC^{1,2}_\upp$ of space-time periodic multiplicative perturbations of the planar exponential $\upe_{z}$. 
Nadin showed that $\lambda_1'=\lambda_{1,0}\leq\lambda_1=\max_{z\in\R^n}\lambda_{1,z}$ and subsequently exhibited sufficient conditions for 
the equality $\lambda_1=\lambda_1'$ to hold; his study is completed by several dependence and optimization results.

Our aim in this paper is twofold. First, we want to generalize the results of Nadin; second, we want to illustrate the originality of 
systems compared to scalar equations by means of new results and counter-examples without scalar counterpart. Let us point out that most generalizations of scalar
results we consider here require work indeed. On one hand, many proofs of \cite{Nadin_2007} rely on algebraic operations that are at least 
ambiguous, at worst unavailable, in the vector setting, like powers or quotients, and this often leads to counter-examples. 
On the other hand, the full coupling assumption that we use to emulate the scalar strong comparison principle, \ref{ass:irreducible}, 
is not a pointwise property but rather a global property, and this makes some adaptations quite technical.

Replacing scalar operators and test functions ($N=1$) by vector ones ($N\in\N^\star$),
we will therefore study the following quantities:
\begin{equation}
    \label{def:lambda1}
    \lambda_1 = \sup\left\{ \lambda\in\R\ |\ \exists \veu\in\caC^{1,2}_{t-\upp}(\R\times\R^n,(\vez,\vei))\ \cbQ\veu\geq\lambda\veu \right\},
\end{equation}
\begin{equation}
    \label{def:lambda1prime}
    \lambda_1' = \inf\left\{ \lambda\in\R\ |\ \exists \veu\in\mathcal{W}^{1,\infty}\cap\caC^{1,2}_{t-\upp}(\R\times\R^n,(\vez,\vei))\ \cbQ\veu\leq\lambda\veu \right\},
\end{equation}
as well as the family $\left(\lambda_{1,z}\right)_{z\in\R^n}$, where:
\begin{equation}
    \label{def:lambdaz}
    \lambda_{1,z} = \lambda_{1,\upp}\left( \cbQ_z \right),
\end{equation}
\begin{equation}
    \label{def:Qz}
    \cbQ_z = \upe_{-z}\cbQ\upe_z = \cbQ-\diag\left((A_i+A_i^\upT)z\cdot\nabla+z\cdot A_i z+\nabla\cdot\left(A_i z\right)-q_i\cdot z\right).
\end{equation}

As in the scalar case, it is a standard result that the Krein--Rutman theorem can be successfully applied to the 
weakly coupled, fully coupled, cooperative operator $\cbQ_z$ in the following two ways: the 
\textit{periodic principal eigenvalue} $\lambda_{1,z}=\lambda_{1,\upp}(\cbQ_z)$ is well-defined;  for any nonempty smooth bounded connected open set $\Omega\subset\R^n$, the 
\textit{Dirichlet principal eigenvalue} $\lambda_{1,\upDir}(\cbQ_z,\Omega)$ is 
well-defined.
The first one corresponds to the operator $\cbQ_z$ acting on $\caC^{1,2}_\upp(\R\times\R^n)$, 
and hereafter we denote $\veu_z$ such a positive principal eigenfunction. 
The second one corresponds to the operator acting on $\caC^{1,2}_{t-\upp}(\R\times\Omega)\cap\caC^1_0(\R\times\overline{\Omega})$,
where the subscript $0$ denotes functions that vanish on $\partial\Omega$. Eigenfunctions for these principal eigenvalues 
are unique up to multiplication by a constant. For detailed applications of the Krein--Rutman theory in the Dirichlet case,
we refer to Bai--He \cite{Bai_He_2020} or Ant\'{o}n--L\'{o}pez-G\'{o}mez \cite{Anton_Lopez-Gomez_1996}.

In contrast, and again as in the scalar case, the generalized principal eigenproblems 
for $\lambda_1$ and $\lambda_1'$ (namely, the question of knowing whether or not eigenpairs exist) 
are mathematically challenging.

\begin{defi}
A \textit{generalized principal eigenfunction associated with $\lambda_1$} is a function 
$\veu\in\caC^{1,2}_{t-\upp}(\R\times\R^n,(\vez,\vei))$ such that $\cbQ\veu=\lambda_1\veu$.

A \textit{generalized principal eigenfunction associated with $\lambda_1'$} is a function
$\veu\in\mathcal{W}^{1,\infty}\cap\caC^{1,2}_{t-\upp}(\R\times\R^n,(\vez,\vei))$ such that $\cbQ\veu=\lambda_1'\veu$.
\end{defi}

\subsection{Results}

Although the theorems and definitions in Subsection \ref{sec:theorems_existence_characterization} are completely analogous to the 
scalar setting \cite{Nadin_2007}, the ones in Subsections \ref{sec:theorems_monotonic_convex_dependence}--\ref{sec:theorems_optimization}, 
will require new restrictions specific to the parabolic vector setting and will show how the time structure, the spatial structure
and the multidimensional state space interact intricately.

\subsubsection{Existence and characterization of generalized principal eigenpairs}\label{sec:theorems_existence_characterization}

\begin{thm}\label{thm:existence_characterization_Rn}
The generalized principal eigenvalues $\lambda_1$ and $\lambda_1'$ are well-defined real numbers
related to the family $\left(\lambda_{1,z}\right)_{z\in\R^n}$:
\[
    \lambda_1'=\lambda_{1,0},\quad\lambda_1=\max_{z\in\R^n}\lambda_{1,z}.
\]
The maximum is uniquely achieved.

Consequently, $\lambda_1'\leq \lambda_1$, $\veu_0$ is a generalized principal eigenfunction associated with $\lambda_1'$ and there exists 
a unique $z^\star\in\R^n$ such that $\upe_{z^\star}\veu_{z^\star}$ is a generalized principal eigenfunction associated with $\lambda_1$.

Furthermore, the following max--min and min--max characterizations hold:
\[
    \lambda_{1,z}=\max_{\veu\in\caC^{1,2}_\upp(\R\times\R^n,(\vez,\vei))}\min_{i\in[N]}\min_{\clOmper}\left(\frac{(\cbQ_z\veu)_i}{u_i}\right)\quad\text{for all }z\in\R^n,
\]
\[
    \lambda_{1,z}=\min_{\veu\in\caC^{1,2}_\upp(\R\times\R^n,(\vez,\vei))}\max_{i\in[N]}\max_{\clOmper}\left(\frac{(\cbQ_z\veu)_i}{u_i}\right)\quad\text{for all }z\in\R^n,
\]
\[
	\lambda_1=\max_{\veu\in\caC^{1,2}_{t-\upp}(\R\times\R^n,(\vez,\vei))}\min_{i\in[N]}\inf_{\R\times\R^n}\left(\frac{(\cbQ\veu)_i}{u_i}\right).
\]
\end{thm}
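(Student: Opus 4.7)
The plan is to prove, in order: (i) the max--min and min--max characterizations of $\lambda_{1,z}$ at fixed $z\in\R^n$; (ii) the identity $\lambda_1'=\lambda_{1,0}$; (iii) the identity $\lambda_1=\max_z\lambda_{1,z}$ together with uniqueness of the maximizer $z^\star$; and (iv) the global max--min formula for $\lambda_1$. Throughout, I rely on the periodic principal pair $(\lambda_{1,z},\veu_z)$ provided by the Krein--Rutman theory applied to $\cbQ_z$, together with a F\"{o}ldes--Pol\'{a}\v{c}ik-type Harnack inequality and a cooperative strong maximum principle available under \ref{ass:cooperative}--\ref{ass:irreducible}.

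Step (i) is a Collatz--Wielandt argument. The eigenfunction $\veu_z$ realizes $\lambda_{1,z}$ in both formulas. For any other positive $\caC^{1,2}_\upp$ test function $\veu$, the constant $\kappa:=\sup\{\rho>0 : \rho\veu_z\leq\veu\}$ is positive and finite by periodicity, the difference $\vew:=\veu-\kappa\veu_z$ is nonnegative and touches zero somewhere on $\clOmper$, and a strict inequality $\min_i\min_{\clOmper}((\cbQ_z\veu)_i/u_i)>\lambda_{1,z}$ would force $\cbQ_z\vew\geq\lambda_{1,z}\vew+\varepsilon\veu_z$ for some $\varepsilon>0$, contradicting the irreducible cooperative strong maximum principle. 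The min--max is proved dually. Step (ii) reuses this device: $\lambda_1'\leq\lambda_{1,0}$ holds because $\veu_0$ is admissible in \eqref{def:lambda1prime} with $\lambda=\lambda_{1,0}$ (which also identifies $\veu_0$ as a generalized principal eigenfunction for $\lambda_1'$ once equality is proved); for the converse, pick any admissible $(\lambda,\veu)$ with $\cbQ\veu\leq\lambda\veu$, invoke the Harnack inequality to bound $\veu$ uniformly from below, set $\kappa:=\sup\{\rho>0 : \rho\veu_0\leq\veu\}$, and apply the strong maximum principle to $\vew:=\veu-\kappa\veu_0$, which satisfies $\cbQ\vew\leq\lambda\vew+\kappa(\lambda-\lambda_{1,0})\veu_0$, to rule out $\lambda<\lambda_{1,0}$.

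Step (iii) is the main obstacle. The lower bound $\lambda_1\geq\max_z\lambda_{1,z}$ is immediate: $\upe_z\veu_z$ is positive, time-periodic and satisfies $\cbQ(\upe_z\veu_z)=\lambda_{1,z}\upe_z\veu_z$. For the upper bound and the existence of a maximizer, the plan is to follow the Berestycki--Hamel--Nadin scheme: given an admissible $(\lambda,\veu)$ with $\cbQ\veu\geq\lambda\veu$, choose a sequence $x_k\in\R^n$ maximizing a suitable logarithmic growth rate of $\veu$, renormalize $\vev_k(t,x):=\veu(t,x+x_k)/|\veu(0,x_k)|$, and use parabolic interior estimates, irreducibility \ref{ass:irreducible} (to prevent any component from degenerating) and the Harnack inequality (to maintain uniform positivity) to extract a limit of the form $\vev_\infty=\upe_{z^\star}\vew$, with $z^\star\in\R^n$ and $\vew$ positive and space-time periodic. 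Passing to the limit yields $\cbQ_{z^\star}\vew\geq\lambda\vew$, and step (i) then gives $\lambda\leq\lambda_{1,z^\star}$. A limiting argument in $\lambda\nearrow\lambda_1$, combined with the growth of $z\mapsto\lambda_{1,z}$ at infinity (to extract a convergent subsequence of maximizers), concludes that the supremum is a maximum, attained at some $z^\star$. This step is significantly harder than its scalar counterpart in \cite{Nadin_2007}: the algebraic shortcuts (powers, ratios) available for scalar $u$ must be replaced by componentwise comparisons relying on cooperativity and irreducibility alone.

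Uniqueness of $z^\star$ follows from strict concavity of $z\mapsto\lambda_{1,z}$, which is a consequence of the convexity/dependence results proved later in the paper and can alternatively be obtained directly via Kato perturbation theory on the simple principal eigenvalue of $\cbQ_z$. Step (iv), the global max--min formula for $\lambda_1$, is then obtained by combining step (i) with the parametrization $\veu\mapsto\upe_z\veu$ of exponential-times-periodic test functions and the identity $\lambda_1=\max_z\lambda_{1,z}$ just proved.
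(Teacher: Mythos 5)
Your steps (i) and (iv) follow essentially the paper's route (Collatz--Wielandt sliding argument plus the strong maximum principle; the global max--min formula is then immediate once an eigenfunction of the form $\upe_{z^\star}\veu_{z^\star}$ exists), but the two hard steps contain genuine gaps. In step (ii), the converse inequality $\lambda_1'\geq\lambda_{1,0}$ is set up in the wrong direction. A test function for $\lambda_1'$ is a bounded positive \emph{sub}solution, $\cbQ\veu\leq\mu\veu$; the Harnack inequality (proved in the paper for nonnegative solutions) gives no uniform positive lower bound for a subsolution, so your $\kappa=\sup\{\rho>0:\rho\veu_0\leq\veu\}$ need not be positive. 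Worse, even granting $\kappa>0$, the function $\vew=\veu-\kappa\veu_0\geq\vez$ satisfies $\cbQ\vew\leq\mu\vew+\kappa(\mu-\lambda_{1,0})\veu_0$, a \emph{sub}solution inequality with negative forcing when $\mu<\lambda_{1,0}$; at a point where a component of $\vew$ vanishes this is perfectly consistent, and the cooperative strong maximum principle (which needs a supersolution inequality) yields no contradiction. The working argument compares from above: since $\veu\in\mathcal{W}^{1,\infty}$ is bounded and $\veu_0$ has a positive minimum, $\kappa^\star=\inf\{\kappa>0:\kappa\veu_0-\veu\gg\vez\}$ is finite, and $\vev=\kappa^\star\veu_0-\veu\geq\vez$ satisfies $\cbQ\vev\geq\mu\vev+\kappa^\star(\lambda_{1,0}-\mu)\veu_0\gg\mu\vev$; because the touching may occur only at infinity, one translates along a minimizing sequence, extracts a limit by interior estimates, and only then applies the strong maximum principle.

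In step (iii) you assert, rather than prove, the heart of the theorem: that from a mere supersolution $\cbQ\veu\geq\lambda\veu$ one can extract, after translation and renormalization, a limit \emph{exactly} of the form $\upe_{z^\star}\vew$ with $\vew$ space-time periodic. Nothing in your sketch forces this multiplicative structure, and with only an inequality it is not available. The paper proceeds differently: it first proves that $\lambda_1$ is a well-defined real number (nonemptiness via the constant Perron--Frobenius eigenvector of $\overline{\veL}$, finiteness by comparison with a Dirichlet eigenvalue in a bounded subdomain -- a point your proposal never addresses) and constructs a genuine generalized principal eigenfunction, i.e. an \emph{equality} $\cbQ\veu=\lambda_1\veu$, as a limit of Dirichlet principal eigenfunctions in growing smooth domains using the fully coupled Harnack inequality and Schauder estimates. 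Only then does it upgrade this eigenfunction, one coordinate direction at a time: the ratio $\vev=\bigl(u_i(\cdot,\cdot+L_1e_1)/u_i\bigr)_{i\in[N]}$ is bounded by Harnack, its supremum defines $z_1$, and translating along a maximizing sequence the strong maximum principle -- applicable because the translates solve the same equation -- forces exact multiplicative periodicity; iterating over the $n$ directions yields $\upe_z\veu_z$ and $\lambda_1=\lambda_{1,z}$, while $\lambda_1\geq\lambda_{1,z}$ for every $z$ is trivial. Finally, uniqueness of $z^\star$ does follow from the strict concavity of $z\mapsto\lambda_{1,z}$ (proved independently in the paper), but Kato perturbation theory would only give smoothness of this map, not strict concavity, so the alternative you mention is not a substitute.
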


By simplicity of the periodic principal eigenvalue,  the only non-negative periodic eigenfunctions are periodic 
principal eigenfunctions. Under assumptions \ref{ass:ellipticity}--\ref{ass:smooth_periodic}, if we further impose 
standard normalisation conditions on the eigenfunction (\textit{e.g.}, $|\veu_z(0,0)|=1$) , classical compactness estimates on 
the family $(\lambda_{1,z},\veu_z)$ imply that the spectral elements $(\lambda_{1,z},\veu_z)$  are continuous with
respect to the coefficients of $\cbQ_z$. In particular, this shows the continuity of $\lambda_1$ and $\lambda_1'$ as 
functions of the coefficients of $\cbQ$.

Since generalized principal eigenfunctions associated with $\lambda_1'$ are globally bounded, a simple comparison argument with the 
uniformly positive $\veu_0$ shows that it is, up to a multiplicative constant, the unique generalized principal 
eigenfunction associated with $\lambda_1'$. On the contrary, generalized principal eigenfunctions associated with
$\lambda_1$ cannot, in general, be compared. The possible existence of generalized principal eigenfunctions for $\lambda_1$ 
that are not of the form $\upe_{z^\star}\veu_{z^\star}$ remains an open question.

It is well-known that the equality $\lambda_1'=\lambda_1$ can be false: in the scalar case, the differential operator 
$u\mapsto -u''+u'$ is a classical counter-example.
The key to this counter-example is the nonzero advection term that moves the maximum of $\lambda_{1,z}$ away from $z=0$; a similar counter-example of a fully coupled cooperative parabolic system that does not reduce trivially to an elliptic 
scalar equation is, in spatial dimension $n=1$, $\cbQ = \partial_t - \partial_{xx} + \partial_{x} -(1/8)\vect{I}-\vect{M}$,
where $\vect{I}$ is the identity matrix in $\R^{N\times N}$ and $\vect{M}$ is the discrete Laplacian defined in \eqref{eq:discrete_Lap}. 
By uniqueness of the periodic principal eigenpair and the fact that the coefficients depend neither on time nor space, 
\[
\lambda_{1,z}=-\lambda_{\upPF}\left(-z(1-z)\vect{I}+\frac{1}{8}\vect{I}+\vect{M}\right) = z(1-z)-\frac{1}{8}-\lambda_{\upPF}(\vect{M}) = z(1-z)-\frac{1}{8},
\]
where $\lambda_\upPF$ denotes the \textit{Perron--Frobenius eigenvalue} of an essentially nonnegative irreducible matrix in $\R^{N\times N}$.
Therefore $\lambda_1'=-1/8<\lambda_1=1/8$, and this also confirms that, as in the scalar case, $\lambda_1$ and $\lambda_1'$ need 
not have the same sign.

In the elliptic scalar setting, the absence of advection implies that $z\mapsto\lambda_{1,z}$ is even, whence the equality
$\lambda_1=\lambda_1'$ follows \cite[Proposition 3.2]{Nadin_2007}. In the elliptic vector setting, Griette and Matano have
very recently proved with a counter-example that this is not the case \cite[Proposition 4.1]{Griette_Matano_2021}: 
the mere asymmetry of $\veL(x)$ can induce the strict inequality $\lambda_1'<\lambda_1$. For the sake of completeness,
we recall their counter-example in Remark \ref{rem:counter-example_evenness_without_advection}.

As in \cite{Nadin_2007}, our method of proof actually establishes a few results on $\lambda_1$ in arbitrary 
domains\footnote{In the spirit of Berestycki--Rossi
\cite{Berestycki_Ros}, $\lambda_1'$ can also be defined in an arbitrary domain $\Omega$ and further results on $\lambda_1(\Omega)$ and
$\lambda_1'(\Omega)$ are likely achievable. As the focus of this paper is on the influence of space--time periodicity, we do not pursue this direction here.}.
For any nonempty open connected set $\Omega\subset\R^n$, we define:
\begin{equation}\label{def:lambda1_Omega}
	\lambda_1(\Omega)=\sup\left\{ \lambda\in\R\ |\ \exists \veu\in\caC^{1,2}_{t-\upp}(\R\times\Omega,(\vez,\vei))\cap\caC^{1}(\R\times\overline{\Omega})\ \cbQ\veu\geq\lambda\veu \right\}.
\end{equation}

Since $\partial\Omega$ is not necessarily smooth, the set $\caC^1(\R\times\overline{\Omega})$ is understood here as the set of 
functions $\veu\in\caC^1(\R\times\Omega)$ such that both $\veu$ and $\nabla\veu$ can be continuously extended at any boundary point 
admitting a strong barrier (see Berestycki--Nirenberg--Varadhan \cite{Berestycki_Nir}).
The subset $\caC^1_0(\R\times\overline{\Omega})$ is the set of functions in $\caC^1(\R\times\overline{\Omega})$ vanishing continuously at
such boundary points.

\begin{defi}
Let $\Omega\subset\R^n$ be a nonempty open connected set. 
A \textit{generalized principal eigenfunction associated with $\lambda_1(\Omega)$} is a function 
$\veu\in\caC^{1,2}_{t-\upp}(\R\times\Omega,(\vez,\vei))\cap\caC^1_0(\R\times\overline{\Omega})$ such that $\cbQ\veu=\lambda_1\veu$.
\end{defi}

\begin{thm}\label{thm:existence_characterization_Omega}
Let $\Omega\subset\R^n$ be a nonempty open connected set such that there exists $x_0\in\Omega$ satisfying $[x_0,x_0+L]\subset\Omega$. 
Then the generalized principal eigenvalue $\lambda_1(\Omega)$ is a well-defined real number
and there exists an associated generalized principal eigenfunction.

If $\Omega=\R^n$, $\lambda_1(\Omega)=\lambda_1$. If $\Omega$ is bounded and smooth, $\lambda_1(\Omega)=\lambda_{1,\upDir}(\Omega)$.

Furthermore, the following max--min characterization holds true:
\[
	\lambda_1(\Omega)=\max_{\veu\in\caC^{1,2}_{t-\upp}(\R\times\Omega,(\vez,\vei))\cap\caC^{1}(\R\times\overline{\Omega})}\min_{i\in[N]}\inf_{\R\times\Omega}\left(\frac{(\cbQ\veu)_i}{u_i}\right).
\]
\end{thm}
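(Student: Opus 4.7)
The plan is to realize $\lambda_1(\Omega)$ as the monotone limit of Dirichlet principal eigenvalues along an exhausting sequence, and to obtain the generalized principal eigenfunction as a locally uniform limit of the associated Dirichlet eigenfunctions. First, I would fix a sequence $(\Omega_k)_{k\in\N}$ of smooth bounded connected open sets with $[x_0,x_0+L]\subset\Omega_1$, $\overline{\Omega_k}\subset\Omega_{k+1}$ and $\bigcup_k\Omega_k=\Omega$. For each $k$, the Krein--Rutman theorem applied as in Bai--He \cite{Bai_He_2020} provides $\lambda^k:=\lambda_{1,\upDir}(\cbQ,\Omega_k)$ together with a positive, time-periodic Dirichlet eigenfunction $\veu_k\in\caC^{1,2}_{t-\upp}(\R\times\Omega_k)\cap\caC^1_0(\R\times\overline{\Omega_k})$, normalized by $|\veu_k(0,x_0)|=1$. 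Domain monotonicity of the Dirichlet principal eigenvalue (itself a consequence of the strong comparison principle for cooperative, fully coupled systems) shows that $(\lambda^k)$ is nonincreasing; a crude lower bound in terms of the coefficients of $\veL$ ensures convergence to some $\lambda_\infty\in\R$.

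For the upper bound $\lambda_1(\Omega)\leq\lambda_\infty$, I would rely on a contact-point argument. Given any admissible pair $(\lambda,\veu)$ for the supremum in \eqref{def:lambda1_Omega} and fixing $k$, the scalar
\[
  t^\star:=\inf\left\{t>0\,:\,t\veu\geq\veu_k\text{ on }\R\times\overline{\Omega_k}\right\}
\]
is positive and finite: $\veu$ is uniformly positive and bounded on the compact $[0,T]\times\overline{\Omega_k}$ (reducing to one period by time-periodicity), whereas $\veu_k$ vanishes on $\R\times\partial\Omega_k$. Consequently $\vect{w}:=t^\star\veu-\veu_k\geq\vez$ has at least one component touching zero at an interior point of $\R\times\Omega_k$, and
\[
  (\cbQ-\lambda^k\vect{I})\vect{w}\geq t^\star(\lambda-\lambda^k)\veu.
\]
If $\lambda>\lambda^k$, the right-hand side is strictly positive, which is incompatible with the \FPH applied to the nonnegative supersolution $\vect{w}$ of the cooperative, fully coupled operator $\cbQ-\lambda^k\vect{I}$: the Harnack inequality forces $\vect{w}\equiv\vez$ and therefore forces the right-hand side to vanish, a contradiction. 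Hence $\lambda\leq\lambda^k$ for every $k$, whence $\lambda_1(\Omega)\leq\lambda_\infty$.

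For the reverse inequality, I would construct a principal eigenfunction as a locally uniform limit of $(\veu_k)$. On any compact $K\Subset\R\times\Omega$, eventually contained in $\R\times\Omega_k$, the \FPH combined with the normalization $|\veu_k(0,x_0)|=1$ yields uniform upper and component-wise positive lower bounds on $\veu_k$. Parabolic Schauder estimates on $\cbQ\veu_k=\lambda^k\veu_k$ then upgrade this to $\caC^{1,2}$-compactness on $K$, and a diagonal extraction produces $\veu_k\to\veu_\infty$ in $\caC^{1,2}_{\mathrm{loc}}(\R\times\Omega)$ with $\veu_\infty\gg\vez$, time-periodic, and satisfying $\cbQ\veu_\infty=\lambda_\infty\veu_\infty$. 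At boundary points $y\in\partial\Omega$ admitting a strong barrier, the vanishing $\veu_\infty(\cdot,y)=\vez$ is inherited from the $\veu_k$ via a barrier comparison in the spirit of Berestycki--Nirenberg--Varadhan \cite{Berestycki_Nir}, so $\veu_\infty$ belongs to the appropriate $\caC^1_0$ class and certifies $\lambda_1(\Omega)\geq\lambda_\infty$.

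The remaining claims are then straightforward. For $\Omega=\R^n$, the $\caC^1(\R\times\overline{\R^n})$ clause is vacuous, so \eqref{def:lambda1} and \eqref{def:lambda1_Omega} coincide, giving $\lambda_1(\R^n)=\lambda_1$; for $\Omega$ bounded and smooth, the exhaustion can be taken eventually constant and we recover $\lambda_1(\Omega)=\lambda_{1,\upDir}(\Omega)$. The max--min characterization follows: one inequality is built into the definition of $\lambda_1(\Omega)$, and the other is witnessed by $\veu=\veu_\infty$, for which every ratio $(\cbQ\veu_\infty)_i/(\veu_\infty)_i$ equals $\lambda_\infty=\lambda_1(\Omega)$. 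The main technical obstacle is the vector-valued strong maximum principle: both the contact argument in the upper bound and the uniform positivity of $\veu_k$ on compact sets depend crucially on the irreducibility \ref{ass:irreducible} through the \FPH, which is the key qualitative feature separating the vector setting from the scalar one of \cite{Nadin_2007}.
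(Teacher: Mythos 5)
Your overall route is the same as the paper's: exhaust $\Omega$ by smooth bounded domains, take the monotone limit of the Dirichlet principal eigenvalues, get the eigenfunction by Harnack-plus-Schauder compactness with a normalization at an interior point, prove the upper bound by a contact argument, and handle the boundary with a barrier. The genuine gap is in the bounded smooth case. Your justification that ``the exhaustion can be taken eventually constant'' is incompatible with your own requirement $\overline{\Omega_k}\subset\Omega_{k+1}$ (impossible when $\Omega_k=\Omega$ is open and bounded), and, more substantively, the equality $\lambda_1(\Omega)=\lambda_{1,\upDir}(\Omega)$ does not follow from your interior contact argument. Domain monotonicity and your main limit only give $\lambda_1(\Omega)=\lim_k\lambda_{1,\upDir}(\Omega_k)\geq\lambda_{1,\upDir}(\Omega)$; the missing direction is $\lambda_1(\Omega)\leq\lambda_{1,\upDir}(\Omega)$. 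There your argument fails because an admissible test function for \eqref{def:lambda1_Omega} is only positive in the open set and may decay to zero on $\partial\Omega$, so when it is compared with the Dirichlet eigenfunction of $\Omega$ itself the contact point may lie on $\R\times\partial\Omega$, where the interior strong maximum principle gives nothing. This is precisely why the definition imposes $\caC^1(\R\times\overline{\Omega})$ regularity on test functions: the paper's Proposition \ref{prop:eigenvalue_well_defined} closes the boundary-contact case with the Hopf-type boundary version of the strong maximum principle (Proposition \ref{prop:strong_maximum_principle_bounded_domains}). Equivalently, you would have to prove that $\lambda_{1,\upDir}(\Omega_k)\to\lambda_{1,\upDir}(\Omega)$ as $\Omega_k\nearrow\Omega$, which is an argument of the same nature, not a formality.

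A second, fixable, issue is a tool misattribution in your upper bound: what forces the nonnegative supersolution $\vect{w}=t^\star\veu-\veu_k$ with an interior zero of one component to vanish identically is the strong maximum principle for fully coupled cooperative systems (Propositions \ref{prop:strong_maximum_principle} and \ref{prop:strong_maximum_principle_bounded_domains}), not the \FPH inequality: Proposition \ref{prop:harnack_inequality} is stated for nonnegative \emph{solutions} of $\cbQ\veu=\diag(\vect{f})\veu$ and does not by itself yield the rigidity you invoke for supersolutions. With that substitution, your contact argument on $\Omega_k$ with $\overline{\Omega_k}\subset\Omega$ is correct (the touching point is interior because the test function is uniformly positive on the compact closure while $\veu_k$ vanishes on $\partial\Omega_k$), and the rest of your outline --- uniform bounds from the Harnack inequality and the normalization, Schauder compactness, the barrier at boundary points admitting a strong barrier to place $\veu_\infty$ in $\caC^1_0(\R\times\overline{\Omega})$ (the paper uses the explicit barrier $\dcbP\hat{\veu}=\veo$ with zero data plus boundary regularity for the gradient), the case $\Omega=\R^n$, and the max--min identity witnessed by $\veu_\infty$ --- matches the paper's Propositions \ref{prop:eigenvalue_limit_Dirichlet} and \ref{prop:max--min_characterization_lambda1}.
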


\subsubsection{Monotonic or convex dependence with respect to the coefficients}\label{sec:theorems_monotonic_convex_dependence}
As an immediate corollary of the max--min characterization of Theorem \ref{thm:existence_characterization_Rn}, we already know 
that the eigenvalues $\lambda_{1,z}$, as functions of the matrix entries $l_{i,j}$, are decreasing: if $l_{i,j}<\widetilde{l}_{i,j}$ 
(\textit{i.e.}, $(t,x)\mapsto\widetilde{l}_{i,j}(t,x)-l_{i,j}(t,x)$ is a nonnegative nonzero function), then
$\lambda_{1,z}(l_{i,j})>\lambda_{1,z}(\widetilde{l}_{i,j})$. This applies in particular to $\lambda_1$
and $\lambda_1'$, by virtue of the identifications $\lambda_1=\max\lambda_{1,z}$ and $\lambda_1'=\lambda_{1,0}$.

Our first theorem on coefficient dependence is concerned with the concavity of the eigenvalues $\lambda_{1,z}$ as functions of the entries
$l_{i,j}$. It generalizes a well-known result by Nussbaum \cite{Nussbaum_1986} on matrices in $\R^{N\times N}$ 
as well as a result by Nadin \cite{Nadin_2007} on the scalar parabolic case. 

\begin{thm}\label{thm:concavity_eigenvalue_L}
Let $z\in\R^n$ and let 
\[
\left(\veL[s]\right)_{s\in[0,1]}\in\left(\caC^{\delta/2,\delta}_\upp(\R\times\R^n,\R^{N\times N})\right)^{[0,1]}
\]
be a family of matrices satisfying the same assumptions as $\veL$ (\textit{i.e.}, \ref{ass:cooperative}, \ref{ass:irreducible})
and such that, for all $(t,x)\in\R\times\R^n$ and $i\in[N]$,
\begin{enumerate}
    \item $s\mapsto l_{i,i}[s](t,x)$ is convex;
    \item for all $j\in[N]\backslash\{i\}$, $s\mapsto l_{i,j}[s](t,x)$ is either identically zero or log-convex.
\end{enumerate}

Then the map
\[
    s\in[0,1]\mapsto \lambda_{1,\upp}(\cbQ_z[s]),
\]
where $\cbQ_z[s]$ is the operator $\cbQ_z$ with $\veL$ replaced by $\veL[s]$, is affine or strictly concave.

It is affine if and only if 
there exist a constant vector $\vect{b}\gg\vez$, a function $\vect{c}\in\caC_\upp(\R\times\R^n,(\vez,\vei))$ and 
a function $\vect{f}\in\caC_{\upp}(\R,\R^N)$ satisfying $\int_0^T\vect{f}\in\vspan(\veo)$ such that the entries of $\veL[s]$ have the form:
\begin{equation*}
    l_{i,j}[s]:(t,x)\mapsto
    \begin{cases}
        l_{i,i}[0](t,x)-sf_i(t) & \text{if }i=j, \\
        l_{i,j}[0](t,x)\left(\frac{b_j}{c_i(t,x)}\right)^s \upe^{s\left(\int_0^t f_j-\frac{t}{T}\int_0^T f_j\right)} & \text{if }i\neq j,
    \end{cases}
\end{equation*}
and such that the function $\vect{c}$ satisfies, at all $(t,x)\in\clOmper$ and for each $i\in[N]$, 
\[
    c_i(t,x)=b_i\upe^{\int_0^t f_i-\frac{t}{T}\int_0^T f_i}\quad\text{or}\quad\forall j\in[N]\backslash\{i\},\ l_{i,j}[0](t,x)=0.
\]
\end{thm}

As explained in Remark \ref{rem:sharpness_of_NSC_for_equality_case}, the function $\vect{c}$ in the above statement is in
general not uniquely determined, but it is so if, for instance, $\veL[0]$ is pointwise irreducible.

Although Theorem \ref{thm:concavity_eigenvalue_L} directly applies to $\lambda_1'=\lambda_{1,0}$, we are only able to prove
a weaker concavity property on the generalized principal eigenvalue $\lambda_1$ in arbitrary domains -- in bounded and smooth domains,
a result exactly analogous to Theorem \ref{thm:concavity_eigenvalue_L} applies, see Proposition \ref{prop:concavity_lambda1_Omega_L}.
Similarly, in the elliptic case with general spatial heterogeneities in $\R^n$, Arapostathis--Biswas--Pradhan 
\cite[Lemma 2.3]{Arapostathis_Biswas_Pradhan_2020} proved the concavity of $\lambda_1$ with respect to the diagonal
entries of $\veL$ -- they did not consider the off-diagonal entries but, their arguments being the same as ours, their result can be
extended accordingly.

\begin{thm}\label{thm:concavity_lambda1_L}
Let $\Omega\subset\R^n$ be a nonempty open connected set such that there exists $x_0\in\Omega$ satisfying $[x_0,x_0+L]\subset\Omega$.

Let 
\[
    \left(\veL[s]\right)_{s\in[0,1]}\in\left(\caC^{\delta/2,\delta}_\upp(\R\times\Omega,\R^{N\times N})\right)^{[0,1]}
\]
be a family of matrices satisfying the same assumptions as $\veL$ (\textit{i.e.}, \ref{ass:cooperative}, \ref{ass:irreducible}) 
and such that, for all $(t,x)\in\R\times\Omega$ and $i\in[N]$,
\begin{enumerate}
    \item $s\mapsto l_{i,i}[s](t,x)$ is convex;
    \item for all $j\in[N]\backslash\{i\}$, $s\mapsto l_{i,j}[s](t,x)$ is either identically zero or log-convex.
\end{enumerate}

Then the mapping $s\in[0,1]\mapsto \lambda_1(\Omega,\cbQ[s])$, where $\cbQ[s]$ is the operator $\cbQ$ with $\veL$ replaced $\veL[s]$, is concave.
\end{thm}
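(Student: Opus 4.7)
The plan is to reduce everything to the max--min characterization supplied by Theorem \ref{thm:existence_characterization_Omega}, namely
\[
\lambda_1(\Omega,\cbQ[s])=\max_{\veu\in\caC^{1,2}_{t-\upp}(\R\times\Omega,(\vez,\vei))\cap\caC^{1}(\R\times\overline{\Omega})}\min_{i\in[N]}\inf_{\R\times\Omega}\frac{(\cbQ[s]\veu)_i}{u_i},
\]
and then produce, for any $s_1,s_2\in[0,1]$ and any $\theta\in[0,1]$, an admissible test function $\veu_\theta$ whose pointwise Rayleigh quotient dominates the convex combination of the Rayleigh quotients of two principal eigenfunctions $\veu_1,\veu_2$ associated respectively with $s_1,s_2$. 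The natural candidate is the componentwise geometric mean $u_{\theta,i}(t,x)=u_{1,i}(t,x)^{\theta}u_{2,i}(t,x)^{1-\theta}$, in analogy with Nussbaum's trick for Perron--Frobenius eigenvalues, and with the arguments used later to prove Proposition \ref{prop:concavity_lambda1_Omega_L} on bounded smooth domains.

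The core is a pointwise inequality. I would split $(\cbQ[s_\theta]\veu_\theta)_i/u_{\theta,i}$ into a differential contribution $\caP_iu_{\theta,i}/u_{\theta,i}$ and a reaction contribution $-\sum_j l_{i,j}[s_\theta]u_{\theta,j}/u_{\theta,i}$. Using the identity
\[
\frac{\caP_i u}{u}=\partial_t\log u-\nabla\cdot\bigl(A_i\nabla\log u\bigr)-(\nabla\log u)^\upT A_i(\nabla\log u)+q_i\cdot\nabla\log u,
\]
and $\log u_{\theta,i}=\theta\log u_{1,i}+(1-\theta)\log u_{2,i}$, all the terms that are affine in $\log u$ immediately give the correct convex combination, while the quadratic term benefits from the convexity of $v\mapsto v^\upT A_i v$ granted by the (symmetric) uniform ellipticity in \ref{ass:ellipticity}; this yields the first half of the inequality. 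For the reaction contribution, the diagonal entries are handled directly by the convexity of $s\mapsto l_{i,i}[s]$, and the off-diagonal entries by combining the log-convexity of $s\mapsto l_{i,j}[s]$ (so that $l_{i,j}[s_\theta]\leq l_{i,j}[s_1]^\theta l_{i,j}[s_2]^{1-\theta}$) with the multiplicative identity $u_{\theta,j}/u_{\theta,i}=(u_{1,j}/u_{1,i})^\theta(u_{2,j}/u_{2,i})^{1-\theta}$ and the weighted arithmetic--geometric mean inequality $a^\theta b^{1-\theta}\leq \theta a+(1-\theta)b$. Adding everything gives
\[
\frac{(\cbQ[s_\theta]\veu_\theta)_i}{u_{\theta,i}}\geq\theta\frac{(\cbQ[s_1]\veu_1)_i}{u_{1,i}}+(1-\theta)\frac{(\cbQ[s_2]\veu_2)_i}{u_{2,i}}=\theta\lambda_1(\Omega,\cbQ[s_1])+(1-\theta)\lambda_1(\Omega,\cbQ[s_2])
\]
pointwise in $\R\times\Omega$, after which the max--min characterization closes the argument.

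The step I expect to be most delicate is the admissibility of $\veu_\theta$ in the class $\caC^{1,2}_{t-\upp}(\R\times\Omega,(\vez,\vei))\cap\caC^{1}(\R\times\overline{\Omega})$. Interior positivity and smoothness are clear since $\veu_1,\veu_2$ are positive and smooth inside $\R\times\Omega$, but the generalized principal eigenfunctions of Theorem \ref{thm:existence_characterization_Omega} may vanish on $\partial\Omega$, and then one has to check that $u_{\theta,i}=u_{1,i}^\theta u_{2,i}^{1-\theta}$ still extends to a $\caC^1$ function up to boundary points admitting a strong barrier. This can be handled by a Hopf-type boundary estimate, giving linear vanishing of each $u_{k,i}$ along the inward normal and hence linear vanishing of $u_{\theta,i}$ with a continuously extendable gradient. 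An alternative, which I would adopt if the boundary regularity becomes inconvenient, is to replace the eigenfunctions by near-optimal admissible subsolutions from the defining sup in \eqref{def:lambda1_Omega}, chosen so as to be strictly positive on $\overline{\Omega}$ (for instance by restricting principal eigenfunctions from slightly enlarged domains), and then let the approximation parameter tend to zero at the end; the concavity inequality is preserved in the limit.
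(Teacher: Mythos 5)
Your core computation is correct and is exactly the Nussbaum--Nadin interpolation the paper itself uses (Proposition \ref{prop:concavity_eigenvalue_z_L} and its Dirichlet analogue, Proposition \ref{prop:concavity_lambda1_Omega_L}): the componentwise geometric mean $u_{\theta,i}=u_{1,i}^{\theta}u_{2,i}^{1-\theta}$, convexity of $v\mapsto v^\upT A_i v$, convexity of the diagonal entries, and log-convexity plus AM--GM for the off-diagonal entries do give the pointwise inequality you state. The genuine gap is the step you yourself flag: admissibility of $\veu_\theta$ in the class $\caC^{1,2}_{t-\upp}(\R\times\Omega,(\vez,\vei))\cap\caC^{1}(\R\times\overline{\Omega})$ when $\Omega$ is an \emph{arbitrary} open connected set. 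The generalized principal eigenfunctions produced by Proposition \ref{prop:eigenvalue_limit_Dirichlet} lie in $\caC^1_0(\R\times\overline{\Omega})$, and since $\nabla u_{\theta,i}=\theta(u_{2,i}/u_{1,i})^{1-\theta}\nabla u_{1,i}+(1-\theta)(u_{1,i}/u_{2,i})^{\theta}\nabla u_{2,i}$, the continuous extension of $\nabla\veu_\theta$ at a boundary point requires the ratio $u_{2,i}/u_{1,i}$ to extend continuously there, i.e.\ a boundary Harnack principle. At boundary points that merely admit a strong barrier (which can be very irregular), neither a Hopf estimate nor ratio continuity is available, and nothing of this sort is established in the paper's framework; ``linear vanishing of both factors'' controls the size of $u_{\theta,i}$ but not the existence of a limit for its gradient.

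Your fallback does not close this gap either. Test functions in \eqref{def:lambda1_Omega} need not be bounded away from zero near $\partial\Omega$, and restricting principal eigenfunctions from an enlarged domain $\Omega'\supset\overline{\Omega}$ only produces admissible functions satisfying $\cbQ[s]\veu\geq\lambda_1(\Omega',\cbQ[s])\veu$; since $\lambda_1$ is monotone under inclusion, this is useful only if $\lambda_1(\Omega',\cbQ[s])\to\lambda_1(\Omega,\cbQ[s])$ as $\Omega'$ shrinks to $\Omega$. That outer continuity is not proved in the paper and is false in general: for a slit-type domain every open neighborhood of $\overline{\Omega}$ contains the ``filled'' domain, whose generalized principal eigenvalue stays strictly below $\lambda_1(\Omega)$, so the approximation parameter cannot be sent to zero. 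The paper avoids all of this by exhausting $\Omega$ \emph{from inside} by smooth bounded domains, where your interpolation argument is legitimate (Hopf and boundary regularity hold, giving Proposition \ref{prop:concavity_lambda1_Omega_L}), then invoking the inner-limit result $\lambda_{1,\upDir}(\Omega_k,\cbQ[s])\to\lambda_1(\Omega,\cbQ[s])$ of Proposition \ref{prop:eigenvalue_limit_Dirichlet} and the elementary fact that a pointwise limit of concave functions on $[0,1]$ is concave. This is also why Theorem \ref{thm:concavity_lambda1_L} only asserts concavity, without the affinity/strict-concavity dichotomy, for general $\Omega$: the dichotomy is lost in the limit. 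I recommend you restructure your proof along these lines rather than trying to make $\veu_\theta$ admissible up to a rough boundary.
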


Monotonicity or convexity results on the dependence on the diffusion matrices $A_i$ or the advection vectors $q_i$ are in 
full generality false (in the scalar setting, cases of non-monotonic and non-concave dependence with respect to the 
diffusion rate are exhibited in Hutson--Mischaikow--Polacik \cite{Hutson_Mischaikow_Polacik}). 

\subsubsection{Asymptotic dependence with respect to the coefficients}\label{sec:theorems_asymptotics}
The next theorem shows how the generalized principal eigenvalues $\lambda_{1,z}$ and $\lambda_1$
behave close to the boundary where \ref{ass:ellipticity}, \ref{ass:cooperative} and \ref{ass:smooth_periodic} 
remain satisfied but the full coupling assumption \ref{ass:irreducible} does not\footnote{
If $\cbQ$ is spatio-temporally homogeneous, then the theorem reduces to the well-known continuity of the 
dominant eigenvalue in the set of essentially nonnegative square matrices.}. We recall that a nonnegative square matrix can be 
conjugated into a block upper triangular Frobenius normal form by a permutation matrix, with each diagonal block an irreducible nonnegative square matrix
(recall that $1\times 1$ matrices are by convention referred to as irreducible even if zero). 
For a space-time periodic cooperative parabolic operator of the form $\dcbP-\veL$ but where $\veL$ does not satisfy \ref{ass:irreducible},
conjugating with the permutation matrix associated with the aforementioned Frobenius normal form of the matrix $\overline{\veL}$ brings similarly the system into block upper triangular form with each block satisfying
\ref{ass:irreducible}. Therefore we can assume without loss of generality 
that the limiting matrix $\veL$ is already in block upper triangular form with each block satisfying \ref{ass:irreducible}.

\begin{thm}\label{thm:continuity_eigenvalue_L}
Let $\veL^\triangle\in\caC_\upp^{\delta/2,\delta}(\R\times\R^n,\R^{N\times N})$ be a block upper triangular essentially nonnegative matrix. 
Let $N'\in[N]$ and $(N_k)_{k\in[N'-1]}$ such that 
\[
    N_0=0< 1\leq N_1\leq N_2\leq\dots\leq N_{N'-1}\leq N_{N'}=N
\]
and such that 
\[
    (l^\triangle_{i,j})_{(i,j)\in([N_k]\backslash[N_{k-1}])^2}
\]
is the $k$-th diagonal block of $\veL^\triangle$ (with the convention $[0]=\emptyset$). Assume
\[
    \left(\max_{(t,x)\in\clOmper}l^\triangle_{i,j}(t,x)\right)_{(i,j)\in([N_k]\backslash[N_{k-1}])^2}\quad\text{is irreducible for all }k\in[N'].
\]

Let
\[
    \cbQ_k= \diag(\caP_i)_{i\in[N_k]\backslash[N_{k-1}]}-(l^\triangle_{i,j})_{(i,j)\in([N_k]\backslash[N_{k-1}])^2}\quad\text{for all }k\in[N'].
\]

Then, as $\veL\to\veL^\triangle$ in $\caC_\upp^{\delta/2,\delta}(\R\times\R^n,\R^{N\times N})$, 
\begin{equation*}
    \lambda_{1,z}(\cbQ)\to\min_{k\in[N']}\lambda_{1,z}\left(\cbQ_k\right)\quad\text{for all }z\in\R^n,
\end{equation*}
\begin{equation*}
    \lambda_1(\cbQ)\to\max_{z\in\R^n}\min_{k\in[N']}\lambda_{1,z}(\cbQ_{k})\leq\min_{k\in[N']}\lambda_1(\cbQ_{k}).
\end{equation*}
\end{thm}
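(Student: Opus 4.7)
The plan is to establish the $\lambda_{1,z}$ statement for each fixed $z\in\R^n$ first and then to deduce the $\lambda_1$ statement via a compactness argument on the maximizer. Fix a sequence $\veL^n\to\veL^\triangle$ in $\caC^{\delta/2,\delta}_\upp(\R\times\R^n,\R^{N\times N})$ with each $\veL^n$ satisfying \ref{ass:cooperative}--\ref{ass:irreducible}; write $\cbQ^n$ and $\cbQ^n_z$ for the corresponding operators and set $\eta_n=\|\veL^n-\veL^\triangle\|_{\caC^{\delta/2,\delta}_\upp}$. For each $k\in[N']$ let $\vev^k\gg\vez$ denote a positive periodic principal eigenfunction of the block operator $\cbQ_{k,z}$, which exists by Theorem \ref{thm:existence_characterization_Rn} applied to the irreducible $k$-th diagonal block.

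\textbf{Lower bound on $\lambda_{1,z}$.} Apply the max--min characterization of Theorem \ref{thm:existence_characterization_Rn} with the anisotropic test function $\veu^n\in\caC^{1,2}_\upp(\R\times\R^n,(\vez,\vei))$ defined by $u^n_i=\epsilon_n^{k-1}v^k_i$ for $i$ in block $k$, where $\epsilon_n\to 0$ is chosen so that $\eta_n=o(\epsilon_n^{N'-1})$ (\textit{e.g.}\ $\epsilon_n=\eta_n^{1/(2N')}$; the case $N'=1$ is trivial). Using $\cbQ_{k,z}\vev^k=\lambda_{1,z}(\cbQ_k)\vev^k$ and the block upper triangular structure of $\veL^\triangle$, a direct computation shows that for $i$ in block $k$,
\[
    \frac{(\cbQ^n_z\veu^n)_i}{u^n_i}=\lambda_{1,z}(\cbQ_k)+O(\eta_n)-\sum_{k'\neq k}\epsilon_n^{k'-k}\frac{\sum_{j\in k'}l^n_{i,j}v^{k'}_j}{v^k_i}.
\]
Contributions from blocks $k'>k$ involve bounded $l^n_{i,j}$ weighted by the vanishing factor $\epsilon_n^{k'-k}\to 0$; contributions from blocks $k'<k$ involve $l^n_{i,j}=O(\eta_n)$, since $l^\triangle_{i,j}=0$ below the diagonal, weighted by $\epsilon_n^{-(k-k')}$, and the choice of $\epsilon_n$ forces $\eta_n\epsilon_n^{-(k-k')}\to 0$ uniformly over $k-k'\leq N'-1$. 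Hence the ratio converges to $\lambda_{1,z}(\cbQ_k)$ uniformly on $\clOmper$, yielding $\liminf_n\lambda_{1,z}(\cbQ^n)\geq\min_{k\in[N']}\lambda_{1,z}(\cbQ_k)$.

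\textbf{Upper bound on $\lambda_{1,z}$.} Fix $k_0\in[N']$ and extend $\vev^{k_0}$ by zero off block $k_0$ to obtain $\tilde\vev\in\caC^{1,2}_\upp(\R\times\R^n,[\vez,\vei)\backslash\{\vez\})$. A direct computation yields $\cbQ^n_z\tilde\vev\leq(\lambda_{1,z}(\cbQ_{k_0})+O(\eta_n))\tilde\vev$ componentwise: on block $k_0$ the principal eigenvalue equation for $\cbQ_{k_0,z}$ holds up to the error $O(\eta_n)\vev^{k_0}$ coming from $l^n-l^\triangle$, and off block $k_0$ one has $(\cbQ^n_z\tilde\vev)_i=-\sum_{j\in k_0}l^n_{i,j}\tilde v_j\leq 0$. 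A classical sliding argument comparing $\tilde\vev$ with the positive periodic principal eigenfunction $\vev^\star\gg\vez$ of $\cbQ^n_z$, combined with the strong maximum principle for cooperative fully coupled parabolic systems implied by \ref{ass:irreducible} (\textit{cf.}\ \cite{Bai_He_2020,Sweers_1992}), then yields $\lambda_{1,z}(\cbQ^n)\leq\lambda_{1,z}(\cbQ_{k_0})+O(\eta_n)$. Letting $n\to\infty$ and minimizing over $k_0$ gives $\limsup_n\lambda_{1,z}(\cbQ^n)\leq\min_k\lambda_{1,z}(\cbQ_k)$.

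\textbf{Passage to $\lambda_1$ and main obstacle.} The lower bound $\liminf_n\lambda_1(\cbQ^n)\geq\max_z\min_k\lambda_{1,z}(\cbQ_k)$ follows from $\lambda_1(\cbQ^n)\geq\lambda_{1,z}(\cbQ^n)$ combined with the previous steps, the maximum in $z$ being attained because plugging $\veu=\veo$ into the min--max characterization together with \ref{ass:ellipticity} and \eqref{def:Qz} yields the $n$-uniform coercive estimate $\lambda_{1,z}(\cbQ^n)\leq C-c|z|^2$. The same coercivity confines the unique maximizer $z^\star_n$ provided by Theorem \ref{thm:existence_characterization_Rn} to a bounded subset of $\R^n$; after extracting a convergent subsequence $z^\star_{n_j}\to z^\infty$ and reapplying the $\lambda_{1,z}$ analysis with $z=z^\star_{n_j}$ (which enters the test-function and sliding arguments as an additional converging parameter with locally uniform estimates), one obtains $\lambda_1(\cbQ^{n_j})=\lambda_{1,z^\star_{n_j}}(\cbQ^{n_j})\to\min_k\lambda_{1,z^\infty}(\cbQ_k)\leq\max_z\min_k\lambda_{1,z}(\cbQ_k)$, matching the lower bound. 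The remaining inequality $\max_z\min_k\lambda_{1,z}(\cbQ_k)\leq\min_k\max_z\lambda_{1,z}(\cbQ_k)=\min_k\lambda_1(\cbQ_k)$ is the elementary max--min inequality. The subtlest step is the upper bound on $\lambda_{1,z}$: extending the min--max characterization of Theorem \ref{thm:existence_characterization_Rn} to the non-strictly-positive test function $\tilde\vev$ via sliding and the strong maximum principle is the parabolic analogue of the monotonicity of the Perron--Frobenius eigenvalue under restriction to principal submatrices, and it is precisely where the full-coupling assumption \ref{ass:irreducible} on the approximating $\veL^n$ is used.
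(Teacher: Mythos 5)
Your proof is correct, and its first half takes a genuinely different route from the paper's. For the convergence of $\lambda_{1,z}$ the paper argues by compactness: it extracts a limit of the normalized periodic principal eigenfunctions of the approximating operators, uses the strong maximum principle and an induction along the block-triangular structure to identify the limit eigenvalue with $\lambda_{1,z}(\cbQ_k)$ for some $k$, and then excludes $\lambda>\lambda_{1,z}(\cbQ_{k'})$ by exactly the block-supported sub-solution comparison you use. Your lower bound replaces the compactness-and-induction step by the anisotropically scaled positive test function $u^n_i=\epsilon_n^{k-1}v^k_i$ in the max--min characterization, with $\epsilon_n\to 0$ suppressing the (bounded) above-diagonal couplings and $\eta_n=o(\epsilon_n^{N'-1})$ controlling the amplified below-diagonal ones; this is quantitative, avoids any extraction, and is arguably the cleaner half of the argument. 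Your upper bound (sub-solution equal to a block eigenfunction extended by zero, slid against the positive principal eigenfunction of the fully coupled $\cbQ^n_z$, contradiction via the strong maximum principle applied to the shifted operator, which remains cooperative and fully coupled) coincides in substance with the paper's. For $\lambda_1$, the paper confines the maximizers to a fixed compact set and uses concavity of $z\mapsto\lambda_{1,z}$ to turn pointwise into locally uniform convergence before exchanging $\max_z$ and the limit, whereas you extract a convergent subsequence of maximizers $z^\star_{n_j}$ and rerun the two bounds at $z=z^\star_{n_j}$. Both work; just note that (i) confining $z^\star_n$ uses, besides the $n$-uniform coercive bound $C-c|z|^2$, the finite lower bound $\lambda_1(\cbQ^n)\geq\lambda_{1,0}(\cbQ^n)$, which converges, and (ii) rerunning the bounds at varying $z$ requires the error constants (Harnack-type bounds on the block eigenfunctions of $\cbQ_{k,z}$ and the $z$-dependent coefficients in \eqref{def:Qz}) to be locally uniform in $z$ -- true, but worth a sentence, or avoidable altogether by invoking concavity in $z$ as the paper does. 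Neither point is a gap.
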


We comment specifically on this important result in Subsection \ref{sec:extension_coupling_default}.

The next theorem is concerned with concurrently vanishing diffusion and advection rates -- the question of vanishing diffusions rates when
the advection rates remain nonnegligible is much more difficult, even in the scalar case \cite{Liu_Lou_Peng_Zhou-1}, and is beyond the 
scope of this paper; for now, it remains open.

In the statement below, we use the generalized principal eigenvalues of a time periodic weakly coupled linear 
degenerate parabolic operator, that combines uniformly parabolic equations and ordinary 
differential equations through cooperative coupling. Such operators satisfy a form of strong 
comparison principle and, consequently, admit generalized principal eigenvalues $\lambda_1'$, $\lambda_1$, $(\lambda_{1,z})_{z\in\R^n}$ that could be studied quite similarly. 
For the case of bounded domains with homogeneous Dirichlet boundary 
conditions, we refer to Liang, Zhang and Zhao \cite{Liang_Zhang_Zhao_2017}; the
adaptation to space-time periodic settings should be straightforward, following ideas
developed in the present paper.
    
\begin{thm}\label{thm:continuity_eigenvalue_diffusion_advection}
Let $\vect{f}\in\caC^1\left([0,+\infty),[\vez,\vei)\right)$ such that $\vect{f}^{-1}(\{\vez\})=\{0\}$ and 
$\vect{f}'(0)\neq\vez$.

For all $\varepsilon\geq 0$, let $(q_i^\varepsilon)_{i\in[N]}\in\caC^{\delta/2,\delta}_\upp(\R\times\R^n,\R^n)$. Assume
$(q_i^\varepsilon)_{i\in[N]}\to (q_i^0)_{i\in[N]}$ in $\caC^{\delta/2,\delta}_\upp(\R\times\R^n,\R^n)$ as $\varepsilon\to 0$.

Let $\cbQ_{\varepsilon}$ be the operator $\cbQ$ with $(A_i)_{i\in[N]}$ and $(q_i)_{i\in[N]}$ replaced respectively by
$(f_i(\varepsilon)^2 A_i)_{i\in[N]}$ and $(f_i(\varepsilon)q_i^\varepsilon)_{i\in[N]}$.

Denote (with a slight abuse of notation), for every $x\in[0,L]$,
\[
    \left(\veL(x),(A_i(x))_{i\in[N]},(q_i(x))_{i\in[N]}\right):t\mapsto\left(\veL(t,x),(A_i(t,x))_{i\in[N]},(q_i(t,x))_{i\in[N]}\right).
\]

Then, for all $z\in\R^n$, 
\begin{equation*}
    \liminf_{\substack{\varepsilon\to 0\\\varepsilon>0}}\lambda_{1,z}\left(\cbQ_\varepsilon\right)\geq \min_{x\in[0,L]}\lambda_{1,\upp}\left(\frac{\upd}{\upd t}-\veL(x)\right).
\end{equation*}

Furthermore, the equality
\[
    \lim_{\substack{\varepsilon\to 0\\\varepsilon>0}}\lambda_{1,z}(\cbQ_\varepsilon)= \min_{x\in[0,L]}\lambda_{1,\upp}\left(\frac{\upd}{\upd t}-\veL(x)\right).
\]
holds true for all $z\in\R^n$ if at least one of the following conditions is satisfied:
\begin{enumerate}
    \item all coefficients of $\cbQ_\varepsilon$ do not depend on $x$;
    \item there exists $\underline{x}\in[0,L]$ such that 
    \[
        \lambda_{1,\upp}\left(\frac{\upd}{\upd t}-\veL(\underline{x})\right)=\min_{x\in[0,L]}\lambda_{1,\upp}\left(\frac{\upd}{\upd t}-\veL(x)\right)
    \] 
    and such that the operator
    \[
        \widetilde{\cbQ} = \partial_t-\diag(f_i'(0)^2\nabla\cdot(A_i(\underline{x})\nabla)-f_i'(0) q_i^0(\underline{x})\cdot\nabla)-\veL(\underline{x})
    \]
    satisfies
    \[
        \lambda_1(\widetilde{\cbQ})=\lambda_1'(\widetilde{\cbQ}).
    \]
\end{enumerate}
\end{thm}

The assumptions on $\vect{f}$ could be relaxed with a marginal impact on the proof; they are mostly
used to simplify the statement of the Theorem and are in any case sufficient for our purposes.

We conjecture that, under reasonable regularity assumptions, the inferior and superior limits of
$\lambda_{1,z}(\cbQ_\varepsilon)$ should always coincide; at this stage, it remains an open question.

The above two conditions that are sufficient to guarantee that 
\begin{equation*}
    \limsup_{\substack{\varepsilon\to 0\\\varepsilon>0}}\lambda_{1,z}\left(\cbQ_\varepsilon\right)\leq \min_{x\in[0,L]}\lambda_{1,\upp}\left(\frac{\upd}{\upd t}-\veL(x)\right)
\end{equation*}
cover very different cases. However, as explained in Remark \ref{rem:counter-example_vanishing_diffusion}, 
there also exist cases where the limit of $\lambda_{1,z}(\cbQ_\varepsilon)$ is well-defined and satisfies
\begin{equation*}
    \lim_{\substack{\varepsilon\to 0\\\varepsilon>0}}\lambda_{1,z}\left(\cbQ_\varepsilon\right)> \min_{x\in[0,L]}\lambda_{1,\upp}\left(\frac{\upd}{\upd t}-\veL(x)\right).
\end{equation*}
More generally, the problem of characterizing the limit is subtle and, in our opinion, worthy of 
future attention. It requires a better understanding of the impact of non-vanishing advection rates; indeed, the condition 
$\lambda_1(\widetilde{\cbQ})=\lambda_1'(\widetilde{\cbQ})$ is, by virtue of Corollary 
\ref{cor:lambda_1_space_homogeneous_no_advection}, satisfied as soon as
$\widetilde{\cbQ}$ contains no advection term, \textit{i.e.} $f_1'(0)q_1^0=f_2'(0)q_2^0=\dots=f_N'(0)q_N^0=0$. 

Beyond pointing out cases where $z\mapsto\lambda_{1,z}$ converges pointwise to a constant in a correctly scaled vanishing diffusion--advection 
limit\footnote{Actually, since $z\mapsto\lambda_{1,z}$ is concave, this convergence is locally uniform in $z$, but there is really 
no hope for uniform convergence since $\lambda_{1,z}\to-\infty$ as $|z|\to+\infty$. Also, in general, $\lambda_1=\max\lambda_{1,z}$ does not 
converge to the same limit: indeed, even for the spatio-temporally homogeneous one-dimensional operator
$\partial_t-\varepsilon^2\partial_{xx}+\varepsilon\partial_x-\vect{M}$, with $\vect{M}$ the discrete Laplacian
defined in \eqref{eq:discrete_Lap}, the maximum of 
$z\mapsto\lambda_{1,z}$ is $1/4$, independently of $\varepsilon$, whereas the pointwise limit as $\varepsilon\to 0$ is $0$.}, 
the possible dependence on $\varepsilon$ of the advection rates $q_i^\varepsilon$ implies that the following two distinct limits are true:
\begin{equation*}
    \liminf_{\substack{\varepsilon\to 0 \\\varepsilon>0}}\lambda_1'\left(\partial_t-\diag(\varepsilon^2\nabla\cdot(A_i\nabla)-\varepsilon q_i\cdot\nabla)-\veL\right)\geq\min_{x\in[0,L]}\lambda_{1,\upp}\left(\frac{\upd}{\upd t}-\veL(x)\right),
\end{equation*}
\begin{equation*}
    \lim_{\substack{\vect{d}\to\vez \\\vect{d}\gg\vez}}\lambda_1'\left(\partial_t-\diag(\vect{d})\diag\left(\nabla\cdot(A_i\nabla)- q_i\cdot\nabla\right)-\veL\right)=\min_{x\in[0,L]}\lambda_{1,\upp}\left(\frac{\upd}{\upd t}-\veL(x)\right).
\end{equation*}
Although the two limits look similar, they do not refer to the same underlying questions. 

The first one is related to a slowly varying medium. Indeed, assume, for the sake of simplicity, that all $L_\alpha$ 
coincide and denote $\varepsilon=L_1^{-1}$. Then the change of variable $x\to \varepsilon x$ changes the 
$[0,T]\times[0,\varepsilon^{-1}]^n$-periodic operator $\cbQ$ into the $[0,T]\times[0,1]^n$-periodic operator
$\partial_t-\varepsilon^2\diag(\nabla\cdot(A_i^\varepsilon\nabla)+\varepsilon\diag(q_i^\varepsilon\cdot\nabla)-\veL^\varepsilon$, where
\[
    ((A_i^\varepsilon)_{i\in[n]},(q_i^\varepsilon)_{i\in[N]},\veL^\varepsilon):(t,x)\mapsto ((A_i)_{i\in[N]},(q_i)_{i\in[N]},\veL)\left(t,\frac{x}{\varepsilon}\right).
\] 
In the scalar case, the limit $\varepsilon\to 0$ has been studied by Hamel, Nadin and Roques \cite{Hamel_Nadin_Roques_2011} in
the elliptic case and by Nadin \cite{Nadin_2007} in the parabolic case. 
In the vector case with temporally homogeneous coefficients and an extra time scaling, it has been studied by Allaire and Hutridurga
\cite{Allaire_Hutridurga_2015} (parabolic scaling) and by Mirrahimi and Souganidis \cite{Mirrahimi_Souganidis_2013} (hyperbolic scaling).

The second one corresponds for instance to the early stages $t\to t/\varepsilon$ of a fast-reaction system $\veL\to\frac{1}{\varepsilon}\veL$, 
when spatial processes (dispersal, transport) are still negligible. In bounded domains with Dirichlet boundary conditions, the singular
limit $\varepsilon\to 0$ has been studied by Bai and He \cite{Bai_He_2020}. As explained by Lam and Lou in their paper on the Neumannn
elliptic case \cite{Lam_Lou_2015}, the fact that the vanishing parameter is the vector $\vect{d}\in\R^N$ and not a scalar amplitude 
parameter $\varepsilon\in\R$ is meaningful: the spatial processes for one species may be much faster than for the others 
(\textit{e.g.}, $d_N=\varepsilon\max_{i\in[N-1]}d_i$), as long as they are all slow compared to the parameter $\varepsilon$ 
measuring the time scale and the speed of the reaction. 

By considering a uniform limit and advection rates $q_i^\varepsilon$ that might vanish as $\varepsilon\to 0$, 
we bring together these two frameworks and prove both limits concurrently. We believe this approach is new.

The next theorem is, on the contrary, concerned with how very large diffusion rates impact the 
periodic principal eigenvalue $\lambda_{1,\upp}$. 
The large diffusivity limit for the whole family $(\lambda_{1,z}(\cbQ_{\vect{d}}))_{z\in\R^n}$ is 
an entirely different problem, since the large parameter $\vect{d}$ appears also in the zeroth order 
term which will therefore blow-up as soon as $z$ is nonzero\footnote{More precisely, as will
be shown in Corollary \ref{cor:comparison_eigenvalues_underline_overline_L},
$-|P_1(z)|\max_{i\in[N]}d_i\leq \lambda_{1,z}(\cbQ_{\vect{d}})\leq -|P_2(z)|\min_{i\in[N]}d_i$
for some second-order polynomials $P_1, P_2$. What would then be relevant would be to figure out
an asymptotic expansion of $\lambda_{1,z}(\cbQ_{\vect{d}})$.}. This problem is beyond our scope
and is left open.

The question of very large advection rates, already much more delicate in the scalar case 
\cite{Liu_Lou_Peng_Zhou-2}, is also beyond our scope.

\begin{thm}\label{thm:limit_eigenvalue_large_diffusion}
Let 
\[
    \left((\langle A_i\rangle,\langle q_i\rangle)_{i\in[N]},\langle\veL\rangle\right):t\mapsto
    \frac{1}{|[0,L]|}\int_{[0,L]}\left((A_i,q_i)_{i\in[N]},\veL\right)(t,x)\upd x
\]
and, for all $\vect{d}\in(\vez,\vei)$, let $\cbQ_{\vect{d}}$ be the operator $\cbQ$ with $(A_i)_{i\in[N]}$ replaced by $(d_i A_i)_{i\in[N]}$.

Then, as $\min_{i\in[N]}d_i\to+\infty$, 
\[
    \lambda_{1,\upp}(\cbQ_{\vect{d}})\to\lambda_{1,\upp}\left(\frac{\upd}{\upd t}-\langle\veL\rangle\right).
\]
\end{thm}

This homogenization theorem basically means that, for the periodic principal eigenvalue $\lambda_{1,\upp}$,
very large diffusion rates tend to replace spatially heterogeneous
coefficients by their averages in space. 
Again, the fact that the multiplicative coefficients $d_i$ can go to $+\infty$ at different 
rates is meaningful.

In the scalar case \cite{Nadin_2007}, the inequality 
\[
    \lambda_{1,\upp}\left(\frac{\upd}{\upd t}-\langle l_{1,1}\rangle\right)\geq \min_{x\in[0,L]}\lambda_{1,\upp}\left(\frac{\upd}{\upd t}-l_{1,1}(x)\right)
\]
holds, and implies a comparison between the large diffusion asymptotic and the vanishing diffusion asymptotic. In the vector case,
this inequality is still true if for instance the periodic principal eigenfunction associated with $\partial_t-\veL(x)$ depends 
neither on $t$ nor on $x$. Indeed, by integrating the equality it satisfies over $[0,L]$,
\[
    \lambda_{1,\upp}\left(\frac{\upd}{\upd t}-\langle\veL\rangle\right)=\frac{1}{|[0,L]|}\int_{[0,L]}\lambda_{1,\upp}\left(\frac{\upd}{\upd t}-\veL(\cdot)\right)
    \geq \min_{x\in[0,L]}\lambda_{1,\upp}\left(\frac{\upd}{\upd t}-\veL(x)\right).
\]
However it is not true in full generality, as shown by the counter-example of Remark \ref{rem:counter-example_comparison_small_large_diffusion}.

\subsubsection{Dependence on the space or time frequencies}\label{sec:theorems_dependence_frequencies}
As explained before, assuming that all spatial periods $L_\alpha$ coincide and changing appropriately the variables $t$ and $x$, the
$\clOmper$-periodic operator $\cbQ$ becomes the following $[0,1]\times[0,1]^n$-periodic operator:
\[
    \cbQ_{T,L_1}=\frac{1}{T}\partial_t-\frac{1}{L_1^2}\diag(\nabla\cdot(A_i^\diamondsuit\nabla))+\frac{1}{L_1}\diag(q_i^\diamondsuit\cdot\nabla)-\veL^\diamondsuit,
\]
where
\[
    \left(\left(A_i^\diamondsuit,q_i^\diamondsuit\right)_{i\in[N]},\veL^\diamondsuit\right):(t,x)\mapsto \left((A_i,q_i)_{i\in[N]},\veL\right)\left(Tt,L_1x\right).
\]

Theorems \ref{thm:continuity_eigenvalue_diffusion_advection} and \ref{thm:limit_eigenvalue_large_diffusion} and Remark
\ref{rem:counter-example_comparison_small_large_diffusion} have immediate interpretations in this context, summarized in the following corollary.

However, before stating the corollary, we draw attention on the fact that after such changes of variables, the family of 
generalized principal eigenvalues $(\lambda_{1,z})_{z\in\R^n}$ is dilated. Indeed, $\lambda_{1,z}(\cbQ)=\lambda_1'(\cbQ_z)$
coincides with
\[
\lambda_{1,L_1 z}\left(\cbQ_{T,L_1}\right)=\lambda_1'\left(\cbQ_{T,L_1}-\diag\left(2\frac{1}{L_1}A_i^\diamondsuit z\cdot\nabla+z\cdot A_i^\diamondsuit z+\frac{1}{L_1}\nabla\cdot\left(A_i^\diamondsuit z\right)-q_i^\diamondsuit\cdot z\right)\right)
\]
and not with 
\[
\lambda_{1,z}\left(\cbQ_{T,L_1}\right)=\lambda_1'\left(\cbQ_{T,L_1}-\diag\left(2\frac{1}{L_1^2}A_i^\diamondsuit z\cdot\nabla+\frac{1}{L_1^2}z\cdot A_i^\diamondsuit z+\frac{1}{L_1^2}\nabla\cdot\left(A_i^\diamondsuit z\right)-\frac{1}{L_1}q_i^\diamondsuit\cdot z\right)\right).
\]
It is in some sense disappointing that the limit $L_1\to+\infty$ in the corollary below is concerned with the
wrong family. In fact it is known that the right family requires a different asymptotic analysis. It is
outside the scope of the present paper and instead we refer for instance
to \cite[Proposition 3.1]{Hamel_Nadin_Roques_2011} where the particular case of scalar space periodic elliptic equations
is investigated.

\begin{cor}\label{cor:asymptotics_small_large_spatial_frequency}
If $q_i^\diamondsuit=0$ for each $i\in[N]$, then the generalized principal eigenvalues of $\cbQ_{T,L_1}$ satisfy the following properties.
\begin{enumerate}
    \item If $L_1\to +\infty$, then 
    \[
        \lambda_{1,z}(\cbQ_{T,L_1})\to\min_{x\in[0,1]^n}\lambda_{1,\upp}\left(\frac{1}{T}\frac{\upd}{\upd t}-\veL^\diamondsuit(x)\right)\quad\text{for all }z\in\R^n.
    \]
    \item If $L_1\to 0$, then
    \[
        \lambda_{1,\upp}(\cbQ_{T,L_1})\to\lambda_{1,\upp}\left(\frac{1}{T}\frac{\upd}{\upd t}-\langle\veL^\diamondsuit\rangle\right).
    \]
    \item There exist a choice of $\left(A_i^\diamondsuit\right)_{i\in[N]}$ and $\veL^\diamondsuit$ such that	
    $L_1\mapsto\lambda_{1,\upp}(\cbQ_{T,L_1})$ is decreasing, a choice such that it is constant and a choice such that it is neither.
\end{enumerate}
\end{cor}

It remains to investigate the effect of the time frequency $1/T$. In the case of a scalar equation in a bounded domain 
with Dirichlet boundary
conditions, this problem was recently studied thoroughly by Liu, Lou, Peng and Zhou \cite{Liu_Lou_Peng_Zhou}. 
They identified cases where $T\mapsto\lambda_{1,\upDir}(\cbQ_{T,L_1})$ is constant,
cases where it is decreasing and cases where it is neither; additionally, they  studied the asymptotics $T\to 0$ 
and $T\to+\infty$ -- reusing
the proof of Nadin \cite{Nadin_2007} for the limit $T\to 0$. Recently, similar results were obtained
for the space homogeneous, time periodic operator \cite{Benaim_Lobry_Sari_Strickler_2023}.
We will adapt the scalar arguments \cite{Liu_Lou_Peng_Zhou,Nadin_2007} to prove the following result.

\begin{thm}\label{thm:limits_eigenvalue_time_frequency}
For all $\omega\in(0,+\infty)$, let $\cbQ_{\omega}$ be the operator $\cbQ$ with $\partial_t$ replaced by
$\omega\partial_t$. Then:
\begin{enumerate}
    \item if $\omega\to 0$, then for all $z\in\R^n$,
    \[
        \lambda_{1,z}(\cbQ_{\omega})\to\frac{1}{T}\int_0^T\lambda_{1,z}\left(-\diag(\nabla\cdot(A_i(t)\nabla)-q_i(t)\cdot\nabla)-\veL(t)\right)\upd t,
    \]
    \[
        \lambda_1(\cbQ_{\omega})\to\frac{1}{T}\int_0^T\lambda_1\left(-\diag(\nabla\cdot(A_i(t)\nabla)-q_i(t)\cdot\nabla)-\veL(t)\right)\upd t,
    \]
    where we denote (with a slight abuse of notation) $((A_i(t),q_i(t))_{i\in[N]},\veL(t)):x\mapsto((A_i(t,x),q_i(t,x))_{i\in[N]},\veL(t,x))$;
    \item if $\omega\to+\infty$, then for all $z\in\R^n$,
    \[
        \lambda_{1,z}(\cbQ_{\omega})\to\lambda_{1,z}\left(-\diag(\nabla\cdot(\hat{A}_i\nabla)-\hat{q}_i\cdot\nabla)-\hat{\veL}\right),
    \]
    \[
        \lambda_1(\cbQ_{\omega})\to\lambda_1\left(-\diag(\nabla\cdot(\hat{A}_i\nabla)-\hat{q}_i\cdot\nabla)-\hat{\veL}\right),
    \]
    where
    \[
        \left((\hat{A}_i,\hat{q}_i)_{i\in[N]},\hat{\veL}\right):x\mapsto
        \frac{1}{T}\int_0^T\left((A_i,q_i)_{i\in[N]},\veL\right)(t,x)\upd t.
    \]
\end{enumerate}
\end{thm}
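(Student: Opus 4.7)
The plan is to handle the two limits $\omega\to 0$ and $\omega\to+\infty$ separately. In each regime I would first prove the pointwise-in-$z$ convergence of $\lambda_{1,z}(\cbQ_\omega)$ by constructing an explicit trial function and then sandwiching $\lambda_{1,z}(\cbQ_\omega)=\lambda_{1,\upp}(\cbQ_{\omega,z})$ between matching bounds via the max-min and min-max characterisations from Theorem~\ref{thm:existence_characterization_Rn}; the limit for $\lambda_1(\cbQ_\omega)=\max_z\lambda_{1,z}(\cbQ_\omega)$ would then be obtained by a localisation-plus-uniformity argument in the variable $z$.

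For the slow regime $\omega\to 0$, let $\mu(t,z)$ and $\vev_z^{(t)}$ denote the periodic principal eigenpair of the $t$-frozen conjugated elliptic operator $\cbQ_z^{(t)}$. Classical Schauder estimates together with the simplicity of the periodic principal eigenpair imply that $t\mapsto(\mu(t,z),\vev_z^{(t)})$ is smooth, $T$-periodic, and that $\vev_z^{(t)}$ is bounded above and away from zero uniformly in $t$. Writing $\bar\mu(z)=\tfrac{1}{T}\int_0^T\mu(s,z)\,\upd s$, introduce
\[
    \phi^\omega_z(t)=\exp\!\left(\frac{1}{\omega}\int_0^t\bigl(\bar\mu(z)-\mu(s,z)\bigr)\,\upd s\right),
\]
which is positive and $T$-periodic since its logarithm has zero time-average. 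The trial function $\veu^\omega(t,x)=\phi^\omega_z(t)\vev_z^{(t)}(x)$ belongs to $\caC^{1,2}_\upp(\R\times\R^n,(\vez,\vei))$ and, using $\cbQ_z^{(t)}\vev_z^{(t)}=\mu(t,z)\vev_z^{(t)}$, a direct computation gives, componentwise,
\[
    \frac{(\cbQ_{\omega,z}\veu^\omega)_i}{u^\omega_i}=\bar\mu(z)+\omega\,\frac{\partial_t(v_z^{(t)})_i}{(v_z^{(t)})_i}.
\]
Since $\partial_t\vev_z^{(t)}/\vev_z^{(t)}$ is uniformly bounded, both characterisations of Theorem~\ref{thm:existence_characterization_Rn} yield $|\lambda_{1,z}(\cbQ_\omega)-\bar\mu(z)|=O(\omega)$.

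For the fast regime $\omega\to+\infty$, let $\hat\vev_z$ and $\hat\mu(z)$ denote the periodic principal eigenpair of the time-averaged conjugated operator $\hat\cbQ_z$. Build a first-order corrector $\vev_1(t,x)$ by solving, for each fixed $x$, the time-periodic ODE $\partial_t\vev_1=-(\cbQ_z^{(t)}-\hat\cbQ_z)\hat\vev_z$; solvability with $T$-periodic $\vev_1$ is guaranteed because the right-hand side has zero time-average by definition of $\hat\cbQ_z$, and $L$-periodicity in $x$ is inherited from that of the right-hand side. The trial function $\veu^\omega=\hat\vev_z+\omega^{-1}\vev_1$ is then positive and space-time periodic for $\omega$ large, and since $\hat\vev_z$ does not depend on $t$ one obtains
\[
    \cbQ_{\omega,z}\veu^\omega=\partial_t\vev_1+\cbQ_z^{(t)}\hat\vev_z+O(\omega^{-1})=\hat\mu(z)\hat\vev_z+O(\omega^{-1})
\]
in sup-norm. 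The same sandwiching argument yields $\lambda_{1,z}(\cbQ_\omega)=\hat\mu(z)+O(\omega^{-1})$.

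The final step, and the chief technical obstacle, is to upgrade the pointwise-in-$z$ limits to the stated limits of $\lambda_1(\cbQ_\omega)$. I would combine the local uniformity in $z$ of both convergences, which follows either from Schauder-type a priori bounds on the families $\{\vev_z^{(t)}\}$ and $\{\hat\vev_z\}$ or, more directly, from the pointwise convergence of the concave functions $z\mapsto\lambda_{1,z}(\cbQ_\omega)$, together with a coercivity estimate $\lambda_{1,z}(\cbQ_\omega)\to-\infty$ as $|z|\to+\infty$ uniform on each asymptotic regime. Coercivity is a consequence of the leading $-z\cdot A_i z$ term in formula~\eqref{def:Qz} combined with the uniform ellipticity assumption~\ref{ass:ellipticity}; it confines the maximiser of $z\mapsto\lambda_{1,z}(\cbQ_\omega)$ to a compact set independent of $\omega$, so that the locally uniform convergence suffices to exchange the maximum with the limit.
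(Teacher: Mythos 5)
The central step of your slow-frequency argument is not available under the paper's hypotheses. You take for granted that, for each frozen time $t$, the elliptic operator $\cbQ_z^{(t)}$ has a simple periodic principal eigenpair $(\mu(t,z),\vev_z^{(t)})$ with $\vev_z^{(t)}$ positive, depending smoothly on $t$ and bounded above and away from zero uniformly in $t$. But assumption \ref{ass:irreducible} only requires irreducibility of the space-time maximum $\overline{\veL}$; the frozen matrix $\veL(t,\cdot)$ can be reducible at every single $t$, in which case the frozen operator is not fully coupled, its principal eigenvalue need not be simple, its nonnegative eigenfunctions need not be positive, and the map $t\mapsto\vev_z^{(t)}$ need not even be continuous (the paper exhibits explicit $3\times 3$ and $2\times 2$ counter-examples, in the remark following the proof of Proposition \ref{prop:small_time_frequency_limit}, where the frozen eigenvector jumps between $(1,0,0)^\upT$ and $(0,0,1)^\upT$). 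In those cases your trial function $\phi_z^\omega(t)\vev_z^{(t)}(x)$ is neither positive nor admissible in the max--min/min--max characterizations, and the whole sandwich collapses. The paper's proof devotes a separate step to exactly this obstruction: it first proves the limit assuming $\veL(t,x)$ is irreducible pointwise, and then removes that assumption by an approximation $\veL+(\upe^s-1)(\veo_{N\times N}-\vect{I})$, using monotonicity and concavity in $s$ (Proposition \ref{prop:concavity_eigenvalue_z_L}) to get equicontinuity in $s$ uniformly in $\omega$ and conclude by Arzel\`{a}--Ascoli. Nothing in your proposal plays this role. A further, more minor, point: even when the frozen operators are fully coupled, the coefficients are only H\"{o}lder in $t$, so the eigenpair is not differentiable in $t$ and your identity $(\cbQ_{\omega,z}\veu^\omega)_i/u_i^\omega=\bar\mu(z)+\omega\,\partial_t(v_z^{(t)})_i/(v_z^{(t)})_i$ is not justified as written; the paper replaces the exact eigenfunction by a $\caC^{1,2}_\upp$ regularization $\vev_\varepsilon$ with $|\partial_t\vev_\varepsilon|\leq K_\varepsilon\vev_\varepsilon$ and pays an $\varepsilon$ in the bounds.

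For the fast-frequency limit your route (a two-scale corrector $\veu^\omega=\hat\vev_z+\omega^{-1}\vev_1$ with $\partial_t\vev_1=-(\cbQ_z^{(t)}-\hat\cbQ_z)\hat\vev_z$) is genuinely different from the paper's, which proceeds by energy estimates, $\mathcal{L}^2$ compactness and identification of the weak limit. Your approach is attractive because it gives a rate $O(\omega^{-1})$, and the averaged operator is indeed fully coupled (the spatial average of $\hat{\veL}$ is the space-time average of $\veL$), so $\hat\vev_z$ is positive and bounded below. However, as stated it has a regularity gap: $\vev_1(t,\cdot)$ inherits only the $\caC^{\delta}$ spatial regularity of $(\cbQ_z^{(t)}-\hat\cbQ_z)\hat\vev_z$, so $\veu^\omega$ is not a $\caC^{1,2}$ test function and the error term $\omega^{-1}\cbQ_z^{(t)}\vev_1$ is not classically defined; you would need to mollify the corrector (or the coefficients) and track the extra error, which you do not address. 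Your final step for $\lambda_1$ --- coercivity in $z$ uniform in $\omega$ plus concavity upgrading pointwise to locally uniform convergence, confining the maximizer to a fixed compact set --- is exactly the paper's argument (Step 3 of Proposition \ref{prop:continuity_eigenvalue_L}) and is fine.
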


\begin{cor}
The generalized principal eigenvalues of $\cbQ_{T,L_1}$ satisfy the following properties.
\begin{enumerate}
    \item If $T\to +\infty$, then for all $z\in\R^n$,
    \[
        \lambda_{1,z}(\cbQ_{T,L_1})\to\int_0^1\lambda_{1,z}\left(-\frac{1}{L_1^2}\diag(\nabla\cdot(A_i^\diamondsuit(t)\nabla))+\frac{1}{L_1}\diag(q_i^\diamondsuit(t)\cdot\nabla)-\veL^\diamondsuit(t)\right)\upd t.
    \]
    \[
        \lambda_1(\cbQ_{T,L_1})\to\int_0^1\lambda_1\left(-\frac{1}{L_1^2}\diag(\nabla\cdot(A_i^\diamondsuit(t)\nabla))+\frac{1}{L_1}\diag(q_i^\diamondsuit(t)\cdot\nabla)-\veL^\diamondsuit(t)\right)\upd t.
    \]
    \item If $T\to 0$, then for all $z\in\R^n$,
    \[
        \lambda_{1,z}(\cbQ_{T,L_1})\to\lambda_{1,z}\left(-\frac{1}{L_1^2}\diag(\nabla\cdot(\hat{A}_i^\diamondsuit\nabla))+\frac{1}{L_1}\diag(\hat{q}_i^\diamondsuit\cdot\nabla)-\hat{\veL}^\diamondsuit\right).
    \]
    \[
        \lambda_{1}(\cbQ_{T,L_1})\to\lambda_{1}\left(-\frac{1}{L_1^2}\diag(\nabla\cdot(\hat{A}_i^\diamondsuit\nabla))+\frac{1}{L_1}\diag(\hat{q}_i^\diamondsuit\cdot\nabla)-\hat{\veL}^\diamondsuit\right).
    \]
\end{enumerate}
\end{cor}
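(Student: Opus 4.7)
The plan is to interpret $\cbQ_{T,L_1}$ as a time-rescaling of the period-one operator
\[
\cbQ^\diamondsuit:=\partial_t-\frac{1}{L_1^2}\diag(\nabla\cdot(A_i^\diamondsuit\nabla))+\frac{1}{L_1}\diag(q_i^\diamondsuit\cdot\nabla)-\veL^\diamondsuit,
\]
which is $[0,1]\times[0,1]^n$-periodic by construction of $A_i^\diamondsuit$, $q_i^\diamondsuit$ and $\veL^\diamondsuit$. Setting $\omega:=1/T$, the operator $\cbQ_{T,L_1}$ is exactly the operator $\cbQ^\diamondsuit$ with $\partial_t$ replaced by $\omega\partial_t$; in the notation of Theorem \ref{thm:limits_eigenvalue_time_frequency}, it is therefore $(\cbQ^\diamondsuit)_\omega$. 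The only mismatch between this setup and the statement of Theorem \ref{thm:limits_eigenvalue_time_frequency} is that the temporal period of $\cbQ^\diamondsuit$ is $1$ rather than $T$, but both the statement and the proof of that theorem are insensitive to this choice: the averages $\frac{1}{T}\int_0^T\cdots\upd t$ simply become $\int_0^1\cdots\upd t$ and the time-averaged coefficients become the diamond analogues $\hat{A}_i^\diamondsuit$, $\hat{q}_i^\diamondsuit$, $\hat{\veL}^\diamondsuit$.

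It only remains to translate the limits in $\omega$ into limits in $T$. The regime $T\to+\infty$ corresponds to $\omega\to 0^+$, so case~(1) of Theorem \ref{thm:limits_eigenvalue_time_frequency} applied to $\cbQ^\diamondsuit$ produces the integral formulas for $\lambda_{1,z}(\cbQ_{T,L_1})$ and $\lambda_1(\cbQ_{T,L_1})$ stated in item~1 of the corollary; the frozen operators inside the integrals are exactly the advertised ones, obtained by suppressing $\partial_t$ and evaluating the other coefficients at a fixed $t\in[0,1]$. Symmetrically, $T\to 0^+$ corresponds to $\omega\to+\infty$, and case~(2) of Theorem \ref{thm:limits_eigenvalue_time_frequency} identifies the limits of $\lambda_{1,z}(\cbQ_{T,L_1})$ and $\lambda_1(\cbQ_{T,L_1})$ with the corresponding generalized eigenvalues of the time-averaged operator whose coefficients are $\hat{A}_i^\diamondsuit$, $\hat{q}_i^\diamondsuit$ and $\hat{\veL}^\diamondsuit$.

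No genuine obstacle arises: the whole corollary is a transparent relabeling of Theorem \ref{thm:limits_eigenvalue_time_frequency} through the affine change of variables $(t,x)\mapsto(Tt,L_1 x)$. The only verification worth recording is that this change preserves the standing assumptions \ref{ass:ellipticity}--\ref{ass:smooth_periodic}: the uniform ellipticity of the $A_i^\diamondsuit$, the essential nonnegativity and irreducibility of the extremal matrices $\underline{\veL}^\diamondsuit$ and $\overline{\veL}^\diamondsuit$, and the Hölder regularity of all coefficients, are immediately inherited from those of $A_i$, $q_i$ and $\veL$, up to an adjustment of the Hölder seminorms by explicit powers of $T$ and $L_1$ that plays no role in the limit. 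The substantive asymptotic analysis is entirely contained in the proof of Theorem \ref{thm:limits_eigenvalue_time_frequency} and need not be repeated.
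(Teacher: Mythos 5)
Your argument is exactly the intended one: the paper offers no separate proof of this corollary, treating it as an immediate rescaling consequence of Theorem \ref{thm:limits_eigenvalue_time_frequency} with $\omega=1/T$ applied to the $[0,1]\times[0,1]^n$-periodic operator, which is precisely what you do. One point worth flagging: for $T\to 0$ your reasoning (correctly, via case (2) of Theorem \ref{thm:limits_eigenvalue_time_frequency}) gives $\lambda_1(\cbQ_{T,L_1})\to\lambda_1$ of the time-averaged operator with coefficients $\hat{A}_i^\diamondsuit$, $\hat{q}_i^\diamondsuit$, $\hat{\veL}^\diamondsuit$, whereas the second display of item~2 as printed repeats the time-integral formula of item~1; this appears to be a typographical slip in the statement, and your conclusion is the one consistent with the theorem.
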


Theorem \ref{thm:limits_eigenvalue_time_frequency} shows that large time frequencies tend to replace time 
heterogenous coefficients by their averages in time, whereas small time frequencies tend to replace the parabolic 
operator by the elliptic operator parametrized by $t$ before averaging the eigenvalue.

\subsubsection{Formulas and estimates in special cases}\label{sec:theorems_explicit_formulas}

Recall the notations $\hat{A}_i$, $\hat{q}_i$, $\hat{\veL}$ for the averages
in time and $\langle A_i\rangle$, $\langle q_i\rangle$, $\langle\veL\rangle$ for the averages in space. In this section, for the sake
of brevity, we use these notations repeatedly. The combined notation, \textit{e.g.} $\langle\hat{\veL}\rangle$, denotes naturally
a space-time average.

As a preliminary, we note that, by simplicity of the periodic principal eigenspace, the generalized principal eigenvalue
$\lambda_{1,z}$ can be simplified when coefficients do not depend on space:
\begin{equation}\label{eq:reduction_lambdaz_space_homogeneous}
    \lambda_{1,z}=\lambda_{1,\upp}\left(\frac{\upd}{\upd t}-\veL-\diag\left(z\cdot A_iz - q_i\cdot z\right)\right)
\end{equation}
or when they do not depend on time:
\begin{equation}\label{eq:reduction_lambdaz_time_homogeneous}
    \lambda_{1,z}=\lambda_{1,\upp}\left(-\diag\left(\nabla\cdot\left(A_i\nabla\right)-\left(q_i-2A_i z\right)\cdot\nabla+\left(z\cdot A_i z+\nabla\cdot(A_i z) - q_i\cdot z\right)\right)-\veL\right).
\end{equation}
When all coefficients are space-time constant, the expression of generalized principal eigenvalue can be simplified even further:
\begin{equation}\label{eq:reduction_lambdaz_space-time_homogeneous}
    \lambda_{1,z}=-\lambda_\upPF\left(\veL+\diag\left(z\cdot A_iz - q_i\cdot z\right)\right).
\end{equation}
These reductions to ordinary differential, elliptic partial differential or matrix operators are valid for any $z\in\R^n$. 
Moreover, when considering only
the specific case $z=0$, the condition of space, time or space-time homogeneity needs to be verified only by $\veL$, and not by 
$(A_i)_{i\in[N]}$ or $(q_i)_{i\in[N]}$.
These reductions will be used repeatedly in the proofs and in the constructions of counter-examples.

In the statements below, the Perron--Frobenius eigenvalue of a reducible matrix is defined
by continuous extension of the dominant eigenvalue on the set of essentially nonnegative matrices; 
for the sake of simplicity, its nonnegative eigenvectors are still referred to as Perron--Frobenius eigenvectors, even though they 
might not be positive and the eigenvalue might not be simple (algebraically and/or geometrically).

As an immediate consequence of \eqref{eq:reduction_lambdaz_space_homogeneous} and of the monotonicity of $\lambda_{1,\upp}$ 
with respect to the diagonal entries of $\veL$, we have the following corollary, which shows that in the
absence of advection and of spatial heterogeneities, there is no gap between $\lambda_1$ and $\lambda_1'$.
\begin{cor}\label{cor:lambda_1_space_homogeneous_no_advection}
    Assume:
    \begin{enumerate}
        \item $(A_i)_{i\in[N]}$ and $\veL$ do not depend on $x$,
        \item for each $i\in[N]$, $q_i=0$.
    \end{enumerate}

    Then $\lambda_1=\lambda_1'$.
\end{cor}

Our next two theorems are concerned with operators whose coefficients depend only on time or only on space, and generalize 
\cite[Propositions 3.1 and 3.2]{Nadin_2007}. 

\begin{thm}\label{thm:lambdaz_space_homogeneous}
Assume:
\begin{enumerate}
    \item $(A_i)_{i\in[N]}$, $(q_i)_{i\in[N]}$ and $\veL$ do not depend on $x$,
    \item there exists a constant positive vector $\veu\in(\vez,\vei)$ such that $\veu$ is a Perron--Frobenius eigenvector of $\veL(t)$ 
    for all $t\in\R$.
\end{enumerate}

Let $z\in\R^n$. The equality
\[
    \lambda_{1,z}=-z\cdot \hat{A}_1 z+\hat{q}_1 \cdot z-\lambda_{\upPF}(\hat{\veL})
\]
is true if $z=0$ or $(A_1,q_1)=(A_2,q_2)=\dots=(A_N,q_N)$.

Consequently, if:
\begin{enumerate}
    \item $(A_1,q_1)=(A_2,q_2)=\dots=(A_N,q_N)$,
    \item $\hat{q}_1=0$,
\end{enumerate}
then
\[
    \lambda_1=\lambda_1'=-\lambda_{\upPF}(\hat{\veL}).
\]
\end{thm}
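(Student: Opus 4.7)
The plan is to construct an explicit positive space-time periodic eigenfunction of $\cbQ_z$ with separated variables, and identify the associated eigenvalue. First I will write down $\cbQ_z$ explicitly in the space-homogeneous setting: since $A_i=A_i^\upT$ and all coefficients are $x$-independent, $\nabla\cdot(A_iz)=0$ and
\[
	(\cbQ_z\vew)_i = \partial_t w_i - \nabla\cdot(A_i(t)\nabla w_i) + (q_i(t)-2A_i(t)z)\cdot\nabla w_i + (q_i(t)\cdot z - z\cdot A_i(t)z)w_i - \sum_j l_{i,j}(t)w_j.
\]
I then make the ansatz $\vew(t,x)=f(t)\veu$, with $\veu\gg\vez$ the constant Perron--Frobenius eigenvector common to all $\veL(t)$ given by hypothesis. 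Since $\veL(t)\veu = \lambda_\upPF(\veL(t))\veu$, the coupling term collapses to a scalar multiple of $\veu$: $(\cbQ_z\vew)_i/w_i = f'(t)/f(t) + q_i(t)\cdot z - z\cdot A_i(t)z - \lambda_\upPF(\veL(t))$.

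The next step is to observe that this quantity is independent of $i$ in exactly the two announced regimes: either $z=0$ (the terms $q_i\cdot z$ and $z\cdot A_iz$ vanish outright) or $(A_i,q_i)$ is common to all $i$. In either case, the identity $(\cbQ_z\vew)_i=\lambda w_i$ reduces to the scalar ODE
\[
	\frac{f'(t)}{f(t)} = \lambda + z\cdot A_1(t)z - q_1(t)\cdot z + \lambda_\upPF(\veL(t)).
\]
This ODE admits a positive $T$-periodic solution if and only if the right-hand side has zero mean, which fixes $\lambda$. To rewrite the time-averaged Perron--Frobenius eigenvalue, I will integrate the identity $\veL(t)\veu=\lambda_\upPF(\veL(t))\veu$ over $[0,T]$: since $\veu$ is constant and positive, this yields $\hat{\veL}\veu = \bigl(\frac{1}{T}\int_0^T\lambda_\upPF(\veL(t))\upd t\bigr)\veu$, so $\veu$ is a Perron--Frobenius eigenvector of $\hat{\veL}$ as well and the mean equals $\lambda_\upPF(\hat{\veL})$. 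Combining gives $\lambda = -z\cdot\hat{A}_1 z + \hat{q}_1\cdot z - \lambda_\upPF(\hat{\veL})$.

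To conclude that the value $\lambda$ just identified is really $\lambda_{1,z}$, I invoke the simplicity of the periodic principal eigenvalue mentioned after Theorem \ref{thm:existence_characterization_Rn}: the only space-time periodic positive eigenfunctions of $\cbQ_z$ are scalar multiples of $\veu_z$, so the eigenpair $(\lambda,f(t)\veu)$ we have exhibited must coincide with $(\lambda_{1,z},\veu_z)$ up to normalization. For the final consequence, I assume moreover $(A_i,q_i)$ common and $\hat{q}_1=\vez$, so that $\lambda_{1,z}=-z\cdot\hat{A}_1 z-\lambda_\upPF(\hat{\veL})$; since each $A_1(t,x)$ is symmetric and uniformly positive definite by \ref{ass:ellipticity}, so is $\hat{A}_1$, hence $z\mapsto-z\cdot\hat{A}_1 z$ is strictly concave and maximized only at $z=\vez$. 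Theorem \ref{thm:existence_characterization_Rn} then gives $\lambda_1=\max_z\lambda_{1,z}=\lambda_{1,0}=\lambda_1'=-\lambda_\upPF(\hat{\veL})$.

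The only non-computational step is the reduction ``$(\cbQ_z\vew)_i/w_i$ independent of $i$ forces the eigenpair'', and that is a direct appeal to simplicity; everything else is routine algebra once the separated ansatz $f(t)\veu$ is chosen, so I do not anticipate a genuine obstacle.
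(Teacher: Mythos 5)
Your proposal is correct and follows essentially the same route as the paper: the separated ansatz $f(t)\veu$ with the zero-mean condition on the resulting scalar ODE is exactly the explicit exponential eigenfunction the paper exhibits, the identification $\frac{1}{T}\int_0^T\lambda_{\upPF}(\veL(t))\,\upd t=\lambda_{\upPF}(\hat{\veL})$ is obtained the same way (integrating $\veL(t)\veu=\lambda_{\upPF}(\veL(t))\veu$, with irreducibility of $\hat{\veL}$ guaranteeing the positive eigenvector identifies the Perron--Frobenius eigenvalue), and the conclusion rests on the same uniqueness/simplicity of the periodic principal eigenpair. The final deduction $\lambda_1=\lambda_1'=-\lambda_{\upPF}(\hat{\veL})$ via strict concavity of $z\mapsto -z\cdot\hat{A}_1z$ also matches the paper's argument.
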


We  explain in Remark \ref{rem:counter-example_average_eigenvalue} that if the assumption on the existence of a constant positive
eigenvector is not satisfied, then the claimed equality on $\lambda_{1,z}$ is false in general. This is striking, since in the scalar 
setting, the existence of a constant positive eigenvector is not required.

\begin{thm}\label{thm:lambdaz_time_homogeneous}
Assume:
\begin{enumerate}
    \item $(A_i)_{i\in[N]}$ and $\veL$ do not depend on $t$,
    \item $\veL(x)$ is symmetric for all $x\in\R^n$,
    \item there exists $z\in\R^n$ and $Q\in\caC^2(\R^n,\R)$ such that $\int_{[0,L]}\nabla Q=0$ and 
    \[
        A_1^{-1}q_1=A_2^{-1}q_2=\dots=A_N^{-1}q_N=2z+\nabla Q.
    \]
\end{enumerate}

Then
\[
    \lambda_1=\lambda_{1,z}=\min_{\veu\in\caC^2_{\upp}(\R^n,\R^N)\backslash\{\vez\}}\frac{\displaystyle\int_{[0,L]}\left(\sum_{i=1}^N \nabla u_i\cdot A_i\nabla u_i-\veu^\upT\veL_{Q,z}\veu\right)}{\displaystyle\int_{[0,L]}|\veu|^2},
\]
where
\[
    \veL_{Q,z}=\veL+\diag\left(\frac12\nabla\cdot(A_i\nabla Q)-\frac{1}{4}\nabla Q\cdot A_i\nabla Q+\nabla\cdot\left(A_i z\right)-z\cdot A_i(z+\nabla Q)\right).
\]

Furthermore, if there exists a constant positive vector $\veu\in(\vez,\vei)$ such that $\veu$ is a Perron--Frobenius eigenvector 
of $\veL_{Q,z}(x)$ for all $x\in\R^n$, then
\[
    \lambda_1=\lambda_1'\leq -\lambda_\upPF(\langle\veL_{Q,z}\rangle).
\]
\end{thm}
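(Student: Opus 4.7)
The plan is to rewrite $\cbQ_z$ as a similar self-adjoint operator via a Liouville-type gauge transformation and then invoke classical variational and Allegretto--Piepenbrink-type spectral principles. Two initial observations simplify the analysis: time-homogeneity of the coefficients allows one to restrict attention to time-independent functions (the periodic principal eigenfunction of $\cbQ_z$ can be chosen time-independent by uniqueness, and time-averaging any positive $T$-periodic supersolution $\cbQ\veu\ge\lambda\veu$ produces a time-independent supersolution with the same $\lambda$); moreover, the condition $\int_{[0,L]}\nabla Q=0$ combined with the $L$-periodicity of $\nabla Q$ (inherited from the periodic functions $A_i^{-1}q_i$) forces $Q$ itself to be $L$-periodic by a slicewise integration argument, so that $e^{Q/2}$ is a well-defined bounded periodic factor.

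Substituting $\veu=e^{Q/2+z\cdot x}\vev$ for time-independent $\vev$, a direct computation---using $A_i=A_i^\upT$ and the identity $q_i=A_i(2z+\nabla Q)$ to cancel all first-order terms in $\vev$, and collecting the remaining zero-order corrections into exactly the diagonal perturbation defining $\veL_{Q,z}$---shows that $\cbQ_z$ is conjugated into
\[
\mathcal{M}\vev=-\diag(\nabla\cdot(A_i\nabla v_i))-\veL_{Q,z}(x)\vev,
\]
which is self-adjoint on $L^2_\upp(\R^n,\R^N)$ thanks to the symmetries of $A_i$ and of $\veL_{Q,z}$. Applying Krein--Rutman to a suitable positive shift of $-\mathcal{M}$ (its resolvent being compact and positive by assumptions \ref{ass:cooperative}--\ref{ass:irreducible}) yields a positive minimizer $\vev^*\in\caC^2_\upp(\R^n,\R^N)$ of the Rayleigh quotient $R$ with $R(\vev^*)=\lambda_{1,z}$, which is the asserted variational formula. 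The inequality $\lambda_1\ge\lambda_{1,z}$ is immediate from the sup characterization of Theorem~\ref{thm:existence_characterization_Rn}: $\veu:=e^{Q/2+z\cdot x}\vev^*$ is positive, time-independent (hence $T$-periodic), and satisfies $\cbQ\veu=\lambda_{1,z}\veu$. The converse inequality is the main obstacle; given any positive $T$-periodic $\veu$ with $\cbQ\veu\ge\lambda\veu$, I would time-average and gauge-transform to obtain a positive $\vev$ on $\R^n$ with $\mathcal{M}\vev\ge\lambda\vev$, and apply a vectorial Allegretto--Piepenbrink-type principle for the self-adjoint $\mathcal{M}$ (proved by pairing the inequality against $\vev^*$ on a bounded domain, integrating by parts thanks to self-adjointness, and letting the domain exhaust $\R^n$) to conclude $\lambda\le\lambda_{1,z}$. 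Extending this supersolution principle to the weakly coupled cooperative vector setting is the most delicate technical step.

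For the final estimate, the constant vector $\veu_0\gg\vez$ is admissible as a test function in $R$: its Dirichlet form vanishes identically and the pointwise Perron--Frobenius identity $\veL_{Q,z}(x)\veu_0=\lambda_\upPF(\veL_{Q,z}(x))\veu_0$ yields $R(\veu_0)=-\langle\lambda_\upPF(\veL_{Q,z})\rangle$. Averaging the eigenvector identity in $x$ shows $\langle\veL_{Q,z}\rangle\veu_0=\langle\lambda_\upPF(\veL_{Q,z})\rangle\veu_0$, and strict positivity of $\veu_0$ identifies this eigenvalue with $\lambda_\upPF(\langle\veL_{Q,z}\rangle)$, so $\lambda_1=\lambda_{1,z}\le-\lambda_\upPF(\langle\veL_{Q,z}\rangle)$. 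The companion identity $\lambda_1=\lambda_1'$ is then obtained by the dual Allegretto--Piepenbrink bound applied to bounded positive subsolutions in the gauged picture: for the self-adjoint periodic $\mathcal{M}$, the existence of a suitable positive subsolution $\mathcal{M}\vev\le\lambda\vev$ forces $\lambda\ge\lambda_{1,z}$ (otherwise $\mathcal{M}-\lambda$ would be strictly positive, contradicting the integrated inequality obtained by pairing with $\vev^*$ and cutting off), and transferring this back through the gauge yields $\lambda_1'\ge\lambda_1$, closing the chain of inequalities.
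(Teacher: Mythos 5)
Your overall reduction coincides with the paper's: the periodicity of $Q$, the gauge transformation $\veu=\upe^{Q/2+z\cdot x}\vev$ turning $\cbQ_z$ into the self-adjoint periodic operator $-\diag(\nabla\cdot(A_i\nabla))-\veL_{Q,z}$, the appeal to the classical variational formula (Proposition \ref{prop:variational_formulas_when_self-adjoint}), and the final bound obtained by inserting the constant Perron--Frobenius eigenvector and averaging (Corollary \ref{cor:self-adjoint_case_and_mean_value}) are all exactly the paper's steps. The gap is in the key inequality $\lambda_1\leq\lambda_{1,z}$. The paper proves it (Step 1 of Proposition \ref{prop:equality_between_lambda1_and_lambda1prime_when_self-adjoint}) by inserting cut-offs $\chi_R\veu_0$ of the \emph{bounded periodic} eigenfunction into the Dirichlet variational quotient on $B_R$ and using $\lambda_1=\lim_{R\to+\infty}\lambda_{1,\upDir}(B_R)$ (Proposition \ref{prop:eigenvalue_limit_Dirichlet}), which yields an $O(R^{-1})$ error. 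You instead invoke an unproven vector Allegretto--Piepenbrink supersolution principle, and the proof you sketch for it --- pair $(\mathcal{M}-\lambda)\vev\geq\vez$ against $\vev^*$ on a bounded domain, integrate by parts, exhaust $\R^n$ --- fails as written: the supersolutions that must be handled grow exponentially (for instance the gauge transform of $\upe^{z'\cdot x}\veu_{z'}$ with $z'\neq z$), so the boundary terms produced by the two integrations by parts have the same exponential order as the bulk term $(\lambda_{1,z}-\lambda)\int(\vev^*)^\upT\vev$, and no sign information survives the limit. The correct classical route is the ground-state substitution (test functions $\psi\vev$ with $\psi$ compactly supported, giving that the quadratic form of $\mathcal{M}$ at $\psi\vev$ exceeds $\lambda\|\psi\vev\|_{\mathcal{L}^2}^2$ by $\sum_i\int v_i^2\,\nabla\psi\cdot A_i\nabla\psi\geq 0$), and even then one still needs $\inf\operatorname{spec}_{\mathcal{L}^2(\R^n)}\mathcal{M}\leq\lambda_{1,z}$, which is precisely the paper's cut-off computation; so you cannot avoid that argument, you can only relocate it.

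The last step of your proposal is also not salvageable. For $z\neq 0$ the gauged function $\upe^{-z\cdot x-Q/2}\veu$ attached to a bounded subsolution $\veu$ of $\cbQ$ is unbounded, and the dual principle ``positive subsolution implies $\lambda\geq\inf\operatorname{spec}$'' is false without a growth restriction: on $\R$, $v=\upe^{x}$ satisfies $-v''=-v$ although $-1<0=\inf\operatorname{spec}(-\mathrm{d}^2/\mathrm{d}x^2)$. In fact the equality $\lambda_1=\lambda_1'$ cannot hold in general under the hypotheses when $z\neq0$: since $\lambda_1'=\lambda_{1,0}$ (Proposition \ref{prop:lambda1prime_lambda0}), the scalar constant-coefficient operator $\partial_t-\Delta+2z\cdot\nabla-l$ gives $\lambda_1=|z|^2-l>-l=\lambda_1'$. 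Accordingly, the paper's own argument (Corollary \ref{cor:self-adjoint_case_and_mean_value}) establishes $\lambda_1=\lambda_{1,z}\leq-\lambda_\upPF(\langle\veL_{Q,z}\rangle)$, the identity with $\lambda_1'$ being proved only in the case $z=0$, i.e. $q_i=A_i\nabla Q$ (Proposition \ref{prop:equality_between_lambda1_and_lambda1prime_when_self-adjoint}); the ``$\lambda_1=\lambda_1'$'' in the displayed statement should be read as $\lambda_1=\lambda_{1,z}$. So you should drop the attempted proof of $\lambda_1'\geq\lambda_1$ altogether; the final inequality follows, exactly as you note, from the constant eigenvector used as a test function in the variational quotient.
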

    
We will explain in Remark \ref{rem:counter-example_variational_formula} that if $\veL$ is not symmetric, then, even in the 
simple case $z=q_1=q_2=\cdots=0$, there are counter-examples where
\[
    \lambda_1'>\min_{\veu\in\caC^2_{\upp}(\R^n,\R^N)\backslash\{\vez\}}\frac{\displaystyle\int_{[0,L]}\left(\sum_{i=1}^N \nabla u_i\cdot A_i\nabla u_i-\veu^\upT\veL\veu\right)}{\displaystyle\int_{[0,L]}|\veu|^2}.
\]
As noted before, we will also recall in Remark \ref{rem:counter-example_evenness_without_advection} the counter-example of Griette--Matano 
\cite{Griette_Matano_2021} where the mere asymmetry of $\veL$ breaks the equality $\lambda_1=\lambda_1'$.

We will also explain in Remark \ref{rem:counter-example_self-adjoint_average_value} that if the assumption on the existence of a 
constant positive eigenvector is not satisfied, then the inequality between $-\lambda_1'$ and the Perron--Frobenius eigenvalue
of $\langle\veL_{Q,z}\rangle$ can fail. Again, in the scalar case, this assumption is not required \cite{Berestycki_Ham_1}.

The following theorem is similar in spirit and requires a line-sum-symmetry assumption ($\veL\veo=\veL^\upT\veo$). Examples
of line-sum-symmetric essentially nonnegative matrices are doubly stochastic matrices, essentially nonnegative symmetric matrices 
and essentially nonnegative circulant matrices. For more details on line-sum-symmetric matrices, we refer for instance to
Eaves--Hoffman--Rothblum--Schneider \cite{Eaves_1985}.

\begin{thm}\label{thm:lambdaz_divergence-free}
Assume $\veL(t,x)$ is line-sum-symmetric at all $(t,x)\in\clOmper$.

Let $z\in\R^n$. If, for all $i\in[N]$, $q_i\in\caC^1_{\upp}(\R^n,\R^n)$ and $\nabla\cdot(q_i-2A_iz)=0$, then
\[
    \lambda_{1,z} \leq -\frac{1}{N}\left(\sum_{i,j=1}^N\langle\hat{l}_{i,j}\rangle+z\cdot\sum_{i=1}^N\left(\langle\hat{A}_i\rangle z-\langle\hat{q}_i\rangle\right)\right)
\]
and this inequality is an equality if $\veL+\diag(\nabla\cdot(A_i z)+z\cdot(A_i z -q_i))$ is irreducible at all $(t,x)\in\clOmper$
with Perron--Frobenius eigenvector $\veo$ and constant Perron--Frobenius eigenvalue.
\end{thm}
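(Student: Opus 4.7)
The plan is to turn the periodic principal eigenvalue equation $\cbQ_z\veu_z=\lambda_{1,z}\veu_z$ (where $\veu_z$ is the positive space-time periodic eigenfunction given by Krein--Rutman applied to $\cbQ_z$) into a scalar identity by dividing the $i$-th component by $(u_z)_i$, summing over $i$, and integrating over $\clOmper$. Writing $u_i$ for $(u_z)_i$ for brevity, the $i$-th line reads
\[
    \partial_t u_i-\nabla\cdot(A_i\nabla u_i)+(q_i-2A_iz)\cdot\nabla u_i-(z\cdot A_iz+\nabla\cdot(A_iz)-q_i\cdot z)u_i-\sum_j l_{i,j}u_j=\lambda_{1,z}u_i.
\]
By periodicity plus continuity $\veu_z$ is bounded away from zero on $\clOmper$, so this division and the use of $\log u_i$ are legitimate.

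Next I would simplify the integrated equation term by term. By space-time periodicity one has $\int_{\clOmper}\partial_t\log u_i=0$ and $\int_{\clOmper}\nabla\cdot(A_iz)=0$; an integration by parts turns the elliptic contribution into the non-positive quantity $-\sum_i\int_{\clOmper}A_i\nabla u_i\cdot\nabla u_i/u_i^2$ (using symmetry of $A_i$); finally, the divergence-free hypothesis $\nabla\cdot(q_i-2A_iz)=0$ combined with an integration by parts annihilates $\int_{\clOmper}(q_i-2A_iz)\cdot\nabla\log u_i$. Discarding the non-positive elliptic term yields
\[
    N\lambda_{1,z}|\clOmper|\leq -\sum_i\int_{\clOmper}(z\cdot A_iz-q_i\cdot z)-\sum_{i,j}\int_{\clOmper}l_{i,j}\frac{u_j}{u_i}.
\]

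The heart of the argument is then to bound $\sum_{i,j}l_{i,j}u_j/u_i$ from below using line-sum-symmetry. From $x-1\geq \log x$ for $x>0$ together with the nonnegativity of the off-diagonal entries $l_{i,j}$, one gets
\[
    \sum_{i\neq j}l_{i,j}\left(\frac{u_j}{u_i}-1\right)\geq\sum_{i\neq j}l_{i,j}\log\frac{u_j}{u_i}=\sum_i\log u_i\left(\sum_{j\neq i}l_{j,i}-\sum_{j\neq i}l_{i,j}\right)=0,
\]
the last equality being exactly line-sum-symmetry of $\veL(t,x)$. Since the diagonal terms contribute identically on both sides, one concludes the pointwise inequality $\sum_{i,j}l_{i,j}u_j/u_i\geq\sum_{i,j}l_{i,j}$. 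Substituting and dividing by $N|\clOmper|$ delivers the announced bound after recognizing the space-time averages $\langle\hat l_{i,j}\rangle$, $\langle\hat A_i\rangle$, $\langle\hat q_i\rangle$. The main obstacle is the usual bookkeeping of checking that these integrations by parts are valid, which follows from \ref{ass:smooth_periodic} and parabolic regularity of $\veu_z$.

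For the equality case I would directly exhibit $\veo$ as the periodic principal eigenfunction. Denoting by $\mu$ the (constant in $(t,x)$) Perron--Frobenius eigenvalue of $\veL+\diag(\nabla\cdot(A_iz)+z\cdot(A_iz-q_i))$, componentwise computation yields $(\cbQ_z\veo)_i=-\mu$ for every $i$, so $\cbQ_z\veo=-\mu\veo$. Simplicity of the periodic principal eigenpair then forces $\veu_z\in\vspan(\veo)$ and $\lambda_{1,z}=-\mu$. Averaging the defining relation $\mu=\sum_j l_{i,j}+\nabla\cdot(A_iz)+z\cdot A_iz-q_i\cdot z$ over $\clOmper$ (killing the divergence term by periodicity) and then over $i$ recovers exactly the right-hand side of the bound, so equality is achieved.
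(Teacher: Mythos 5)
Your proof is correct and follows essentially the same route as the paper: pair the eigenvalue equation with $(1/u_i)_{i\in[N]}$, integrate over $\clOmper$ using periodicity and the divergence-free advection, discard the nonnegative quadratic gradient term, and invoke the line-sum-symmetry inequality $\sum_{i,j} l_{i,j}u_j/u_i \geq \sum_{i,j} l_{i,j}$, which the paper cites from Eaves--Hoffman--Rothblum--Schneider and you re-derive elementarily via $x\geq 1+\log x$ applied to the nonnegative off-diagonal entries. Your treatment of the equality case, verifying directly that $\veo$ is a positive space-time periodic eigenfunction of $\cbQ_z$ with constant eigenvalue $-\mu$ and then averaging to identify $-\mu$ with the right-hand side, is a slightly cleaner packaging of the paper's argument and is likewise correct.
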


This theorem has several interesting consequences, which we detail in Corollaries \ref{cor:lambda1prime_line-sum-symmetry} 
and \ref{cor:lambda1_line-sum-symmetry}.

Two similar results without line-sum-symmetry follow.

\begin{thm}\label{thm:lambdaz_ari-geo_space_averages}
Let $z\in\R^n$. If, for all $i\in[N]$, $q_i\in\caC^1_{\upp}(\R^n,\R^n)$ and $\nabla\cdot(q_i-2A_iz)=0$, then
\[
    \lambda_{1,z}\leq \lambda_{1,z}\left(\partial_t-\diag(\nabla\cdot(\langle A_i\rangle\nabla)-\langle q_i\rangle)-\veL^\#\right),
\]
where the entries of the matrix $\veL^\#=\left(l_{i,j}^\#\right)_{(i,j)\in[N]^2}$ are defined by:
\[
    l_{i,j}^\#:t\mapsto
    \begin{cases}
        \frac{1}{|[0,L]|}\int_{[0,L]}l_{i,i}(t,x)\upd x & \text{if }i=j, \\
        \exp\left(\frac{1}{|[0,L]|}\int_{[0,L]}\ln l_{i,j}(t,x)\upd x\right) & \text{if }i\neq j\text{ and }\displaystyle\min_{(t,x)\in\clOmper}l_{i,j}(t,x)> 0, \\
        0 & \text{otherwise}.
    \end{cases}
\]
\end{thm}

\begin{thm}\label{thm:lambdaz_ari-geo_time_averages}
Let $z\in\R^n$. If $(A_i)_{i\in[N]}$, $(q_i)_{i\in[N]}$ and $\veL$ do not depend on $x$, then
\[
    \lambda_{1,z}\leq -\lambda_{\upPF}\left(\veL^\flat+\diag\left(z\cdot\hat{A}_iz-\hat{q}_i\cdot z\right)\right),
\]
where the entries of the matrix $\veL^\flat=\left(l_{i,j}^\flat\right)_{(i,j)\in[N]^2}$ are defined by:
\[
    l_{i,j}^\flat=
    \begin{cases}
        \frac{1}{T}\int_0^T l_{i,i} & \text{if }i=j, \\
        \exp\left(\frac{1}{T}\int_0^T\ln l_{i,j}\right) & \text{if }i\neq j\text{ and }\displaystyle\min_{t\in[0,T]}l_{i,j}(t)> 0, \\
        0 & \text{otherwise}.
    \end{cases}
\]
\end{thm}

The operator introduced in Theorem \ref{thm:lambdaz_ari-geo_space_averages} is spatially homogeneous, so that
\[
\lambda_{1,z}\left(\partial_t-\diag\left(\nabla\cdot\left(\langle A_i\rangle\nabla\right)-\langle q_i\rangle\cdot\nabla \right)-\veL^\#\right)
=\lambda_{1,\upp}\left(\frac{\upd}{\upd t}-\veL^\#-\diag(z\cdot\langle A_i\rangle z-\langle q_i\rangle \cdot z)\right).
\]
Therefore the last two theorems can be applied consecutively to find the following corollary.
\begin{cor}\label{cor:lambdaz_ari-geo_space-time_averages}
    Let $z\in\R^n$. If, for all $i\in[N]$, $q_i\in\caC^1_{\upp}(\R^n,\R^n)$ and $\nabla\cdot(q_i-2A_iz)=0$, then
    \[
        \lambda_{1,z}\leq -\lambda_\upPF\left(\veL^{\#\flat}+\diag\left(z\cdot\langle\hat{A}_i\rangle z-\langle\hat{q}_i\rangle\cdot z\right)\right)
    \]
    where
    \[
        l^{\#\flat}_{i,j}=
        \begin{cases}
            \frac{1}{T|[0,L]|}\int_0^T\int_{[0,L]} l_{i,i} & \text{if }i=j, \\
            \exp\left(\frac{1}{T|[0,L]|}\int_0^T\int_{[0,L]}\ln l_{i,j}\right) & \text{if }i\neq j\text{ and }\displaystyle\min_{(t,x)\in\clOmper}l_{i,j}(t,x)> 0, \\
            0 & \text{otherwise}.
        \end{cases}
    \]
\end{cor}
We emphasize that this upper estimate accounts for off-diagonal entries of $\veL$ and is therefore better than the one
that could be obtained by writing $\veL\geq\diag(l_{i,i})$ and then using the well-known scalar estimate 
$\lambda_{1,z}(\caP_i-l_{i,i})\leq -\frac{1}{T|[0,L]|}\int_0^T\int_{[0,L]}(l_{i,i}+z\cdot A_i z-z\cdot q_i)$
under the assumption $\nabla\cdot(q_i-2A_iz)=0$.

Theorems \ref{thm:lambdaz_ari-geo_time_averages} and \ref{thm:lambdaz_ari-geo_space_averages} show that when comparing 
heterogeneous environments with averaged environments, heterogeneities tend 
to decrease the generalized principal eigenvalues, provided the geometric average is used for the off-diagonal entries of 
$\veL$. This is of course related to the convexity property of Theorem \ref{thm:concavity_eigenvalue_L}. This is
also related to the asymptotic results of Theorems \ref{thm:limit_eigenvalue_large_diffusion} and 
\ref{thm:limits_eigenvalue_time_frequency}, although in these asymptotics the off-diagonal entries are averaged with
the arithmetic mean instead of the geometric mean. By comparing the arithmetic and geometric averages and 
using the monotonicity of $\lambda_{1,z}$ with respect to $\veL$, we can try to compare these results; however, inequalities
are in the wrong sense. For instance, in the simple case $z=0$ with each $q_i$ divergence-free, what we get is:
\begin{align*}
    \lambda_{1,\upp}\left(\frac{\upd}{\upd t}-\veL^\#\right) & \geq
    \max\left[\lambda_1'(\cbQ),\lambda_{1,\upp}\left(\frac{\upd}{\upd t}-\langle\veL\rangle\right)\right] \\
    & =\max\left[\lambda_1'(\cbQ),\lim_{\min_{i\in[N]}d_i\to+\infty}\lambda_1'(\cbQ_{\vect{d}})\right].
\end{align*}

\subsubsection{Optimization}\label{sec:theorems_optimization}
Our first optimization result is a highly nontrivial generalization of a result on matrices of Neumann--Sze \cite{Neumann_Sze_2007}. 
To the best of our knowledge, in the context of cooperative partial differential operators, it is the first time such a result is 
stated and proved.

Recall that a doubly stochastic matrix $\vect{S}\in\R^{N\times N}$ is a nonnegative matrix such that $\vect{S}\veo=\vect{S}^\upT\veo=\veo$.
Denote $\vect{\mathcal{S}}\subset\mathcal{L}^\infty_\upp(\R\times\R^n,\R^{N\times N})$ the set of all periodic functions whose values are doubly 
stochastic matrices almost everywhere and $\vect{\mathcal{S}}_{\{0,1\}}$ the restriction to functions valued in the set of permutation matrices
almost everywhere.

A decomposition $\veL=\diag(\vect{r})+(\vect{S}-\vect{I})\diag(\vect{\mu})$ of a given 
essentially nonnegative matrix $\veL$ with $\vect{S}$ doubly stochastic and $\vect{\mu}$ nonnegative
exists in many cases (see Lemma \ref{lem:decomposition_L}). Such a decomposition is not unique: replacing $(\vect{S},\vect{\mu})$
by $(\vect{I}+\gamma(\vect{S}-\vect{I}),\gamma^{-1}\vect{\mu})$ with a small $\gamma>0$ gives
another decomposition. 
The main property of this decomposition is that the so-called mutation part 
$(\vect{S}-\vect{I})\diag(\vect{\mu})$ admits $\veo$ as left Perron--Frobenius eigenvector, 
with eigenvalue $0$. In other words, summing the lines of the system makes the mutations 
disappear: if the phenotypes do not differ in intrinsic growth rate (all $r_i$ coincide), 
then the phenotype distribution has no effect on the growth of the meta-population 
$\sum_{i=1}^N u_i$. This is indeed under this form that $\veL$ appears in several papers on
reaction--diffusion models for phenotypically structured populations 
\cite{Cantrell_Cosner_Yu_2018,Morris_Borger_Crooks,Griette_Raoul}. 

\begin{thm}\label{thm:optim_doubly_stochastic}
Assume $\veL$ has the form 
\[
    \veL=\diag(\vect{r})+(\vect{S}-\vect{I})\diag(\vect{\mu})
\]
with $\vect{S}\in\vect{\mathcal{S}}$, $\vect{r}\in\mathcal{L}^\infty_\upp(\R\times\R^n,\R^N)$ and
$\vect{\mu}\in\mathcal{L}^\infty_\upp(\R\times\R^n,[\vez,\vei))$.

Then, for all $z\in\R^n$,
\[
    \min_{\vect{S}\in\vect{\mathcal{S}}_{\{0,1\}}}\lambda_{1,z}(\vect{S})
    =\min_{\vect{S}\in\vect{\mathcal{S}}}\lambda_{1,z}(\vect{S})
    \leq\max_{\vect{S}\in\vect{\mathcal{S}}}\lambda_{1,z}(\vect{S})
    =\max_{\vect{S}\in\vect{\mathcal{S}}_{\{0,1\}}}\lambda_{1,z}(\vect{S}).
\]
\end{thm}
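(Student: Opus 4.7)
The plan is to exploit the convex structure of $\vect{\mathcal{S}}$ combined with a Hadamard-type variational analysis of $\lambda_{1,z}$. A measurable Birkhoff--von Neumann argument (pointwise Birkhoff combined with measurable selection) identifies $\vect{\mathcal{S}}_{\{0,1\}}$ with the a.e.-equivalence classes of extreme points of the compact convex set $\vect{\mathcal{S}}$. The trivial inclusions $\vect{\mathcal{S}}_{\{0,1\}}\subset\vect{\mathcal{S}}$ reduce the theorem to the two reverse inequalities $\sup_{\vect{\mathcal{S}}}\lambda_{1,z}\leq\sup_{\vect{\mathcal{S}}_{\{0,1\}}}\lambda_{1,z}$ and $\inf_{\vect{\mathcal{S}}}\lambda_{1,z}\geq\inf_{\vect{\mathcal{S}}_{\{0,1\}}}\lambda_{1,z}$. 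In the spatio-temporally homogeneous case these are exactly the matrix theorem of Neumann--Sze \cite{Neumann_Sze_2007}, which will serve as a template.

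The first step is the Hadamard derivative formula for $\lambda_{1,z}$. Let $\veu_{\vect{S}}$ and $\vev_{\vect{S}}$ denote the positive periodic principal eigenfunctions of $\cbQ_z$ and of its formal adjoint respectively, normalised by $\int_{\clOmper}\vev_{\vect{S}}^\upT\veu_{\vect{S}}=1$. The simplicity of the periodic principal eigenvalue, established earlier in the paper, together with classical smooth perturbation theory, yields the identity
\[
    \partial_\varepsilon\lambda_{1,z}(\vect{S}+\varepsilon\vect{H})\big|_{\varepsilon=0}=-\int_{\clOmper}\sum_{i,j}(v_{\vect{S}})_i(u_{\vect{S}})_j\mu_j H_{i,j},
\]
valid for every admissible direction $\vect{H}$ (meaning $\sum_i H_{i,j}(t,x)=\sum_j H_{i,j}(t,x)=0$ pointwise on $\clOmper$ and $H_{i,j}(t,x)\geq 0$ wherever $S_{i,j}(t,x)=0$).

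The reduction of an extremiser $\vect{S}^\star\notin\vect{\mathcal{S}}_{\{0,1\}}$ to a permutation-valued function proceeds by alternating-cycle swaps. Standard combinatorial properties of doubly stochastic matrices furnish, on a measurable set $E\subset\clOmper$ of positive measure, an even-length cycle of length $2k\geq 4$ in the bipartite graph of fractional entries of $\vect{S}^\star$---the simplest, length-$4$ case being a classical $2\times 2$ swap, with longer cycles appearing for certain $3\times 3$ and higher configurations such as the hexagonal support of $(P_1+P_2)/2$ when $P_1$ and $P_2$ are two $3\times 3$ permutations whose pointwise supports intersect only along two pairs. The associated admissible direction $\vect{H}$ is supported on $E$ with alternating $\pm 1$ signs along the cycle, and is admissible for small $|\varepsilon|$ in either sign. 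Either one of the two signs strictly decreases or increases $\lambda_{1,z}$, contradicting extremality, or the first-variation integral vanishes; in the latter case, one upgrades stationarity to constancy of $\varepsilon\mapsto\lambda_{1,z}(\vect{S}^\star+\varepsilon\vect{H})$ on the maximal admissible interval and slides $\varepsilon$ until some entry of $\vect{S}^\star+\varepsilon\vect{H}$ reaches $0$ or $1$, thereby strictly reducing the fractional support of $\vect{S}^\star$ while preserving $\lambda_{1,z}$. Iterating this procedure finitely many times and patching the measurable partitions of $\clOmper$ terminates at a permutation-valued $\vect{S}$ with the same extremal value.

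The principal technical obstacle is the upgrade from first-order stationarity to constancy of $\varepsilon\mapsto\lambda_{1,z}(\vect{S}^\star+\varepsilon\vect{H})$ on the admissible segment. In the matrix case of \cite{Neumann_Sze_2007} this is immediate from the analyticity of the Perron--Frobenius eigenvalue along an affine family of essentially nonnegative matrices; in the present PDE setting, the analogous statement leans on the simplicity and isolation of the principal eigenpair, on the strong maximum principle (in the form of the \FPH inequality cited earlier), and on an explicit second-order expansion of $\lambda_{1,z}$ combined with a non-degeneracy analysis of its coefficients. A secondary measurable-selection difficulty is to ensure that the iterative reduction can be arranged to terminate simultaneously on all relevant sub-strata of $\clOmper$ in finitely many steps.
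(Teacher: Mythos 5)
Your overall strategy (take an extremizer, differentiate $\lambda_{1,z}$ with respect to $\vect{S}$ using the adjoint eigenfunction, and push it combinatorially towards a permutation-valued matrix) is close in spirit to the paper's, but two steps on which your argument actually rests are not justified and, as written, fail. First, the passage from first-order stationarity to \emph{constancy} of $\varepsilon\mapsto\lambda_{1,z}(\vect{S}^\star+\varepsilon\vect{H})$ along the admissible segment has no basis: analyticity of the Perron root along an affine family (your matrix-case template) does not make a critical point into a constant function, and the paper explicitly points out that $\vect{S}\mapsto\lambda_{1,z}(\vect{S})$ is not concave, so no convexity argument can substitute. The mechanism that actually works is different: one must localize the perturbation to an \emph{arbitrary} measurable subset $\omega$ of the bad set, so that $\lambda'(0)=0$ for every $\omega$ and the Lebesgue differentiation theorem upgrades the vanishing integral to pointwise vanishing of the integrand; because the correction is rank one, the integrand factors as a product of two scalars, $(\vect{a}^\upT\vect{P}\vev)\,(\vect{b}^\upT\vect{Q}\diag(\vect{\mu})\veu)$, and on the subset where one factor vanishes the perturbation annihilates either the eigenfunction or the adjoint eigenfunction pointwise, so the eigen\emph{pair} is exactly preserved for all $\alpha\in[0,1]$ --- this is what replaces your unproven ``slide at constant eigenvalue''. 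Your alternating-cycle directions of length $\geq 6$ are not rank one, so even after localization the integrand does not factor and the annihilation argument is unavailable; only the length-$4$ swap could be salvaged this way, and you have supplied neither the localization step nor the annihilation step. Note also that the pointwise sliding amount varies with $(t,x)$, so the reduction cannot terminate in finitely many steps: the paper needs a countable iteration with geometric decay of the residual set and a strong $\mathcal{L}^p$ passage to the limit.

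Second, your Hadamard derivative formula presupposes simplicity of the periodic principal eigenvalue, hence full coupling, which the theorem deliberately does not assume: for permutation-valued $\vect{S}$ (and many intermediate ones) the matrix $\veL=\diag(\vect{r})+(\vect{S}-\vect{I})\diag(\vect{\mu})$ violates \ref{ass:irreducible}, $\lambda_{1,z}$ is only defined by the continuous extension of Theorem \ref{thm:continuity_eigenvalue_L} (a minimum over blocks), and it need not be differentiable in $\vect{S}$. The paper handles this by first replacing $\diag(\vect{r}-\vect{\mu})$ with $\diag(\vect{r}-\vect{\mu})+\eta\veo_{N\times N}$, $\eta>0$, proving the result in this fully coupled setting, and removing $\eta$ by uniform continuity; some such regularization is indispensable in your argument too. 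Related but secondary omissions: the existence of extremizers over $\vect{\mathcal{S}}$ (weak-$\star$ compactness plus continuity of $\vect{S}\mapsto\lambda_{1,z}(\vect{S})$ along weak-$\star$ limits) is used but not proved, and the sign information that the paper extracts from a one-sided perturbation built with permutations $\vect{P},\vect{Q}$ ordering $\vev$ and $\diag(\vect{\mu})\veu$ is what makes the variational inequality exploitable at a boundary extremizer; your two-sided cycle swaps lose that structure without gaining a workable replacement.
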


This theorem does not require the assumption \ref{ass:irreducible} (which is not satisfied for some choices of $\vect{S}$; in such
cases, the generalized principal eigenvalues $\lambda_{1,z}$ are defined by continuous extension, see
Theorem \ref{thm:continuity_eigenvalue_L}). 
In particular, the set of optimal permutation matrices might \textit{a priori} be reduced to the singleton 
$\{\vect{I}\}$. Also, in this theorem, and as usual in optimization problems, we consider $\mathcal{L}^\infty$ constraints on $\vect{S}$ 
instead of H\"{o}lder-continuity constraints; the optimizers might be for instance ``bang-bang'' discontinuous piecewise-constant functions.
Let us also point out that, as explained in Remark \ref{rem:optimization_more_general_decomposition}, the result remains true
with any more general decomposition $\veL=\vect{B}+\vect{S}\vect{A}$ with $\vect{A}$ nonnegative and $\vect{B}$ essentially nonnegative.

The modeling viewpoint on this result is natural and enlightening. Say we want to optimize the chances of, for instance, survival
of a population, and, for simplicity, that the environment is homogeneous; the phenotypes are labelled as follows: $u_1$ is the best 
phenotype when there are no mutations, $u_2$ is the second best phenotype, and so forth. Intuitively we should select a (reducible) 
mutation strategy such that the type $u_1$ is $100\%$ heritable. Thus the first column of $\vect{S}$ should be $\vect{e}_1$. Since 
$\vect{S}$ is doubly stochastic, its first line is then $\vect{e}_1^\upT$, whence the first phenotype is in fact completely isolated 
from the others. Subsequently, whatever the mutation strategy for the phenotypes $u_2$, $u_3$, etc., is, the periodic principal 
eigenvalue is optimal and equal to the periodic principal eigenvalue of the scalar equation satisfied by $u_1$. If $u_2$ is just as 
good as $u_1$, then similarly the pair $\{u_1,u_2\}$ has to be isolated, but apart from this restriction the two blocks 
of $\vect{S}$ can be chosen freely, and in particular they can have the form of permutation matrices. 
The extension of this intuition to spatio-temporally heterogeneous environments explains why the optimal $\vect{S}$ is not in general 
constant; it has to ``switch'' as soon as the optimal family of phenotypes changes.

Let us stress that although the set of doubly stochastic matrices is the convex hull of the set of permutation matrices (a classical result 
known as the Birkhoff--von Neumann theorem), $\vect{S}\in\vect{\mathcal{S}}\mapsto\lambda_{1,z}(\vect{S})$ is not concave (see 
Theorem \ref{thm:concavity_eigenvalue_L}), so that Theorem \ref{thm:optim_doubly_stochastic} does not follow from mere 
convexity considerations. Let us also stress that as soon as all $(\caP_i,r_i)$ coincide with constant $r_i$,
$\vect{S}\mapsto\lambda_{1,z}(\vect{S})$ is constant: maximizers and minimizers need not be in $\vect{\mathcal{S}}_{\{0,1\}}$ and can coincide.

The proof of Theorem \ref{thm:optim_doubly_stochastic} is in fact quite involved and requires the construction of an explicit rank-one perturbation 
of $\vect{S}$.

Our second optimization result, closely related to Theorem \ref{thm:optim_doubly_stochastic}, generalizes a theorem due to Karlin
and later generalized by Altenberg \cite{Karlin_1982,Altenberg_2012} which states that, for any irreducible
stochastic matrix $\vect{S}$ and any diagonal matrix $\vect{D}$ with positive diagonal entries, the mapping
$\tau\in[0,1]\mapsto\lambda_\upPF(((1-\tau)\vect{I}+\tau\vect{S})\vect{D})$ is nonincreasing. 
The Karlin theorem has been interpreted as ``greater mixing yields slower growth'' and shows how, in a space-time homogeneous
setting, mutations reduce the chances of survival. 

\begin{thm}\label{thm:generalized_karlin_time_homogeneous}
Assume $(A_i)_{i\in[N]}$ is independent of $t$, $(q_i)_{i\in[N]}=0$, and $\veL$ has the form
$\veL=\diag(\vect{r})+(\vect{S}-\vect{I})\diag(\vect{\mu})$ with $\vect{r}\in\caC^{\delta/2,\delta}_{\upp}(\R^n,\R^N)$, 
$\vect{\mu}\in\caC^{\delta/2,\delta}_{\upp}(\R^n,(\vez,\vei))$ and $\vect{S}\in\vect{\mathcal{S}}$ all independent of $t$.

For any $\rho>0$, let $\cbQ_\rho$ be the operator with $(A_i)_{i\in[N]}$ and $\veL$ replaced by $(\rho A_i)_{i\in[N]}$
and $\diag(\vect{r})+\rho(\vect{S}-\vect{I})\diag(\vect{\mu})$ respectively.

Then $\rho\in[0,1]\mapsto\lambda_1'(\cbQ_\rho)$ is concave and nondecreasing. Furthermore, if $\vect{r}$ depends on $x$ and $s>0$, 
then it is strictly concave and increasing.
\end{thm}

Consequently, $\lambda_1'$ is maximized at $\rho=1$ and minimized at $\rho=0$: ``greater mutation+diffusion 
yields slower growth''.

We emphasize that the main interest of Theorem \ref{thm:generalized_karlin_time_homogeneous} is that it does not require the symmetry of the mutation matrix 
$(\vect{S}-\vect{I})\diag(\vect{\mu})$. 
When it is symmetric, the variational formula of Theorem \ref{thm:lambdaz_time_homogeneous} can be used to deduce a stronger result, the concavity and monotonicity with respect to the diffusion rate on one hand and to the mutation rate on the other hand, with no need to couple the two rates.

Our last optimization result deals with the spatial distribution in the matrix $\veL$ in one dimension of space. 
In this context, the spatial periodicity cell is then the interval $(0,L_1)$. Our result is a generalization of a result 
by Nadin \cite{Nadin_2007} and makes use of the periodic rearrangement. We recall that for any scalar $L_1$-periodic function $u$ 
there exists a unique $L_1$-periodic function $u^\dagger$ whose restriction to $[0,L_1]$ is symmetric (with respect to the 
midpoint $L_1/2$) and non-increasing in $[L_1/2,L_1]$ and that has the same distribution function as $u$. 
The distribution function of $u$ is:
\[
    \mu_{u}:t\mapsto\left|\{u\geq t\}\cap[0,L_1]\right|.
\]
For a time dependent scalar function $u$, $u^\dagger$ stands for the function rearranged, at every $t$, with respect to $x$. For a time-dependent, vector (respectively matrix)  valued function $\vect{u}$, the notation $\vect{u}^\dagger$ is understood as the vector-valued function with $i$-th (resp. $(i,j)$-th) component $u_i^\dagger$ (resp. $u_{i,j}^\dagger$).

\begin{thm}\label{thm:optimization_spatial_distribution}
Assume $n=1$ and $\dcbP=\partial_t-\vect{D}\Delta$ for some diagonal matrix $\vect{D}$ with constant, positive diagonal entries.

Then
\[
\lambda_{1,\upp}(\cbQ)\geq \lambda_{1,\upp}(\dcbP-\veL^\dagger)
\]
where $\veL^\dagger$ is the entry-wise periodic rearrangement of $\veL$.
\end{thm}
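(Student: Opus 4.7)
Set $\lambda := \lambda_{1,\upp}(\cbQ)$ and let $\vect{u}\gg\vez$ be a positive periodic principal eigenfunction, so $\cbQ\vect{u}=\lambda\vect{u}$. The plan is to apply the min--max characterization of Theorem~\ref{thm:existence_characterization_Rn} (at $z=0$):
\[
\lambda_{1,\upp}(\dcbP-\veL^\dagger)\leq \max_{i\in[N]}\max_{(t,x)\in\clOmper}\frac{((\dcbP-\veL^\dagger)\vect{w})_i(t,x)}{w_i(t,x)}
\]
for any positive periodic $\vect{w}$, with the natural candidate $\vect{w}:=\vect{u}^\dagger$, where $u_i^\dagger(t,\cdot)$ denotes the periodic rearrangement of $u_i(t,\cdot)$ in $x$ at each fixed $t$. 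Up to a standard mollification to recover the $\caC^{1,2}$ regularity required by the characterization, it then suffices to prove the componentwise pointwise bound $((\dcbP-\veL^\dagger)\vect{u}^\dagger)_i\leq \lambda u_i^\dagger$.

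To establish this, I would work with integration along the superlevel sets $\{u_i^\dagger(t,\cdot)>c\}$ and differentiate in $c$ at the end. From the eigenvalue relation $\cbQ\vect{u}=\lambda\vect{u}$,
\[
\int_{\{u_i(t,\cdot)>c\}}\left(\partial_t u_i-d_i\partial_{xx}u_i\right)\upd x=\lambda\int_{\{u_i(t,\cdot)>c\}}u_i\upd x+\sum_{j\in[N]}\int_{\{u_i(t,\cdot)>c\}}l_{i,j}u_j\upd x.
\]
On the zeroth-order side, the multifunction Hardy--Littlewood (Brascamp--Lieb--Luttinger) inequality applied to the nonnegative functions $\mathbf{1}_{\{u_i>c\}}$, $l_{i,j}$ and $u_j$ (nonnegativity of the off-diagonal $l_{i,j}$ is given by \ref{ass:cooperative}, and $u_j>0$) yields
\[
\int_{\{u_i(t,\cdot)>c\}}l_{i,j}u_j\upd x\leq \int_{\{u_i^\dagger(t,\cdot)>c\}}l_{i,j}^\dagger u_j^\dagger\upd x
\]
for every $i,j$, while equimeasurability gives $\int_{\{u_i>c\}}u_i=\int_{\{u_i^\dagger>c\}}u_i^\dagger$. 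On the parabolic side, an Alvino--Trombetti--Lions-type symmetrization adapted to the 1D periodic torus should provide
\[
\int_{\{u_i^\dagger(t,\cdot)>c\}}\left(\partial_t u_i^\dagger-d_i\partial_{xx}u_i^\dagger\right)\upd x\leq\int_{\{u_i(t,\cdot)>c\}}\left(\partial_t u_i-d_i\partial_{xx}u_i\right)\upd x:
\]
the $\partial_t$ contribution is actually an equality via Reynolds' transport formula combined with equimeasurability, and the $\partial_{xx}$ contribution is the P\'{o}lya--Szeg\H{o}/isoperimetric step, which in one dimension reduces to the observation that the superlevel set of $u_i$ has at least two boundary points whereas that of $u_i^\dagger$ has exactly two, combined with Cauchy--Schwarz on the coarea identity. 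Assembling these ingredients produces
\[
\int_{\{u_i^\dagger(t,\cdot)>c\}}((\dcbP-\veL^\dagger)\vect{u}^\dagger)_i\upd x\leq\lambda\int_{\{u_i^\dagger(t,\cdot)>c\}}u_i^\dagger\upd x
\]
for every $t$ and $c>0$; differentiating in $c$ and exploiting that $u_i^\dagger(t,\cdot)$ is symmetric-decreasing on each period gives the desired pointwise inequality and, through the min--max formula, the theorem.

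The main obstacle is the parabolic symmetrization step: the classical Alvino--Trombetti--Lions comparison is formulated for Cauchy problems with symmetric-decreasing initial data, whereas here we need a version compatible with the time-periodic boundary conditions and with the vector-valued nature of $\cbQ$. The hypotheses $n=1$ and $\vect{D}$ constant are precisely what makes the adaptation tractable: each $d_i\partial_{xx}$ is invariant under the reflection about $L_1/2$ underlying the periodic rearrangement, so the scalar parabolic symmetrization can be run componentwise, with the vectorial coupling confined to the Hardy--Littlewood step. An extension to spatially heterogeneous or nonconstant diffusion matrices, or to higher spatial dimensions, would require genuinely new ingredients and is not pursued here.
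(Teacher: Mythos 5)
Your level-set bookkeeping is mostly sound (the time term, the isoperimetric/Cauchy--Schwarz bound on the flux, and the three-function Hardy--Littlewood step for the off-diagonal coupling all go in the right direction, modulo the fact that $l_{i,i}$ need not be nonnegative, so the diagonal term requires the usual shift by $c\vect{I}$ before rearranging), but the final step is a genuine gap: from
\[
\int_{\{u_i^\dagger(t,\cdot)>c\}}\Bigl[\bigl((\dcbP-\veL^\dagger)\veu^\dagger\bigr)_i-\lambda u_i^\dagger\Bigr]\,\upd x\leq 0\quad\text{for all }c>0
\]
you cannot deduce the pointwise inequality $((\dcbP-\veL^\dagger)\veu^\dagger)_i\leq\lambda u_i^\dagger$ by ``differentiating in $c$''. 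The superlevel sets of $u_i^\dagger$ are the nested intervals centered at the symmetry point, so this family of inequalities only controls averages of the quantity $G_i=((\dcbP-\veL^\dagger)\veu^\dagger)_i-\lambda u_i^\dagger$ over centered intervals; differentiating a nonpositive function of $c$ gives no sign information, and shrinking the intervals yields $G_i\leq 0$ only at the symmetry point. In fact the pointwise claim you are after --- that the rearranged eigenfunction is a subsolution of the rearranged problem --- is not expected to be true; this is precisely why the classical Talenti/Alvino--Lions--Trombetti theory (and the paper, following Nadin) only proves a comparison of \emph{concentrations} $\veu\prec\vev$ between the original solution and the solution of the symmetrized problem, and then converts it into a spectral inequality by comparing the period (Poincar\'{e}) maps in $\mathcal{L}^\infty$ norm and their spectral radii, rather than by inserting a rearranged test function into the min--max characterization.

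Two further points would also need repair even if the pointwise inequality were available. First, $u_i^\dagger(t,\cdot)$ is in general only Lipschitz in $x$, its distributional second derivative can carry singular parts, and a ``standard mollification'' does not preserve a differential inequality for an operator with $(t,x)$-dependent zeroth-order coefficients, so the test function would not legitimately enter the $\caC^{1,2}_\upp$ min--max formula. Second, the identity for the time term requires some regularity of $t\mapsto u_i^\dagger(t,\cdot)$ that has to be justified (it holds through the distribution function, but not by a naive Reynolds formula). The workable route is the one of the paper: an elliptic Talenti comparison for the resolvent $(c+\dcbP)^{-1}$ in the $\prec$ sense, an implicit time discretization to get the parabolic comparison with time-periodic coefficients, and then the identification of $\lambda_{1,\upp}$ through the spectral radius of the period map.
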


Note that this theorem optimizes the distribution of each $l_{i,j}$ but does not optimize the distribution of mass in the matrix $\veL$.
Brenier \cite{Brenier_1991} showed that the rearrangement of a function of $x$ and the polar decomposition of an invertible matrix in 
$\R^{N\times N}$ are related notions, via the relations $\mu(x)=\mu^\#(u(x))$ ($\mu^\#$ is the spatial rearrangement, $u$ is unitary)
and $\vect{M}=\vect{R}\vect{U}$ ($\vect{R}=(\vect{M}\vect{M}^\upT)^\frac12$ is symmetric positive definite, $\vect{U}$ is orthogonal).
In particular, it is well-known that, similarly to $\lambda_{1,\upp}(-\Delta-\mu^\#)\leq\lambda_{1,\upp}(-\Delta-\mu)$,
any essentially nonnegative matrix $\vect{M}\in\R^{N\times N}$ satisfies 
$\lambda_{\upPF}(\vect{M})\leq \lambda_{\max}((\vect{M}\vect{M}^\upT)^\frac12)$, where
$\lambda_{\max}$ denotes the maximal eigenvalue of a real symmetric matrix. In other words, if $\cbQ$ has only constant coefficients,
\[
    \lambda_{1,\upp}(\dcbP-(\veL^\upT\veL)^\frac12)\leq\lambda_{1,\upp}(\dcbP-\veL),
\]
where the periodic principal eigenvalue $\lambda_{1,\upp}$ on the left-hand side is defined via the spectral theorem for self-adjoint 
compact operators instead of via the Krein--Rutman theorem -- the matrix $(\veL^\upT\veL)^\frac12$ is not, in general, essentially nonnegative.
However the proofs of $\lambda_{1,\upp}(-\Delta-\mu^\#)\leq\lambda_{1,\upp}(-\Delta-\mu)$ and of 
$\lambda_{\upPF}(\vect{M})\leq \lambda_{\max}((\vect{M}\vect{M}^\upT)^\frac12)$ differ strongly. The first one typically uses
the Hardy--Littlewood inequality, which is false for matrices as showed by Brenier \cite{Brenier_1991}. Therefore
it seems that optimizing $\veL$ in the spatial sense and in the matrix sense simultaneously is much more difficult and 
we leave it as a very interesting open problem.

\subsection{Extension to systems with a coupling default}\label{sec:extension_coupling_default}

Theorem \ref{thm:continuity_eigenvalue_L} shows how results on fully coupled cooperative systems (and especially the 
results of Subsections
\ref{sec:theorems_existence_characterization}--\ref{sec:theorems_optimization}) can be applied to more general cooperative systems, that
need not satisfy \ref{ass:irreducible}, by understanding them as networks of fully coupled subsystems. It also shows that such a perspective
is limited regarding $\lambda_1$, as we are now going to explain.

Recall that the Perron--Frobenius eigenvalue $\lambda_\upPF$ can be understood as the restriction to the set of irreducible
essentially nonnegative matrices of the dominant eigenvalue, which is a well-defined continuous mapping from the set of 
essentially nonnegative matrices to $\R$.
Therefore it is natural to suggest the following extension of the generalized principal eigenvalues $\lambda_{1,z}$ and $\lambda_1(\Omega)$:
\begin{equation*}
    \lambda_{1,z}(\cbQ)=\min_{k\in[N']}\lambda_{1,z}(\cbQ_k)=\min_{k\in[N']}\lambda_{1,\upp}(\upe_{-z}\cbQ_{k}\upe_z),
\end{equation*}
\begin{equation*}
    \lambda_1(\cbQ,\Omega)=\min_{k\in[N']}\lambda_1(\cbQ_k,\Omega),
\end{equation*}
where $\cbQ_k$ denotes as in the statement of Theorem \ref{thm:continuity_eigenvalue_L} the $k$-th fully coupled block of
$\cbQ=\dcbP-\veL^\triangle$ in block upper triangular form. With these definitions, Theorem \ref{thm:continuity_eigenvalue_L} shows that
the extension of each $\lambda_{1,z}$, and in particular that of $\lambda_1'$, is continuous. However, as explained in Remark
\ref{rem:counter-example_lambda1_reducible}, the inequality 
\[
    \lim_{\veL\to\veL^\triangle}\lambda_1(\cbQ)=\max_{z\in\R^n}\min_{k\in[N']}\lambda_{1,z}(\cbQ_k)\leq\min_{k\in[N']}\lambda_1(\cbQ_k)
\]
is in some cases strict: the extension of $\lambda_1$ suggested above is not lower semi-continuous, and \textit{a fortiori} not continuous. 

It might be tempting to think that this discontinuity is caused by a wrong choice of generalized definition, and that the correct
choice should be continuous. For instance, defining $\lambda_1$ as $\max_{z\in\R^n}\lambda_{1,z}$ would give a continuous
extension to systems with a coupling default. In view of the literature 
\cite{Berestycki_Nir,Berestycki_Ros_1,Arapostathis_Biswas_Pradhan_2020,Nadin_2007}, 
it is also natural to consider the original definition \eqref{def:lambda1} of $\lambda_1$, and since the coupling default induces a
weaker maximum principle, it is also natural to consider a relaxed definition with nonnegative nonzero super-solutions instead 
of positive super-solutions. In order to compare these quantities, let us denote them as follows:
\[
    \lambda_1^0=\min_{k\in[N']}\lambda_1(\cbQ_k),
\]
\[
    \lambda_1^1=\max_{z\in\R^n}\lambda_{1,z},
\]
\[
    \lambda_1^2=\sup\left\{ \lambda\in\R\ |\ \exists\veu\in\caC^{1,2}_{t-\upp}(\R\times\R^n,(\vez,\vei))\ \cbQ\veu\geq\lambda\veu \right\},
\]
\[
    \lambda_1^3=\sup\left\{ \lambda\in\R\ |\ \exists\veu\in\caC^{1,2}_{t-\upp}(\R\times\R^n),[\vez,\vei)),
    \ \veu\neq\vez,\ \cbQ\veu\geq\lambda\veu \right\}.
\]
Then we can show\footnote{The proof is voluntarily not detailed, for the sake of brevity.} that
\[
    \lambda_1^1\leq\lambda_1^0\leq\max_{k\in[N']}\lambda_1(\cbQ_k)=\lambda_1^3,\quad\lambda_1^2\leq\lambda_1^0.
\]
The inequality $\lambda_1^0\leq\lambda_1^3$ is strict as soon as two $\lambda_1(\cbQ_k)$ differ. 
The equality $\lambda_1^0=\lambda_1^2$ can be verified if $\cbQ$ is block diagonal; although the proof seems to require some work,
we believe that it remains true even if $\cbQ$ is not block diagonal. In any case, since the counter-example of Remark
\ref{rem:counter-example_lambda1_reducible} is block diagonal, there are block diagonal operators $\cbQ$ such that
$\lambda_1^1<\lambda_1^0=\lambda_1^2<\lambda_1^3$. This shows that reasonable definitions of $\lambda_1$ other than 
$\lambda_1^1$ cannot be continuous as \ref{ass:irreducible} ceases to be true. 

Let us point out that $\lambda_1^1$ is indisputably the least natural definition.
In particular, having in mind that $\lambda_1<0$ should be a criterion for population growth (see Subsection \ref{sec:relation_with_semilinear} 
below), then the natural definitions would be either $\lambda_1^0$ (growth of at least one population) or $\lambda_1^3$ (growth of all 
populations). In both cases, the default of lower semi-continuity means that populations with vanishingly small couplings might have 
much stronger chances than decoupled 
populations. This has strong implications for modeling, as simplifying a vanishingly coupled model into a decoupled one is often tempting.
It has been related to the emergence in eco-evolutionary models of unexpectedly large spreading speeds in the vanishing mutation limit. 
We refer to Elliott--Cornell \cite{Elliott_Cornel} for the first formal calculations and to Morris--B\"{o}rger--Crooks 
\cite{Morris_Borger_Crooks} for the rigorous analysis. 

\subsection{Relation with KPP-type semilinear systems}\label{sec:relation_with_semilinear}

In the scalar framework of KPP-type reaction--diffusion equations, $\lambda_1<0$ implies the locally uniform convergence of
all solutions to the unique periodic and uniformly positive entire solution, whereas $\lambda_1'\geq 0$ implies the 
uniform convergence of all solutions to $0$, as proved by Nadin \cite{Nadin_2010}.
The study of entire solutions is much more delicate in the multidimensional setting, simply due to topological freedom
\cite{Morris_Borger_Crooks,Girardin_2017,Girardin_2018,Girardin_Griette_2020}, and their uniqueness and stability 
properties cannot in general be inferred from the linearization at $\vez$. 
However, we will show in a sequel \cite{Girardin_2023} that in the multidimensional case, the results of 
Nadin \cite{Nadin_2010} can be generalized in the following weak form: $\lambda_1<0$ implies the locally uniform 
persistence of all solutions and the existence of a periodic and uniformly positive entire solution, whereas 
$\lambda_1'\geq 0$ implies the uniform convergence of all solutions to $\vez$.

Going toward these results is one of our main motivations for the present work, the other one being the future construction of
pulsating traveling waves \cite{Nadin_2009}.

\section{Preliminaries}

Many of our proofs will use a strong maximum principle and a Harnack inequality for parabolic cooperative systems.
These already exist in the literature under slightly different forms (we refer for instance to 
\cite{Foldes_Polacik_2009,Protter_Weinberger,Bai_He_2020} or to
\cite{Alziary_Fleckinger_Lecureux,Araposthathis_,Chen_Zhao_97,Sweers_1992,Birindelli_Mitidieiri_Sweers,Figueiredo_1994,Figueiredo_Mit} 
for the elliptic case).
For the sake of self-containment and because the parabolic Harnack inequality in \cite{Foldes_Polacik_2009} is insufficient for our purposes, 
in this section, we state or prove what we need afterward. 

\subsection{Strong maximum principle}\label{sec:maximum_principle}
The strong maximum principle for time periodic nonnegative solutions of $\cbQ\veu+K\veu=\vez$ with large $K>0$ (actually, $K>\lambda_1$) 
is established as a side result of the preparation of the application of the Krein--Rutman theorem, just as in Bai--He \cite{Bai_He_2020}. 
In fact, we can repeat the argument of \cite[p. 9882]{Bai_He_2020} to obtain the strong maximum principle for
all values of $K\in\R$, including $K=0$ (large values of $K$ are required only for the inversion of the operator), and for super-solutions
that might not be time periodic but are well-defined in a sufficiently distant past. For clarity, we state this version of the 
strong maximum principle below.

\begin{prop}[Strong maximum principle]
\label{prop:strong_maximum_principle}
Let $\veu\in\caC^{1,2}((0,+\infty)\times\R^n,[\vez,\vei))\cap\caC([0,+\infty)\times\R^N)$ such that $\cbQ\veu\geq\vez$ 
in $(0,+\infty)\times\R^N$. 

If there exist $t^\star> T$, $x^\star\in\R^n$ and $i^\star\in[N]$ such that $u_{i^\star}(t^\star,x^\star)=0$, then $\veu=\vez$ in $[0,+\infty)\times\R^N$.
\end{prop}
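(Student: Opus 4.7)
The strategy is to combine the classical scalar parabolic strong maximum principle with the cooperativity assumption \ref{ass:cooperative} and the irreducibility assumption \ref{ass:irreducible}, propagating the single vanishing value $u_{i^\star}(t^\star,x^\star)=0$ first backward in time along the $i^\star$-th component, then laterally across the coupling graph of $\overline{\veL}$. The $i^\star$-th line of the inequality $\cbQ\veu\geq\vez$ reads
\[
    \caP_{i^\star}u_{i^\star}-l_{i^\star,i^\star}u_{i^\star}\geq\sum_{j\neq i^\star}l_{i^\star,j}u_j,
\]
and by \ref{ass:cooperative} every off-diagonal $l_{i^\star,j}$ is pointwise nonnegative while every $u_j$ is nonnegative, so the right-hand side is $\geq 0$. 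Hence $u_{i^\star}$ is a nonnegative supersolution of a linear scalar parabolic operator with bounded coefficients, vanishing at the interior point $(t^\star,x^\star)$, and the classical scalar strong maximum principle yields $u_{i^\star}\equiv 0$ on $[0,t^\star]\times\R^n$.

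Next, I would transport this vanishing laterally. Reinserting $u_{i^\star}\equiv 0$ in the line above forces $\sum_{j\neq i^\star}l_{i^\star,j}(t,x)u_j(t,x)=0$ on $(0,t^\star)\times\R^n$, and by nonnegativity every summand vanishes identically. The irreducibility assumption \ref{ass:irreducible} provides some $j_1\neq i^\star$ with $\overline{l}_{i^\star,j_1}>0$, so the open set $V_1=\{l_{i^\star,j_1}>0\}$ is nonempty. Here the hypothesis $t^\star>T$ becomes essential: the time-projection of $V_1$ onto $\R$ is an open, nonempty, $T$-periodic subset of $\R$, hence meets every interval of length $\geq T$ and in particular meets $(0,t^\star)$. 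Picking $(t_1,x_1)\in V_1$ with $0<t_1<t^\star$ gives $u_{j_1}(t_1,x_1)=0$, and a second application of the scalar strong maximum principle to $u_{j_1}$ (which satisfies the analogous scalar inequality by the same cooperative argument) yields $u_{j_1}\equiv 0$ on $[0,t_1]\times\R^n$.

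The main technical obstacle is then to iterate this two-step scheme along the whole strongly connected graph of $\overline{\veL}$ without losing the entire time window, since each time an edge of the graph is traversed the extinction time may shrink by up to one period $T$. I would handle this by careful bookkeeping, exploiting the $T$-periodicity of the sets $\{l_{i,j}>0\}$ to always select zeros in the latest admissible period, and using the bound $N-1$ on the diameter of the graph, thereby producing for each $i\in[N]$ a positive time $t_i$ with $u_i\equiv 0$ on $[0,t_i]\times\R^n$. Setting $t_\sharp=\min_{i\in[N]} t_i>0$ yields $\veu\equiv\vez$ on $[0,t_\sharp]\times\R^n$. The final extension of this vanishing to $[0,+\infty)\times\R^n$ would then follow as in Bai--He \cite{Bai_He_2020}: once $\veu$ and all its derivatives vanish on a full time period, the hypothesis $\cbQ\veu\geq\vez$ yields $\cbQ\veu\equiv\vez$ there, and a forward Cauchy-uniqueness bootstrap combined with the $T$-periodicity of the coefficients of $\cbQ$ propagates the vanishing indefinitely in time.
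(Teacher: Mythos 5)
Your overall skeleton (component-wise scalar strong maximum principle plus propagation of the zero through the strongly connected graph of $\overline{\veL}$, using the $T$-periodicity of the couplings to find activation times) is exactly the Bai--He-type argument the paper invokes instead of writing a proof, and your first step is sound: $u_{i^\star}$ is a nonnegative supersolution of $\caP_{i^\star}-l_{i^\star,i^\star}$ by \ref{ass:cooperative}, so it vanishes on $(0,t^\star]\times\R^n$. The first genuine gap is the time bookkeeping you defer to ``careful bookkeeping''. With only $t^\star>T$, the first lateral step yields $u_{j_1}\equiv 0$ merely on $(0,t_1]$ with $t_1>t^\star-T$, and this window may have length well below $T$; the next coupling out of $\{i^\star,j_1\}$ is only guaranteed to activate once per period, so it may never activate inside $(0,t_1]$, and if the new component is reachable only through $j_1$ (a chain-shaped coupling graph, which \ref{ass:irreducible} allows), the propagation stops. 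No choice of zeros repairs this: one can arrange a smooth, cooperative, irreducible-on-average $3\times 3$ example with staggered activation windows admitting a nonnegative supersolution with $u_1\equiv 0$ up to some $t^\star$ slightly larger than $T$, $u_2\equiv 0$ only on a short initial interval, and $u_3>0$ everywhere. What makes the induction cost nothing in the setting the proposition is actually used for (and in \cite{Bai_He_2020}) is time periodicity of $\veu$: then $u_{i^\star}\equiv 0$ on $(0,t^\star]$ upgrades to all times, every set $\{l_{i,j}>0\}$ is available in full, and the graph induction closes. Without periodicity you need $t^\star$ of order $(N-1)T$ and must weaken the conclusion to an initial time interval.

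The second gap is the forward extension to $[0,+\infty)$. Uniqueness for the Cauchy problem applies to solutions of $\cbQ\veu=\vez$, not to supersolutions: knowing $\veu\equiv\vez$ on a full period gives $\cbQ\veu=\vez$ only there and says nothing about later times. Indeed $u(t,x)=\bigl((t-t_\sharp)_+\bigr)^2$ with $t_\sharp>t^\star>T$ is a nonnegative $\caC^{1,2}$ supersolution of $\partial_t-\Delta$ (take $N=1$, $\veL=0$, which satisfies the standing assumptions by the paper's convention on $1\times1$ matrices) that vanishes at $(t^\star,x^\star)$ and on $[0,t_\sharp]$ but is positive afterwards; so the forward conclusion cannot be obtained by any bootstrap for general supersolutions, and your last step is not just unjustified but unrescuable as stated. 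The conclusion on all of $[0,+\infty)$ is recovered only when $\veu$ is time periodic -- which is the case in every application of the proposition in the paper -- where vanishing at earlier times propagates by periodicity, not by Cauchy uniqueness. In short: either restrict the statement to time-periodic supersolutions (then your scheme closes and coincides with the cited argument), or keep general supersolutions but take $t^\star$ sufficiently far in the past and conclude only that $\veu$ vanishes on an initial time interval.
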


A similar property is satisfied in bounded domains. For the sake of simplicity, we only consider smooth boundaries.

\begin{prop}[Strong maximum principle in bounded domains]
\label{prop:strong_maximum_principle_bounded_domains}
Let $\Omega\subset\R^n$ be a nonempty smooth bounded open connected set and 
$\veu\in\caC^{1,2}((0,+\infty)\times\Omega,[\vez,\vei))\cap\caC^{0,1}([0,+\infty)\times\overline{\Omega})$ such that $\cbQ\veu\geq\vez$ 
in $(0,+\infty)\times\Omega$. 

Assume that there exists $x_0\in\Omega$ such that $[x_0,x_0+L]\subset\Omega$.

If there exist $t^\star> T$, $x^\star\in\Omega$ and $i^\star\in[N]$ such that $u_{i^\star}(t^\star,x^\star)=0$, then $\veu=\vez$ in
$[0,+\infty)\times\overline{\Omega}$.

If there exist $t^\star>T$, $x^\star\in\partial\Omega$ and $i^\star\in[N]$ such that $u_{i^\star}(i^\star,x^\star)=\nu(x^\star)\cdot\nabla u_{i^\star}(t^\star,x^\star)=0$, where $\nu(x^\star)\in\R^n$ is the outward pointing unit normal vector,
then $\veu=\vez$ in $[0,+\infty)\times\overline{\Omega}$.
\end{prop}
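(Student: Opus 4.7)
The plan is to reduce the system to scalar inequalities via the cooperative structure, apply the classical scalar parabolic strong maximum principle (with the parabolic Hopf lemma for the boundary-derivative case), and then propagate the resulting zero across all components of $\veu$ by combining the irreducibility assumption \ref{ass:irreducible} with space-time periodicity and the geometric assumption $[x_0,x_0+L]\subset\Omega$.

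First, since the system is weakly coupled and $l_{i,j}\geq 0$ pointwise for $i\neq j$ by \ref{ass:cooperative}, the nonnegativity of $\veu$ and the inequality $\cbQ\veu\geq\vez$ yield, componentwise,
\[
\caP_i u_i -l_{i,i}u_i\geq\sum_{j\neq i}l_{i,j}u_j\geq 0\quad\text{on }(0,+\infty)\times\Omega.
\]
In the interior case, the classical scalar parabolic strong maximum principle applied to $u_{i^\star}\geq 0$, which vanishes at the interior point $(t^\star,x^\star)$ with $t^\star>T>0$, forces $u_{i^\star}\equiv 0$ on $[0,t^\star]\times\overline{\Omega}$. In the boundary case, I would invoke the parabolic Hopf lemma on the smooth bounded domain: were $u_{i^\star}$ not identically zero in a parabolic neighborhood of $(t^\star,x^\star)\in(T,+\infty)\times\partial\Omega$, its outward normal derivative at that point would be strictly negative, contradicting the hypothesis $\nu(x^\star)\cdot\nabla u_{i^\star}(t^\star,x^\star)=0$. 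Either way, $u_{i^\star}\equiv 0$ on $[0,t^\star]\times\overline{\Omega}$.

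Knowing $u_{i^\star}\equiv 0$ on $[0,t^\star]\times\Omega$ collapses the previous inequality to
\[
l_{i^\star,j}(t,x)\,u_j(t,x)=0\quad\text{on }[0,t^\star]\times\Omega,\quad\text{for all }j\neq i^\star.
\]
For every $j$ with $\overline{l}_{i^\star,j}>0$, the $T$-periodicity in $t$ and $L$-periodicity in $x$ of $l_{i^\star,j}$, combined with $[x_0,x_0+L]\subset\Omega$ and $t^\star>T$, furnish a point $(\tilde t,\tilde x)\in(0,t^\star)\times\Omega$ where $l_{i^\star,j}(\tilde t,\tilde x)>0$; H\"older continuity extends this positivity to an open neighborhood, forcing $u_j$ to vanish there, and a second application of the scalar strong maximum principle then yields $u_j\equiv 0$ on $[0,\tilde t]\times\overline{\Omega}$. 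Iterating this propagation along paths in the directed graph whose edges are the pairs $(i,j)$ with $\overline{l}_{i,j}>0$, which is strongly connected by \ref{ass:irreducible}, reaches every index of $[N]$ in at most $N-1$ steps; by choosing at each step the positivity point as close as possible to the current time horizon modulo $T$ (again by periodicity) one controls the cumulative loss of time horizon, and a final forward-uniqueness argument for the cooperative Cauchy problem extends the vanishing set to $[0,+\infty)\times\overline{\Omega}$.

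The principal obstacle is the propagation step, where the global algebraic irreducibility of $\overline{\veL}$, a property of the suprema $\overline{l}_{i,j}$ taken over all of $\R\times\R^n$, must be converted into pointwise zeros inside the parabolic cylinder $[0,t^\star]\times\Omega$. The geometric assumption $[x_0,x_0+L]\subset\Omega$ together with space-time periodicity is precisely what guarantees this transfer: it ensures that each $l_{i,j}$ attains its supremum at an interior point of $\Omega$. Without it, irreducibility of $\overline{\veL}$ would be an empty condition at the pointwise level and the argument would break down; once it is in place, the rest is the parabolic analogue of the graph-walk argument used in the proof of Proposition \ref{prop:strong_maximum_principle}.
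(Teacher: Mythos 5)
Your overall route is the expected one (and the one the paper itself gestures at via Bai--He): componentwise reduction to scalar inequalities, the scalar parabolic strong maximum principle plus the Hopf lemma at the boundary, and then propagation of the zero through the coupling graph, using space-time periodicity together with $[x_0,x_0+L]\subset\Omega$ to convert the irreducibility of $\overline{\veL}$ into pointwise positivity of some $l_{i,j}$ inside the cylinder. The two points you dispatch in one clause each at the end are, however, genuine gaps.

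First, the time-horizon erosion is not controlled by ``choosing the positivity point as close as possible to the current horizon modulo $T$''. All that periodicity gives is that the latest time $\tau<h$ at which $l_{i,j}$ is positive somewhere in the cell satisfies $\tau>h-T$; each propagation step may therefore lose almost a full period. Under the stated hypothesis $t^\star>T$ this is enough for the first step only: after it, the horizon may already be smaller than $T$, and the positivity times (mod $T$) of the next coupling may lie entirely above it, so the walk stalls. For $N\geq 3$ this is not a bookkeeping artefact: take $l_{1,2}>0$ only for $t\in(0.05T,0.1T)$ mod $T$, $l_{2,3}>0$ only for $t\in(0.2T,0.3T)$ mod $T$, $l_{3,1}>0$ somewhere (so $\overline{\veL}$ is irreducible), $l_{1,3}=l_{2,1}=0$, and $t^\star=1.05T$; then your iteration only yields $u_2\equiv\vez$ on $[0,0.1T]$ and never reaches $u_3$, and one can in fact exhibit spatially constant nonnegative supersolutions (solving $h_i'\geq\sum_j l_{i,j}h_j$) with $u_1(t^\star,\cdot)=0$ and $u_3\not\equiv 0$. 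To close the induction you need either $t^\star>(N-1)T$ (and then only a common vanishing interval $[0,t^\star-(N-1)T]$), or the hypothesis under which the paper actually applies the proposition, namely time-periodicity of $\veu$: once one component vanishes on some interval, periodicity makes it vanish for all $t$, the inequality $l_{i,j}u_j\leq\caP_i u_i-l_{i,i}u_i=0$ then holds at arbitrarily large times, the coupling is positive at arbitrarily large times, and the backward scalar maximum principle kills the next component on $[0,\tau]$ for every $\tau$; no erosion occurs.

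Second, the final step ``a forward-uniqueness argument for the cooperative Cauchy problem extends the vanishing set to $[0,+\infty)$'' is not valid: forward uniqueness applies to solutions, not to supersolutions. A nonnegative supersolution vanishing up to time $t^\star$ may become positive afterwards; for instance $u_i(t,x)=\upe^{Kt}g(t)$ for every $i$, with $g=0$ on $[0,t^\star]$, $g(t)=\upe^{-1/(t-t^\star)}$ for $t>t^\star$ and $K$ large, satisfies $\cbQ\veu\geq\vez$, is nonnegative, vanishes at $(t^\star,x^\star)$ with $t^\star>T$, and is not identically zero. So no forward-in-time conclusion can be extracted from the backward information for general supersolutions; here again the forward part of the conclusion is recovered exactly when $\veu$ is time-periodic (which is the case in every application made in the paper), and your proof should either invoke that hypothesis explicitly or restrict the conclusion accordingly.
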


These versions of the strong maximum principle exploit the full coupling assumption \ref{ass:irreducible}:
if one component of $\veu$ is zero, then so are the others. Nonnegative super-solutions are either zero or positive. 
Without \ref{ass:irreducible}, this alternative is false in general; we refer, for weaker statements applicable to general 
cooperative systems, to the celebrated book by Protter and Weinberger \cite[Chapter 3, Section 8]{Protter_Weinberger}.

\subsection{Harnack inequality}
In this section, we denote by $\sigma>0$ the smallest positive entry of $\overline{\veL}$ and by $K\geq 1$ the smallest positive number such that
\[
    K^{-1}\leq\min_{i\in[N]}\min_{y\in\Sn}\min_{(t,x)\in\clOmper}\left(y\cdot A_i(t,x)y\right),
\]
\[
    \max_{i\in[N]}\max_{y\in\Sn}\max_{(t,x)\in\clOmper}\left(y\cdot A_i(t,x)y\right)\leq K,
\]
\[
    \max_{i\in[N]}\max_{\alpha\in[n]}\max_{(t,x)\in\clOmper}|q_{i,\alpha}(t,x)|\leq K,
\]
\[
    \max_{i,j\in[N]}\sup_{(t,x)\in\clOmper}|l_{i,j}(t,x)|\leq K.
\]

Applying \FPH inequality \cite[Theorem 3.9]{Foldes_Polacik_2009} to the operator $\cbQ$, we obtain the following property.

\begin{prop}
Let $\theta>0$. Assume the irreducibility of the matrix 
\[
    \underline{\veL}=\left(\displaystyle\min_{(t,x)\in\clOmper}l_{i,j}(t,x)\right)_{(i,j)\in[N]^2}
\]
and denote $\eta>0$ its smallest positive entry.

There exists a constant $\overline{\kappa}_{\theta,\eta}>0$, determined only by $n$, $N$, $\eta$, $K$ and the parameter 
$\theta$ such that, if $\veu\in\caC([-2\theta,6\theta]\times[-\frac{3\theta}{2},\frac{3\theta}{2}]^n,[\vez,\vei))$ is a solution of 
$\cbQ\veu=\vez$, then
\[
    \min_{i\in[N]}\min_{(t,x)\in[5\theta,6\theta]\times[-\frac{\theta}{2},\frac{\theta}{2}]^n}u_i(t,x)
    \geq \overline{\kappa}_{\theta,\eta}\max_{i\in[N]}\max_{(t,x)\in[0,2\theta]\times[-\frac{\theta}{2},\frac{\theta}{2}]^n}u_i(t,x).
\]
\end{prop}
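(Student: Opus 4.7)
The plan is to reduce this statement to a direct application of the Földes--Poláčik Harnack inequality for cooperative parabolic systems (Theorem 3.9 of \cite{Foldes_Polacik_2009}). The bulk of the work consists in verifying that the hypotheses of that theorem are met on the cylinder $[-2\theta,6\theta]\times[-\frac{3\theta}{2},\frac{3\theta}{2}]^n$ and in checking that the geometric parameters line up with the statement we wish to prove; no genuinely new analytic content is needed.

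First, I would verify the coupling hypothesis. The Földes--Poláčik theorem requires a cooperative system whose off-diagonal zeroth-order couplings form an irreducible network pointwise, with a positive lower bound on the relevant entries. Here, since $\underline{\veL}$ is essentially nonnegative and irreducible with smallest positive entry $\eta>0$, one has, for every pair $(i,j)\in[N]^2$ with $\underline{l}_{i,j}>0$, the pointwise inequality $l_{i,j}(t,x)\geq\eta$ at every $(t,x)\in\R\times\R^n$. In particular the directed graph $\{(i,j):l_{i,j}(t,x)\geq\eta\}$ contains the support of $\underline{\veL}$ and is therefore irreducible uniformly in $(t,x)$, with uniform coupling strength $\eta$. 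This supplies exactly the hypothesis required by the reference.

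Next, I would verify the remaining structural hypotheses. By definition of $K$, each $A_i$ is symmetric with $\min_{y\in\Sn} y\cdot A_i y\geq K^{-1}$ (uniform parabolicity) and $\max y\cdot A_i y\leq K$, while $|q_{i,\alpha}|$ and $|l_{i,j}|$ are bounded by $K$. These bounds together with the smoothness hypothesis \ref{ass:smooth_periodic} put $\cbQ$ within the framework of \cite[Theorem 3.9]{Foldes_Polacik_2009}. Feeding $\veu\in\caC$ with $\cbQ\veu=\vez$ into that theorem then produces, on any fixed parabolic cylinder whose dimensions are comparable to $\theta$, a Harnack-type inequality
\[
    \min_{i}\min_{[5\theta,6\theta]\times[-\theta/2,\theta/2]^n}u_i
    \geq \overline{\kappa}_{\theta,\eta}\max_{i}\max_{[0,2\theta]\times[-\theta/2,\theta/2]^n}u_i,
\]
with $\overline{\kappa}_{\theta,\eta}>0$ depending quantitatively on the structural constants $n$, $N$, $K$, $\eta$ and the geometric scale $\theta$.

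The only step that requires a modicum of care, and which I would single out as the main (minor) obstacle, is matching the precise geometry of \cite[Theorem 3.9]{Foldes_Polacik_2009} to the cylinders in our statement: the ``past'' cylinder $[0,2\theta]\times[-\theta/2,\theta/2]^n$ where the maximum is taken, the ``future'' cylinder $[5\theta,6\theta]\times[-\theta/2,\theta/2]^n$ where the minimum is taken, and the enveloping cylinder $[-2\theta,6\theta]\times[-\frac{3\theta}{2},\frac{3\theta}{2}]^n$ providing the buffer needed for the internal Harnack chain and the iteration across components. After a translation in time and a rescaling by $\theta$ this reduces to the fixed-scale version in \cite{Foldes_Polacik_2009}, and the dependence of the constant on $\theta$ emerges explicitly from the rescaling, yielding the claimed form $\overline{\kappa}_{\theta,\eta}$.
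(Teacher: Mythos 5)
Your proposal is correct and matches the paper's treatment: the paper obtains this proposition precisely by applying F\"{o}ldes--Pol\'{a}\v{c}ik's Harnack inequality \cite[Theorem 3.9]{Foldes_Polacik_2009} to $\cbQ$, the key point being, as you note, that irreducibility of $\underline{\veL}$ forces $l_{i,j}(t,x)\geq\underline{l}_{i,j}\geq\eta$ whenever $\underline{l}_{i,j}>0$, so the system is fully coupled pointwise with uniform strength $\eta$ and the structural bounds are controlled by $K$. Your hypothesis verification and geometric matching are exactly the (routine) content the paper leaves implicit.
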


However, our irreducibility assumption \ref{ass:irreducible} is concerned with the matrix 
\[
    \overline{\veL}=\left( \max_{(t,x)\in\clOmper}l_{i,j}(t,x)\right)_{(i,j)\in[N]^2}
\]
and not with $\underline{\veL}$. By continuity and essential nonnegativity, $\overline{\veL}$ is irreducible if and only if
\[
    \left(T|[0,L]|\right)^{-1}\int_{\Omega_\upp}\veL(t,x)\upd t\upd x
\]
is itself irreducible. Hence we can understand the assumption \ref{ass:irreducible} as ``$\veL(t,x)$ is irreducible on average''.
It is known that such an assumption is sufficient, and in some sense necessary, for full coupling of the parabolic or elliptic 
operator; refer, for instance, to \cite{Bai_He_2020,Araposthathis_,Sweers_1992,Birindelli_Mitidieiri_Sweers}.

Since \FPH inequality requires the pointwise irreducibility of $\veL$, 
which is a much stronger assumption than the irreducibility on average (there are simple examples of matrices that are irreducible 
on average but reducible pointwise at all $(t,x)$, see for instance Remark \ref{rem:counter-example_average_eigenvalue}), 
it is not satisfying for our purposes. Actually, going through the proof of \cite[Theorem 3.9]{Foldes_Polacik_2009}, it appears that 
its adaptation to our setting is not straightforward, as F\"{o}ldes and Pol\'{a}\v{c}ik overcome the key obstacle by 
constructing a nonnegative nonzero sub-solution smaller than $\eta$ multiplied by some positive constant. Nevertheless, since 
\ref{ass:irreducible} is known to be the optimal assumption for full coupling, it is natural to expect a similar Harnack inequality 
to hold, provided the parabolic cylinder under consideration is sufficiently larger than the periodicity cell $\Omega_\upp$. 
This is what we prove below, drawing inspiration from the elliptic case studied in Araposthathis--Ghosh--Marcus \cite{Araposthathis_}.

By convenience for future use, we state the result for a zeroth order, diagonal, non-necessarily periodic perturbation of $\cbQ$. The diffusion 
and advection terms can be perturbed similarly if needed.

\begin{prop}[Fully coupled Harnack inequality]\label{prop:harnack_inequality}
Let $\theta\geq\max\left(T,L_1,\dots,L_n\right)$ and $\vect{f}\in\mathcal{L}^\infty\cap\caC^{\delta/2,\delta}(\R\times\R^n,\R^N)$ 
with $\delta\in(0,1)$. Let $F>0$ such that  
\[
    \max_{i\in[N]}\sup_{(t,x)\in\R\times\R^n}|f_i(t,x)|\leq F.
\]

There exists a constant $\overline{\kappa}_{\theta,F}>0$, determined only by $n$, $N$, $\sigma$, $K$ and the parameters
$\theta$ and $F$ such that, if $\veu\in\caC([-2\theta,6\theta]\times[-\frac{3\theta}{2},\frac{3\theta}{2}]^n,[\vez,\vei))$ 
is a solution of $\cbQ\veu=\diag(\vect{f})\veu$, then
\[
    \min_{i\in[N]}\min_{(t,x)\in[5\theta,6\theta]\times[-\frac{\theta}{2},\frac{\theta}{2}]^n}u_i(t,x)
    \geq \overline{\kappa}_{\theta,F}\max_{i\in[N]}\max_{(t,x)\in[0,2\theta]\times[-\frac{\theta}{2},\frac{\theta}{2}]^n}u_i(t,x).
\]
\end{prop}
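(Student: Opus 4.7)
The plan is to follow the overall scheme of F\"{o}ldes--Pol\'{a}\v{c}ik, but to replace their pointwise irreducibility assumption on $\veL$ by a space-time chaining argument that uses periodicity together with the size condition $\theta\geq\max(T,L_1,\dots,L_n)$ to transport positivity across indices. The argument rests on three ingredients: a scalar parabolic Harnack for each component, a local inter-component comparison valid wherever an off-diagonal entry is quantitatively positive, and a chaining procedure realizing the irreducibility of $\overline{\veL}$ in space-time.

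For the scalar ingredient, cooperativity and $\veu\geq\vez$ yield, for every $i\in[N]$,
\[
\caP_i u_i = l_{i,i}u_i+\sum_{j\neq i}l_{i,j}u_j+f_iu_i\geq (l_{i,i}+f_i)u_i,
\]
so each $u_i$ is a nonnegative super-solution of a scalar parabolic operator whose coefficients are uniformly bounded by constants depending only on $K$ and $F$. The classical Moser/Krylov--Safonov parabolic Harnack inequality then gives a quantitative comparison for $u_i$ between parabolic sub-cylinders of controlled geometry, with constants depending only on $n$, $K$, $F$ and the sub-cylinder dimensions. For the local inter-component step, if $l_{i,j}(t^\star,x^\star)\geq\sigma/2$ at some point, H\"{o}lder continuity with norm $\leq K$ yields a radius $\rho=\rho(\sigma,K)>0$ and a parabolic ball $\mathcal{B}_\rho$ around $(t^\star,x^\star)$ on which $l_{i,j}\geq\sigma/4$; hence
\[
\caP_i u_i\geq\tfrac{\sigma}{4}u_j+(l_{i,i}+f_i)u_i \quad\text{on }\mathcal{B}_\rho,
\]
and comparing $u_i$ with the solution of the corresponding scalar Cauchy problem having source $(\sigma/4)u_j$ and zero initial data, together with a Gaussian lower bound on its fundamental solution, gives $\inf_{\mathcal{B}'}u_i\geq c(\sigma,K,F,\rho)\inf_{\mathcal{B}_\rho}u_j$ on a slightly later sub-cylinder $\mathcal{B}'\subset\mathcal{B}_\rho$.

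Finally, the chaining: irreducibility of $\overline{\veL}$ provides, for any $(i,j)\in[N]^2$, a sequence $i=i_0,i_1,\dots,i_m=j$ with $m\leq N-1$ and $\overline{l}_{i_{s-1},i_s}>0$, and by definition of $\sigma$ each $l_{i_{s-1},i_s}$ attains a value $\geq\sigma/2$ at some point $(t^\star_s,x^\star_s)\in\clOmper$. Periodicity of the coefficients together with $\theta\geq\max(T,L_1,\dots,L_n)$ allow us to translate each such point by integer multiples of $(T,L_1,\dots,L_n)$ so as to position the sequence inside the large cylinder $[-2\theta,6\theta]\times[-\frac{3\theta}{2},\frac{3\theta}{2}]^n$ with strictly increasing, well-separated times and with enough room on both sides of each $(t^\star_s,x^\star_s)$ for a scalar Harnack connector. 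Starting from a point of $[5\theta,6\theta]\times[-\frac{\theta}{2},\frac{\theta}{2}]^n$ at which some $u_i$ is minimized and walking backwards through the chain, we alternate the local coupling (to jump between components at the $(t^\star_s,x^\star_s)$) with the scalar Harnack (to transport in space-time between consecutive coupling points), ending at an arbitrary point of $[0,2\theta]\times[-\frac{\theta}{2},\frac{\theta}{2}]^n$ where $u_j$ is maximized. Taking the worst constant over the finitely many pairs $(i,j)\in[N]^2$ produces $\overline{\kappa}_{\theta,F}>0$.

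The main obstacle is the combinatorial and geometric bookkeeping of the chaining step: one must place up to $O(N^3)$ coupling balls of radius $\sim\rho(\sigma,K)$ inside the large cylinder while keeping their times ordered and their pairwise separations compatible with the quantitative scalar Harnack, and one must ensure that the accumulated product of constants depends only on the stated parameters. This is precisely what forces the assumption $\theta\geq\max(T,L_1,\dots,L_n)$: any smaller cylinder could fail to contain even one full periodicity cell in which each positive entry of $\overline{\veL}$ is attained. This step is the parabolic analogue of the chaining used in Araposthathis--Ghosh--Marcus in the elliptic setting, the added complication being that time transport must be monotone.
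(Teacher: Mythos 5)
Your high-level plan coincides with the paper's: keep the F\"{o}ldes--Pol\'{a}\v{c}ik scheme and use periodicity together with $\theta\geq\max(T,L_1,\dots,L_n)$ so that only the irreducibility of $\overline{\veL}$ (not of $\veL(t,x)$ pointwise) is needed. The gap is in the execution. You assert that, since cooperativity makes each $u_i$ a nonnegative \emph{super}-solution of the scalar operator $\caP_i-(l_{i,i}+f_i)$, ``the classical Moser/Krylov--Safonov parabolic Harnack inequality'' gives two-sided comparisons for each component, and your chain is then anchored at ``an arbitrary point of $[0,2\theta]\times[-\frac{\theta}{2},\frac{\theta}{2}]^n$ where $u_j$ is maximized.'' For supersolutions only the weak half of the Harnack inequality holds (earlier $\mathcal{L}^p$-averages control later infima); the sup-to-inf half needs the equation, and for $u_j$ the equation is $\caP_j u_j-(l_{j,j}+f_j)u_j=\sum_{k\neq j}l_{j,k}u_k$, whose right-hand side is exactly what you are trying to control -- a pointwise spike of a supersolution need not be felt by later infima (think of a regularized Newtonian kernel, a time-independent nonnegative supersolution of the heat equation with arbitrarily large sup and bounded inf). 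So the last link of your chain, from the early-cylinder maximum of $u_j$ to a quantitative lower bound somewhere, does not follow from the scalar Harnack you invoke; it is precisely the delicate part of F\"{o}ldes--Pol\'{a}\v{c}ik's proof that the paper \emph{retains} verbatim (the paper reduces everything to one inductive claim whose hypothesis and conclusion are already phrased in terms of $M=\max_{[0,2\theta]\times[-\theta/2,\theta/2]^n}u_1$, and only re-proves the inter-component propagation step, the sole place where pointwise irreducibility had been used). Your ``main obstacle'' (geometric bookkeeping of the coupling balls) is therefore not the real obstacle.

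A secondary issue concerns the constant. Your inter-component step localizes around a point where $l_{i,j}\geq\sigma/2$ and uses H\"{o}lder continuity to produce a ball of radius $\rho$ on which $l_{i,j}\geq\sigma/4$; but $K$ only bounds sup-norms, so $\rho$, and hence $\overline{\kappa}_{\theta,F}$, would additionally depend on the H\"{o}lder seminorm of $\veL$, which is not among the admissible parameters $n,N,\sigma,K,\theta,F$ of the statement. The paper's mechanism is designed to avoid any quantitative use of the modulus of continuity: for the pair $(i,j)$ supplied by \ref{ass:irreducible} it compares $u_i$, via the comparison principle, with the solution $\underline{u}$ of the Cauchy--Dirichlet problem $\caP_i\underline{u}-l_{i,i}\underline{u}-f_i\underline{u}=\kappa_k M\,l_{i,j}$ on a nested cube containing a translate of $\clOmper$, and obtains a uniform lower bound for $\underline{u}$ on the next cube by a compactness/strong-maximum-principle argument, using only that the periodic source has $\max_{\clOmper}l_{i,j}\geq\sigma$. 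If you wish to keep your Gaussian/Duhamel mechanism you must either enlarge the parameter list or replace the H\"{o}lder-ball localization by an argument of the paper's type, and in either case you still need to import, rather than re-derive, the F\"{o}ldes--Pol\'{a}\v{c}ik treatment of the early-cylinder maximum.
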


\begin{proof}
Define, for all $i\in[N]$, the $n+1$-dimensional hypercube
\[
Q_i=\left(5\theta-\frac{\theta}{2^{i-1}},6\theta\right)\times\left(-\frac{\theta}{2}-\frac{\theta}{2^{i}},\frac{\theta}{2}+\frac{\theta}{2^{i}}\right)^n\subset\R\times\R^n.
\]
Note the series of compact inclusions
\[
Q_1 = (4\theta,6\theta)\times(-\theta,\theta)^n\supset Q_2\supset \dots\supset
Q_N\supset(5\theta,6\theta)\times\left(-\frac{\theta}{2},\frac{\theta}{2}\right)^n.
\]

Following carefully the proof of \FPH inequality \cite{Foldes_Polacik_2009}, we observe that we only have to prove the following claim.

\textbf{Claim 1:} let $k\in[N-1]$. If there exists $I\subset[N]$ of cardinal $k$ and a positive constant $\kappa_k$ determined only by $k$, $n$, $N$, 
$\sigma$, $K$, $\theta$ and $F$, such that, for all $j\in I$,
\[
    \min_{(t,x)\in\overline{Q_k}}u_j(t,x)
    \geq \kappa_k \max_{(t,x)\in[0,2\theta]\times[-\frac{\theta}{2},\frac{\theta}{2}]^n}u_1(t,x),
\]
then there exists $i\in[N]\backslash I$ and a positive constant $\kappa_{k+1}\leq \kappa_k$ determined only by $k$, $n$, $N$, $\sigma$, 
$K$, $\theta$ and $F$, such that
\[
    \min_{(t,x)\in\overline{Q_{k+1}}}u_i(t,x)
    \geq \kappa_{k+1} \max_{(t,x)\in[0,2\theta]\times[-\frac{\theta}{2},\frac{\theta}{2}]^n}u_1(t,x).
\]

We prove first the following simpler claim, inspired by \cite[Lemma 3.6]{Araposthathis_}.

\textbf{Claim 2:} Let $k\in[N-1]$, $i\in[N]$ and $g\in\caC(\overline{Q_k},[0,+\infty))$. 
There exists a positive constant $C_k$ determined only by $k$, $n$, $K$, $\theta$ and $F$, such that,
if $u$ is a solution of $\caP_i u -l_{i,i}u-f_i u = g$ in $Q_k$ with $u=0$ on the parabolic boundary $\partial_P Q_k$, then
\[
    \min_{(t,x)\in\overline{Q_{k+1}}}u(t,x)\geq C_k\max_{(t,x)\in\overline{Q_k}}g(t,x).
\]

\begin{proof}[Proof of Claim 2]
When $g=0$, $u=0$ as well and the result is obvious (with, say, $C_k=1$). Therefore we assume without loss of generality that $g>0$.

Up to dividing $u$ by $\max_{\overline{Q_k}}g$, we assume without loss of generality $\max_{\overline{Q_k}}g=1$. Since the solution
$u$ of the Cauchy--Dirichlet problem with zero data on the parabolic boundary is unique, we only have to prove that this solution has
a positive minimum in $\overline{Q_{k+1}}$, and that the infimum of these minima, when $\cbQ$, $f$ and $g$ vary in the correct class, is 
still positive.

The nonnegativity of $u$ is a direct consequence of the (weak) maximum principle. The positivity of its minimum in $\overline{Q_{k+1}}$
is a consequence of the strong maximum principle and the fact that $0$ cannot be the solution.

Now, define $\mathcal{U}_k$ as the set of all $U=\left(A,q,l,f,g\right)$ such that
\[
    A\in\caC^{\delta/2,1+\delta}(\overline{Q_k},\R^{n\times n}),
\]
\[
    (q,l,f,g)\in\caC^{\delta/2,\delta}(\overline{Q_k},[-K,K]^n\times[-K,K]\times[-F,F]\times[0,1]),
\]
such that $A=A^\upT$, $\max_{\overline{Q_k}}g=1$ and
\[
    K^{-1}\leq\min_{y\in\Sn}\min_{(t,x)\in\overline{Q_k}}\left(y\cdot A(t,x)y\right)\leq \max_{y\in\Sn}\max_{(t,x)\in\overline{Q_k}}\left(y\cdot A(t,x)y\right)\leq K.
\]
For all $U\in\mathcal{U}_k$, denote $u_U$ the solution of 
\[
    \begin{cases}
        \partial_t u -\nabla\cdot(A\nabla u)+q\cdot\nabla u -lu-fu=g & \text{in }Q_k, \\
        u = 0 & \text{on }\partial_P Q_k,
    \end{cases}
\]
and denote $m(U)=\min_{\overline{Q_{k+1}}}u_U>0$. Let us verify that $\inf_{U\in\mathcal{U}_k}m(U)>0$.

Assume by contradiction $\inf_{U\in\mathcal{U}_k}m(U)=0$. Then there exists a minimizing sequence $(U_p)_{p\in\N}$ such that $m(U_p)\to 0$ as 
$p\to+\infty$. By classical compactness and regularity estimates \cite{Lieberman_2005}, up to extraction, $(U_p)$ converges uniformly to a limit $U_\infty=(A_\infty,q_\infty,l_\infty,f_\infty,g_\infty)$ such that
\[
    A_\infty\in\caC^{0,1}(\overline{Q_k},\R^{n\times n}),
\]
\[
    (q_\infty,l_\infty,f_\infty,g_\infty)\in\caC(\overline{Q_k},\R^{n\times n}\times[-K,K]^n\times[-K,K]\times[-F,F]\times[0,1]),
\]
such that $A_\infty=A_\infty^\upT$, $\max_{\overline{Q_k}}g_\infty=1$ and
\[
    K^{-1}\leq\min_{y\in\Sn}\min_{(t,x)\in\overline{Q_k}}\left(y\cdot A_\infty(t,x)y\right)\leq \max_{y\in\Sn}\max_{(t,x)\in\overline{Q_k}}\left(y\cdot A_\infty(t,x)y\right)\leq K
\]
and $(u_p)_{p\in\N}=\left(u_{U_p}\right)_{p\in\N}$ converges uniformly to the solution $u_\infty$ of
\[
    \begin{cases}
        \partial_t u -\nabla\cdot(A_\infty\nabla u)+q_\infty\cdot\nabla u -l_\infty u-f_\infty u=g_\infty & \text{in }Q_k, \\
        u = 0 & \text{on }\partial_P Q_k.
    \end{cases}
\]
Moreover, by definition of $(U_p)$, $m(U_\infty)=\lim_{p\to+\infty}m(U_p)=0$. But then the strong maximum principle yields $u_\infty=0$,
and this contradicts $g_\infty>0$. Hence $\inf_{U\in\mathcal{U}_k}m(U)>0$ and Claim 2 is proved with $C_k=\inf_{U\in\mathcal{U}_k}m(U)>0$.
\end{proof}

\begin{proof}[Proof of Claim 1]
Let $k\in[N-1]$, $I\subset[N]$ of cardinal $k$, 
\[
    M=\max_{(t,x)\in[0,2\theta]\times[-\frac{\theta}{2},\frac{\theta}{2}]^n}u_1(t,x),
\]
and assume that for all $j\in I$,
\[
    \min_{(t,x)\in\overline{Q_k}}u_j(t,x)\geq \kappa_k M.
\]
By \ref{ass:irreducible}, there exists $i\in[N]\backslash I$ and $j\in I$ such that $\max_{\clOmper}l_{i,j}>0$.

Let $\underline{u}$ be the solution of $\caP_i \underline{u}-l_{i,i}\underline{u}-f_i\underline{u}=\kappa_k M l_{i,j}$ in $Q_k$, 
$\underline{u}=0$ on $\partial_P Q_k$. Since $Q_k$ contains a translation of $\clOmper$ and $l_{i,j}$ is periodic, applying Claim 2, we get:
\[
    \min_{\overline{Q_{k+1}}}\underline{u}\geq C_k \kappa_k M \max_{\clOmper}l_{i,j}.
\]

Moreover, in $Q_k$,
\[
    \caP_i u_i-l_{i,i}u_i-f_i u_i = \sum_{k\in[N]\backslash\{i\}}l_{i,k}u_k\geq l_{i,j}u_j\geq l_{i,j}\min_{(t,x)\in\overline{Q}_k}u_j\geq \kappa_k M l_{i,j}.
\]
Also, on the parabolic boundary $\partial_P Q_k$, $u_i\gg 0=\underline{u}$. Therefore, by virtue of the comparison principle, 
$u_i\geq\underline{u}$ in $Q_k$, and subsequently, using the definition of $\sigma$,
\[
    \min_{\overline{Q_{k+1}}}u_i\geq \min_{\overline{Q_{k+1}}}\underline{u}\geq C_k \kappa_k M \max_{\clOmper}l_{i,j}\geq C_k\kappa_kM\sigma.
\]
Setting $\kappa_{k+1}=C_k\kappa_k\sigma$, we have proved Claim 1.
\end{proof}

This ends the proof.
\end{proof}

\begin{rem}
As an immediate corollary, if $\veu$ is time periodic, then $\overline{\kappa}_{\theta,F}<1$ and
\[
    \min_{i\in[N]}\min_{(t,x)\in\R\times[-\frac{\theta}{2},\frac{\theta}{2}]^n}u_i(t,x)
    \geq \overline{\kappa}_{\theta,F}\max_{i\in[N]}\max_{(t,x)\in\R\times[-\frac{\theta}{2},\frac{\theta}{2}]^n}u_i(t,x).
\]
If $\veu$ is space-time periodic, then an even stronger estimate holds:
\[
    \min_{i\in[N]}\min_{(t,x)\in\R\times\R^n}u_i(t,x)\geq \overline{\kappa}_{\theta,F}\max_{i\in[N]}\max_{(t,x)\in\R\times\R^n}u_i(t,x).
\]
\end{rem}

\section{Proofs}

\subsection{Existence, characterization and concavity: proof of Theorems \ref{thm:existence_characterization_Rn}--\ref{thm:concavity_eigenvalue_L}}

The main result of this subsection is Theorem \ref{thm:existence_characterization_Rn}. It is actually
a consequence of Theorems \ref{thm:existence_characterization_Omega} and of a concavity result on $z\mapsto\lambda_{1,z}$,
Corollary \ref{cor:lambdaz_strictly_concave}, that will follow from a more general concavity result,
Proposition \ref{prop:concavity_eigenvalue_z_L}, that also contains \ref{thm:concavity_eigenvalue_L}.

Most proofs in this subsection are direct adaptations to the vector case
of the proofs by Nadin \cite{Nadin_2007}, written here for the paper to be self-contained.
The only proofs whose adaptations truly require some care are those of Propositions
\ref{prop:concavity_eigenvalue_z_L} and \ref{prop:existence_eigenfunction_exp_times_per}.

\subsubsection{The generalized principal eigenvalue $\lambda_1$ in arbitrary domains: proof of Theorem \ref{thm:existence_characterization_Omega}}

\begin{prop}\label{prop:eigenvalue_well_defined}
    Let $\Omega\subset\R^n$ be a nonempty open connected set such that there exists $x_0\in\Omega$ satisfying $[x_0,x_0+L]\subset\Omega$. 
    Then the generalized principal eigenvalue $\lambda_1(\Omega)\in\R$ is well-defined. 
    
    Furthermore, if $\partial\Omega$ is bounded and smooth, then $\lambda_1(\Omega)=\lambda_{1,\upDir}(\Omega)$.
\end{prop}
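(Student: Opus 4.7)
The plan is as follows. For well-definedness, first I show $\lambda_1(\Omega) > -\infty$: testing with the constant vector $\veo$ gives $\cbQ\veo = -\veL\veo$, so the defining set contains every $\lambda \leq -\max_i \max_{(t,x) \in \clOmper} \sum_j l_{i,j}(t,x)$. For finiteness, I pick a smooth bounded ball $B$ with $[x_0, x_0+L] \subset B \Subset \Omega$; since $B$ contains a full periodicity cell, $\overline{\veL}|_B$ inherits \ref{ass:irreducible}, so the Krein--Rutman theorem (recalled in the introduction) yields a Dirichlet principal eigenpair $(\lambda_{1,\upDir}(B), \vev_B)$ with $\vev_B \gg \vez$ in $\R \times B$ and $\vev_B = \vez$ on $\R \times \partial B$. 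For any admissible pair $(\lambda, \veu)$, set $t^\star = \sup\{t > 0 : t\vev_B \leq \veu \text{ on } \R \times \overline B\}$. By compactness, time periodicity, and strict positivity of $\veu$ on $\R \times \overline B$ (since $\overline B \subset \Omega$), $t^\star$ is positive; by vanishing of $\vev_B$ on $\R \times \partial B$, it is finite and the touching of $\veu$ and $t^\star \vev_B$ occurs at an interior point. Writing $\vew = \veu - t^\star \vev_B \geq \vez$, one has $\cbQ\vew - \lambda\vew \geq (\lambda - \lambda_{1,\upDir}(B))t^\star \vev_B$, and if $\lambda > \lambda_{1,\upDir}(B)$ the right-hand side is strictly positive on $\R \times B$, forcing $\vew \equiv \vez$ by the strong maximum principle (Proposition \ref{prop:strong_maximum_principle_bounded_domains}), which contradicts the nonvanishing of $\veu$ on $\R \times \partial B$. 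Therefore $\lambda_1(\Omega) \leq \lambda_{1,\upDir}(B) < +\infty$.

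When $\Omega$ is itself bounded and smooth, $\lambda_{1,\upDir}(\Omega) \leq \lambda_1(\Omega)$ holds trivially since the Dirichlet eigenfunction $\vev_\Omega \in \caC^{1,2}_{t-\upp}(\R \times \Omega, (\vez, \vei)) \cap \caC^1(\R \times \overline\Omega)$ is admissible in the supremum, and the reverse inequality repeats the comparison above with $B$ replaced by $\Omega$. The novelty is that a generic super-solution $\veu$ may vanish on $\R \times \partial\Omega$, so the touching of $\veu$ and $t^\star \vev_\Omega$ need not be interior; this is handled by the Hopf lemma applied to $\vev_\Omega$ (its outward normal derivative being componentwise strictly negative on $\R \times \partial\Omega$), which ensures that $\veu/\vev_\Omega$ extends continuously to $\R \times \overline\Omega$ and attains a finite positive minimum either inside $\Omega$ (where the standard strong maximum principle yields the contradiction as above) or on $\partial\Omega$ (where its Hopf boundary variant in Proposition \ref{prop:strong_maximum_principle_bounded_domains} does).

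The case where $\partial\Omega$ is bounded and smooth but $\Omega$ itself is unbounded (exterior domain) requires defining $\lambda_{1,\upDir}(\Omega) := \lim_{R \to +\infty}\lambda_{1,\upDir}(\Omega_R)$ with $\Omega_R := \Omega \cap B_R$ for $R$ large enough that $\Omega_R$ is smooth bounded and contains a periodicity cell; the limit exists in $\R$ by domain monotonicity of the Dirichlet eigenvalue and the uniform bound $\lambda_1(\Omega) > -\infty$ of the first paragraph. The inequality $\lambda_1(\Omega) \leq \lambda_{1,\upDir}(\Omega)$ follows by repeating the comparison of the previous paragraph inside each $\Omega_R$ and letting $R \to +\infty$. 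The converse $\lambda_{1,\upDir}(\Omega) \leq \lambda_1(\Omega)$ requires constructing a generalized principal eigenfunction on $\R \times \Omega$: normalize each Dirichlet eigenfunction $\vev_R$ at a fixed interior point $x_\star \in \Omega$, use the fully coupled Harnack inequality (Proposition \ref{prop:harnack_inequality}) to obtain uniform local bounds from below and above on all components, apply classical parabolic Schauder estimates, and extract a subsequential limit $\vev_\infty \in \caC^{1,2}_{t-\upp}(\R \times \Omega, (\vez, \vei)) \cap \caC^1_0(\R \times \overline\Omega)$ satisfying $\cbQ \vev_\infty = \lambda_{1,\upDir}(\Omega) \vev_\infty$, which is admissible in the defining supremum of $\lambda_1(\Omega)$.

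The main obstacle is this last construction on the exterior domain: because \ref{ass:irreducible} is only an irreducibility \emph{on average}, classical pointwise Harnack inequalities are insufficient to prevent $\vev_\infty$ from degenerating in some components over arbitrarily large compacts of $\R \times \Omega$, and it is precisely the non-pointwise fully coupled Harnack inequality of Proposition \ref{prop:harnack_inequality} that yields the required componentwise strict positivity of $\vev_\infty$ after passage to the limit $R \to +\infty$.
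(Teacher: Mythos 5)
Your proposal is correct and follows essentially the same route as the paper: a constant super-solution (your $\veo$ in place of the paper's Perron--Frobenius eigenvector of $\overline{\veL}$) for nonemptiness, comparison with a Dirichlet principal eigenpair on a bounded smooth subdomain containing a periodicity cell for finiteness, and, for bounded smooth $\Omega$, the Dirichlet eigenfunction as test function in one direction plus the sliding/touching-point argument combining the interior strong maximum principle with its Hopf boundary version (Proposition \ref{prop:strong_maximum_principle_bounded_domains}) in the other. Your third paragraph on exterior domains goes beyond the paper's proof, which interprets the hypothesis as ``$\Omega$ bounded and smooth'' (the only setting in which $\lambda_{1,\upDir}$ is defined in the paper); the exhaustion-and-limit construction you sketch there is in substance the argument the paper gives separately in Proposition \ref{prop:eigenvalue_limit_Dirichlet}, so it is consistent but not required for the statement as the paper proves it.
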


\begin{proof}
We begin with the case of bounded smooth domains. The inequality $\lambda_{1,\upDir}(\Omega)\leq\lambda_1(\Omega)$ follows 
by using the Dirichlet principal eigenfunction as test function in the definition of $\lambda_1(\Omega)$. 
The converse inequality is proved by contradiction: assume that
$\lambda_{1,\upDir}(\Omega)<\lambda_1(\Omega)$. Then there exists $\mu\in(\lambda_{1,\upDir}(\Omega),\lambda_1(\Omega))$ and
$\veu\in\caC^{1,2}_{t-\upp}(\R\times\Omega,(\vez,\vei))\cap\caC^{1}(\R\times\overline{\Omega})$ such that $\cbQ\veu\geq\mu\veu$.
By boundedness of the Dirichlet principal eigenfunction $\vev$, the quantity
\[
    \kappa^\star=\inf\left\{ \kappa>0\ |\ \kappa\veu-\vev\gg\vez \right\}
\]
is well-defined in $\R$. The function $\vew=\kappa^\star\veu-\vev$ satisfies 
\[
    \cbQ\vew=\kappa^\star\mu\veu-\lambda_{1,\upDir}(\Omega)\vev\gg\lambda_{1,\upDir}(\Omega)\vew\quad\text{in }\R\times\Omega,
\]
\[
    \vew\geq\vez\quad\text{in }\R\times\overline{\Omega},
\]
\[
    \vew\geq\vez\quad\text{on }\R\times\partial\Omega,
\]
and there exists $(i^\star,t^\star,x^\star)\in[N]\times[0,T]\times\overline{\Omega}$ such that $w_{i^\star}(t^\star,x^\star)=0$. 
If $x^\star\in\Omega$, then by virtue of the strong maximum principle (see Proposition \ref{prop:strong_maximum_principle_bounded_domains}), 
$\vew$ is the zero function, which contradicts $\mu>\lambda_{1,\upDir}(\Omega)$. Hence $\vew\gg\vez$ in $\R\times\Omega$. Since
$\veu\in\caC^1(\R\times\overline{\Omega})$, the normal derivative of $\vew$ at any point $(t,x)\in\R\times\partial\Omega$ is well-defined.
The optimality of $\kappa^\star$ implies the existence of $(i',t',x')\in[N]\times[0,T]\times\partial\Omega$ such that
that both $w_{i'}(t',x')$ and the normal derivative of $w_{i'}$ at $(t',x')$ are zero, which contradicts the boundary version 
of the strong maximum principle. Hence $\lambda_1(\Omega)\leq\lambda_{1,\upDir}(\Omega)$. This ends the proof in the case of bounded smooth domains.

Then we turn to general, not necessarily bounded and smooth, domains.
    Let $\nu=-\lambda_{\upPF}(\overline{\veL})\in\R$, where the square matrix $\overline{\veL}$ is defined in
    \ref{ass:irreducible}, and let $\veu\in\R^N$ be a positive Perron--Frobenius eigenvector for
    $\overline{\veL}$, namely $\overline{\veL}\veu=-\nu\veu$. Then clearly $\cbQ\veu\geq\nu\veu$,
    which proves that the set 
    \[
        \left\{ \lambda\in\R\ |\ \exists \veu\in\caC^{1,2}_{t-\upp}(\R\times\Omega,(\vez,\vei))\cap\caC^{1}(\R\times\overline{\Omega})\ \cbQ\veu\geq\lambda\veu \right\}
    \]
    is nonempty. Hence its supremum, $\lambda_1(\Omega)$, is well-defined in $\R\times\{\infty\}$.

    Next, it follows directly from the definition that $\lambda_1(\Omega)\leq\lambda_1(\Omega')$
    for any open set $\Omega'\subset\Omega$. Since $\Omega$ is open and contains a periodicity cell $[x_0,x_0+L]$, it contains a 
    bounded smooth connected open set $\Omega'$ satisfying $[x_0,x_0+L]\subset\Omega'\subset\overline{\Omega'}\subset\Omega$. Therefore 
    \[
        \lambda_1(\Omega)\leq\lambda_1(\Omega')=\lambda_{1,\upDir}(\Omega')<+\infty.
    \]
    This ends the proof.
\end{proof}

\begin{prop}\label{prop:eigenvalue_limit_Dirichlet}
    Let $\Omega\subset\R^n$ be a nonempty open connected set and let $(\Omega_k)_{k\in\N}$ be a sequence of nonempty
    open connected sets such that, for some $x_0\in\Omega$,
    \[
        [x_0,x_0+L]\subset\Omega_1,\quad\Omega_k\subset\Omega_{k+1},\quad\bigcup_{k\in\N}\Omega_k=\Omega.
    \]

    Then $\lambda_1(\Omega_k)\to\lambda_1(\Omega)$ as $k\to+\infty$. 
    
    Furthermore, there exists a generalized principal eigenfunction associated with $\lambda_1(\Omega)$.
\end{prop}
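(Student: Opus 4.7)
My plan is to establish monotonicity of $k \mapsto \lambda_1(\Omega_k)$, extract a limit eigenfunction on $\Omega$ via a smooth bounded Dirichlet exhaustion, and close the sandwich on $\lim_k \lambda_1(\Omega_k)$ using this eigenfunction as a test function. First, for any two nested nonempty open connected sets $\Omega' \subset \Omega''$, any admissible test function in the definition of $\lambda_1(\Omega'')$ restricts to an admissible test function for $\lambda_1(\Omega')$: at points of $\partial\Omega' \cap \Omega''$ by interior regularity, and at points of $\partial\Omega' \cap \partial\Omega''$ by the assumed $\caC^1$-regularity on $\R \times \overline{\Omega''}$. Hence $\lambda_1(\Omega') \geq \lambda_1(\Omega'')$, so $(\lambda_1(\Omega_k))_k$ is nonincreasing, bounded below by $\lambda_1(\Omega) \in \R$, and converges to some $\lambda_\infty \geq \lambda_1(\Omega)$.

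Since the $\Omega_k$ need not be bounded or smooth, I introduce an auxiliary inner exhaustion: bounded smooth connected open sets $(\tilde\Omega_k)_k$ with $[x_0, x_0+L] \subset \tilde\Omega_1$, $\overline{\tilde\Omega_k} \subset \tilde\Omega_{k+1}$, and $\bigcup_k \tilde\Omega_k = \Omega$. By Proposition \ref{prop:eigenvalue_well_defined}, $\tilde\lambda_k := \lambda_1(\tilde\Omega_k) = \lambda_{1,\upDir}(\tilde\Omega_k)$ admits a positive Dirichlet principal eigenfunction $\vev_k$, which I normalize by $\max_{i \in [N]} v_{k,i}(0, x_0) = 1$; by monotonicity, $\tilde\lambda_k$ is nonincreasing and converges to some $\tilde\lambda_\infty \geq \lambda_1(\Omega)$. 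On any compact $K \subset \R \times \Omega$, for $k$ sufficiently large $K$ lies at positive distance from $\R \times \partial\tilde\Omega_k$; the fully coupled Harnack inequality (Proposition \ref{prop:harnack_inequality}) applied to $\cbQ \vev_k = \tilde\lambda_k \vev_k$ combined with time-periodicity yields uniform positive two-sided bounds on $\vev_k|_K$, and interior parabolic Schauder estimates provide uniform $\caC^{1+\delta/2, 2+\delta}$ bounds. A diagonal extraction then produces a subsequence converging in $\caC^{1,2}_{\textup{loc}}(\R \times \Omega, \R^N)$ to a limit $\vev \in \caC^{1,2}_{t-\upp}(\R \times \Omega, (\vez, \vei))$ solving $\cbQ \vev = \tilde\lambda_\infty \vev$.

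At each strong-barrier point of $\partial\Omega$, parabolic barrier estimates applied componentwise---legitimate because the principal part of $\cbQ$ is diagonal and the coupling enters only through bounded zeroth-order terms---show that $\vev$ and $\nabla \vev$ extend continuously, so $\vev \in \caC^1(\R \times \overline{\Omega})$ in the sense of Definition \ref{def:lambda1_Omega}. Hence $\vev$ is an admissible test function for $\lambda_1(\Omega)$ at level $\tilde\lambda_\infty$, giving $\tilde\lambda_\infty \leq \lambda_1(\Omega)$; together with monotonicity, $\tilde\lambda_\infty = \lambda_1(\Omega)$ and $\vev$ is the sought generalized principal eigenfunction. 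To identify $\lambda_\infty$ with $\lambda_1(\Omega)$, fix $j$: compactness of $\overline{\tilde\Omega_j}$ and $\bigcup_k \Omega_k = \Omega$ provide an index $K_j$ with $\overline{\tilde\Omega_j} \subset \Omega_{K_j}$, whence $\lambda_1(\Omega_k) \leq \lambda_1(\tilde\Omega_j) = \tilde\lambda_j$ for $k \geq K_j$; letting $k \to \infty$ and then $j \to \infty$ yields $\lambda_\infty \leq \tilde\lambda_\infty = \lambda_1(\Omega)$, completing the equality.

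The main technical hurdle is the continuous extension of $\vev$ up to strong-barrier points of $\partial\Omega$: this requires uniform-in-$k$ barrier estimates on the $\vev_k$ rather than one-shot boundary regularity for the limit, but the cooperative weakly coupled structure---specifically the diagonal principal part together with the componentwise parabolic maximum principle---reduces the problem to scalar barrier constructions that pass to the limit.
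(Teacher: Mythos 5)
Your scheme is the paper's: monotonicity of $\lambda_1$ under inclusion, an inner exhaustion by smooth bounded domains whose Dirichlet eigenpairs are compactified through the Harnack inequality of Proposition \ref{prop:harnack_inequality} and interior Schauder estimates, and the limit function fed back into the definition of $\lambda_1(\Omega)$ as a test function to close the sandwich; decoupling your $\tilde\Omega_j$ from the $\Omega_k$ and reconnecting them by compact containment is only cosmetically different from the paper's choice $\overline{\tilde\Omega_k}\subset\Omega_k$. Up to and including the interior convergence $\vev_k\to\vev$ with $\cbQ\vev=\tilde\lambda_\infty\vev$ and the final chain of inequalities, the argument is sound.

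The genuine gap is the boundary step, which you flag but do not carry out, and the missing ingredient is exactly the one your interior argument cannot supply: an estimate on the normalized eigenfunctions $\vev_k$ that is uniform in $k$ up to $\partial\Omega$. Your Harnack-chain bounds are purely interior (the constants degenerate as the compact set approaches $\partial\Omega$), so at a strong-barrier point $x^\star\in\partial\Omega$ you have no $k$-independent control either of the right-hand side $\sum_{j\neq i}l_{i,j}v_{k,j}+(l_{i,i}+\tilde\lambda_k)v_{k,i}$ in the $i$-th scalar equation or of the values of $v_{k,i}$ on the inner portion of the boundary of the barrier region; without these, the ``scalar barrier constructions that pass to the limit'' cannot even be initialized, and a one-shot barrier applied to the limit $\vev$ fails for the same reason (a positive solution in $\R\times\Omega$ need not be bounded near $\partial\Omega$ -- think of a Poisson-kernel-type profile -- and $\vev$ carries no boundary data of its own; the vanishing of each $\vev_k$ on $\partial\tilde\Omega_k$ must be exploited quantitatively and uniformly). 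The paper fills precisely this hole: it first asserts a uniform bound on $\|\veu_k\|_{\mathcal{L}^\infty([0,T]\times\tilde\Omega_k)}$ (time periodicity plus the Harnack inequality), then introduces a single global comparison function $\hat\veu$, the time-periodic solution of the decoupled problem $\dcbP\hat\veu=\veo$ in $\R\times\Omega$ with $\hat\veu=\vez$ on $\R\times\partial\Omega$, and deduces $\dcbP(C\hat\veu-\veu_k)\geq\vez$, hence $\veu_k\leq C\hat\veu$ with $C$ independent of $k$; passing to the limit gives $\veu_\infty\leq C\hat\veu$ and the continuous vanishing at the boundary in one stroke. Note finally that a barrier by itself only yields continuity (with value zero) of $\vev$ at such points; the continuous extension of $\nabla\vev$ demanded by the class $\caC^1(\R\times\overline{\Omega})$ in \eqref{def:lambda1_Omega} is obtained afterwards from regularity estimates up to the boundary, as in the paper, not from the barrier comparison itself.
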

\begin{proof}
    In order to work with bounded and smooth domains, we consider a family 
    $(\widetilde{\Omega}_k)_{k\in\N}$, nondecreasing and convergent to $\Omega$ in the inclusion sense,
    and such that $\overline{\widetilde{\Omega}_k}\subset\Omega_k$ for all $k\in\N$ 
    (with $[x_0,x_0+L]\subset\widetilde{\Omega}_1$, which is always possible since $[x_0,x_0+L]$ is closed and $\Omega_1$ is open). 
    Denote $(\mu_{k})_{k\in\N}=(\lambda_{1,\upDir}(\Omega_k))_{k\in\N}$,
    $(\nu_{k})_{k\in\N}=(\lambda_{1,\upDir}(\widetilde{\Omega}_k))_{k\in\N}$, and note
    that both sequences converge, with limits satisfying 
    \[
        	\lambda_1(\Omega)\leq \lim_{k\to+\infty}\mu_k\leq \lim_{k\to+\infty}\nu_k.
    \]

    Let $\nu=\lim\nu_k$. We now aim to prove that $\nu\leq\lambda_1(\Omega)$ by constructing an
    eigenfunction for the eigenvalue $\nu$ of the operator $\cbQ$ acting on $\caC^{1,2}_{t-\upp}(\R\times\Omega,(\vez,\vei))\cap\caC^1_0(\R\times\overline{\Omega})$. Since such an eigenfunction
    will in fact be a generalized principal eigenfunction for the generalized principal eigenvalue $\lambda_1(\Omega)$, this will
    complete the proof.
    
    Fix $y\in\widetilde{\Omega}_1=\bigcap_{k\in\N}\widetilde{\Omega}_k$ and consider the sequence
    $(\veu_k)_{k\in\N}$ of positive principal eigenfunctions associated with $\nu_k$ and normalized
    by $\max_{i\in[N]}u_{i,k}(0,y)=1$. Extend these eigenfuctions as functions defined in
    $\R\times\Omega$ by setting $\veu_k=\vez$ in $\R\times\Omega\backslash\widetilde{\Omega}_k$.
    
    By virtue of the time periodicity of $\veu_k$, of the normalization at time $t=0$ and of the Harnack inequality of 
    Proposition \ref{prop:harnack_inequality}, 
    the sequence $\left(\|\veu_k\|_{\mathcal{L}^\infty([0,T]\times\widetilde{\Omega}_{k_0})}\right)_{k\in\N,k>k_0}$ 
    is bounded for any $k_0\in\N$. By standard regularity estimates \cite{Lieberman_2005}, $(\veu_k)_{k\in \N}$ converges
    up to a diagonal extraction to a function 
    $\veu_\infty\in\caC^{1,2}_{t-\upp}(\R\times\Omega)$ satisfying
    \[
	\cbQ\veu_\infty=\nu\veu_\infty\quad\text{in }\R\times\Omega.
    \]
    Moreover, $\veu_\infty$ is nonnegative, nonzero at $(t,x)=(0,y)$, and by the maximum principle
    it is therefore positive in $\R\times\Omega$. 
    
    In order to establish $\nu\leq \lambda_1(\Omega)$, it only remains to verify that $\veu_\infty\in\caC^1_0(\R\times\overline{\Omega})$. 
    
    Let 
    \[
        C=|\lambda_1(\widetilde{\Omega}_1)|\sup_{k\in[N]}\|\veu_k\|_{\mathcal{L}^\infty([0,T]\times\widetilde{\Omega}_k)}
    \]
    and define $\hat{\veu}\in\caC^{1,2}_{t-\upp}(\R\times\Omega,(\vez,\vei))\cap\caC^1_0(\R\times\overline{\Omega},[\vez,\vei))$ 
   as the time periodic solution of the following (decoupled) system:
    \[
        \begin{cases}
            \dcbP\hat{\veu}=\veo &\text{in }\R\times\Omega, \\
            \hat{\veu}=\vez &\text{on }\R\times\partial\Omega.
        \end{cases}
    \]
    Then, for any $k\in\N$, 
    \[
        \dcbP((C\hat{\veu}-\veu_k)\geq 
        C\veo-\sup_{k\in[N]}\left(\lambda_1(\widetilde{\Omega}_k)\right)\veu_k= C\veo-\lambda_1(\widetilde{\Omega}_1)\veu_k\geq\vez.
    \]
    This leads to $\veu_k\leq C\hat{\veu}$ for all $k\in[N]$, and then, passing to the limit, $\veu_\infty\leq C\hat{\veu}$ in $\R\times\Omega$.
    Hence $\veu_\infty\in\caC_0(\R\times\overline{\Omega})$. 
    The continuity of its gradient $\nabla\veu_\infty$ on the regular boundary points follows from classical regularity estimates up to the 
    boundary \cite{Lieberman_2005}. This ends the proof.
\end{proof}

\begin{rem}
The proof uses the interior Harnack inequality of Proposition \ref{prop:harnack_inequality}, which, as stated, requires that the domain of definition
contains a translation of $[0,3\theta]^n$, with $\theta\geq\max(T,L_1,\dots,L_n)$. This is not optimal and just for convenience of notation; what 
truly matters for the interior Harnack inequality is that the domain of definition is strictly larger than a closed periodicity cell, as expressed 
in the preceding statement. We leave the necessary correction of the proof of Proposition \ref{prop:harnack_inequality} as an exercise for 
interested readers.
\end{rem}

\begin{prop}\label{prop:max--min_characterization_lambda1}
    Let $\Omega\subset\R^n$ be a nonempty open connected set such that there exists $x_0\in\Omega$ satisfying $[x_0,x_0+L]\subset\Omega$. 
    Then the generalized principal eigenvalue $\lambda_1(\Omega)$ can be characterized as:
    \[
	\lambda_1(\Omega)=\max_{\veu\in\caC^{1,2}_{t-\upp}(\R\times\Omega,(\vez,\vei))\cap\caC^{1}(\R\times\overline{\Omega})}\min_{i\in[N]}\inf_{\R\times\Omega}\left(\frac{(\cbQ\veu)_i}{u_i}\right).
    \]
\end{prop}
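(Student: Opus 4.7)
The proposed proof is short because most of the work has already been done in Propositions \ref{prop:eigenvalue_well_defined} and \ref{prop:eigenvalue_limit_Dirichlet}. Denote by $\Lambda$ the right-hand side of the claimed characterization, so that the task is to establish $\lambda_1(\Omega)=\Lambda$ and that the supremum in the definition of $\Lambda$ is attained.

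For the inequality $\lambda_1(\Omega)\geq\Lambda$, I would argue by plugging each admissible test function into the definition \eqref{def:lambda1_Omega}. Fix any $\veu\in\caC^{1,2}_{t-\upp}(\R\times\Omega,(\vez,\vei))\cap\caC^{1}(\R\times\overline{\Omega})$ and set
\[
    \mu(\veu)=\min_{i\in[N]}\inf_{\R\times\Omega}\left(\frac{(\cbQ\veu)_i}{u_i}\right)\in\R\cup\{-\infty\}.
\]
If $\mu(\veu)=-\infty$ the bound is trivial; otherwise, component-wise $(\cbQ\veu)_i\geq\mu(\veu)u_i$ in $\R\times\Omega$, that is $\cbQ\veu\geq\mu(\veu)\veu$, so $\mu(\veu)$ belongs to the set whose supremum defines $\lambda_1(\Omega)$. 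Taking the supremum over $\veu$ yields $\lambda_1(\Omega)\geq\Lambda$.

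For the converse inequality, together with the attainment of the max, I would use the generalized principal eigenfunction $\veu^\star$ constructed in Proposition \ref{prop:eigenvalue_limit_Dirichlet}. By definition it belongs to $\caC^{1,2}_{t-\upp}(\R\times\Omega,(\vez,\vei))\cap\caC^1_0(\R\times\overline{\Omega})$, which is contained in the admissible class for $\Lambda$ (positivity of the components is only required on the open set $\R\times\Omega$, and vanishing on $\partial\Omega$ is compatible with lying in $\caC^1(\R\times\overline{\Omega})$). Since $\cbQ\veu^\star=\lambda_1(\Omega)\veu^\star$ pointwise in $\R\times\Omega$ and each component $u^\star_i$ is strictly positive there, the ratios $(\cbQ\veu^\star)_i/u^\star_i$ are identically equal to $\lambda_1(\Omega)$. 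Hence $\mu(\veu^\star)=\lambda_1(\Omega)$ and $\Lambda\geq\lambda_1(\Omega)$, with the supremum realized by $\veu^\star$.

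The only step that would deserve a remark is the verification that $\veu^\star$ really is an admissible competitor in the max, since its boundary values vanish; this is not an obstacle because the infimum in the definition of $\mu(\veu)$ is taken over the open set $\R\times\Omega$, where $\veu^\star\gg\vez$ by the strong maximum principle used in the construction of Proposition \ref{prop:eigenvalue_limit_Dirichlet}. Beyond that, the argument is essentially algebraic, and no Harnack or compactness input beyond the existence theorem is needed.
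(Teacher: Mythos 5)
Your proof is correct and follows essentially the same route as the paper: one inequality comes from plugging each admissible test function into the definition \eqref{def:lambda1_Omega} (the paper phrases this as a contradiction, which is logically the same), and the other inequality plus attainment come from using the generalized principal eigenfunction of Proposition \ref{prop:eigenvalue_limit_Dirichlet} as a competitor. Your explicit remark that the vanishing boundary values do not obstruct admissibility, since positivity and the infimum are only required on $\R\times\Omega$, is a welcome clarification of a point the paper leaves implicit.
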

\begin{proof}
    Testing $\cbQ$ against a generalized principal eigenfunction (whose existence is guaranteed by 
    Proposition \ref{prop:eigenvalue_limit_Dirichlet}), we directly find 
    \[
        \lambda_1(\Omega)\leq \sup_{\veu\in\caC^{1,2}_{t-\upp}(\R\times\Omega,(\vez,\vei))\cap\caC^{1}(\R\times\overline{\Omega})}\min_{i\in[N]}\inf_{\R\times\Omega}\left(\frac{(\cbQ\veu)_i}{u_i}\right).
    \]

    Next we assume by contradiction that the above inequality is actually strict. Then there exists
    $\mu>\lambda_1(\Omega)$ and a test function $\veu$ such that $\cbQ\veu\geq\mu\veu$. This contradicts the definition of $\lambda_1(\Omega)$. 

    Finally, the existence of a generalized principal eigenfunction shows that the supremum is in fact a maximum, as in the statement.
\end{proof}

\subsubsection{Characterizations of the periodic principal eigenvalues $\lambda_{1,z}$}

For any $z\in\R^n$, the existence and uniqueness of the eigenpair $(\lambda_{1,z},\veu_z)$, up to multiplication of 
the eigenfunction by a constant, follows from the Krein--Rutman theorem. We do not detail the proof of this claim.
Below, we prove a generalization of the classical Collatz--Wielandt formula for Perron--Frobenius eigenvalues.

\begin{prop}\label{prop:max--min_characterization_lambdaz}
Let $z\in\R^n$. Then the periodic principal eigenvalue $\lambda_{1,z}$ can be characterized as:
\begin{equation}
    \lambda_{1,z}=\max_{\veu\in\caC^{1,2}_\upp(\R\times\R^n,(\vez,\vei))}\min_{i\in[N]}\min_{\clOmper}\left(\frac{(\cbQ_z\veu)_i}{u_i}\right),
\end{equation}
\begin{equation}
    \lambda_{1,z}=\min_{\veu\in\caC^{1,2}_\upp(\R\times\R^n,(\vez,\vei))}\max_{i\in[N]}\max_{\clOmper}\left(\frac{(\cbQ_z\veu)_i}{u_i}\right).
\end{equation}
\end{prop}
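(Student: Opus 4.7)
The plan is to prove both characterizations by the standard sliding method, using the Krein--Rutman eigenpair $(\lambda_{1,z},\veu_z)$ (which was admitted above as a consequence of the spatial compactness coming from periodicity) together with the strong maximum principle of Proposition~\ref{prop:strong_maximum_principle}.

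First, I would dispatch the easy inequalities by testing both formulas with the choice $\veu=\veu_z$: since $\cbQ_z\veu_z=\lambda_{1,z}\veu_z$, every ratio $(\cbQ_z\veu_z)_i/u_{z,i}$ is identically equal to $\lambda_{1,z}$ on $\clOmper$, which gives simultaneously the lower bound in the max--min and the upper bound in the min--max. Once the converses are established, this also identifies $\veu_z$ as an extremizer of both variational problems, turning suprema and infima into max and min.

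For the nontrivial inequality in the max--min, I would argue by contradiction. Suppose some $\veu\in\caC^{1,2}_\upp(\R\times\R^n,(\vez,\vei))$ and some $\varepsilon>0$ satisfy $\cbQ_z\veu\geq(\lambda_{1,z}+\varepsilon)\veu$ on $\clOmper$. Define the sliding constant
\[
\kappa^\star:=\inf\bigl\{\kappa>0\,\bigl|\,\kappa\veu-\veu_z\gg\vez\text{ on }\clOmper\bigr\},
\]
which is finite and positive thanks to the positivity and periodicity of $\veu$ and $\veu_z$. By construction, $\vew:=\kappa^\star\veu-\veu_z$ is a space-time periodic, nonnegative, nonzero function whose $i^\star$-th component vanishes at some point $(t^\star,x^\star)\in\clOmper$. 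A direct computation yields
\[
(\cbQ_z-\lambda_{1,z}\vect{I})\vew\geq \kappa^\star\varepsilon\veu\gg\vez.
\]
Since $\cbQ_z-\lambda_{1,z}\vect{I}$ has the same cooperative and fully coupled structure as $\cbQ$ (only the diagonal is shifted), Proposition~\ref{prop:strong_maximum_principle} forces $\vew\equiv\vez$, which is incompatible with the strict positivity of the right-hand side. The min--max characterization is obtained by the symmetric sliding: one replaces $\kappa^\star$ with $\kappa_\star:=\sup\{\kappa>0\mid \veu_z-\kappa\veu\gg\vez\text{ on }\clOmper\}$, sets $\vew:=\veu_z-\kappa_\star\veu$, and reverses all inequalities.

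The main technical obstacle is the mismatch between Proposition~\ref{prop:strong_maximum_principle}, which is phrased for nonnegative super-solutions on $(0,+\infty)\times\R^n$ with touching point at some $t^\star>T$, and our periodic super-solution $\vew$ defined on all of $\R\times\R^n$. This is harmless: by periodicity, $\vew$ restricts to a $\caC^{1,2}$ super-solution of the same differential inequality on $(0,+\infty)\times\R^n$, and the touching point can be shifted by a sufficiently large integer multiple of $T$ so as to meet the condition $t^\star>T$; the proposition can then be invoked verbatim.
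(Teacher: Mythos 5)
Your proposal is correct and follows essentially the same route as the paper: the easy bounds come from testing with the Krein--Rutman eigenfunction $\veu_z$, and the converse inequalities are obtained by the sliding constant $\kappa^\star$ (resp.\ $\kappa_\star$) together with the strong maximum principle applied to $\kappa^\star\veu-\veu_z$ (resp.\ $\veu_z-\kappa_\star\veu$), exactly as the paper does by reusing the argument of its Proposition on $\lambda_1(\Omega)$. The only cosmetic remark is that $\cbQ_z-\lambda_{1,z}\vect{I}$ differs from $\cbQ$ not merely by a diagonal shift but also by added drift terms; this is immaterial since all structural hypotheses needed for the strong maximum principle are preserved.
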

\begin{proof}
We prove only the max--min characterization, the min--max one being proved quite similarly.

Using the existence of the periodic principal eigenfunction $\veu_z$, we immediately obtain
\[
    \lambda_{1,z}\leq\sup_{\veu\in\caC^{1,2}_\upp(\R\times\R^n,(\vez,\vei))}\min_{i\in[N]}\min_{\clOmper}\left(\frac{(\cbQ_z\veu)_i}{u_i}\right).
\]

Next we assume by contradiction that the above inequality is actually strict. Then there exists a test function 
$\veu\in\caC^{1,2}_\upp(\R\times\R^n,(\vez,\vei))$ and a real number $\mu>\lambda_{1,z}$ such that $\cbQ_z\veu\geq \mu\veu$.
Let 
\[
    \kappa^\star=\inf\left\{ \kappa>0\ |\ \kappa\veu-\veu_z\gg\vez \right\}.
\]
Applying the strong maximum principle to $\kappa^\star\veu-\veu_z$, just as in the proof of Proposition
\ref{prop:eigenvalue_well_defined}, we find a contradiction.

Finally, the existence of $\veu_z$ shows that the supremum is in fact a maximum, as in the statement.
\end{proof}

\subsubsection{Concave dependence on $z$ and $\veL$}

In order to show later on that $\lambda_1=\max_{z\in\R^n}\lambda_z$, we need to establish first the strict concavity of 
$z\mapsto\lambda_z$. This is stated below in Corollary \ref{cor:lambdaz_strictly_concave}.
Since the proof of Theorem \ref{thm:concavity_eigenvalue_L} on the concavity of $\veL\mapsto\lambda_z(\veL)$ is quite similar, 
we prove the two results directly together.

\begin{prop}\label{prop:concavity_eigenvalue_z_L}
Let $z_1,z_2\in\R^n$.

Let 
\[
    \left(\veL[s]\right)_{s\in[0,1]}\in\left(\caC^{\delta/2,\delta}_\upp(\R\times\R^n,\R^{N\times N})\right)^{[0,1]}
\]
a family of matrices satisfying the same assumptions as $\veL$ (\textit{i.e.}, \ref{ass:cooperative}, \ref{ass:irreducible}) and 
such that, for all 
$(t,x)\in\R\times\R^n$ and $i\in[N]$,
\begin{enumerate}
    \item $s\mapsto l_{i,i}[s](t,x)$ is convex;
    \item for all $j\in[N]\backslash\{i\}$, $s\mapsto l_{i,j}[s](t,x)$ is either identically zero or log-convex.
\end{enumerate}

For all $s\in[0,1]$, denote 
\[
    \cbQ[s]=\upe_{-(1-s)z_1-sz_2}(\diag(\caP_i)-\veL[s])\upe_{(1-s)z_1+sz_2}
\]
and $\lambda[s]=\lambda_{1,\upp}(\cbQ[s])$ the associated periodic principal eigenvalue.

Then $s\in[0,1]\mapsto \lambda[s]$ is affine or strictly concave and it is affine if and only if the following conditions are both satisfied:
\begin{enumerate}[label=$(\text{Cond. }\arabic*)$]
    \item \label{cond:equality_case_1} $z_1=z_2$;
    \item \label{cond:equality_case_2} there exist a constant vector $\vect{b}\gg\vez$, a function $\vect{c}\in\caC_\upp(\R\times\R^n,(\vez,\vei))$ 
    and a function $\vect{f}\in\caC_{\upp}(\R,\R^N)$ satisfying $\int_0^T\vect{f}\in\vspan(\veo)$ such that the entries of $\veL$ have the form:
    \begin{equation*}
        l_{i,j}[s]:(t,x)\mapsto
        \begin{cases}
            l_{i,i}[0](t,x)-sf_i(t) & \text{if }i=j, \\
            l_{i,j}[0](t,x)\left(\frac{b_j}{c_i(t,x)}\right)^s \upe^{s\left(\int_0^t f_j-\frac{t}{T}\int_0^T f_j\right)} & \text{if }i\neq j
        \end{cases}
    \end{equation*}
    and such that the function $\vect{c}$ satisfies, at all $(t,x)\in\clOmper$ and for each $i\in[N]$, 
    \[
        c_i(t,x)=b_i\upe^{\int_0^t f_i-\frac{t}{T}\int_0^T f_i}\quad\text{or}\quad\forall j\in[N]\backslash\{i\},\ l_{i,j}[0](t,x)=0.
    \]
\end{enumerate}
\end{prop}
\begin{proof}
We divide the proof into three steps: the concavity of $s\mapsto\lambda[s]$, the alternative between affinity or strict concavity,
the characterization of the affinity case.

\begin{proof}[Step 1: concavity]

Fix $s\in[0,1]$, set $z=(1-s)z_1+sz_2$ and, for all $(t,x)\in\R\times\R^n$, define the auxiliary matrix $\widetilde{\veL}[s](t,x)$ whose entries are:
\[
    \widetilde{l}_{i,j}[s](t,x)=
    \begin{cases}
        (1-s)l_{i,i}[0](t,x)+sl_{i,i}[1](t,x) & \text{if }i=j \\
        \left(l_{i,j}[0](t,x)\right)^{1-s}\left(l_{i,j}[1](t,x)\right)^s & \text{if }i\neq j
    \end{cases}
\]
(with $0^0=0$ by convention).
By construction, and by our convexity assumptions, $\veL[s]\leq\widetilde{\veL}[s]$ in $\R\times\R^n$. 
Hence, as a direct consequence of the min--max/max--min characterizations of the periodic principal eigenvalue
of Proposition \ref{prop:max--min_characterization_lambdaz}, we get: 
\begin{equation}
    \label{eq:concavity_eigenvalue_1}
    \lambda[s]\geq\lambda_{1,\upp}(\widetilde{\cbQ}[s]),
\end{equation}
where $\widetilde{\cbQ}[s]=\cbQ[s]+\veL[s]-\widetilde{\veL}[s]$.

Recall the notation $\upe_{z'}:x\mapsto\upe^{z'\cdot x}$ and note that, by definition of $\widetilde{\cbQ}[s]$, 
\[
    \frac{\left(\widetilde{\cbQ}[s](\upe_{-z}\veu)\right)_i}{\upe_{-z}u_i}=\frac{\caP_i u_i-\left(\widetilde{\veL}[s]\veu\right)_i}{u_i}\quad\text{for all }\veu\in\caC^{1,2}(\R\times\R^n,(\vez,\vei)).
\]
Hence there is a bijection between space-time periodic eigenfunctions of $\widetilde{\cbQ}[s]$ 
and time periodic eigenfunctions of $\dcbP-\widetilde{\veL}[s]$ whose product with $\upe_z$ is space periodic.

Let $\mu=\lambda_{1,\upp}(\widetilde{\cbQ}[0])$, $\nu=\lambda_{1,\upp}(\widetilde{\cbQ}[1])$ and $\upe_{-z_1}\veu$, $\upe_{-z_2}\vev$ two respectively associated space-time periodic positive eigenfunctions:
\[
    \widetilde{\cbQ}[0](\upe_{-z_1}\veu)=\mu\upe_{-z_1}\veu,\quad\widetilde{\cbQ}[1](\upe_{-z_2}\vev)=\nu\upe_{-z_2}\vev,
\]
\textit{i.e.}
\[
    \dcbP\veu-\widetilde{\veL}[0]\veu=\mu\veu,\quad\dcbP\vev-\widetilde{\veL}[1]\vev=\nu\vev.
\]
Define $\vew=\left(u_i^{1-s}v_i^s\right)_{i\in[N]}$. Since $\upe_{-z}w_i=(\upe_{-z_1}u_i)^{1-s}(\upe_{-z_2}v_i)^s$ for all $i\in[N]$, 
$\upe_{-z}\vew$ is space-time periodic and therefore we can use it as test function for $\widetilde{\cbQ}[s]$. 
Following Nadin \cite{Nadin_2007} for the expansion of the $\caP_i$ part and using the uniform ellipticity assumption
\ref{ass:ellipticity}, we find:
\begin{align*}
    \frac{\caP_i w_i-\left(\widetilde{\veL}[s]\vew\right)_i}{w_i} & = (1-s)\frac{\caP_i u_i}{u_i} +s\frac{\caP_i v_i}{v_i}
    +s(1-s)\left(\frac{\nabla u_i}{u_i}-\frac{\nabla v_i}{v_i}\right)\cdot A_i\left(\frac{\nabla u_i}{u_i}-\frac{\nabla v_i}{v_i}\right)\\
    &\quad - (1-s)\widetilde{l}_{i,i}[0]-s\widetilde{l}_{i,i}[1] 
    -\frac{1}{w_i}\sum_{j\in[N]\backslash\{i\}}(\widetilde{l}_{i,j}[0]u_j)^{1-s}(\widetilde{l}_{i,j}[1]v_j)^s \\
    & \geq (1-s)\frac{\caP_i u_i}{u_i} +s\frac{\caP_i v_i}{v_i}\\
    &\quad - (1-s)\widetilde{l}_{i,i}[0]-s\widetilde{l}_{i,i}[1] 
    -\frac{1}{w_i}\sum_{j\in[N]\backslash\{i\}}(\widetilde{l}_{i,j}[0]u_j)^{1-s}(\widetilde{l}_{i,j}[1]v_j)^s.
\end{align*}
Following Nussbaum \cite{Nussbaum_1986} and using the H\"{o}lder inequality, the equalities satisfied by $\veu$ and $\vev$ and
the inequality between arithmetic and geometric means, we get
\begin{equation}
    \label{eq:concavity_eigenvalue_components}
    \frac{\left(\widetilde{\cbQ}[s](\upe_{-z}\vew)\right)_i}{\upe_{-z}w_i}\geq(1-s)\mu+s\nu\quad\text{for all }i\in[N],
\end{equation}
and, eventually, the max--min characterization yields:
\begin{equation}
    \label{eq:concavity_eigenvalue_2}
    \lambda_{1,\upp}(\widetilde{\cbQ}[s])\geq (1-s)\mu + s\nu=(1-s)\lambda_{1,\upp}(\widetilde{\cbQ}[0])+s\lambda_{1,\upp}(\widetilde{\cbQ}[1]).
\end{equation}

Combining \eqref{eq:concavity_eigenvalue_1} and \eqref{eq:concavity_eigenvalue_2} and using the fact that $\cbQ[s]$ and $\widetilde{\cbQ}[s]$
coincide at $s=0$ and $s=1$, we find indeed the claimed concavity:
\begin{equation}
    \label{eq:concavity_eigenvalue}
    \lambda_{1,\upp}(\cbQ[s])\geq (1-s)\lambda_{1,\upp}(\cbQ[0])+s\lambda_{1,\upp}(\cbQ[1]).
\end{equation}
\end{proof}

\begin{proof}[Step 2: affinity or strict concavity]

Assume that $s\mapsto\lambda[s]$ is not strictly concave. This means that there exists
$s_0\in[0,1]$ such that \eqref{eq:concavity_eigenvalue} is an equality at $s=s_0$.

The equality in \eqref{eq:concavity_eigenvalue} at $s=s_0$ implies the equality in \eqref{eq:concavity_eigenvalue_1} at $s=s_0$, 
which in turn implies the equality $\veL[s_0]=\widetilde{\veL}[s_0]$ in $\R\times\R^n$. 
Since all $s\mapsto\widetilde{l}_{i,i}[s](t,x)$ are linear and all $s\mapsto l_{i,i}[s](t,x)$ are convex, 
$l_{i,i}[s](t,x)\leq \widetilde{l}_{i,i}[s](t,x)$ together
with the equality at $s=0$, $s=s_0$, $s=1$ imply $l_{i,i}=\widetilde{l}_{i,i}$ identically for all $i\in[N]$. Similarly, 
$l_{i,j}=\widetilde{l}_{i,j}$ identically for all $i,j\in[N]$. Hence, as functions of $(s,t,x)$,
$\veL=\widetilde{\veL}$ identically in $[0,1]\times\R\times\R^n$.

Similarly, the equality in \eqref{eq:concavity_eigenvalue} at $s=s_0$ implies the equality in
\eqref{eq:concavity_eigenvalue_2} at $s=s_0$, and then the max--min characterization (Proposition
\ref{prop:max--min_characterization_lambdaz}) implies equality in
\eqref{eq:concavity_eigenvalue_components} at $s=s_0$ for all $i\in[N]$ in $\R\times\R^n$. 
Then, this implies, for all $i\in[N]$:
\begin{itemize}
    \item $\nabla u_i/u_i=\nabla v_i/v_i$, that is there exists a function $a_i$ of the variable $t$ only such that $u_i(t,x)=a_i(t)v_i(t,x)$;
    \item for all $j\in[N]\backslash\{i\}$, there exists a positive function $c_i$ of $t$ and $x$ such that 
    $\widetilde{l}_{i,j}[0]u_j=c_i \widetilde{l}_{i,j}[1]v_j$ (equality in the H\"{o}lder inequality);
    \item $\frac{\caP_i u_i}{u_i}-\widetilde{l}_{i,i}[0]-\mu=\frac{\caP_i v_i}{v_i}-\widetilde{l}_{i,i}[1]-\nu$ 
    (equality in the inequality between geometric and arithmetic averages). 
\end{itemize}

Putting the two together, the equality in \eqref{eq:concavity_eigenvalue} at $s=s_0$ implies:
\begin{enumerate}[label=$(\text{Cond. }\arabic*')$]
    \item \label{cond:1'} $\veL=\widetilde{\veL}$ identically in $[0,1]\times\R\times\R^n$;
    \item \label{cond:2'} there exists a function $a_i$ of the variable $t$ only such that $u_i(t,x)=a_i(t)v_i(t,x)$;
    \item \label{cond:3'} for all $j\in[N]\backslash\{i\}$, there exists a positive function $c_i$ of $t$ and $x$ such that 
    $\widetilde{l}_{i,j}[0]a_j=c_i \widetilde{l}_{i,j}[1]$;
    \item \label{cond:4'} $\frac{\caP_i u_i}{u_i}-\widetilde{l}_{i,i}[0]-\mu=\frac{\caP_i v_i}{v_i}-\widetilde{l}_{i,i}[1]-\nu$. 
\end{enumerate}

These four conditions do not depend on $s_0$. Going back through Step 1, it appears that under these conditions, all inequalities are equalities.
Hence \eqref{eq:concavity_eigenvalue} is an equality at all $s\in[0,1]$, or in other words $s\mapsto\lambda[s]$ is affine. It will be useful in the
next step to note that this argument precisely shows that \ref{cond:1'}--\ref{cond:4'} are equivalent to the affinity of $s\mapsto\lambda[s]$.

\end{proof}

\begin{proof}[Step 3: necessary and sufficient conditions for affinity]

From Step 2, we know that $s\mapsto\lambda[s]$ is affine if and only if \ref{cond:1'}--\ref{cond:4'}.
Let us prove that this group of conditions is equivalent to the group \ref{cond:equality_case_1}--\ref{cond:equality_case_2}.

Note first that without loss of generality, we can assume that $\veu$ and $\vev$ are uniquely identified by the following normalizations:
\[
    \|u_1(0,\cdot)\|_{\mathcal{L}^\infty(\R^n,\R)} = 1,\quad\|v_1(0,\cdot)\|_{\mathcal{L}^\infty(\R^n,\R)} = 1.
\]

First, we prove that \ref{cond:1'}--\ref{cond:4'} imply \ref{cond:equality_case_1}--\ref{cond:equality_case_2}. 
From $u_i(t,x)=a_i(t)v_i(t,x)$, we deduce $z_1=z_2$ (recall that $\upe_{-z_1}\veu$ and $\upe_{-z_2}\vev$ 
are both space-time periodic) and $\frac{\caP_i u_i}{u_i}=\frac{\caP_i v_i}{v_i}+\frac{a_i'}{a_i}$.
The equality $\frac{\caP_i u_i}{u_i}-\widetilde{l}_{i,i}[0]-\mu=\frac{\caP_i v_i}{v_i}-\widetilde{l}_{i,i}[1]-\nu$ reads
$\frac{a_i'}{a_i}=\widetilde{l}_{i,i}[0]-\widetilde{l}_{i,i}[1]+\mu-\nu$, or in other words there exists $\vect{b}\in\R^N$ such that
\[
    a_i:t\mapsto b_i\exp\left(\int_0^t\left(\widetilde{l}_{i,i}[0](t',x)-\widetilde{l}_{i,i}[1](t',x)\right)\upd t'+(\mu-\nu)t\right).
\]
This directly implies that $f_i=\widetilde{l}_{i,i}[0]-\widetilde{l}_{i,i}[1]$ does not depend on $x$. Moreover, the 
positivity of both $u_i$ and $v_i$ implies $b_i>0$, the normalizations imply $b_1=1$, and the time periodicity 
implies that $f_i$ is periodic with average $\nu-\mu$, independent of $i$.
To characterize $\vect{c}$, we sum the $N-1$ equalities $l_{i,j}[0]a_j=c_il_{i,j}[1]$ for $j\in[N]\backslash\{i\}$ coming from
\ref{cond:3'} and rearrange terms as follows:
\begin{align*}
   0 & = \left(\sum_{j\in[N]\backslash\{i\}}l_{i,j}[1]v_j\right)c_i - \sum_{j\in[N]\backslash\{i\}}l_{i,j}[0]u_j \\
   & = \left((\veL[1]\vev)_i-l_{i,i}[1]v_i\right)c_i - \left((\veL[0]\veu)_i-l_{i,i}[0]u_i\right) \\
   & = \left(\frac{\caP_i v_i}{v_i}-l_{i,i}[1]-\nu\right)v_i c_i-\left(\frac{\caP_i u_i}{u_i}-l_{i,i}[0]-\mu\right)u_i
\end{align*}
Now, using \ref{cond:2'} and \ref{cond:4'}, we get
\begin{equation*}
   0 = \left(\frac{\caP_i v_i}{v_i}-l_{i,i}[1]-\nu\right)v_i(c_i-a_i) = \left(\sum_{j\in[N]\backslash\{i\}}l_{i,j}[1]v_j\right)(c_i-a_i).
\end{equation*}
By nonnegativity of each term in the sum, we deduce that, at each $(t,x)\in\clOmper$,
\[
    c_i(t,x)=a_i(t)\quad\text{or}\quad\forall j\in[N]\backslash\{i\},\ l_{i,j}[0](t,x)=0.
\]

Second, to verify that \ref{cond:equality_case_1}--\ref{cond:equality_case_2} imply \ref{cond:1'}--\ref{cond:4'}, it suffices to set
\[
    a_i:t\mapsto \frac{b_i}{b_1}\exp\left(\int_0^t f_i -\frac{t}{T}\int_0^T f_i\right),
\]
and to check $\frac{1}{T}\int_0^T f_i =\mu-\nu$ and $\veu=\vect{a}\circ\vev$.
Actually, $\widetilde{\veu}=\vect{a}\circ\vev$ satisfies
\begin{align*}
    \caP_i \widetilde{u}_i-\left(\veL[0]\widetilde{\veu}\right)_i & = a_i' v_i + a_i\caP_i v_i - \left(\veL[0](\vect{a}\circ\vev)\right)_i \\
    & =\left(f_i-\frac{1}{T}\int_0^T f_i\right)a_i v_i + a_i\left(\veL[1]\vev\right)_i +a_i \lambda_{1,\upp}(\cbQ[1])v_i - \left(\veL[0](\vect{a}\circ\vev)\right)_i \\
    & =\left(\lambda_{1,\upp}(\cbQ[1])-\frac{1}{T}\int_0^T f_i \right)\widetilde{u}_i +\sum_{j\in[N]\backslash\{i\}}\left(\left(l_{i,j}[1]a_i-l_{i,j}[0]a_j\right)v_j\right) \\
    & =\left(\lambda_{1,\upp}(\cbQ[1])-\frac{1}{T}\int_0^T f_i \right)\widetilde{u}_i +\left(a_i-c_i \right)\sum_{j\in[N]\backslash\{i\}}l_{i,j}[1]v_j \\
    & =\left(\lambda_{1,\upp}(\cbQ[1])-\frac{1}{T}\int_0^T f_i \right)\widetilde{u}_i.
\end{align*}
Since $\upe_{-z_2}\vev$ is space-time periodic, $\upe_{-z_1}\widetilde{\veu}=\upe_{-z_2}\vect{a}\circ\vev$ 
is also space-time periodic, whence by uniqueness
$\lambda_{1,\upp}(\cbQ[1])-T^{-1}\int_0^T f_i =\lambda_{1,\upp}(\cbQ[0])$ and $\upe_{-z_1}\widetilde{\veu}\in\vspan(\veu_{z_1})$. 
This exactly proves the existence of $C>0$ such that $C\widetilde{\veu}=\veu$, and, in view of the chosen normalizations on $\veu$ and $\vev$,
$C=1$, \textit{i.e.} $\widetilde{\veu}=\veu$. 
\end{proof}

The proof of the theorem is complete.
\end{proof}

\begin{rem}\label{rem:sharpness_of_NSC_for_equality_case}
When $\veL[0](t,x)$ is irreducible at all $(t,x)\in\clOmper$, the characterization of the function $\vect{c}$ in 
\ref{cond:equality_case_2} is immediately strengthened as: $c_i$ only depends on $t$ and $c_i:t\mapsto b_i\upe^{\int_0^t f_i-\frac{t}{T}\int_0^T f_i}$.

On the contrary, $\vect{c}$ cannot be uniquely determined when a line of $\veL[0]$ vanishes somewhere in $\clOmper$, and from
\ref{cond:equality_case_2} it is actually clear that if the $i$-th line of $\veL[0]$ vanishes in an open space-time ball 
$B\subset\clOmper$, then basically any nonnegative scalar function can be added to $a_i$ in $B$ and the resulting sum still 
forms an admissible $\vect{c}$.
This might come as a surprise, especially since pointwise irreducible matrices are dense\footnote{Just change $\veL$ into 
$\veL+\varepsilon\veo_{N\times N}$.} in the set of admissible matrices (namely, matrices in 
$\caC^{\delta/2,\delta}_\upp(\R\times\R^n,\R^{N\times N})$ satisfying \ref{ass:cooperative} and \ref{ass:irreducible}).
\end{rem}

\begin{cor}\label{cor:lambdaz_strictly_concave}
With the notations of Proposition \ref{prop:concavity_eigenvalue_z_L}, if $z_1\neq z_2$, then $s\in[0,1]\mapsto\lambda[s]$ is strictly concave. 
In particular, $z\mapsto\lambda_{1,z}$ is strictly concave.
\end{cor}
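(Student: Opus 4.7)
The corollary will be an almost immediate consequence of Proposition \ref{prop:concavity_eigenvalue_z_L}, which establishes a dichotomy between affinity and strict concavity for the map $s \mapsto \lambda[s]$, together with an explicit characterization of the affine case.

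For the first assertion, I would simply invoke the proposition with the given $z_1 \neq z_2$ and the given family $(\veL[s])_{s\in[0,1]}$. The proposition says $s \mapsto \lambda[s]$ is either strictly concave or affine; if it were affine, condition \ref{cond:equality_case_1} would force $z_1 = z_2$, contradicting the hypothesis. So strict concavity is the only option.

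For the second assertion on the strict concavity of $z \in \R^n \mapsto \lambda_{1,z}$, I would apply the proposition to the constant family $\veL[s] \equiv \veL$, which trivially satisfies the convexity and log-convexity hypotheses (affine, resp.\ constant, in $s$). For any pair of distinct points $z_1, z_2 \in \R^n$, the operator $\cbQ[s]$ is then nothing but $\upe_{-((1-s)z_1 + sz_2)}(\diag(\caP_i) - \veL)\upe_{(1-s)z_1 + sz_2}$, whose periodic principal eigenvalue is by definition $\lambda_{1,(1-s)z_1 + sz_2}$. By the first assertion of the corollary, the map $s \in [0,1] \mapsto \lambda_{1,(1-s)z_1 + sz_2}$ is strictly concave, which is exactly the definition of strict concavity of $z \mapsto \lambda_{1,z}$ along the segment $[z_1,z_2]$. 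Since $z_1, z_2$ were arbitrary distinct points, $z \mapsto \lambda_{1,z}$ is strictly concave on $\R^n$.

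There is essentially no obstacle here, as all the work has been absorbed into Proposition \ref{prop:concavity_eigenvalue_z_L}; the corollary is a direct reading of its equality case. The only mild point worth mentioning in the write-up is that the constant family $\veL[s] \equiv \veL$ satisfies the hypotheses of the proposition (including \ref{ass:cooperative} and \ref{ass:irreducible}, which are inherited from the standing assumptions on $\veL$).
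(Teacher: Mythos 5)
Your proposal is correct and matches the paper's intent exactly: the paper states the corollary without further proof precisely because affinity is ruled out by the necessary condition $z_1=z_2$ in Proposition \ref{prop:concavity_eigenvalue_z_L}, and the strict concavity of $z\mapsto\lambda_{1,z}$ follows by applying that proposition to the constant family $\veL[s]\equiv\veL$ along arbitrary segments, just as you do. No gaps.
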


Very minor adaptations of the proof of Proposition \ref{prop:concavity_eigenvalue_z_L}, not detailed here, 
lead to the following analogous result in the Dirichlet case \footnote{The absence of $z$ actually makes the proof shorter.}.

\begin{prop}\label{prop:concavity_lambda1_Omega_L}
Let $\Omega\subset\R^n$ be a nonempty, bounded, smooth, open, connected set such that there exists $x_0\in\Omega$
satisfying $[x_0,x_0+L]\subset\Omega$.

Let 
\[
    \left(\veL[s]\right)_{s\in[0,1]}\in\left(\caC^{\delta/2,\delta}_\upp(\R\times\Omega,\R^{N\times N})\right)^{[0,1]}
\]
a family of matrices satisfying the same assumptions as $\veL$ (\textit{i.e.}, \ref{ass:cooperative}, \ref{ass:irreducible})
and such that, for all $(t,x)\in\R\times\Omega$ and $i\in[N]$,
\begin{enumerate}
    \item $s\mapsto l_{i,i}[s](t,x)$ is convex;
    \item for all $j\in[N]\backslash\{i\}$, $s\mapsto l_{i,j}[s](t,x)$ is either identically zero or log-convex.
\end{enumerate}

Then $s\in[0,1]\mapsto \lambda_{1,\upDir}(\Omega,\cbQ[s])$, where $\cbQ[s]$ is the operator $\cbQ$ with $\veL$ replaced $\veL[s]$, 
is affine or strictly concave and it is affine if and only if
there exist a constant vector $\vect{b}\gg\vez$, a function $\vect{c}\in\caC_{t-\upp}(\R\times\Omega,(\vez,\vei))$ 
and a function $\vect{f}\in\caC_{t-\upp}(\R,\R^N)$ satisfying $\int_0^T\vect{f}\in\vspan(\veo)$ such that the entries of $\veL$ have the form:
\begin{equation*}
    l_{i,j}[s]:(t,x)\mapsto
    \begin{cases}
        l_{i,i}[0](t,x)-sf_i(t) & \text{if }i=j, \\
        l_{i,j}[0](t,x)\left(\frac{b_j}{c_i(t,x)}\right)^s \upe^{s\left(\int_0^t f_j-\frac{t}{T}\int_0^T f_j\right)} & \text{if }i\neq j,
    \end{cases}
\end{equation*}
and such that the function $\vect{c}$ satisfies, at all $(t,x)\in\R\times\Omega$ and for each $i\in[N]$, 
\[
    c_i(t,x)=b_i\upe^{\int_0^t f_i-\frac{t}{T}\int_0^T f_i}\quad\text{or}\quad\forall j\in[N]\backslash\{i\},\ l_{i,j}[0](t,x)=0.
\]
\end{prop}

As a corollary, we obtain the concavity of $\lambda_1$ in arbitrary domains, namely Theorem \ref{thm:concavity_lambda1_L}.

\begin{cor}
Let $\Omega\subset\R^n$ be a nonempty open connected set such that there exists $x_0\in\Omega$ satisfying $[x_0,x_0+L]\subset\Omega$.

Let 
\[
    \left(\veL[s]\right)_{s\in[0,1]}\in\left(\caC^{\delta/2,\delta}_\upp(\R\times\Omega,\R^{N\times N})\right)^{[0,1]}
\]
a family of matrices satisfying the same assumptions as $\veL$ (\textit{i.e.}, \ref{ass:cooperative}, \ref{ass:irreducible}) 
and such that, for all $(t,x)\in\R\times\Omega$ and $i\in[N]$,
\begin{enumerate}
    \item $s\mapsto l_{i,i}[s](t,x)$ is convex;
    \item for all $j\in[N]\backslash\{i\}$, $s\mapsto l_{i,j}[s](t,x)$ is either identically zero or log-convex.
\end{enumerate}

Then the mapping $s\in[0,1]\mapsto \lambda_1(\Omega,\cbQ[s])$, where $\cbQ[s]$ is the operator $\cbQ$ with $\veL$ replaced $\veL[s]$, is concave.
\end{cor}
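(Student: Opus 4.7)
The plan is to reduce the statement to the bounded smooth case, for which Proposition \ref{prop:concavity_lambda1_Omega_L} already provides the desired concavity (indeed, the stronger affine/strictly concave dichotomy), and then pass to the limit. Since concavity is preserved under pointwise limits of real-valued functions, the whole argument hinges on finding a good exhaustion of $\Omega$ by bounded smooth connected subdomains and applying Proposition \ref{prop:eigenvalue_limit_Dirichlet} to transfer the concavity in the limit.

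Concretely, I fix $x_0 \in \Omega$ with $[x_0,x_0+L] \subset \Omega$ and choose a sequence $(\Omega_k)_{k\in\N}$ of nonempty bounded smooth connected open sets such that $[x_0,x_0+L] \subset \Omega_1$, $\overline{\Omega_k} \subset \Omega_{k+1}$ and $\bigcup_{k\in\N}\Omega_k = \Omega$. Such an exhaustion exists because $\Omega$ is open and connected and contains a closed periodicity cell. For each $k \in \N$ and each $s \in [0,1]$, the restriction of $\veL[s]$ to $\R \times \Omega_k$ still satisfies \ref{ass:cooperative}, \ref{ass:irreducible} and the pointwise convexity/log-convexity conditions, so Proposition \ref{prop:concavity_lambda1_Omega_L} applies to the family $(\veL[s]_{|\R\times\Omega_k})_{s\in[0,1]}$. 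This yields that the map
\[
    s \in [0,1] \mapsto \lambda_{1,\upDir}(\Omega_k, \cbQ[s])
\]
is concave for every $k \in \N$. By Proposition \ref{prop:eigenvalue_well_defined}, this coincides with $s \mapsto \lambda_1(\Omega_k, \cbQ[s])$.

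It then remains to let $k \to +\infty$. By Proposition \ref{prop:eigenvalue_limit_Dirichlet}, for each fixed $s \in [0,1]$,
\[
    \lambda_1(\Omega_k, \cbQ[s]) \to \lambda_1(\Omega, \cbQ[s])\quad\text{as }k\to+\infty.
\]
Concavity is preserved under pointwise limits: for any $s_1,s_2 \in [0,1]$ and $\tau \in [0,1]$,
\[
    \lambda_1(\Omega_k, \cbQ[(1-\tau)s_1 + \tau s_2]) \geq (1-\tau)\lambda_1(\Omega_k, \cbQ[s_1]) + \tau\lambda_1(\Omega_k, \cbQ[s_2]),
\]
and passing to the limit in $k$ on both sides gives the same inequality with $\Omega_k$ replaced by $\Omega$. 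This establishes the concavity of $s \mapsto \lambda_1(\Omega, \cbQ[s])$.

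The main thing to check carefully is that the convexity/log-convexity hypotheses on the entries of $\veL[s]$ genuinely descend to the subdomains $\Omega_k$ (immediate, since these are pointwise conditions) and that the exhausting sequence $(\Omega_k)$ meets the hypotheses of Proposition \ref{prop:eigenvalue_limit_Dirichlet}, in particular the inclusion $[x_0,x_0+L] \subset \Omega_1$, which we guaranteed by construction. There is no real analytical obstacle beyond this bookkeeping: unlike for Theorem \ref{thm:concavity_eigenvalue_L}, we make no claim about equality cases, so the delicate characterization of affinity from Propositions \ref{prop:concavity_eigenvalue_z_L} and \ref{prop:concavity_lambda1_Omega_L} need not be propagated through the limit — only the plain concavity inequality is needed, and this passes through pointwise limits trivially.
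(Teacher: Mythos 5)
Your proof is correct and follows essentially the same route as the paper: exhaust $\Omega$ by smooth bounded connected subdomains containing a periodicity cell, invoke Proposition \ref{prop:concavity_lambda1_Omega_L} for the concavity on each $\Omega_k$, identify the Dirichlet eigenvalue with $\lambda_1(\Omega_k)$, and pass to the pointwise limit via Proposition \ref{prop:eigenvalue_limit_Dirichlet}. The only (immaterial) difference is that the paper also remarks that the convergence of concave functions is automatically uniform on $[0,1]$, whereas you only use the pointwise limit of the concavity inequality, which indeed suffices.
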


\begin{proof}
Just as in the proof of Proposition \ref{prop:eigenvalue_limit_Dirichlet}, we work with a sequence $(\Omega_k)_{k\in\N}$
of smooth, bounded, nonempty, open, connected subsets of $\Omega$ such that
\[
    [x_0,x_0+L]\subset\Omega_1,\quad\Omega_k\subset\Omega_{k+1},\quad\bigcup_{k\in\N}\Omega_k=\Omega.
\]
By virtue of Proposition \ref{prop:concavity_lambda1_Omega_L}, all $s\in[0,1]\mapsto\lambda_{1,\upDir}(\Omega_k,\cbQ[s])$ are concave.
By virtue of Proposition \ref{prop:eigenvalue_limit_Dirichlet}, $\lambda_{1,\upDir}(\Omega_k,\cbQ[s])\to\lambda_1(\Omega,\cbQ[s])$ as 
$k\to+\infty$, for all $s\in[0,1].$

The pointwise convergence of a sequence of concave functions on the compact set $[0,1]$ is automatically improved as uniform convergence 
in $[0,1]$, and the limit is concave on $[0,1]$ as well. This ends the proof.
\end{proof}

\begin{rem}
We will establish in the next section that $\lambda_1=\max_{z\in\R^n}\lambda_z$. However, the maximum of a family of concave functions
is in general not a concave function itself, so that this identity cannot be used to prove the concavity of $\lambda_1$. 
\end{rem}

\subsubsection{Relations between $\lambda_1$, $\lambda_1'$ and $\lambda_{1,z}$}

\begin{prop}\label{prop:existence_eigenfunction_exp_times_per}
There exists $z\in\R^n$ such that $\upe_z\veu_z$ is a generalized principal eigenfunction of $\cbQ$ associated with $\lambda_1$
and $\lambda_1=\lambda_{1,z}$.
\end{prop}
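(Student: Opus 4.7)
The plan is to prove simultaneously that $\lambda_1 = \max_{z \in \R^n} \lambda_{1,z}$ and that the maximum is attained at a unique $z^\star \in \R^n$, so that the generalized principal eigenfunction can be taken to be $\upe_{z^\star}\veu_{z^\star}$ by construction.

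First, I would establish the easy inequality $\lambda_1 \geq \lambda_{1,z}$ for every $z \in \R^n$. By the definition of $\cbQ_z$ in \eqref{def:Qz}, the function $\upe_z\veu_z$ is positive and only time-periodic (since $\veu_z$ is space-time periodic while $\upe_z$ depends only on $x$), and it satisfies $\cbQ(\upe_z\veu_z) = \lambda_{1,z}\,\upe_z\veu_z$. Using it as a test function in the definition \eqref{def:lambda1} yields $\lambda_{1,z} \leq \lambda_1$, and therefore $\sup_{z\in\R^n}\lambda_{1,z} \leq \lambda_1$.

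Second, I would show that this supremum is attained at a unique $z^\star \in \R^n$. Strict concavity of $z \mapsto \lambda_{1,z}$ is furnished by Corollary \ref{cor:lambdaz_strictly_concave}, and continuity follows from standard compactness and regularity estimates applied to the family $(\lambda_{1,z},\veu_z)$. Coercivity comes from the min--max characterization of Proposition \ref{prop:max--min_characterization_lambdaz} applied to the constant test function $\veo$: the ellipticity assumption \ref{ass:ellipticity} gives $(\cbQ_z \veo)_i = -\sum_j l_{i,j} - z\cdot A_i z - \nabla\cdot(A_i z) + q_i\cdot z$, whence $\lambda_{1,z} \leq -c_0|z|^2 + C_0(|z|+1)$ for suitable positive constants; hence $\lambda_{1,z} \to -\infty$ as $|z|\to+\infty$. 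A continuous, strictly concave, coercive function on $\R^n$ attains its maximum at a unique point $z^\star$.

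Third, the core step is to prove the reverse inequality $\lambda_1 \leq \lambda_{1,z^\star}$. Take $\mu < \lambda_1$ and a test function $\veu \in \caC^{1,2}_{t-\upp}(\R\times\R^n,(\vez,\vei))$ with $\cbQ\veu \geq \mu\veu$. For each $z \in \R^n$, the shift $\tilde{\veu}_z := \upe_{-z}\veu$ is positive and time-periodic (but generically not space-periodic), and satisfies $\cbQ_z \tilde{\veu}_z \geq \mu\tilde{\veu}_z$. I would compare $\tilde{\veu}_z$ with the space-time periodic eigenfunction $\veu_z$ of $\cbQ_z$ via a sliding argument in the spirit of Proposition \ref{prop:eigenvalue_well_defined}, relying on the strong maximum principle (Proposition \ref{prop:strong_maximum_principle}) and the fully-coupled Harnack inequality (Proposition \ref{prop:harnack_inequality}) to extract, for a well-chosen $z$, the inequality $\mu \leq \lambda_{1,z} \leq \lambda_{1,z^\star}$. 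Letting $\mu \nearrow \lambda_1$ then completes the proof; the identity $\cbQ(\upe_{z^\star}\veu_{z^\star}) = \lambda_1\,\upe_{z^\star}\veu_{z^\star}$ identifies the eigenfunction.

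The main obstacle is the choice of this $z$. In the scalar case it is extracted as an exponential growth rate of $\veu$ along a suitable spatial sequence, but in the vector setting the merely average coupling \ref{ass:irreducible} forbids a naive componentwise construction. The idea is to use the Harnack estimate of Proposition \ref{prop:harnack_inequality} to bound the ratios $u_i/u_j$ uniformly on each periodicity strip, so that the relevant exponential rate is captured by the behavior of a single component; the strict concavity of $z\mapsto\lambda_{1,z}$ and the uniqueness of $z^\star$ then pin down the extracted $z$ as equal to $z^\star$, closing the argument.
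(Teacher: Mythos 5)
Your Steps 1 and 2 are fine (the easy inequality $\lambda_1\geq\lambda_{1,z}$ and the existence/uniqueness of the maximizer $z^\star$ via strict concavity and coercivity), but they are not where the content of the proposition lies. The whole difficulty is your Step 3, and as sketched it does not go through. First, you work with a strict super-solution $\veu$ for some $\mu<\lambda_1$, i.e.\ $\cbQ\veu\geq\mu\veu$. The translation--quotient machinery (bounding $\veu(t,x+L_1e_1)/\veu(t,x)$ by the Harnack inequality and reading off an exponential rate) only yields an \emph{exact} exponential structure when $\veu$ is an exact eigenfunction: one needs the limit of the translated, normalized functions and the limit of the difference $\upe^{z_1L_1}\hat{\veu}_\infty-\hat{\veu}_\infty^\tau$ to solve the \emph{same} equation, so that the strong maximum principle forces the difference to vanish identically. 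With a mere super-solution this conclusion fails, and no candidate $z$ with $\mu\leq\lambda_{1,z}$ is produced. Second, the comparison you invoke between $\upe_{-z}\veu$ and the periodic eigenfunction $\veu_z$ ``in the spirit of Proposition \ref{prop:eigenvalue_well_defined}'' is not available on $\R\times\R^n$: the critical constant $\kappa^\star=\inf\{\kappa>0\ |\ \kappa\upe_{-z}\veu-\veu_z\gg\vez\}$ need not be finite, and even when it is, the touching point may escape to infinity; the translation-compactness trick used in Proposition \ref{prop:lambda1prime_lambda0} requires the non-periodic competitor to be uniformly comparable to a periodic function, which is exactly the exponential structure you are trying to establish. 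Finally, invoking strict concavity to ``pin down the extracted $z$ as $z^\star$'' is not needed and hides the gap: once \emph{some} $z$ with $\lambda_1=\lambda_{1,z}$ is found, the identification with the maximizer follows for free from Step 1; the problem is producing that $z$ at all.

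The paper closes this gap differently, and you have all the ingredients to do the same: start from an \emph{exact} generalized principal eigenfunction $\veu$ with $\cbQ\veu=\lambda_1\veu$, whose existence on $\R^n$ is guaranteed by Proposition \ref{prop:eigenvalue_limit_Dirichlet} (limit of Dirichlet problems in growing domains). For the translation $\tau$ by one period in the direction $x_1$, the fully coupled Harnack inequality of Proposition \ref{prop:harnack_inequality} makes $\vev=(u_i^\tau/u_i)_{i\in[N]}$ globally bounded; set $z_1=L_1^{-1}\ln\sup_i\sup v_i$, take a maximizing sequence of space-time points, translate and renormalize $\veu$ along it, pass to the limit by parabolic estimates, and apply the strong maximum principle to the nonnegative limit $\upe^{z_1L_1}\hat{\veu}_\infty-\hat{\veu}_\infty^\tau$, which vanishes at one point and hence identically; this gives exact $L_1$-periodicity of $\upe^{-z_1x_1}$ times the new eigenfunction. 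Iterating over the $n$ coordinate directions produces $z\in\R^n$ and an eigenfunction of the form $\upe_z$ times a space-time periodic positive function, and the uniqueness of the periodic principal eigenpair $(\lambda_{1,z},\veu_z)$ then yields $\lambda_1=\lambda_{1,z}$. Your Steps 1--2 then upgrade this to $\lambda_1=\max_{z\in\R^n}\lambda_{1,z}$ with a unique maximizer, which is the content of Corollary \ref{cor:lambda1_max_lambdaz}, not of the proposition itself.
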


\begin{proof}
From Proposition \ref{prop:eigenvalue_limit_Dirichlet}, there exists a generalized principal eigenfunction 
$\veu\in\caC^{1,2}(\R\times\R^n,(\vez,\vei))$ associated with $\lambda_1$.

We first prove that there exists $z_1\in\R$ and a new generalized principal eigenfunction $\veu^1$ such that $(t,x)\mapsto\upe^{-z_1 x_1}\veu^1(t,x)$ 
is $L_1$-periodic with respect to $x_1$.

Define the translation $\tau:x\in\R^n\mapsto x+L_1 e_1$, where $e_1=(\delta_{1\alpha})_{\alpha\in[n]}$,
and denote $\veu^\tau:(t,x)\mapsto\veu(t,\tau(x))$ and 
$\vev=\left(u^\tau_i/u_i\right)_{i\in[N]}$. By virtue of the fully coupled Harnack inequality of Proposition \ref{prop:harnack_inequality} and
periodicity of the coefficients of $\cbQ$, $\vev$ is globally bounded.
Let 
\[
    z_1=L_1^{-1}\ln\left(\max_{i\in[N]}\sup_{(t,x)\in\R\times\R^n}v_i(t,x)\right).
\]
Recalling that $\veu$ and consequently $\vev$ are time periodic, 
there exists $\overline{i}\in[N]$ and $\left(t_k,x_k\right)_{k\in\N}\in([0,T]\times\R^n)^\N$ such that
$v_{\overline{i}}(t_k,x_k)\to\upe^{z_1L_1}$ as $k\to+\infty$. Moreover, there exists $\left(y_k\right)_{k\in\N}$
such that, for all $k\in\N$, $x_k-y_k\in L_1\mathbb{Z}\times\dots\times L_n\mathbb{Z}$. Up to extraction,
we assume that $(t_k,y_k)\to (t_\infty,y_\infty)\in\clOmper$.

Now, define, for all $k\in\N$,
\[
    \hat{\veu}_k:(t,x)\mapsto\frac{1}{u_{\overline{i}}(t_k,x_k)}\veu(t+t_k,x+x_k),
\]
\[
    \hat{\veu}^\tau_k:(t,x)\mapsto\hat{\veu}_k(t,\tau(x)),
\]
\[
    \vew_k:(t,x)\mapsto\upe^{z_1L_1}\hat{\veu}_k-\hat{\veu}_k^\tau.
\]

Once more by virtue of the Harnack inequality and
the periodicity of the coefficients of $\cbQ$, $\left(\hat{\veu}_k\right)_{k\in\N}$ is globally bounded.
By periodicity of the coefficients of $\cbQ$, it satisfies:
\[
    \cbQ(t+t_k,x+y_k)\hat{\veu}_k(t,x)=\lambda_1\hat{\veu}_k(t,x)\quad\text{for all }(t,x)\in\R\times\R^n,\ k\in\N.
\]
Therefore, by classical regularity estimates \cite{Lieberman_2005}, $\left(\hat{\veu}_k\right)_{k\in\N}$
converges up to a diagonal extraction to $\hat{\veu}_\infty\in\caC^{1,2}_{t-\upp}(\R\times\R^n,[\vez,\vei))$
which satisfies:
\[
    \cbQ(t+t_\infty,x+y_\infty)\hat{\veu}_\infty(t,x)=\lambda_1\hat{\veu}_\infty(t,x)\quad\text{for all }(t,x)\in\R\times\R^n,
\]
\textit{i.e.}
\[
    \cbQ(t,x)\hat{\veu}_\infty(t-t_\infty,x-y_\infty)=\lambda_1\hat{\veu}_\infty(t-t_\infty,x-y_\infty)\quad\text{for all }(t,x)\in\R\times\R^n.
\]
Moreover, $\hat{u}_{\overline{i},\infty}(0,0)=1$, whence $\hat{\veu}_\infty$ is nonzero.
By the strong maximum principle (see Proposition \ref{prop:strong_maximum_principle}), it is in fact positive.

We can now extend the family $(\vew_k)$ in $\N\cup\{\infty\}$ with 
$\vew_\infty=\upe^{z_1L_1}\hat{\veu}_\infty -\hat{\veu}_\infty^\tau$.
Since, for all $k\in\overline{\N}$,
\[
    \vew_k=\hat{\veu}_k\circ\left(\upe^{z_1L_1}\veo-\vev_k\right),\quad\text{where }\vev_k:(t,x)\mapsto\vev(t+t_k,x+x_k),
\]
we deduce by definition of $z_1$ that $\vew_\infty\geq\vez$ with $w_{\overline{i},\infty}(0,0)=0$. 
Moreover, $\vew_\infty$ satisfies the same equation than $\hat{\veu}_\infty$. Therefore,
by virtue of the strong maximum principle, $\vew_\infty$ is the zero function.
This exactly means that $\upe^{z_1L_1}\hat{\veu}_\infty=\hat{\veu}_\infty^\tau$.

It is now clear that $\veu^1:(t,x)\mapsto\hat{\veu}_\infty(t-t_\infty,x-y_\infty)$ is positive, 
time periodic, a solution of $\cbQ\veu^1=\lambda_1\veu^1$, and that the function
$(t,x)\mapsto\upe^{-z_1 x_1}\veu^1(t,x)$ is $L_1$-periodic with respect to $x_1$. 
Indeed, for any $(t,x)\in\R\times\R^n$,
\begin{align*}
    \upe^{-z_1 (x_1+L_1)}\veu^1(t,x+L_1 e_1) & =\upe^{-z_1 x_1}\upe^{-z_1 L_1}\veu^1(t,\tau(x)) \\
    & =\upe^{-z_1 x_1}\upe^{-z_1 L_1}\hat{\veu}_\infty^\tau(t-t_\infty,x-y_\infty) \\
    & =\upe^{-z_1 x_1}\hat{\veu}_\infty(t-t_\infty,x-y_\infty) \\
    & =\upe^{-z_1 x_1}\veu^1(t,x).
\end{align*}
The first part of the proof is done.

Next, we iterate this construction, replacing $\veu$ by $\veu^1$, in order to obtain a new
generalized principal eigenfunction $\veu^2$ such that 
$(t,x)\mapsto\upe^{-z_1 x_1}\upe^{-z_2 x_2}\veu^2(t,x)$ is $L_1$-periodic with respect to $x_1$ and 
$L_2$-periodic with respect to $x_2$. Iterating again, we finally obtain $z\in\R^n$ and $\veu^n\in\caC^{1,2}_{t-\upp}(\R\times\R^n,(\vez,\vei))$
such that $\veu^n$ is a generalized principal eigenfunction associated with $\lambda_1$ and such
that $\upe_{-z}\veu^n$ is space periodic. The uniqueness of the eigenpair $(\lambda_{1,z},\veu_z)$,
up to multiplication of $\veu_z$ by a constant, yields finally $\lambda_1=\lambda_{1,z}$ and
$\upe_{-z}\veu^n\in\vspan(\veu_z)$.
\end{proof}

\begin{cor}\label{cor:lambda1_max_lambdaz}
The generalized principal eigenvalue $\lambda_1$ satisfies:
\begin{equation}\label{eq:lambda1_max_lambdaz}
    \lambda_1 = \max_{z\in\R^n}\lambda_{1,z}
\end{equation}
and there exists a unique $z\in\R^n$ such that $\lambda_1=\lambda_{1,z}$.
\end{cor}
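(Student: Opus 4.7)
The plan is to observe that this corollary is essentially a packaging of the two hard results just proved: the existence statement of Proposition \ref{prop:existence_eigenfunction_exp_times_per} and the strict concavity of $z\mapsto\lambda_{1,z}$ from Corollary \ref{cor:lambdaz_strictly_concave}. So the proof amounts to two short arguments together with an invocation of strict concavity, and I do not expect any real obstacle.

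First I would establish the inequality $\sup_{z\in\R^n}\lambda_{1,z}\leq\lambda_1$. For any fixed $z\in\R^n$, the function $\upe_z\veu_z$ lies in $\caC^{1,2}_{t-\upp}(\R\times\R^n,(\vez,\vei))$ (it is positive because $\veu_z\gg\vez$, and time periodic because $\veu_z$ is space-time periodic), and by construction it satisfies the identity
\[
    \cbQ(\upe_z\veu_z)=\upe_z\cbQ_z\veu_z=\lambda_{1,z}\upe_z\veu_z.
\]
Thus $\upe_z\veu_z$ is an admissible test function in the definition \eqref{def:lambda1} of $\lambda_1$, with $\cbQ(\upe_z\veu_z)\geq\lambda_{1,z}(\upe_z\veu_z)$, and we conclude $\lambda_{1,z}\leq\lambda_1$. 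Taking the supremum over $z\in\R^n$ yields one inequality.

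The converse inequality, together with the fact that the supremum is attained, is exactly the content of Proposition \ref{prop:existence_eigenfunction_exp_times_per}: there exists $z^\star\in\R^n$ such that $\lambda_1=\lambda_{1,z^\star}$. Combining the two inequalities gives \eqref{eq:lambda1_max_lambdaz} and shows that the supremum is a maximum achieved at $z^\star$.

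It only remains to prove uniqueness of the maximizer. For this I would appeal directly to Corollary \ref{cor:lambdaz_strictly_concave}: the map $z\in\R^n\mapsto\lambda_{1,z}$ is strictly concave. A strictly concave function on $\R^n$ admits at most one global maximum, since if $z^\star\neq z^{\star\star}$ were two maximizers, then strict concavity applied along the segment $[z^\star,z^{\star\star}]$ would force $\lambda_{1,(z^\star+z^{\star\star})/2}>\lambda_1$, contradicting the identity just proved. Hence $z^\star$ is unique, which completes the proof.
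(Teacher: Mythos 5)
Your proposal is correct and follows essentially the same route as the paper: the inequality $\lambda_1\geq\sup_{z}\lambda_{1,z}$ via the identity $\cbQ(\upe_z\veu_z)=\lambda_{1,z}\upe_z\veu_z$ and the definition \eqref{def:lambda1}, the attainment via Proposition \ref{prop:existence_eigenfunction_exp_times_per}, and uniqueness from the strict concavity of $z\mapsto\lambda_{1,z}$ (Corollary \ref{cor:lambdaz_strictly_concave}). The only difference is cosmetic: you spell out the midpoint argument for uniqueness, which the paper leaves implicit.
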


\begin{proof}
Proposition \ref{prop:existence_eigenfunction_exp_times_per} already shows that $\lambda_1$ is in the image of $z\mapsto\lambda_{1,z}$
and Corollary \ref{cor:lambdaz_strictly_concave} already shows that $z\mapsto\lambda_{1,z}$ is strictly concave. Thus
it only remains to show $\lambda_1\geq\sup_{z\in\R^n}\lambda_{1,z}$. This is actually obvious, since the
equality $\cbQ(\upe_z\veu_z)=\lambda_{1,z}\upe_z\veu_z$ (which is just the definition of the eigenpair $(\lambda_{1,z},\veu_z)$) directly
implies, in view of the definition of $\lambda_1$, the inequality $\lambda_1\geq\lambda_{1,z}$.
\end{proof}

\begin{rem}
Let
\[
    E=\left\{ \lambda\in\R\ |\ \exists \veu\in\caC^{1,2}_{t-\upp}(\R\times\R^n,(\vez,\vei))\ \cbQ\veu=\lambda\veu \right\}
\]
and denote $\Lambda\subset\R$ the image of $z\in\R^n\mapsto\lambda_{1,z}$.
From the equality $\cbQ\upe_z\veu_z=\lambda_{1,z}\upe_z\veu_z$, the following set inclusions hold true:
\[
\Lambda\subset E\subset\left\{ \lambda\in\R\ |\ \exists \veu\in\caC^{1,2}_{t-\upp}(\R\times\R^n,(\vez,\vei))\ \cbQ\veu\geq\lambda\veu \right\}.
\]
By strict concavity, $\Lambda=(-\infty,\max\lambda_{1,z}]$, and since $\lambda_1=\max\lambda_{1,z}$ is by definition the supremum of the larger set above, 
all inclusions above are actually set equalities.

This shows in particular that the set $E$ of eigenvalues of $\cbQ$ acting on the set $\caC^{1,2}_{t-\upp}(\R\times\R^n,(\vez,\vei))$ 
is $(-\infty,\lambda_1]$. This is of course in striking contrast with the case of smooth bounded domains,
where the Krein--Rutman theorem can be applied and the principal eigenvalue is unique.
For the same result in the elliptic case with general spatial heterogeneities, refer to Berestycki--Rossi \cite[Theorem 1.4]{Berestycki_Ros_1}
(scalar setting) and Arapostathis--Biswas--Pradhan \cite[Theorem 1.2]{Arapostathis_Biswas_Pradhan_2020} (cooperative vector setting).
\end{rem}

\begin{prop}\label{prop:lambda1prime_lambda0}
The generalized principal eigenvalue $\lambda_1'$ satisfies:
\begin{equation}\label{eq:lambda1prime_lambda0}
    \lambda_1'=\lambda_{1,0}.
\end{equation}
\end{prop}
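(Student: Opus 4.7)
The inequality $\lambda_1'\le\lambda_{1,0}$ is essentially free: the periodic principal eigenfunction $\veu_0$ is smooth and space-time periodic, hence lies in $\mathcal{W}^{1,\infty}\cap\caC^{1,2}_{t-\upp}(\R\times\R^n,(\vez,\vei))$, and it realizes the equality $\cbQ\veu_0=\lambda_{1,0}\veu_0$. So $\lambda_{1,0}$ belongs to the set whose infimum is $\lambda_1'$ by \eqref{def:lambda1prime}, which gives the desired bound.

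For the reverse inequality, I argue by contradiction. Suppose $\lambda_1'<\lambda_{1,0}$, pick $\lambda\in(\lambda_1',\lambda_{1,0})$ and a corresponding admissible $\veu$. Because $\veu_0$ is continuous, positive and space-time periodic, $\min_{i\in[N]}\inf_{\R\times\R^n}u_{0,i}>0$, while $\veu$ is bounded. Hence
\[
    \kappa^\star:=\inf\left\{\kappa>0\ |\ \kappa\veu_0\ge\veu\text{ in }\R\times\R^n\right\}\in(0,+\infty)
\]
is well-defined, and the function $\vew:=\kappa^\star\veu_0-\veu$ satisfies $\vew\ge\vez$ together with
\[
    \cbQ\vew\ge\kappa^\star\lambda_{1,0}\veu_0-\lambda\veu=\lambda\vew+(\lambda_{1,0}-\lambda)\kappa^\star\veu_0\ge\lambda\vew+\varepsilon\veo,
\]
where $\varepsilon:=(\lambda_{1,0}-\lambda)\kappa^\star\min_{i}\inf u_{0,i}>0$. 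By the minimality of $\kappa^\star$, there is a sequence $(t_k,x_k,i_k)\in\R\times\R^n\times[N]$ with $w_{i_k}(t_k,x_k)\to0$.

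Using the space-time periodicity of the coefficients of $\cbQ$ and of $\veu_0$, together with the time periodicity of $\veu$, I translate by an appropriate lattice vector to move $(t_k,x_k)$ into $\clOmper$, obtaining a new sequence $\tilde{\veu}_k$ still satisfying $\cbQ\tilde{\veu}_k\le\lambda\tilde{\veu}_k$, uniformly bounded in $\mathcal{W}^{1,\infty}$, and such that the translated $\tilde{\vew}_k=\kappa^\star\veu_0-\tilde{\veu}_k$ satisfies the same inequalities as $\vew$ above. The plan is then to extract a subsequence converging locally uniformly to a limit $\tilde{\veu}_\infty$ (Arzel\`a--Ascoli from the $\mathcal{W}^{1,\infty}$ bound), set $\tilde{\vew}_\infty:=\kappa^\star\veu_0-\tilde{\veu}_\infty$, and conclude that $\tilde{\vew}_\infty\ge\vez$, $\cbQ\tilde{\vew}_\infty\ge\lambda\tilde{\vew}_\infty+\varepsilon\veo$ in the distributional sense, and $\tilde{w}_{\infty,i^\star}(t^\star,x^\star)=0$ for some interior $(t^\star,x^\star,i^\star)\in\clOmper\times[N]$. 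The strong maximum principle (Proposition \ref{prop:strong_maximum_principle}) then forces $\tilde{\vew}_\infty\equiv\vez$, contradicting $\cbQ\tilde{\vew}_\infty\ge\varepsilon\veo>\vez$.

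The main technical obstacle is the passage to the limit: since $\veu$ is only a classical super-solution with no two-sided control on $\cbQ\veu$, uniform parabolic regularity for the translates $\tilde{\veu}_k$ is not immediate. I would handle this by reinterpreting the super-solution inequality as the equation $\cbQ\tilde{\veu}_k=\lambda\tilde{\veu}_k-\vect{f}_k$ with $\vect{f}_k\ge\vez$, combining the $\mathcal{W}^{1,\infty}$ bound with the fully coupled Harnack inequality of Proposition \ref{prop:harnack_inequality} (applied componentwise with a suitable diagonal perturbation) to obtain enough compactness to pass the inequality to the limit and to locate the contact point $(t^\star,x^\star,i^\star)$ in the interior of $\clOmper$; the strong maximum principle then applies to the limit by the standard approximation/distributional extension for cooperative parabolic systems.
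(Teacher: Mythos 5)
Your argument is correct and essentially identical to the paper's proof: the upper bound follows by using $\veu_0$ as a test function in the definition of $\lambda_1'$, and the lower bound is the same contradiction via $\kappa^\star$, a sequence of near-contact points of $\kappa^\star\veu_0-\veu$, translation, passage to the limit, and the strong maximum principle of Proposition \ref{prop:strong_maximum_principle}. The compactness step you flag is handled in the paper simply by translating $\vev=\kappa^\star\veu_0-\veu$ by the contact points and invoking standard parabolic regularity estimates to extract a $\caC^{1,2}_{t-\upp}$ limit satisfying $(\cbQ-\mu)\vev_\infty\gg\vez$ with an interior zero, rather than by your Harnack-based workaround; otherwise the two proofs coincide.
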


\begin{proof}
Since $\veu_0=\upe_0\veu_0$ is globally bounded, we can use it as test function in the definition of $\lambda_1'$ and obtain
$\lambda_1'\leq \lambda_{1,0}$. 

Now, we assume by contradiction that this inequality is actually strict, so that by definition of $\lambda_1'$, there exists
$\mu\in(\lambda_1',\lambda_{1,0})$ and $\veu\in\mathcal{W}^{1,\infty}\cap\caC^{1,2}_{t-\upp}(\R\times\R^n,(\vez,\vei))$ such 
that $\cbQ\veu\leq\mu\veu$.

We can now define 
\[
    \kappa^\star = \inf\left\{ \kappa>0\ |\ \kappa\veu_0-\veu\gg\vez \right\}
\]
and study the sign of $\vev=\kappa^\star\veu_0-\veu$. This function satisfies 
\[
\cbQ\vev\geq(\lambda_{1,0}-\mu)\kappa^\star\veu_0+\mu\vev,
\]
is time periodic and nonnegative, and by optimality there exists 
$\left((t_k,x_k)\right)_{k\in\N}\in\left([0,T]\times\R^n\right)^\N$ and $\underline{i}\in[N]$ such that 
\[
    v_{\underline{i}}(t_k,x_k)\to 0\quad\text{as }k\to+\infty.
\]
Moreover, there exists $(y_k)_{k\in\N}$ such that, for all $k\in\N$, $x_k-y_k\in L_1\mathbb{Z}\times\dots\times L_n\mathbb{Z}$. Up to
extraction, we assume that $(t_k,y_k)\to(t_\infty,y_\infty)\in\clOmper$.

Define 
\[
    \vev_k:(t,x)\mapsto\vev(t+t_k,x+x_k)\quad\text{for all }k\in[N].
\]
By standard regularity estimates \cite{Lieberman_2005}, $\left(\vev_k\right)_{k\in\N}$ converges
up to a diagonal extraction to a function $\vev_\infty\in\mathcal{L}^\infty\cap\caC^{1,2}_{t-\upp}(\R\times\R^n,[\vez,\vei))$ 
which satisfies $v_{\underline{i},\infty}(0,0)=0$ and, for all $(t,x)\in\R\times\R^n$,
\[
    (\cbQ-\mu)(t+t_\infty,x+y_\infty)\vev_\infty(t,x)\geq(\lambda_{1,0}-\mu)\kappa^\star\min_{i\in[N]}\min_{\clOmper}\left(u_{0,i}\right)\veo\gg\vez.
\]
By virtue of the strong maximum principle (see Proposition \ref{prop:strong_maximum_principle}), $\vev_\infty=\vez$, 
but then this contradicts the preceding inequality. This ends the proof.
\end{proof}

\begin{rem}\label{rem:counter-example_evenness_without_advection}
    It is natural to investigate the equality between $\lambda_1=\max_{z\in\R^n}\lambda_{1,z}$ 
    and $\lambda_1'=\lambda_{1,0}$. The scalar counter-example with constant coefficients 
$\mathcal{Q}=\partial_t-\partial_{xx}+q\partial_x-l$ shows that both outcomes are possible, since 
$\lambda_{1,z}=z(q-z)-l$ is maximal at $z=0$ if and only if $q=0$. Identifying precise conditions 
for the maximality at $z=0$ becomes 
then one of our main goals. A very recent contribution by Griette and Matano \cite[Proposition 4.1]{Griette_Matano_2021} shows that in
the vector setting, the absence of advection is not enough.

Their two-dimensional counter-example in one-dimensional space is:
\begin{equation*}
    \cbQ=\partial_t-\partial_x\left(\diag\begin{pmatrix}a_1 \\ a_2\end{pmatrix}\partial_x\right)-
    \begin{pmatrix}
        r_1-\frac{1}{\varepsilon}p & \frac{1}{\varepsilon}(1-p) \\
        \frac{1}{\varepsilon}p & r_2-\frac{1}{\varepsilon}(1-p)
    \end{pmatrix}
\end{equation*}
with $a_1$, $a_2$, $r_1$, $r_2$ and $p$ periodic functions of $x$.
As $\varepsilon\to 0$, locally uniformly with respect to $z$,
\begin{equation*}
    \lambda_{1,z}(\cbQ)\to\lambda_{1,z}\left(-\partial_x(a\partial_x)+q\partial_x-(r-q')\right)
\end{equation*}
with 
\begin{equation*}
    a=(1-p)a_1+pa_2,\quad r=(1-p)r_1+pr_2,\quad q=(a_1-a_2)p'.
\end{equation*}
Under the condition $\int_0^{L_1}q/a\neq 0$, the limit is not maximal at $z=0$ \cite[Appendix A]{Griette_Matano_2021}, whence $\lambda_{1,z}(\cbQ)$
is also not maximal at $z=0$ when $\varepsilon$ is sufficiently small. For more details, we refer to \cite{Griette_Matano_2021}.
\end{rem}

\subsubsection{Rough estimates}

Here we state rough upper and lower estimates that will be used later on in the proofs. The more precise estimates of
Subsection \ref{sec:theorems_explicit_formulas}, that use special assumptions on the coefficients of $\cbQ$,
will be proved later.

Using the cooperativity assumption \ref{ass:cooperative}, the min--max characterization of Proposition
\ref{prop:max--min_characterization_lambdaz}, 
the equalities $\lambda_1=\max\lambda_{1,z}$ and $\lambda_1'=\lambda_{1,0}$ of Corollary \ref{cor:lambda1_max_lambdaz} and 
Proposition \ref{prop:lambda1prime_lambda0} respectively, and the corresponding scalar results \cite{Nadin_2007}, 
we deduce the following corollary which relates the generalized principal eigenvalues of the operator $\cbQ$ to the generalized principal 
eigenvalues of the scalar operators $\caP_i-l_{i,i}$.

\begin{cor}\label{cor:comparison_eigenvalues_decoupled_equations}
For all $z\in\R^n$,
\[
    \lambda_{1,z}(\cbQ)\leq\min_{i\in[N]}\lambda_{1,z}(\caP_i-l_{i,i}).
\]
Consequently, 
\[
\lambda_1(\cbQ)\leq\min_{i\in[N]}\lambda_1(\caP_i-l_{i,i})\quad\text{and}\quad\lambda_1'(\cbQ)\leq\min_{i\in[N]}\lambda_1'(\caP_i-l_{i,i}).
\]
\end{cor}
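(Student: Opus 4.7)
The plan is to use the min--max characterization of $\lambda_{1,z}$ from Proposition \ref{prop:max--min_characterization_lambdaz} together with the cooperativity assumption \ref{ass:cooperative}. The natural test function is the vector $\veu=(v_1,\dots,v_N)$ whose components are the periodic principal eigenfunctions of the scalar operators $(\caP_i-l_{i,i})_z=\upe_{-z}(\caP_i-l_{i,i})\upe_z$, obtained in the scalar setting of \cite{Nadin_2007}. Each $v_i\in\caC^{1,2}_\upp(\R\times\R^n,(0,+\infty))$ is positive and satisfies $(\caP_i)_z v_i-l_{i,i}v_i=\lambda_{1,z}(\caP_i-l_{i,i})\, v_i$.

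Plugging this $\veu$ into the operator $\cbQ_z$ and using the expression $(\cbQ_z\veu)_i=(\caP_i)_z u_i-\sum_{j\in[N]} l_{i,j}u_j$ inherited from the definition of $\cbQ_z$ in \eqref{def:Qz}, I would compute
\[
(\cbQ_z\veu)_i=\lambda_{1,z}(\caP_i-l_{i,i})\, v_i-\sum_{j\in[N]\backslash\{i\}} l_{i,j}v_j.
\]
Since \ref{ass:cooperative} implies $l_{i,j}\geq 0$ pointwise for all $j\neq i$, and since each $v_j>0$, the sum is nonnegative, so $(\cbQ_z\veu)_i/v_i\leq\lambda_{1,z}(\caP_i-l_{i,i})$ pointwise on $\clOmper$. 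Applying the min--max characterization
\[
\lambda_{1,z}(\cbQ)=\min_{\vew\in\caC^{1,2}_\upp(\R\times\R^n,(\vez,\vei))}\max_{i\in[N]}\max_{\clOmper}\left(\frac{(\cbQ_z\vew)_i}{w_i}\right)
\]
to this particular $\veu$ yields the first inequality $\lambda_{1,z}(\cbQ)\leq\max_{i\in[N]}\lambda_{1,z}(\caP_i-l_{i,i})$.

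For the two consequences, I would just combine this inequality with the identifications of Corollary \ref{cor:lambda1_max_lambdaz} and Proposition \ref{prop:lambda1prime_lambda0}, which also hold in the scalar setting by \cite{Nadin_2007}. Specifically, taking $z=0$ immediately gives $\lambda_1'(\cbQ)=\lambda_{1,0}(\cbQ)\leq\max_i\lambda_{1,0}(\caP_i-l_{i,i})=\max_i\lambda_1'(\caP_i-l_{i,i})$, and taking the supremum over $z\in\R^n$ together with the interchange of two maxima gives
\[
\lambda_1(\cbQ)=\max_{z\in\R^n}\lambda_{1,z}(\cbQ)\leq\max_{i\in[N]}\max_{z\in\R^n}\lambda_{1,z}(\caP_i-l_{i,i})=\max_{i\in[N]}\lambda_1(\caP_i-l_{i,i}).
\]

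There is no serious obstacle: the argument is a one-line computation once the right test function is chosen, and all the needed ingredients (scalar existence of $v_i$, min--max characterization, and the identities $\lambda_1=\max\lambda_{1,z}$, $\lambda_1'=\lambda_{1,0}$) are already at hand. The only point that deserves a word is that the scalar principal eigenfunctions $v_i$ used as components of the test vector need not be related to one another, but this is irrelevant since the min--max is taken componentwise and $\veu\gg\vez$ is all that is required.
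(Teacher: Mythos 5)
Your proof is correct and follows exactly the route the paper intends: it sketches this corollary as a deduction from the min--max characterization of Proposition \ref{prop:max--min_characterization_lambdaz}, the cooperativity assumption \ref{ass:cooperative}, the identities $\lambda_1=\max_z\lambda_{1,z}$ and $\lambda_1'=\lambda_{1,0}$, and the scalar results of Nadin, which is precisely the combination you use with the vector of scalar periodic principal eigenfunctions as test function. No gaps; the sign argument on the off-diagonal coupling terms and the exchange of the two maxima for the consequence on $\lambda_1$ are both valid.
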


\begin{rem}
Rougher but more explicit estimates can subsequently be derived by considering constant test functions in the min--max characterization of
$\lambda_{1,z}(\caP_i-l_{i,i})$ and the discrete Cauchy--Schwarz inequality:
\begin{align*}
    \lambda_{1,z}(\caP_i-l_{i,i}) & \leq \max_{\clOmper}\left(-z\cdot A_i z -\nabla\cdot\left(A_i z\right)+q_i\cdot z-l_{i,i}\right) \\ 
    & \leq -\min_{\clOmper}(z\cdot A_i z)+\max_{\clOmper}(-\nabla\cdot(A_i z))+\max_{\clOmper}(q_i\cdot z)-\min_{\clOmper}l_{i,i} \\
    & \leq -\underline{l}_{i,i}-\min_{\clOmper}\min_{y\in\Sn}(y\cdot A_i y)|z|^2 +\max_{\clOmper}\sum_{\alpha=1}^n \partial_\alpha\left(\sum_{\beta=1}^n -A^i_{\alpha,\beta}z_\beta\right)+\max_{\clOmper}(|q_i|)|z| \\
    & \leq -\underline{l}_{i,i}-\min_{\clOmper}\min_{y\in\Sn}(y\cdot A_i y)|z|^2 +\sum_{\alpha=1}^n \max_{\clOmper}\sum_{\beta=1}^n \partial_\alpha (-A^i_{\alpha,\beta})z_\beta +\||q_i|\||z| \\
    & \leq -\underline{l}_{i,i}-\min_{\clOmper}\min_{y\in\Sn}(y\cdot A_i y)|z|^2
    +\sum_{\alpha=1}^n\left\|\left(\sum_{\beta=1}^n|\partial_\alpha A^i_{\alpha,\beta}|^2\right)^{1/2}\right\||z| 
    +\||q_i|\||z|,
\end{align*}
where $\underline{\veL}$ is defined in \ref{ass:cooperative} and the notation $\|\cdot\|$ refers to the norm
in the space $\mathcal{L}^\infty(\R\times\R^n,\R)$.
\end{rem}

Another way to obtain rough upper and lower estimates consists in using 
\[
    \underline{\veL}+\left(\min_{\clOmper}\min_{y\in\Sn}(y\cdot A_i y)|z|^2-K|z|\right)\vect{I}\leq\veL+\diag\left(z\cdot A_i z+\nabla\cdot(A_i z)-q_i\cdot z\right)
\]
and
\[
    \veL+\diag\left(z\cdot A_i z+\nabla\cdot(A_i z)-q_i\cdot z\right)\leq\overline{\veL}+\left(\max_{\clOmper}\max_{y\in\Sn}(y\cdot A_i y)|z|^2+K|z|\right)\vect{I},
\]
where 
\[
    K=\sum_{\alpha=1}^n\left\|\left(\sum_{\beta=1}^n|\partial_\alpha A^i_{\alpha,\beta}|^2\right)^{1/2}\right\|+\||q_i|\|.
\]
Although $\underline{\veL}$ might not be irreducible, its Perron--Frobenius eigenvalue is still well-defined by continuous 
extension; it admits nonnnegative nonzero eigenvectors that can be used as sub-solutions. Using Perron--Frobenius
eigenvectors as test functions in the min--max and max--min formulas of Proposition \ref{prop:max--min_characterization_lambdaz},
we deduce the following corollary.

\begin{cor}\label{cor:comparison_eigenvalues_underline_overline_L}
    Let 
    \[
        A=\max_{i\in[N]}\left(\max\left(\max_{\clOmper}\max_{y\in\Sn}(y\cdot A_i y),\frac{1}{\min_{\clOmper}\min_{y\in\Sn}(y\cdot A_i y)}\right)\right)>0,
    \]
    \[
        B=\max_{i\in[N]}\left(\sum_{\alpha=1}^n\left(\sum_{\beta=1}^n\max_{\clOmper}|\partial_\alpha A^i_{\alpha,\beta}|^2\right)^{1/2}+\max_{\clOmper}|q_i|\right)\geq 0.
    \]
    
    Then, for all $z\in\R^n$, 
    \[
        -\lambda_{\upPF}(\overline{\veL})-A|z|^2-B|z|
        \leq \lambda_{1,z}
        \leq -\lambda_{\upPF}(\underline{\veL})-\frac{1}{A}|z|^2+B|z|.
    \]
\end{cor}

\subsection{Asymptotic dependence: proof of Theorems \ref{thm:continuity_eigenvalue_L}--\ref{thm:limits_eigenvalue_time_frequency}}
Proposition \ref{prop:concavity_eigenvalue_z_L} already proves Theorem \ref{thm:concavity_eigenvalue_L}. Below, we prove
the remaining theorems on coefficient dependence.

\subsubsection{Continuity: proof of Theorem \ref{thm:continuity_eigenvalue_L}}

We begin with the proof of Theorem \ref{thm:continuity_eigenvalue_L}, whose statement is recalled below.

\begin{prop}\label{prop:continuity_eigenvalue_L}
Let $\veL^\triangle\in\caC_\upp^{\delta/2,\delta}(\R\times\R^n,\R^{N\times N})$ be a block upper triangular essentially nonnegative matrix. 
Let $N'\in[N]$ and $(N_k)_{k\in[N'-1]}$ such that 
\[
    N_0=0< 1\leq N_1\leq N_2\leq\dots\leq N_{N'-1}\leq N_{N'}=N
\]
and such that 
\[
    (l^\triangle_{i,j})_{(i,j)\in([N_k]\backslash[N_{k-1}])^2}
\]
is the $k$-th diagonal block of $\veL^\triangle$ (with the convention $[0]=\emptyset$). Assume
\[
    \left(\max_{(t,x)\in\clOmper}l^\triangle_{i,j}(t,x)\right)_{(i,j)\in([N_k]\backslash[N_{k-1}])^2}\quad\text{is irreducible for all }k\in[N'].
\]

Let
\[
    \cbQ_k= \diag(\caP_i)_{i\in[N_k]\backslash[N_{k-1}]}-(l^\triangle_{i,j})_{(i,j)\in([N_k]\backslash[N_{k-1}])^2}\quad\text{for all }k\in[N'].
\]

Then, as $\veL\to\veL^\triangle$ in $\caC_\upp^{\delta/2,\delta}(\R\times\R^n,\R^{N\times N})$, 
\begin{equation*}
    \lambda_{1,z}(\cbQ)\to\min_{k\in[N']}\lambda_{1,z}\left(\cbQ_k\right)\quad\text{for all }z\in\R^n,
\end{equation*}
\begin{equation*}
    \lambda_1(\cbQ)\to\max_{z\in\R^n}\min_{k\in[N']}\lambda_{1,z}(\cbQ_{k})\leq\min_{k\in[N']}\lambda_1(\cbQ_{k}).
\end{equation*}
\end{prop}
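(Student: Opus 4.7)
The plan is to sandwich $\lambda_{1,z}(\cbQ)$ between matching $\limsup$ and $\liminf$ bounds, and then to deduce the $\lambda_1$ statement from the identity $\lambda_1=\max_z\lambda_{1,z}$ of Corollary~\ref{cor:lambda1_max_lambdaz}. The main technical difficulty concentrates in the $\liminf$ step, where the block triangular structure of $\veL^\triangle$ must be exploited to identify the limit; the $\limsup$ is, pleasantly, almost free thanks to cooperativity.

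For the upper bound, I first fix $z\in\R^n$, $k\in[N']$, and any irreducible $\veL$ sufficiently close to $\veL^\triangle$. Let $\veu\gg\vez$ denote the space-time periodic Krein--Rutman eigenfunction of $\cbQ_z$, and $\veu_{(k)}$ its restriction to the indices of the $k$-th block. For $i\in[N_k]\backslash[N_{k-1}]$, the eigenvalue identity rewrites as
\[
  (\cbQ_{k,z}\veu_{(k)})_i=\lambda_{1,z}(\cbQ)u_i+\sum_{j\notin[N_k]\backslash[N_{k-1}]}l_{i,j}u_j\geq\lambda_{1,z}(\cbQ)u_i,
\]
where $\cbQ_{k,z}$ is the analogue of $\cbQ_z$ built from the $k$-th diagonal block of $\veL$ and the extra sum is nonnegative by cooperativity and positivity of $\veu$. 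Feeding the positive periodic test function $\veu_{(k)}$ into the max--min characterization of Proposition~\ref{prop:max--min_characterization_lambdaz} applied to the (fully coupled) block operator yields $\lambda_{1,z}(\cbQ)\leq\lambda_{1,z}(\cbQ_k)$, $\cbQ_k$ being built from $\veL$. Irreducibility of each diagonal block of $\veL^\triangle$ and the classical continuity of the Krein--Rutman eigenvalue under $\caC^{\delta/2,\delta}$-perturbations (recalled right after Theorem~\ref{thm:existence_characterization_Rn}) then let me pass to the limit, giving $\limsup_{\veL\to\veL^\triangle}\lambda_{1,z}(\cbQ)\leq\min_{k\in[N']}\lambda_{1,z}(\cbQ_k)$ with $\cbQ_k$ now built from $\veL^\triangle$ as in the statement.

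For the lower bound, I take a sequence $\veL_n\to\veL^\triangle$ realizing the $\liminf$ and denote by $\veu^{(n)}\gg\vez$ the periodic Krein--Rutman eigenfunctions of the corresponding $\cbQ_z$, normalized so that $\max_{i\in[N]}\max_{\clOmper}u_i^{(n)}=1$. Parabolic Schauder estimates yield a subsequence converging in $\caC^{1,2}_{\mathrm{loc}}$ to a periodic, nonnegative, nonzero $\veu^\infty$ satisfying $\cbQ_z^\triangle\veu^\infty=\lambda_\infty\veu^\infty$, with $\cbQ_z^\triangle$ built from $\veL^\triangle$ and $\lambda_\infty=\lim_n\lambda_{1,z}(\cbQ_n)$. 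Let $k_0=\max\{k\in[N']:\veu^\infty_{(k)}\neq\vez\}$. Block upper triangularity of $\veL^\triangle$ means that for $i$ in block $k_0$ the entries $l^\triangle_{i,j}$ vanish for $j$ in earlier blocks, while the contributions from blocks strictly later than $k_0$ vanish because $\veu^\infty_{(k)}=\vez$ there; the eigenvalue equation restricted to the $k_0$-th block therefore reduces to $\cbQ_{k_0,z}\veu^\infty_{(k_0)}=\lambda_\infty\veu^\infty_{(k_0)}$. Irreducibility of the $k_0$-th block combined with the strong maximum principle (Proposition~\ref{prop:strong_maximum_principle}) promotes $\veu^\infty_{(k_0)}$ to strict positivity, so uniqueness of the Krein--Rutman eigenpair forces $\lambda_\infty=\lambda_{1,z}(\cbQ_{k_0})\geq\min_{k\in[N']}\lambda_{1,z}(\cbQ_k)$. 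This closes the first convergence.

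For $\lambda_1(\cbQ)\to\max_z\min_k\lambda_{1,z}(\cbQ_k)$, I would rely on $\lambda_1(\cbQ)=\max_{z\in\R^n}\lambda_{1,z}(\cbQ)$ from Corollary~\ref{cor:lambda1_max_lambdaz} and on the pointwise convergence in $z$ just established. Strict concavity of $z\mapsto\lambda_{1,z}(\cbQ)$ (Corollary~\ref{cor:lambdaz_strictly_concave}) combined with the explicit quadratic upper bound derived from Corollary~\ref{cor:comparison_eigenvalues_decoupled_equations} (whose constants are controlled by uniform $\caC^{\delta/2,\delta}$-bounds on the coefficients, hence stable as $\veL\to\veL^\triangle$) confines the maximizers of $z\mapsto\lambda_{1,z}(\cbQ)$ to a common compact subset of $\R^n$; pointwise convergence of concave functions is then automatically uniform on that set, so passing to the maximum yields the desired limit. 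The final inequality $\max_z\min_k\lambda_{1,z}(\cbQ_k)\leq\min_k\max_z\lambda_{1,z}(\cbQ_k)=\min_k\lambda_1(\cbQ_k)$ is nothing but the standard min--max inequality applied to $(z,k)\mapsto\lambda_{1,z}(\cbQ_k)$.
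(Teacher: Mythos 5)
Your proposal is correct, and for the decisive upper bound it takes a genuinely different route from the paper. The paper proves $\limsup\lambda_{1,z}(\cbQ)\leq\min_k\lambda_{1,z}(\cbQ_k)$ by contradiction: assuming the limit exceeds $\lambda_{1,z}(\cbQ_{k'})$ for some $k'$, it extends the principal eigenfunction of the $k'$-th block by zero into a sub-solution of the full perturbed system (up to an $\eta/2$ error controlled with the Harnack inequality of Proposition \ref{prop:harnack_inequality}) and contradicts the strong maximum principle for the fully coupled perturbed operator. You instead restrict the perturbed principal eigenfunction to each block, use cooperativity to see that this restriction is a positive periodic super-solution of the perturbed block operator at level $\lambda_{1,z}(\cbQ)$, conclude $\lambda_{1,z}(\cbQ)\leq\lambda_{1,z}(\text{block of }\veL)$ from the max--min characterization of Proposition \ref{prop:max--min_characterization_lambdaz}, and pass to the limit by continuity of the periodic principal eigenvalue within the fully coupled class. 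This is shorter and avoids both the sub-solution construction and the Harnack-based smallness estimate; its price is that it rests on the continuity statement that the paper only asserts (as a classical consequence of compactness estimates) after Theorem \ref{thm:existence_characterization_Rn}, whereas the paper's contradiction argument is self-contained. Your lower-bound step (limit equation, maximal nonvanishing block $k_0$, strong maximum principle, uniqueness of the Krein--Rutman eigenpair, which is exactly the paper's ``classical inductive argument'' made explicit) and your treatment of $\lambda_1$ (compact confinement of maximizers, uniform convergence of concave functions, min--max inequality) coincide with the paper's Steps 1 and 3; your sup-normalization of the eigenfunctions is in fact slightly safer than a pointwise normalization, since the Harnack constant degenerates as the coupling vanishes.

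Three points you should make explicit, all routine: (i) before extracting limits in the lower-bound step you need uniform two-sided bounds on $\lambda_{1,z}$ along the sequence (obtained, as in the paper, by plugging constant positive vectors into the max--min and min--max characterizations); without them neither the finiteness of the $\liminf$ nor the Schauder compactness is justified. (ii) In the upper-bound step, the $k$-th diagonal block of the perturbed $\veL$ must itself be irreducible on average so that its periodic principal eigenvalue is defined and Proposition \ref{prop:max--min_characterization_lambdaz} applies; this holds for $\veL$ close enough to $\veL^\triangle$ because the positive entries of the max of the limiting block stay positive under small $\caC^0$ perturbations. (iii) Confining the maximizers of $z\mapsto\lambda_{1,z}(\cbQ)$ to a fixed compact set uses not only the uniform quadratic upper bound from Corollary \ref{cor:comparison_eigenvalues_decoupled_equations} but also a uniform lower bound on the maximal value, namely $\lambda_1(\cbQ)\geq\lambda_1'(\cbQ)$, which is bounded below thanks to the convergence already established at $z=0$.
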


\begin{proof}
\begin{proof}[Step 1: the special case z=0]
Let $(\veL_p)_{p\in\N}$ be a sequence of matrices satisfying \ref{ass:cooperative}, \ref{ass:irreducible} and that 
converges to $\veL^\triangle$ in $\caC_\upp^{\delta/2,\delta}(\R\times\R^n,\R^{N\times N})$. Denote $\cbQ_p=\diag(\caP_i)-\veL_p$
and $\cbQ_{k,p}=\diag(\caP_i)_{i\in[N_k]\backslash[N_{k-1}]}-(l_{p,i,j})_{(i,j)\in([N_k]\backslash[N_{k-1}])^2}$.

Since
\[
    0\leq \veL_p\leq \left(\sup_{p\in\N}\max_{(t,x)\in\clOmper}l_{p,i,j}(t,x)\right),
\]
we can derive from the max--min and min--max characterizations of $\lambda_1'(\veL_p)$ (see Proposition
\ref{prop:max--min_characterization_lambdaz}) uniform bounds on $\left( \lambda_1'(\veL_p) \right)_{p\in\N}$. Therefore
up to extraction this sequence converges to a limit $\lambda\in\R$. Similarly, up to extraction, the associated generalized principal
eigenfunction $\veu_{p}\in\caC^{1,2}_\upp(\R\times\R^n,(\vez,\vei))$ with normalization $|\veu_{p}(0,0)|=1$ converges to a nonnegative nonzero 
limit $\veu\in\caC^{1,2}_\upp(\R\times\R^n,[\vez,\vei))$ satisfying the same normalization and satisfying
\[
    \diag(\caP_i)\veu-\veL^\triangle\veu=\lambda\veu.
\]

Note that for each $k\in[N']$,
\begin{align*}
    \cbQ_{k}(u_i)_{i\in[N_k]\backslash[N_{k-1}]} & =\lambda(u_i)_{i\in[N_k]\backslash[N_{k-1}]}+\left(\sum_{j\in[N_{k-1}]\cup[N]\backslash[N_k]}l^\triangle_{i,j}u_j\right)_{i\in[N_k]\backslash[N_{k-1}]} \\
    & \geq \lambda(u_i)_{i\in[N_k]\backslash[N_{k-1}]}.
\end{align*}
Therefore, from the strong maximum principle of Proposition \ref{prop:strong_maximum_principle}, either $(u_i)_{i\in[N_k]\backslash[N_{k-1}]}=\vez$
or $(u_i)_{i\in[N_k]\backslash[N_{k-1}]}\gg\vez$. For all $k\in[N']$ such that $(u_i)_{i\in[N_k]\backslash[N_{k-1}]}\gg\vez$,
it follows from the characterization of $\lambda_{1,\upp}(\cbQ_{k})$ (see Proposition \ref{prop:max--min_characterization_lambdaz})
that $\lambda\leq\lambda_{1,\upp}(\cbQ_{k})$. Since $\veu$ is nonzero, there exists at least one such $k$. Let $I\subset[N']$ be the set
of all such $k$ and let $J=[N']\backslash I$.

If $N'\in I$, then from the special block upper triangular form of $\cbQ$, $\lambda=\lambda_1'(\cbQ_{N'})$. Otherwise, 
there exists $k\in[N'-1]\cap I$. It follows then from a classical inductive argument that there exists indeed $k\in[N']$ 
such that $\lambda=\lambda_1'(\cbQ_k)$.

Now, assume by contradiction that there exists $k'\in[N']$ such that $\lambda>\lambda_1'(\cbQ_{k'})$. Let $\eta=\lambda-\lambda_1'(\cbQ_{k'})>0$.
Let $\veu_{k'}$ be a periodic principal eigenfunction associated with $\lambda_{1,\upp}(\cbQ_{k'})$. Let $\underline{\veu}$ be defined as 
\[
    \underline{u}_i=
    \begin{cases}
        u_{k',i-N_{k'-1}} & \text{if }i\in[N_{k'}]\backslash[N_{k'-1}], \\
        0 & \text{otherwise}.
    \end{cases}
\]
Then, for all $i\in[N]$,
\[
    (\cbQ_{p}\underline{\veu})_i=
    \begin{cases}
        ((\cbQ_{k',p}-\cbQ_{k'})\underline{\veu})_i+\lambda_1'(\cbQ_{k'})\underline{u}_i &  \text{if }i\in[N_{k'}]\backslash[N_{k'-1}], \\
        \displaystyle-\sum_{j\in[N_{k'}]\backslash[N_{k'-1}]}l_{p,i,j}\underline{u}_j & \text{otherwise}.
    \end{cases}
\]
On one hand, 
\[
    -\sum_{j\in[N_{k'}]\backslash[N_{k'-1}]}l_{p,i,j}\underline{u}_j\leq 0=\underline{u}_i\quad\text{for all }i\notin[N_{k'}]\backslash[N_{k'-1}].
\]
On the other hand, by convergence of $\veL_p$ and the Harnack inequality of Proposition \ref{prop:harnack_inequality} applied to the 
fully coupled operator $\cbQ_{k'}$, we can assume that $p\in\N$ is so large that, for all $i\in[N_{k'}]\backslash[N_{k'-1}]$,
\[
    ((\cbQ_{k',p}-\cbQ_{k'})\underline{\veu})_i=((\veL_{k',p}-\veL^\triangle_{k'})\underline{\veu})_i\leq \frac{\eta}{2}\underline{u}_i,
\]
where $\veL_{k',p}=\left(l_{p,i,j}\right)_{(i,j)\in([N_{k'}]\backslash[N_{k'-1}])^2}$ and
$\veL^\triangle_{k'}=\left(l^\triangle_{i,j}\right)_{(i,j)\in([N_{k'}]\backslash[N_{k'-1}])^2}$. Hence
\[
    \cbQ_{p}\underline{\veu}\leq \left(\lambda_1'(\cbQ_{k'})+\frac{\eta}{2}\right)\underline{\veu}=\left(\lambda-\frac{\eta}{2}\right)\underline{\veu}.
\]
If $\lambda_1'(\cbQ_p)>\lambda-\frac{\eta}{2}$, then we can study $\kappa^\star\veu_{p}-\underline{\veu}$ with
\[
    \kappa^\star=\frac{\displaystyle\max_{i\in[N]}\max_{(t,x)\in\clOmper}\underline{u}_i(t,x)}{\displaystyle\min_{i\in[N]}\min_{(t,x)\in\clOmper}u_{p,i}(t,x)}>0
\]
and, by full coupling of $\cbQ_{p}$, deduce a contradiction from the strong maximum principle. 
Hence $\lambda_1'(\cbQ_{p})\leq\lambda-\frac{\eta}{2}$. But now, assuming in
addition that $p$ is so large that $\lambda_1'(\cbQ_p)>\lambda-\frac{\eta}{3}$, we find a contradiction. Therefore, for all $k'\in[N']$,
$\lambda\leq\lambda_1'(\cbQ_{k'})$, or in other words:
\[
    \lambda\leq\min_{k'\in[N']}\lambda_1'(\cbQ_{k'}).
\]
Combining this with $\lambda=\lambda_1'(\cbQ_k)$, we deduce that the preceding inequality is an equality.

This argument shows that any convergent subsequence of the sequence $\left(\lambda_1'(\veL_p)\right)_{p\in\N}$ converges to 
$\min_{k\in[N']}\lambda_1'(\cbQ_{k})$. The conclusion follows.
\end{proof}

\begin{proof}[Step 2: the general case $z\in\R^n$]
In view of
\[
    \lambda_{1,z}(\cbQ)=\lambda_1'(\cbQ_z)=\lambda_1'\left(\cbQ-\diag\left((A_i+A_i^\upT)z\cdot\nabla+z\cdot A_i z+\nabla\cdot\left(A_i z\right)-q_i\cdot z\right)\right),
\]
in order to prove the convergence of $\lambda_{1,z}$ for any $z\in\R^n$, we only have to apply the preceding step to the operator $\cbQ_z$. 
\end{proof}

\begin{proof}[Step 3: convergence of $\lambda_1=\max_{z\in\R^n}\lambda_{1,z}$]
Since all $z\mapsto\lambda_{1,z}(\cbQ_p)$, $p\in\N$, are concave, the pointwise convergence is automatically improved to locally uniform convergence. 

On one hand, recall from Corollary \ref{cor:comparison_eigenvalues_underline_overline_L} and
the ellipticity assumption \ref{ass:ellipticity} that there exists $A>0$, $B\geq 0$ and $C\in\R$ such that, for all $p\in\N$,
\[
    \lambda_{1,z}(\cbQ_p)\leq -A|z|^2+B|z|+C\quad\text{for all }p\in\N.
\]

On the other hand, for all $p\in\N$, $\lambda_1(\cbQ_p)\geq\lambda_1'(\cbQ_p)$. In particular,
$\lambda_1(\cbQ_p)\geq\inf_{p\in\N}\lambda_1'(\cbQ_p)$ and this lower bound is finite by virtue of Step 1 above.

Consequently, for all $p\in\N$, the point $z_p$ where the maximum is achieved (which is indeed uniquely defined, 
see Corollary \ref{cor:lambda1_max_lambdaz}) is necessarily in the set $Z$ defined as: 
\[
    Z=\left\{ z\in\R^n\ |\ \inf_{p\in\N}\lambda_1'(\cbQ_p)\leq -A|z|^2+B|z|+C\right\}.
\]
This set is compact.

To conclude, from the already established equality:
\[
    \lim_{p\to+\infty}\lambda_{1,z}(\cbQ_p)=\min_{k\in[N']}\lambda_{1,z}(\cbQ_{k}),
\]
and from the uniform convergence in $Z$ and the concavity in $\R^n$, we deduce
\begin{align*}
    \lim_{p\to+\infty}\lambda_1(\cbQ_p) & =\lim_{p\to+\infty}\max_{z\in Z}\lambda_{1,z}(\cbQ_p) \\
    & =\max_{z\in Z}\lim_{p\to+\infty}\lambda_{1,z}(\cbQ_p) \\
    & =\max_{z\in\R^n}\lim_{p\to+\infty}\lambda_{1,z}(\cbQ_p) \\
    & =\max_{z\in\R^n}\min_{k\in[N']}\lambda_{1,z}(\cbQ_{k}).
\end{align*}

Finally, from the inequality $\lambda_{1,z}(\cbQ_k)\leq\lambda_1(\cbQ_k)$ for all $k$ and $z$, it follows that 
\[
    \min_{k\in[N']}\lambda_{1,z}(\cbQ_k)\leq\min_{k\in[N']}\lambda_1(\cbQ_k),
\]
whence 
\[
    \max_{z\in\R^n}\min_{k\in[N']}\lambda_{1,z}(\cbQ_{k})\leq\min_{k\in[N']}\lambda_1(\cbQ_{k}).
\]
\end{proof}
This ends the proof.
\end{proof}

\begin{rem}\label{rem:continuity_weak_topology}
It should be noted here that the only part of this proof that is seemingly specific to the case of H\"{o}lder-continuous coefficients is the Harnack inequality provided by Proposition \ref{prop:harnack_inequality}. However, this is merely used to show that $\kappa^*\in (0,\infty)$. In the case of coefficients in $\mathcal{L}^\infty$, a simple reasoning by contradiction shows that $\kappa^*\in(0, \infty)$ is still true. A minor adaptation of the arguments then shows the continuity of $\lambda_1'$ for the weak-$\star$ $\mathcal{L}^\infty$ convergence of coefficients.
\end{rem}

\begin{rem}
We will use repeatedly the arguments of Step 2 and Step 3 above in what follows, in order to deduce the convergence of 
$\lambda_{1,z}$ and $\lambda_1$ when the convergence of $\lambda_1'$ has been established.

Note however that the estimate $\lambda_{1,z}(\cbQ)\leq -A|z|^2+B|z|+C$ with $A>0$ and $B\geq 0$ of Corollary 
\ref{cor:comparison_eigenvalues_underline_overline_L} becomes useless when the diffusion 
matrices $A_i$ vanish. This is consistent with the fact that, in Theorem \ref{thm:continuity_eigenvalue_diffusion_advection}, 
the convergence of $\lambda_1$ is in general false.
\end{rem}

\begin{rem}\label{rem:counter-example_lambda1_reducible}
The inequality 
\[
    \max_{z\in\R^n}\min_{k\in[N']}\lambda_{1,z}(\cbQ_{k})\leq\min_{k\in[N']}\lambda_1(\cbQ_{k})
\]
is strict in some cases.
Consider for instance the following space-time homogeneous, one-dimensional, two-component counter-example:
\[
    \cbQ=\cbQ_\varepsilon=\diag\left(\begin{pmatrix}
        \partial_t u_1-\partial_{xx}u_1 \\
        \partial_t u_2-\partial_{xx}u_2+2\partial_x u_2-1 \\
    \end{pmatrix}\right)-\varepsilon\begin{pmatrix}
        -1 & 1 \\
        1 & -1
    \end{pmatrix},
\]
where $\varepsilon>0$. The operator is diagonal if $\varepsilon=0$. By standard reduction 
(\textit{cf.} \eqref{eq:reduction_lambdaz_space-time_homogeneous}), the two scalar operators on the diagonal, 
$\cbQ_1=\partial_t-\partial_{xx}$ and $\cbQ_2=\partial_t-\partial_{xx}+2\partial_x-1$, 
satisfy $\lambda_{1,z}(\cbQ_1)=-z^2$ and $\lambda_{1,z}(\cbQ_2)=-(z-1)^2$. In particular, $\lambda_1(\cbQ_1)=\lambda_1(\cbQ_2)=0$. 
However, the function $z\mapsto\min(-z^2,-(z-1)^2)$ coincides with 
\[
    z\mapsto
    \begin{cases}
        -(z-1)^2 & \text{if }z<1/2, \\
        -z^2 & \text{if }z\geq 1/2,
    \end{cases}
\]
whose maximal value is $-1/4<0$, which is attained at $1/2$.
\end{rem}

\subsubsection{Vanishing diffusion and advection: proof of Theorem \ref{thm:continuity_eigenvalue_diffusion_advection}}

Below, we prove Theorem \ref{thm:continuity_eigenvalue_diffusion_advection} on vanishing diffusion and advection rates.

Following the statement of the theorem, we fix a function $\vect{f}\in\caC^1\left([0,+\infty),[\vez,\vei)\right)$ 
such that $\vect{f}^{-1}(\{\vez\})=\{0\}$ and $\vect{f}'(0)\neq\vez$, as well as
a family $((q_i^\varepsilon)_{i\in[N]})_{\varepsilon\geq 0}$ such that, for all $\varepsilon\geq 0$, 
$(q_i^\varepsilon)_{i\in[N]}\in\caC^{\delta/2,\delta}_\upp(\R\times\R^n,\R^n)$ and such that
$(q_i^\varepsilon)_{i\in[N]}\to (q_i^0)_{i\in[N]}$ in $\caC^{\delta/2,\delta}_\upp(\R\times\R^n,\R^n)$ as $\varepsilon\to 0$.
Note that, by assumption, $\vect{f}'(0)>\vez$.

We denote $\cbQ_{\varepsilon}$ the operator $\cbQ$ where $(A_i)_{i\in[N]}$, $(q_i)_{i\in[N]}$ 
and $\veL$ are replaced respectively by
$(f_i(\varepsilon)^2 A_i)_{i\in[N]}$, $(f_i(\varepsilon)q_i^\varepsilon)_{i\in[N]}$ and
a parameterized matrix $\veL^\varepsilon$ that still satisfies
\ref{ass:cooperative}--\ref{ass:smooth_periodic} and that converges uniformly to $\veL^0=\veL$ as $\varepsilon\to 0$.
We use the (slightly abusing) notations $A_i(x):t\mapsto A_i(t,x)$, $q_i^\varepsilon(x):t\mapsto q_i^\varepsilon(t,x)$ and $\veL^\varepsilon(x):t\mapsto\veL^\varepsilon(t,x)$.

Recall that, for any $z\in\R^n$,
\begin{align*}
    \upe_{-z}\cbQ_\varepsilon\upe_z = &\ \partial_t -\diag\left(f_i(\varepsilon)^2\nabla\cdot\left(A_i\nabla\right)\right) \\
    & -\diag\left(f_i(\varepsilon)\left(2f_i(\varepsilon)A_i z-q_i^{\varepsilon}\right)\cdot\nabla\right) \\
    & -\diag\left(\left(f_i(\varepsilon)^2 z\cdot A_iz +f_i(\varepsilon)^2\nabla\cdot(A_i z) - f_i(\varepsilon)q_i^\varepsilon\cdot z\right)\right)-\veL^\varepsilon
\end{align*}

Therefore the convergence in the case where the coefficients of $\cbQ_\varepsilon$ do not depend on space
is a straightforward consequence of \eqref{eq:reduction_lambdaz_space_homogeneous} and of the continuity of the periodic
principal eigenvalue (see Theorem \ref{thm:continuity_eigenvalue_L}).

The above expansion of $\upe_{-z}\cbQ_\varepsilon\upe_z$ also shows that, thanks to the
replacement of $\veL$ by $\veL^\varepsilon$, this operator has again the form of
$\cbQ_\varepsilon$.
Therefore, by virtue of the above expansion and of the equality, for all $x\in[0,L]$,
\[
    \lambda_{1,\upp}\left(\frac{\upd}{\upd t}-\veL(x)\right)=\lambda_1'\left(\partial_t-\diag(f_i'(0)^2\nabla\cdot(A_i(x)\nabla)-f_i'(0) q_i^0(x)\cdot\nabla)-\veL(x)\right),
\]
the following result is actually sufficient to prove the remaining part of Theorem \ref{thm:continuity_eigenvalue_diffusion_advection}.

\begin{prop}\label{prop:limit_eigenvalue_diffusion_advection}
For any $x\in[0,L]$, let
\[
    \widetilde{\cbQ}(x) = \partial_t-\diag\left(f_i'(0)^2\nabla\cdot(A_i(x)\nabla)-f_i'(0) q_i^0(x)\cdot\nabla\right)-\veL(x).
\]

Then the generalized principal eigenvalue $\lambda_1'(\cbQ_\varepsilon)$ satisfies:
\[
    \min_{x\in[0,L]}\lambda_{1,\upp}\left(\frac{\upd}{\upd t}-\veL(x)\right)\leq 
    \liminf_{\substack{\varepsilon\to 0\\\varepsilon>0}}\lambda_1'(\cbQ_\varepsilon)\leq
    \limsup_{\substack{\varepsilon\to 0\\\varepsilon>0}}\lambda_1'(\cbQ_\varepsilon)\leq
    \min_{x\in[0,L]}\lambda_1(\widetilde{\cbQ}(x)).
\]
\end{prop}

\begin{proof}
    The proof is done in three steps.

    \begin{proof}[Step 1: the pointwise irreducibility of every $\veL^\varepsilon$ can be assumed without loss of generality]
    Assume the result has been proved provided every $\veL^\varepsilon(t,x)$ is irreducible at all $(t,x)\in\clOmper$.

Define
\[
    \veL^\varepsilon:s\in[0,+\infty)\mapsto\veL^\varepsilon+(\upe^s-1)\veo_{N\times N}-(\upe^s-1)\vect{I}.
\]
Obviously, $\veL^\varepsilon(0)=\veL^\varepsilon$ and, for all $s\in(0,+\infty)$, $\veL^\varepsilon(s,t,x)$ is irreducible at 
all $(t,x)\in\clOmper$. Moreover, by virtue of
Propositions \ref{prop:concavity_eigenvalue_z_L}, \ref{prop:max--min_characterization_lambdaz} and \ref{prop:continuity_eigenvalue_L}, 
the periodic principal eigenvalue $\lambda_1'(\varepsilon,s)$ associated with the operator
\[
    \cbQ_{\varepsilon,s}=\partial_t-\diag(f_i(\varepsilon)^2\nabla\cdot(A_i\nabla)-f_i(\varepsilon)q_i^\varepsilon\cdot\nabla)-\veL^\varepsilon(s)
\]
is, as a function of $s$, continuous in $[0,+\infty)$, decreasing in $[0,+\infty)$, strictly concave in $[0,+\infty)$. 

In particular, by decreasing monotonicity, for any $s>0$,
\[
    \liminf_{\substack{\varepsilon\to 0\\\varepsilon>0}}\lambda_1'(\cbQ_{\varepsilon,0})\geq\liminf_{\substack{\varepsilon\to 0\\\varepsilon>0}}\lambda_1'(\cbQ_{\varepsilon,s})\geq \min_{x\in[0,L]}\lambda_{1,\upp}\left(\frac{\upd}{\upd t}-\veL(s,x)\right).
\]
Passing to the limit $s\to 0$ in the right-hand side, which is continuous indeed with respect to $s$, shows the first inequality:
\[
    \min_{x\in[0,L]}\lambda_{1,\upp}\left(\frac{\upd}{\upd t}-\veL(x)\right)\leq 
    \liminf_{\substack{\varepsilon\to 0\\\varepsilon>0}}\lambda_1'(\cbQ_\varepsilon).
\]

Similarly, defining for all $s\in(0,+\infty)$ and $x\in[0,L]$ the operator
\[
    \widetilde{\cbQ}_s(x) = \partial_t-\diag\left(f_i'(0)^2\nabla\cdot(A_i(x)\nabla)-f_i'(0) q_i^0(x)\cdot\nabla\right)-\veL(s,x),
\]
we find by monotonicity
\[
    \limsup_{\substack{\varepsilon\to 0\\\varepsilon>0}}\lambda_1'(\cbQ_{\varepsilon,s})\leq     \min_{x\in[0,L]}\lambda_1(\widetilde{\cbQ}_s(x))\leq\min_{x\in[0,L]}\lambda_1(\widetilde{\cbQ}(x)).
\]
To pass to the limit $s\to 0$ in the left-hand side, we use
the concavity as follows. Let $\gamma>0$ and $s\in(0,1)$. There exists $\varepsilon_{s,\gamma}>0$ such that
\[
    \lambda_1'(\cbQ_{\varepsilon,s})\leq\min_{x\in[0,L]}\lambda_1(\widetilde{\cbQ}(x))+\gamma\quad\text{for all }\varepsilon\in(0,\varepsilon_{s,\gamma}).
\]
Let $\varepsilon\in(0,\varepsilon_{s,\gamma})$. By concavity, using $s<1$,
\[
    \lambda_1'(\varepsilon,2)-\lambda_1'(\varepsilon,1)\leq\lim_{\substack{s''<s\\s''\to s}}\frac{\lambda_1'(\varepsilon,s'')-\lambda_1'(\varepsilon,s)}{s''-s}\leq 0
\]
and for all $s'\in(0,s)$, 
\[
    \lambda_1'(\cbQ_{\varepsilon,s'})\leq\lambda_1'(\cbQ_{\varepsilon,s})+\left(\lim_{\substack{s''<s\\s''\to s}}\frac{\lambda_1'(\varepsilon,s'')-\lambda_1'(\varepsilon,s)}{s''-s}\right)(s'-s)
\]
\textit{i.e.}
\[
    \lambda_1'(\cbQ_{\varepsilon,s'})+\left(\lim_{\substack{s''<s\\s''\to s}}\frac{\lambda_1'(\varepsilon,s'')-\lambda_1'(\varepsilon,s)}{s''-s}\right)(s-s')\leq\lambda_1'(\cbQ_{\varepsilon,s}).
\]
Hence
\[
    \lambda_1'(\cbQ_{\varepsilon,s'})+\left(\lim_{\substack{s''<s\\s''\to s}}\frac{\lambda_1'(\varepsilon,s'')-\lambda_1'(\varepsilon,s)}{s''-s}\right)(s-s')\leq\min_{x\in[0,L]}\lambda_1(\widetilde{\cbQ}(x))+\gamma
\]
and consequently
\[
    \lambda_1'(\cbQ_{\varepsilon,s'})+\left(\lambda_1'(\varepsilon,2)-\lambda_1'(\varepsilon,1)\right)(s-s')\leq\min_{x\in[0,L]}\lambda_1(\widetilde{\cbQ}(x))+\gamma.
\]
Passing to the limit $s'\to 0$, we get:
\[
    \lambda_1'(\cbQ_{\varepsilon,0})+\left(\lambda_1'(\varepsilon,2)-\lambda_1'(\varepsilon,1)\right)s\leq\min_{x\in[0,L]}\lambda_1(\widetilde{\cbQ}(x))+\gamma
\]
and this inequality holds for any $\gamma>0$, $s\in(0,1)$ and $\varepsilon\in(0,\varepsilon_{s,\gamma})$.

There exists a sequence $(\varepsilon_k)_{k\in\N}$ such that, as $k\to+\infty$,
$\varepsilon_k\to 0$ and
\[
    \lambda_1'(\cbQ_{\varepsilon_k,0})\to\limsup_{\substack{\varepsilon\to 0\\\varepsilon>0}}\lambda_1'(\cbQ_{\varepsilon,0}).
\]
Passing to the limit along this sequence, we obtain:
\[
    \limsup_{\substack{\varepsilon\to 0\\\varepsilon>0}}\lambda_1'(\cbQ_{\varepsilon,0})+\liminf_{k\to+\infty}\left(\lambda_1'(\varepsilon_k,2)-\lambda_1'(\varepsilon_k,1)\right)s\leq\min_{x\in[0,L]}\lambda_1(\widetilde{\cbQ}(x))+\gamma.
\]
Finally, passing to the limit $s\to 0$ and $\gamma\to 0$, we obtain the second inequality:
\[
    \limsup_{\substack{\varepsilon\to 0\\\varepsilon>0}}\lambda_1'(\cbQ_{\varepsilon,0})\leq \min_{x\in[0,L]}\lambda_1(\widetilde{\cbQ}(x)).
\]

This ends the proof of this step.
\end{proof}

Consequently, in the following steps, we assume without loss of generality that every $\veL^\varepsilon(t,x)$ is indeed 
irreducible at all $(t,x)\in\clOmper$. 

In order to ease the reading, we assume without loss of generality (up to a spatial translation) that the minimum of $x\mapsto\lambda_{1,\upp}\left(\frac{\upd}{\upd t}-\veL(x)\right)$ is attained at $x=0$.

\begin{proof}[Step 2: $\liminf_{\varepsilon\to 0}\lambda_1'(\cbQ_\varepsilon)\geq \lambda_{1,\upp}\left(\frac{\upd}{\upd t}-\veL(0)\right)$]
    For each $\varepsilon>0$, let $\veu_\varepsilon$ be the space-time periodic generalized principal eigenfunction 
    associated with $\lambda_1'(\cbQ_\varepsilon)$ and normalized by 
    \[
        \max_{i\in[N]}\max_{\clOmper}u_{i,\varepsilon}=1.
    \]
    Let $(t_\varepsilon,x_\varepsilon)\in\clOmper$ such that $\max_{i\in[N]}u_{i,\varepsilon}(t_\varepsilon,x_\varepsilon)=1$. 
    By compactness, there exists a sequence $(\varepsilon_k)_{k\in\N}$ that converges to $0$ as $k\to\infty$ and such that:
    \begin{enumerate}
        \item $(t_k,x_k)=(t_{\varepsilon_k},x_{\varepsilon_k})$ converges in $\clOmper$ to a limit $(t_\infty,x_\infty)$;
        \item $\lambda_k=\lambda_1'(\cbQ_{\varepsilon_k})$ converges in 
        $[-\lambda_{\upPF}(\overline{\veL}),-\lambda_{\upPF}(\underline{\veL})]$ to a limit $\lambda_\infty$ 
        (see Corollary \ref{cor:comparison_eigenvalues_underline_overline_L}).
    \end{enumerate}
    
    Let $g_i:\varepsilon\in(0,+\infty)\mapsto f_i(\varepsilon)/\varepsilon$. For each $k\in\N$, define 
    \[
        \vev_k:(t,x)\mapsto\veu_{\varepsilon_k}(t,\varepsilon_k x+x_k),
    \]
    \[
        (A_{i,k})_{i\in[N]}:(t,x)\mapsto \left(g_i(\varepsilon_k)^2 A_i(t,\varepsilon_k x+x_k)\right)_{i\in[N]},
    \]
    \[
        (q_{i,k})_{i\in[N]}:(t,x)\mapsto \left(g_i(\varepsilon_k)q_i^{\varepsilon_k}(t,\varepsilon_k x+x_k)\right)_{i\in[N]},
    \]
    \[
        \veL_k:(t,x)\mapsto \veL^{\varepsilon_k}(t,\varepsilon_k x+x_k),
    \]
    \[
        \cbQ_k = \partial_t - \diag\left(\nabla\cdot(A_{i,k}\nabla)-q_{i,k}\cdot\nabla\right))-\veL_k.
    \]
    The function $\vev_k$ is space-time periodic with periodicity cell $[0,T]\times[0,L/\varepsilon_k]$. It satisfies
    \[
        \cbQ_k\vev_k = \lambda_k\vev_k\quad\text{in }\R\times\R^n.
    \]
    
    Denote, for each $i\in[N]$, 
    \[
        \sigma_i=f_i'(0)=\lim_{\varepsilon\to 0}g_i(\varepsilon)\geq 0.
    \]
    Let $I\subset[N]$ such that $\sigma_i>0$ if and only if $i\in I$. By assumption on $\vect{f}$, $I$ is nonempty.

    The coefficients of the above system, $(A_{i,k})$, $(q_{i,k})$ and $\veL_k$,
    converge locally uniformly to their respective space homogeneous $[0,T]$-time periodic limits
    $(\sigma_i^2 A_i(x_\infty))_{i\in[N]}$, $(\sigma_i q_i^0(x_\infty))_{i\in[N]}$, $\veL(x_\infty)$.
    
    Note that the limiting operator, 
    \[
        \cbQ_\infty = \partial_t -\diag(\nabla\cdot(\sigma_i^2 A_i(x_\infty)\nabla)-\sigma_i q_i^0(x_\infty)\cdot\nabla)-\veL(x_\infty),
    \]
    is a non-degenerate parabolic operator only in the special case $I=[N]$ and is in general a 
    parabolic--ordinary operator. Hence the standard parabolic estimates cannot be used to pass 
    directly to the limit in the solution $\vev_k$.
    
    Since $\vev_k\leq\veo$ for all $k\in\N$, by virtue of the Banach--Alaoglu theorem, the sequence $(\vev_k)_{k\in\N}$ 
    converges up to extraction to a weak-$\star$ limit $\vev_\infty\in\mathcal{L}^\infty_\upp([0,T],\mathcal{L}^\infty(\R^n))$. 
    Up to other extractions, by standard parabolic estimates \cite{Lieberman_2005}, 
    all components with index $i\in I$ (there is at least one)
    converge in a Sobolev stronger sense that preserves the fact that they are weak solutions.
    We introduce, for every $k\leq +\infty$, the adjoint operator:
    \[
        \cbQ_k^\star = -\partial_t -\diag(\nabla\cdot(A_{i,k}\nabla)+q_{i,k}\cdot\nabla+\nabla\cdot q_{i,k})-\veL_k^\upT.
    \]
    By passing to the limit in 
    \[
        \lambda_k\int_{[0,T]\times\R^n}\vect{\varphi}^\upT\vev_k=\int_{[0,T]\times\R^n}\vev_k^\upT(\cbQ_k^\star\vect{\varphi}),
    \]
    for any test function $\vect{\varphi}\in\caC^1_\upp(\R,\caC^2_c(\R^n,[\vez,\vei)))$, and then integrating 
    by parts in space-time, we find that:
    \begin{itemize}
        \item if $i\in I$, then $v_{i,\infty}$ is a 
        bounded weak solution of a parabolic equation with bounded continuous 
        coefficients and bounded forcing term, whence it benefits from bootstrapped parabolic regularity, 
        and in particular it is continuous;
        \item if $i\in [N]\backslash I$, then $v_{i,\infty}$ is a 
        bounded weak solution of an ordinary differential equation with bounded 
        continuous coefficients and bounded forcing term, whence it is actually 
        Lipschitz-continuous.        
    \end{itemize}
    Consequently, $\vev_\infty$ is continuous.   
    Iterating, we find that it is actually a classical solution, nonnegative, time periodic 
    and globally bounded by $\veo$, of 
    \[
        \cbQ_\infty\vev_\infty=\lambda_\infty\vev_\infty\quad\text{in }\R\times\R^n.
    \]

    Let us show now that it is a nonzero solution. Actually, let us show that it is positive.
    Up to permutations, we can assume that there exists $N_0\in[N]$ such that $I=[N_0]$. Since 
    \[
        \veL_\infty\geq 
        \begin{pmatrix}
            \left(l_{i,j,\infty}\right)_{i,j\in[N_0]} & 0 \\
            0 & \left(l_{i,j,\infty}\right)_{i,j\in[N]\backslash[N_0]}
        \end{pmatrix}
    \]
    and both diagonal blocks in the right-hand side are fully coupled (\textit{cf.} Step 1),
    we can apply the comparison principle for cooperative systems of parabolic partial differential equations
    of Proposition \ref{prop:strong_maximum_principle} on $(v_{i,\infty})_{i\in[N_0]}$ 
    and a form of comparison principle for cooperative systems of ordinary differential equations
    \cite{Slomczynski_1993} on $(v_{i,\infty})_{i\in[N]\backslash[N_0]}$ to deduce
    that they are both either positive or zero. 
    Subsequently, the full coupling of $\veL_\infty$ (\textit{cf.} Step 1) implies that
    $\vev_\infty$ is either positive or zero.
    The standard parabolic estimates \cite{Lieberman_2005} on $(v_{i,k})_{i\in[N_0]}$ imply that if 
    \[
        \inf_{k\in\N}\max_{i\in[N_0]}v_{i,k}(t_\infty,0)>0,
    \]
    then $\max_{i\in[N_0]}v_{i,\infty}(t_\infty,0)>0$ and subsequently $\vev_\infty$ is positive. 
    If, on the contrary, 
    \[
        \inf_{k\in\N}\max_{i\in[N_0]}v_{i,k}(t_\infty,0)=0,
    \]
    then by the parabolic strong comparison principle applied to each uniformly parabolic operator $\cbQ_k$,
    it actually means that, up to extraction, $(v_{i,k})_{i\in[N_0]}$ converges locally uniformly
    to $\vez$ as $k\to+\infty$. Due to the normalization $\max_{j\in[N]}v_{j,k}(t_k,0)=1$,
    there exists $\overline{j}\in[N]\backslash[N_0]$ such that, up to extraction, 
    \[
        v_{\overline{j},k}(t_\infty,0)\to 1\quad\text{as }k\to+\infty.
    \]
    Moreover, due to the full coupling of $\veL(x_\infty)$, there exists $\overline{i}\in[N_0]$ such that 
    \[
        l_{\overline{i},\overline{j}}(t_\infty,x_\infty)>0.
    \]
    Evaluating at $(t,x)=(t_\infty,0)$ and passing to the limit $k\to+\infty$ in the equality
    \[
        \partial_t v_{\overline{i},k}-\nabla\cdot(A_{\overline{i},k}\nabla v_{\overline{i},k})+q_{\overline{i},k}\cdot\nabla v_{\overline{i},k}-\sum_{j\in[N_0]}l_{\overline{i},j,k}v_{j,k}-\lambda_k v_{\overline{i},k}=\sum_{j\in[N]\backslash[N_0]}l_{\overline{i},j,k}v_{j,k},
    \]
    the left-hand side converges to $0$ (by standard parabolic regularity estimates)
    whereas the right-hand side admits a limit inferior bounded below by 
    $l_{\overline{i},\overline{j}}(t_\infty,x_\infty)>0$. This contradiction means that 
    \[
        \inf_{k\in\N}\max_{i\in[N_0]}v_{i,k}(t_\infty,0)>0,
    \]
    whence $\vev_\infty$ is positive.
    
    Now, consider $\vev\in\caC^1_\upp(\R,\R^N)$ the time periodic principal eigenfunction satisfying
    \[
        \vev'=\veL(x_\infty)\vev+\lambda_{1,\upp}\left(\frac{\upd}{\upd t}-\veL(x_\infty)\right)\vev,\quad\min_{i\in[N]}\min_{[0,T]}v_i=1.
    \]
    Since it is spatially homogeneous, it is also a solution of 
    \[
        \cbQ_\infty\vev=\lambda_{1,\upp}\left(\frac{\upd}{\upd t}-\veL(x_\infty)\right)\vev.
    \]

    A comparison argument (distinguishing, as before, the parabolic part and the ordinary part 
    of the operator $\cbQ_\infty$) shows that if 
    $\lambda_{1,\upp}\left(\frac{\upd}{\upd t}-\veL(x_\infty)\right)\geq\lambda_\infty$, 
    then $\vev$ and $\vev_\infty$ are proportional, and then the preceding 
    inequality is an equality. In other words, 
    \[
        \lambda_\infty\geq \lambda_{1,\upp}\left(\frac{\upd}{\upd t}-\veL(x_\infty)\right).
    \]

    Since this applies to any accumulation point of $(x_\varepsilon,\lambda_1'(\cbQ_{\varepsilon}))_{\varepsilon>0}$, the conclusion
    of this step follows:
    \[
        \liminf_{\varepsilon\to 0}\lambda_1'(\cbQ_\varepsilon)\geq \min_{x\in[0,L]}\lambda_{1,\upp}\left(\frac{\upd}{\upd t}-\veL(x)\right)=\lambda_{1,\upp}\left(\frac{\upd}{\upd t}-\veL(0)\right).
    \]
\end{proof}

\begin{proof}[Step 3: $\limsup_{\varepsilon\to 0}\lambda_1'(\cbQ_\varepsilon)\leq     \min_{x\in[0,L]}\lambda_1(\widetilde{\cbQ}(x))$]    
    Let us fix arbitrarily $x_0\in[0,L]$ and prove that 
    \[
        \limsup_{\varepsilon\to 0}\lambda_1'(\cbQ_\varepsilon)\leq\lambda_1(\widetilde{\cbQ}(x_0)).
    \]

    We repeat the construction of Step 2 with the (important) modification $(t_\varepsilon,x_\varepsilon)=(0,x_0)$ 
    for each $\varepsilon>0$. 
    We use, for each $k\in\N$ and up to $k=+\infty$ when it makes sense, the same notations 
    $\lambda_k$, $t_k$, $x_k$, $\vev_k$, $\sigma_i$, $I$, $A_{i,k}$, $q_{i,k}$, $\veL_k$, $\cbQ_k$.
    With the new spatial shift, $\cbQ_\infty=\widetilde{\cbQ}(x_0)$.

    Since $(0,x_0)\in\clOmper$ is \textit{a priori} not in $\operatorname{argmax}(\max_{i\in[N]}v_{i,k})$, it is now
    unclear if $\vev_k$ remains positive at the limit $k\to+\infty$. Thus, instead of directly passing to the limit, 
    we are going to construct a sub-solution appropriate for every finite $k$.

    Let $R>0$ and, for each $k\in\N$, define $\mu_{R,k}=\lambda_{1,\upDir}(\cbQ_k,B(x_0,R))$. The solution
    $\underline{\vev}_k\in\caC^{1,2}_{t-\upp}(\R\times B(x_0,R),\R^N)\cap\caC^{0,1}_{t-\upp}(\R\times\overline{B(x_0,R)},\R^N)$
    of the Dirichlet periodic--parabolic problem:
    \[
        \begin{cases}
            \cbQ_k\underline{\vev}_k=\mu_{R,k}\underline{\vev}_k & \text{in }\R\times B(x_0,R) \\
            \underline{\vev}_k=\vez & \text{on }\R\times\partial B(x_0,R) \\
            \displaystyle\max_{i\in[N]}\max_{(t,x)\in[0,T]\times\overline{B(x_0,R)}}\underline{v}_{i,k}(t,x)=1
        \end{cases}
    \]
    can be multiplied by an appropriately small positive constant and extended in
    $\R\times\R^n$ by setting $\underline{\vev}_k=\vez$ in $\R\times(\R^n\backslash B(x_0,R))$ in
    order to show, thanks to a standard comparison argument, that 
    \begin{equation}\label{eq:vanishing_diffusion_upper_estimate_eigenvalue}
        \lambda_1'(\cbQ_k)\leq \mu_{R,k}.
    \end{equation}

    Now, let us verify that $\mu_{R,k}\to\lambda_{1,\upDir}(\widetilde{\cbQ}(x_0),B(x_0,R))$ as $k\to+\infty$.
    We point out that when $\min_{i\in[N]}\sigma_i>0$, this convergence is obvious, by locally uniform convergence of 
    the coefficients $(A_{i,k})_{i\in[N]}$, $(q_{i,k})_{i\in[N]}$ and $\veL_k$ and by continuity of the Dirichlet 
    principal eigenvalue. Hence we only have to verify that this continuity remains true when some, but not all, equations of the system become ordinary differential equations.

    Repeating the compactness and bootstrap argument of Step 2, we obtain after passing to the limit a
    classical solution $\vev_\infty$ of the Dirichlet periodic--parabolic problem:
    \[
        \begin{cases}
            \widetilde{\cbQ}(x_0)\vev_\infty=\mu_{R,\infty}\vev_\infty & \text{in }\R\times B(x_0,R) \\
            \vev_\infty=\vez & \text{on }\R\times\partial B(x_0,R) \\
            \displaystyle\max_{i\in[N]}\max_{(t,x)\in[0,T]\times\overline{B(x_0,R)}}v_{i,\infty}(t,x)=1,
        \end{cases}
    \]
    where $\mu_{R,\infty}$ is an accumulation point of $(\mu_{R,k})_{k\in\N}$. To prove by a comparison argument the uniqueness of the Dirichlet periodic--parabolic principal
    eigenpair, so that $\mu_{R,\infty}=\lambda_{1,\upDir}(\widetilde{\cbQ}(x_0),B(x_0,R))$, we need
    to control the behavior of the solution close to $\partial B(x_0,R)$. More precisely,
    it suffices to prove that for every Dirichlet principal eigenfunction $\vev$, there exists
    $C>0$ such that, for each $i\in[N]$,
    \begin{equation}\label{eq:vanishing_diffusion_sub-solution_Lipschitz_Hopf}
        0<\frac{1}{C}\leq\frac{v_i(t,x)}{||x-x_0|-R|}\leq C\quad\text{for all }(t,x)\in[0,T]\times B(x_0,R).
    \end{equation}

    For each $i\in I$, \eqref{eq:vanishing_diffusion_sub-solution_Lipschitz_Hopf} follows directly
    from the parabolic regularity up to the boundary and the Hopf lemma. It remains to prove
    \eqref{eq:vanishing_diffusion_sub-solution_Lipschitz_Hopf} for each $i\in [N]\backslash I$.

    As in Step 2, assume up to permutations that $I=[N_0]$ with $N_0\in[N]$. If $N_0=N$, then 
    \eqref{eq:vanishing_diffusion_sub-solution_Lipschitz_Hopf} is true for all $i\in[N]$.
    If $N_0<N$, then $1\leq N_0\leq N-1$ and we can write $\veL(x_0)$ in block form:
    \[
        \veL(t,x_0)=
        \begin{pmatrix}
            \vect{A}(t) & \vect{B}(t) \\
            \vect{C}(t) & \vect{D}(t)
        \end{pmatrix}
        \quad\text{for all }t\in[0,T]
    \]
    with 
    \[
        \vect{A}(t)\in\R^{N_0\times N_0},\ \vect{B}(t)\in\R^{N_0\times (N-N_0)},\ \vect{C}(t)\in\R^{(N-N_0) \times N_0},\ \vect{D}(t)\in\R^{(N-N_0)\times (N-N_0)}.
    \] 
    By Step 1, we can assume without loss of generality that, for all $t\in[0,T]$, $\vect{A}(t)$ is essentially positive, $\vect{B}(t)$ 
    is positive, $\vect{C}(t)$ is positive and $\vect{D}(t)$ is essentially positive. Hence, decomposing the principal eigenfunction $\vev$ as 
    \[
        \vev=
        \begin{pmatrix}
            \vew \\
            \widetilde{\vew}
        \end{pmatrix}
        \quad\text{with }\vew\in\R^{N_0},\ \widetilde{\vew}\in\R^{N-N_0},
    \]
    we find
    \[
        \partial_t\widetilde{\vew}=\vect{D}\widetilde{\vew}+\vect{C}\vew\quad\text{in }[0,T]\times B(x_0,R)
    \]
    On one hand, this immediately leads to the Lipschitz-continuity of $\widetilde{\vew}$ at
    the boundary of the spatial domain, namely the upper estimate of \eqref{eq:vanishing_diffusion_sub-solution_Lipschitz_Hopf} for each $i\in[N]\backslash[N_0]$.
    On the other hand, using the inequality
    \[
        \partial_t\widetilde{\vew}=\vect{D}\widetilde{\vew}+\vect{C}\vew\geq\diag(d_{i,i})_{i\in[N-N_0]}\widetilde{\vew}+\vect{C}\vew \quad\text{in }[0,T]\times B(x_0,R)
    \]
    multiplied by 
    \[
        \vect{E}:t\mapsto\diag\left(\upe^{-\int_0^t d_{i,i}(\tau)\upd \tau}\right)_{i\in[N-N_0]}
    \]
    and integrated in $[0,t]$ for an arbitrary $t\in(0,T]$, we obtain for all $x\in B(x_0,R)$
    \begin{align*}
        \widetilde{\vew}(t,x) & \geq\vect{E}(t)^{-1}\widetilde{\vew}(0,x)+\vect{E}(t)^{-1}\int_0^t\vect{E}(t')\vect{C}(t')\vew(t',x)\upd t' \\
        & \geq \vect{E}(t)^{-1}\int_0^t\vect{E}(t')\vect{C}(t')\vew(t',x)\upd t'
    \end{align*}
    Since $\vew$ satisfies \eqref{eq:vanishing_diffusion_sub-solution_Lipschitz_Hopf} component-wise,
    it follows that, up to increasing $C$, for each $i\in[N]\backslash[N_0]$,
    \[
        0<\frac{1}{C}\leq\frac{\widetilde{w}_i(t,x)}{||x-x_0|-R|}\quad\text{for all }(t,x)\in[0,T]\times B(x_0,R).
    \]
    Therefore for each $i\in[N]\backslash[N_0]$ the lower estimate of \eqref{eq:vanishing_diffusion_sub-solution_Lipschitz_Hopf} is also satisfied. 

    Hence \eqref{eq:vanishing_diffusion_sub-solution_Lipschitz_Hopf} is satisfied for each $i\in[N]$.
    
    Now, take two principal eigenpairs $(\lambda,\vev)$ and $(\widetilde{\lambda},\widetilde{\vev})$, assume
    for instance $\lambda\geq\widetilde{\lambda}$, and let us prove by comparison that $\lambda=\widetilde{\lambda}$
    and $\vev=\kappa \widetilde{\vev}$ for some $\kappa>0$.
    By \eqref{eq:vanishing_diffusion_sub-solution_Lipschitz_Hopf}, there exists $\kappa>0$ such that
    $\kappa\widetilde{\vev}\leq\vev$. Hence $\vew=\vev-\kappa\widetilde{\vev}$ is nonnegative and satisfies:
    \[
        \widetilde{\cbQ}(x_0)\vew = \widetilde{\lambda}\vew+(\lambda-\widetilde{\lambda})\vev \geq \widetilde{\lambda}\vew\quad\text{in }\R\times B(x_0,R).
    \]
    Verifying that the Hopf-type lower estimate of \eqref{eq:vanishing_diffusion_sub-solution_Lipschitz_Hopf} remains true for 
    super-solutions (the argument is the same), we find that $\vew$ is either positive or zero. 
    Increasing continuously $\kappa$, we deduce from the
    comparison principle an optimal $\kappa^\star$ such that $\vev=\kappa^\star\widetilde{\vev}$.
    It follows subsequently, from the system satisfied by $\vew$, that $\lambda=\widetilde{\lambda}$.
    
    Hence the Dirichlet principal eigenvalue is unique indeed and $\mu_{R,k}\to\lambda_{1,\upDir}(\widetilde{\cbQ}(x_0))$.
    Passing to the limit $k\to+\infty$ in \eqref{eq:vanishing_diffusion_upper_estimate_eigenvalue},
    we deduce:
    \[
        \limsup_{\varepsilon\to 0}\lambda_1'(\cbQ_\varepsilon)\leq\lambda_{1,\upDir}(\widetilde{\cbQ}(x_0),B(x_0,R)).
    \]

    Since this is true for all $R>0$, it only remains to verify that
    \[
            \lambda_{1,\upDir}(\widetilde{\cbQ}(x_0),B(x_0,R))\to \lambda_1(\widetilde{\cbQ}(x_0))\quad\text{as }R\to+\infty.
    \]
    Again, this is classical if $N_0=N$ and only requires work if $N_0<N$. 
    
    In fact, it can be verified that, in order to adapt the proof of Proposition 
    \ref{prop:eigenvalue_limit_Dirichlet} to an operator which is degenerate parabolic
    but still satisfies the strong comparison principle, we only need to be able to extract a 
    convergent subsequence of Dirichlet principal eigenfunctions $\vev_R$ as $R\to+\infty$ with 
    nonzero limit.
    In view of the compactness procedure detailed in Step 2, we actually only need
    to verify that the pointwise normalization $\max_{i\in[N]}v_{i,R}(0,x_0)=1$ implies that,
    for any $R_0>0$, there exists $C>0$ such that, for any $R>R_0$, 
    \[
        \vev_R\leq C\veo\quad\text{in }[0,T]\times B(x_0,R_0).
    \]
    As a matter of fact, such local bounds can be derived from parabolic interior regularity
    estimates \cite{Lieberman_2005} applied to components with index $i\in[N_0]$ 
    combined with standard regularity results for parameterized
    ordinary differential equations applied to components with index $i\in[N]\backslash[N_0]$. 
    This is standard and not detailed.
\end{proof}

The proof is now ended.

\end{proof}

\begin{rem}\label{rem:counter-example_vanishing_diffusion}
    Contrarily to what was claimed by Nadin in his work on the scalar case \cite[Theorem 3.6]{Nadin_2007},
    the convergence to $\min_{x\in[0,L]}\lambda_{1,\upp}\left(\frac{\upd}{\upd t}-\veL(x)\right)$ cannot be true in general,
    even if $N=1$.
    Indeed, if:
    \begin{itemize}
        \item $f_i(\varepsilon)=\varepsilon$ for each $i\in[N]$;
        \item the coefficients of $\cbQ_\varepsilon$ do not depend on time;
        \item $\veL$ is symmetric;
        \item and there exists $Q\in\caC^2(\R^n,\R)$ such that $\int_{[0,L]}\nabla Q=0$ and 
        \[
            (\varepsilon^2 A_i)^{-1}(\varepsilon q_i^\varepsilon)=\nabla Q\quad\text{for each }i\in[N];
        \]
    \end{itemize}
    then by the variational formula of Theorem \ref{thm:lambdaz_time_homogeneous}, 
    \[
        \lambda_1(\cbQ_{\varepsilon})=\lambda_1'(\cbQ_{\varepsilon})=\lambda_1'(-\varepsilon^2\diag(\nabla\cdot(A_i\nabla))-\veL_Q)
    \]
    with
    \begin{align*}
        \veL_Q & = \veL + \diag\left(\frac12\nabla\cdot(\varepsilon^2 A_i\nabla Q)-\frac14\nabla Q\cdot \varepsilon^2 A_i\nabla Q\right) \\
        & = \veL + \diag\left(\frac12\varepsilon\nabla\cdot(q_i)-\frac14 A_i^{-1}q_i\cdot q_i\right).
    \end{align*}
    If the convergence of $\lambda_1'(\cbQ_\varepsilon)$ to $\min_{x\in[0,L]}\lambda_{1,\upp}\left(\frac{\upd}{\upd t}-\veL(x\right))$ was true,
    then by passing to the limit in the equality $\lambda_1'(\cbQ_{\varepsilon})=\lambda_1'(-\varepsilon^2\diag(\nabla\cdot(A_i\nabla))-\veL_Q)$, 
    we would obtain
    \[
        \min_{x\in[0,L]}\lambda_{1,\upp}\left(\frac{\upd}{\upd t}-\veL(x)\right)=\min_{x\in[0,L]}\lambda_{1,\upp}\left(\frac{\upd}{\upd t}-\veL(x)+\frac14\diag(|A_i^{-1/2}q_i|^2)(x)\right),
    \]
    which yields an obvious contradiction after a careful choice of $\veL$, $(A_i)_{i\in[N]}$, $(q_i)_{i\in[N]}$
    -- for instance, in spatial dimension $n=1$, 
    \[
        A_i=1,\quad q_i:x_1\mapsto\cos\left(\frac{2\pi}{L_1}x_1\right),\quad \veL:x_1\mapsto-\frac14\sin\left(\frac{2\pi}{L_1}x_1\right)^2\vect{I}+\vect{M}
    \]
    with $\vect{M}$ the discrete Laplacian defined in \eqref{eq:discrete_Lap}.

    The mistake in Nadin's proof \cite{Nadin_2007} can be corrected with the additional assumption
    $\lambda_1(\widetilde{\cbQ})=\lambda_1'(\widetilde{\cbQ})$. In this sense, our result and its proof 
    provide as a by-product a correction of the scalar counterpart in \cite{Nadin_2007}.
\end{rem}

\subsubsection{Large diffusion: proof of Theorem \ref{thm:limit_eigenvalue_large_diffusion}}

We now prove Theorem \ref{thm:limit_eigenvalue_large_diffusion}.

\begin{prop}\label{prop:limit_eigenvalue_large_diffusion}
Let 
\[
    \left((\langle A_i\rangle,\langle q_i\rangle)_{i\in[N]},\langle\veL\rangle\right):t\mapsto
    \frac{1}{|[0,L]|}\int_{[0,L]}\left((A_i,q_i)_{i\in[N]},\veL\right)(t,x)\upd x
\]
and, for all $\vect{d}\in(\vez,\vei)$, let $\cbQ_{\vect{d}}$ be the operator $\cbQ$ with $(A_i)_{i\in[N]}$ replaced by $(d_i A_i)_{i\in[N]}$.

Then 
\[
    \lim_{\min_{i\in[N]}d_i\to+\infty}\lambda_{1,\upp}(\cbQ_{\vect{d}})=\lambda_{1,\upp}\left(\partial_t-\langle\veL\rangle\right).
\]
\end{prop}

\begin{proof}
Let $\vect{d}\gg\vez$ and $\veu_{\vect{d}}$ be the periodic principal eigenfunction associated with $\lambda_1'(\cbQ_{\vect{d}})$ and
normalized by 
\[
    \frac{1}{T|[0,L]|}\int_{\clOmper}|\veu_{\vect{d}}|^2=1.
\]

Below, we assume $\caC^1$ space regularity of the coefficients $(q_i)_{i\in[N]}$.
The proof in the general case with mere H\"{o}lder-continuity is not detailed -- it can be deduced by a standard 
regularization procedure and the continuity of periodic principal eigenvalues.

Multiplying $(\cbQ_{\vect{d}}\veu_{\vect{d}})_i-\lambda_1'(\cbQ_{\vect{d}})u_{\vect{d},i}$ by $u_{\vect{d},i}$ and
then integrating over $\clOmper$, we find for each $i\in[N]$:
\[
    d_i\int_{\clOmper}\nabla u_{\vect{d},i}\cdot A_i\nabla u_{\vect{d},i} =\int_{\clOmper}\left(\left(\frac{\nabla\cdot q_i}{2}+\lambda_1'(\cbQ_{\vect{d}})\right)u_{\vect{d},i}^2+\sum_{i=1}^N l_{i,j}u_{\vect{d},i}u_{\vect{d},j}\right).
\]
Recall from Corollary \ref{cor:comparison_eigenvalues_underline_overline_L} the estimate
$-\lambda_{\upPF}(\overline{\veL})\leq\lambda_1'(\cbQ_{\vect{d}})\leq-\lambda_{\upPF}(\underline{\veL})$,
whence, by the Young inequality $|u_{\vect{d},i}u_{\vect{d},j}|\leq\frac12\left(|u_{\vect{d},i}|^2+|u_{\vect{d},j}|^2\right)$,
there exists a constant $K>0$ independent of $\vect{d}$ such that
\[
    0\leq \sum_{i=1}^N\int_{\clOmper}\nabla u_{\vect{d},i}\cdot A_i\nabla u_{\vect{d},i}\leq \frac{K}{\displaystyle\min_{i\in[N]}d_i}.
\]
Consequently, 
\[
    \frac{1}{T}\int_0^T\left(\sum_{i=1}^N\int_{[0,L]}\nabla u_{\vect{d},i}\cdot A_i\nabla u_{\vect{d},i}\right)\to 0\quad\text{as }\min_{i\in[N]}d_i\to+\infty.
\]

Let $\langle \veu_{\vect{d}}\rangle:t\mapsto\frac{1}{|[0,L]|}\int_{[0,L]}\veu_{\vect{d}}(t,x)\upd x$ and
$\vev_{\vect{d}}=\veu_{\vect{d}}-\langle\veu_{\vect{d}}\rangle$. By the Poincaré inequality, there exists another constant $K'>0$ such that
\[
    \sum_{i=1}^N\int_{[0,L]}\nabla u_{\vect{d},i}\cdot A_i\nabla u_{\vect{d},i}=\sum_{i=1}^N\int_{[0,L]}\nabla v_{\vect{d},i}\cdot A_i\nabla v_{\vect{d},i}\geq K'\int_{[0,L]}|\vev_{\vect{d}}|^2.
\]
Since the average in $[0,T]$ of the nonnegative function on the left-hand side converges to $0$ as $\min_{i\in[N]}d_i\to+\infty$,
so does the average in $[0,T]$ of $\int_{[0,L]}|\vev_{\vect{d}}|^2$, whence $\vev_{\vect{d}}$ itself converges to $\vez$ almost everywhere
(up to extraction of a subsequence).

Since, for each $i\in[N]$,
\begin{align*}
    \int_0^T\langle \veu_{\vect{d}}\rangle_i & =|[0,L]|^{-1}\int_{\clOmper}u_{\vect{d},i} \\
    & \leq \left(\frac{T}{|[0,L]|}\right)^{1/2}\left(\int_{\clOmper}u_{\vect{d},i}^2\right)^{1/2} \\
    & \leq \left(\frac{T}{|[0,L]|}\right)^{1/2}\left(\int_{\clOmper}\sum_{i=1}^N u_{\vect{d},i}^2\right)^{1/2}=T,
\end{align*}
$\langle\veu_{\vect{d}}\rangle_i$ is bounded in $\mathcal{L}^1([0,T])$ uniformly with respect to $\vect{d}$.

Integrating $\cbQ_{\vect{d}}\veu_{\vect{d}}=\lambda_1'(\cbQ_{\vect{d}})\veu_{\vect{d}}$ over $[0,L]$ and dividing by $|[0,L]|$, we find:
\begin{align*}
    \partial_t\langle \veu_{\vect{d}}\rangle & = |[0,L]|^{-1}\int_{[0,L]}\left(\diag(\nabla\cdot q_i)\veu_{\vect{d}}\right)+|[0,L]|^{-1}\int_{[0,L]}\left(\veL\veu_{\vect{d}}\right)+\lambda_1'(\cbQ_{\vect{d}})\langle \veu_{\vect{d}}\rangle \\
    & = |[0,L]|^{-1}\int_{[0,L]}\diag(\nabla\cdot q_i)\langle \veu_{\vect{d}}\rangle+|[0,L]|^{-1}\int_{[0,L]}\veL\langle \veu_{\vect{d}}\rangle+\lambda_1'(\cbQ_{\vect{d}})\langle \veu_{\vect{d}}\rangle \\
    & \quad + |[0,L]|^{-1}\int_{[0,L]}\left(\left(\diag(\nabla\cdot q_i)+\veL\right)\vev_{\vect{d}}\right) \\
    & = \langle\veL\rangle\langle\veu_{\vect{d}}\rangle+\lambda_1'(\cbQ_{\vect{d}})\langle \veu_{\vect{d}}\rangle + |[0,L]|^{-1}\int_{[0,L]}\left(\left(\diag(\nabla\cdot q_i)+\veL\right)\vev_{\vect{d}}\right).
\end{align*}

Now, by the discrete and continuous Cauchy--Schwarz inequalities, denoting $\|\cdot\|$ the norm in $\mathcal{L}^\infty(\clOmper,\R)$,
\begin{align*}
    \left|\int_{[0,L]}\left(\left(\nabla\cdot q_i\right)v_{\vect{d},i}+\sum_{j=1}^N l_{i,j} v_{\vect{d},j}\right)\right|
    & \leq \left(\|\nabla\cdot q_i\|+\max_{j\in[N]}\|l_{i,j}\|\right)\int_{[0,L]}\sum_{j=1}^N|v_{\vect{d},j}| \\
    & \leq \left(\|\nabla\cdot q_i\|+\max_{j\in[N]}\|l_{i,j}\|\right)\int_{[0,L]}\sqrt{N}\left(\sum_{j=1}^N|v_{\vect{d},j}|^2\right)^{\frac12} \\
    & \leq \left(\|\nabla\cdot q_i\|+\max_{j\in[N]}\|l_{i,j}\|\right)\sqrt{N|[0,L]|}\int_{[0,L]}|\vev_{\vect{d}}|^2.
\end{align*}

Since $\int_{[0,L]}|\vev_{\vect{d}}|^2$ term converges to $0$ as $\min_{i\in[N]}d_i\to+\infty$, each component of 
$\partial_t\langle\veu_{\vect{d}}\rangle$ is bounded in $\mathcal{L}^1([0,T])$ uniformly with respect to $\vect{d}$. 
Hence each component of $\langle\veu_{\vect{d}}\rangle$ is 
bounded uniformly in $\mathcal{W}^{1,1}([0,T])$, and then via the fundamental theorem of calculus it is bounded uniformly in $\mathcal{L}^\infty([0,T])$, whence it is bounded uniformly in the space of functions of bounded variation $\mathcal{BV}([0,T])$.
By compactness of the embedding $\mathcal W^{1,1}\hookrightarrow \mathcal L^1$, each component of $\langle\veu_{\vect{d}}\rangle$ converges up to extraction in $\mathcal{L}^1([0,T])$. Using the equation
\[
    \partial_t\langle \veu_{\vect{d}}\rangle = \langle\veL\rangle\langle\veu_{\vect{d}}\rangle+\lambda_1'(\cbQ_{\vect{d}})\langle \veu_{\vect{d}}\rangle + |[0,L]|^{-1}\int_{[0,L]}\left(\left(\diag(\nabla\cdot q_i)+\veL\right)\vev_{\vect{d}}\right)
\]
and assuming up to another extraction that $\lambda_1'(\cbQ_{\vect{d}})\to\lambda\in\R$,
so does each component of $\partial_t\langle\veu_{\vect{d}}\rangle$. Denoting by $\veu_\infty$ the limit of $\langle\veu_{\vect{d}}\rangle$, we deduce
that the limit of $\partial_t\langle\veu_{\vect{d}}\rangle$ is, in distributional sense, the derivative of $\veu_\infty$, so that
$\veu_\infty$ satisfies
\[
    \partial_t\veu_\infty=\langle\veL\rangle\veu_\infty+\lambda\veu_\infty\quad\text{in }(\mathcal{L}^\infty)'(\R).
\]
Again by virtue of the fundamental theorem of calculus, each component of $\veu_\infty$ is actually in $\mathcal{L}^\infty([0,T])$, and 
now from the equation it appears that so does each component of $\partial_t\veu_\infty$. Therefore $\veu_\infty$ is in fact 
Lipschitz-continuous, and using again the equation it is $\caC^1$. Since it is periodic, nonnegative (by almost everywhere convergence, 
up to another extraction) and nonzero (if on the contrary it was zero, then $\veu_{\vect{d}}$ would converge to $0$ almost everywhere and 
this would contradict the normalization on $\veu_{\vect{d}}$) and since the operator $\partial_t-\langle\veL\rangle$ is fully coupled in 
$[0,T]$ by \ref{ass:irreducible}, we deduce by uniqueness of the classical solution that
\[
    \lambda=\lambda_{1,\upp}(\partial_t-\langle\veL\rangle).
\]
By uniqueness, the family $(\lambda_1'(\cbQ_{\vect{d}}))_{\vect{d}\gg\vez}$ has a unique accumulation point and thus:
\[
    \lim_{\min_{i\in[N]}d_i\to+\infty}\lambda_1'(\cbQ_{\vect{d}})=\lambda_{1,\upp}(\partial_t-\langle\veL\rangle).
\]
\end{proof}

\begin{rem}\label{rem:correction_Nadin_large_diffusion_lambdaz}
    Contrarily to what was claimed by Nadin in \cite[Theorem 3.6]{Nadin_2007}, the large diffusion limit
    of the family $(\lambda_{1,z})_{z\in\R^n}$ and of the generalized principal eigenvalue 
    $\lambda_1$ cannot be directly deduced from the above proof. Indeed, the large parameter
    $\vect{d}$ appears in the zeroth order term of the operator $\cbQ_z$ and makes the eigenvalue
    $\lambda_{1,z}$ blow-up to $-\infty$ as $\min d_i\to+\infty$, see Corollary 
    \ref{cor:comparison_eigenvalues_underline_overline_L}.
\end{rem}

\begin{rem}\label{rem:counter-example_comparison_small_large_diffusion}
Contrarily to the scalar setting \cite{Nadin_2007} or the special cases where $\partial_t-\veL(x)$ admits a space-time homogeneous
periodic principal eigenfunction, it is in general false that for systems without diffusion and advection, 
spatial average and periodic principal eigenvalue commute, namely
\[
    \lambda_{1,\upp}(\partial_t-\langle\veL\rangle)\neq\langle x\mapsto\lambda_{1,\upp}(\partial_t-\veL(x))\rangle.
\]
In fact, even the inequality
\[
    \lambda_{1,\upp}(\partial_t-\langle\veL\rangle)\geq\min_{x\in[0,L]}\lambda_{1,\upp}(\partial_t-\veL(x))
\]
is false in general, as shown by the following very simple time homogeneous one-dimensional counter-example
\[
    \veL:(t,x)\mapsto
    \begin{cases}
        \begin{pmatrix}1 & 1 \\ 0 & 1\end{pmatrix} & \text{if }x\in[0,L_1/2]+L_1\mathbb{Z}, \\
        \begin{pmatrix}1 & 0 \\ 1 & 1\end{pmatrix} & \text{if }x\in[L_1/2,L_1]+L_1\mathbb{Z}.
    \end{cases}
\]
In a time homogeneous setting, 
\[
    \lambda_{1,\upp}(\partial_t-\langle\veL\rangle)=-\lambda_{\upPF}(\langle\veL\rangle),\quad\min_{x\in[0,L]}\lambda_{1,\upp}(\partial_t-\veL(x))=-\max_{x\in[0,L]}\lambda_{\upPF}(\veL(x)).
\]
With the counter-example above, these two quantities turn out to be respectively $-\frac{3}{2}$ and $-1$: the averaged matrix has a larger
Perron--Frobenius eigenvalue than the matrix at any point in space. In other words, considering for instance the operator 
$\partial_t -d\Delta-\veL$, the limit $d\to 0$ of the periodic principal eigenvalue is larger than the limit $d\to+\infty$.
This is in sharp contrast with the variational formula of Theorem \ref{thm:lambdaz_time_homogeneous}, which indicates a nondecreasing
dependence on $d$ but does not apply here due to the asymmetry of $\veL$. Of course $\veL$ in this counter-example is not continuous 
and therefore does not satisfy \ref{ass:smooth_periodic}; however, any smooth sufficiently precise approximation of $\veL$ will give 
the same conclusion, by continuity of the Perron--Frobenius eigenvalue. As a side result, this counter-example also shows that the variational
formula of Theorem \ref{thm:lambdaz_time_homogeneous} does not hold if only $\langle\veL\rangle$ is symmetric, namely if the pointwise 
symmetry assumption is replaced by an assumption of symmetry on average.

Let us also point out that this counter-example is used to prove Corollary \ref{cor:asymptotics_small_large_spatial_frequency}. More precisely, the map $L_1\mapsto\lambda_{1,\upp}(\cbQ_{T,L_1})$ is
\begin{itemize}
    \item constant if for instance the coefficients of $\cbQ_{T,L_1}$ do not depend on space;
    \item decreasing if for instance the coefficients of $\cbQ_{T,L_1}$ do not depend on time
    and the operator is self-adjoint and spatially heterogeneous, so that the variational formula of Theorem \ref{thm:lambdaz_time_homogeneous} applies and periodic principal eigenfunctions are spatially heterogeneous and therefore have nonzero gradients;
    \item neither if for instance $\cbQ_{T,L_1}=\frac{1}{T}\partial_t-\frac{1}{L_1^2}\Delta-\veL$ with $\veL$ the above counter-example, so that the limit $L_1\to +\infty$ of the periodic principal eigenvalue is larger than the limit $L_1\to 0$.
\end{itemize}
\end{rem}

\subsubsection{Small and large time frequency: proof of Theorem \ref{thm:limits_eigenvalue_time_frequency}}

Now we turn to the proof of Theorem \ref{thm:limits_eigenvalue_time_frequency}. Denoting by $\cbQ_{\omega}$ the operator $\cbQ$ with
$\partial_t$ replaced by $\omega\partial_t$, we first prove the small frequency limit $\omega\to 0$ in
Proposition \ref{prop:small_time_frequency_limit}, then the high frequency one $\omega\to +\infty$ in Proposition 
\ref{prop:high_time_frequency_limit}. 

\begin{prop}\label{prop:small_time_frequency_limit}
For all $z\in\R^n$,
\[
    \lim_{\substack{\omega>0\\\omega\to 0}}\lambda_{1,z}(\cbQ_{\omega})=\frac{1}{T}\int_0^T\lambda_{1,z}\left(-\diag(\nabla\cdot(A_i(t)\nabla)-q_i(t)\cdot\nabla)-\veL(t)\right)\upd t,
\]
\[
    \lim_{\substack{\omega>0\\\omega\to 0}}\lambda_1(\cbQ_{\omega})=\frac{1}{T}\int_0^T\lambda_{1}\left(-\diag(\nabla\cdot(A_i(t)\nabla)-q_i(t)\cdot\nabla)-\veL(t)\right)\upd t,
\]
where, with a slight abuse of notation, for all $t\in[0,T]$,
\[
    ((A_i(t),q_i(t))_{i\in[N]},\veL(t)):x\mapsto((A_i(t,x),q_i(t,x))_{i\in[N]},\veL(t,x)).
\]
\end{prop}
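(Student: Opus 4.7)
My plan is to exploit the fact that as $\omega\to 0$ the time derivative $\omega\partial_t$ becomes negligible, so the problem asymptotically decouples into a family of $x$-only elliptic eigenproblems parametrized by $t$. For each fixed $t\in[0,T]$, let $\mathcal{E}(t)=-\diag(\nabla\cdot(A_i(t)\nabla)-q_i(t)\cdot\nabla)-\veL(t)$ act on functions of $x$, denote by $\mu_z(t)=\lambda_{1,z}(\mathcal{E}(t))$ its periodic principal eigenvalue, and let $\vev_z[t](x)$ be the associated positive periodic principal eigenfunction, normalized by $\max_{i\in[N],\,x\in[0,L]} v_{z,i}[t](x)=1$. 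By Krein--Rutman theory and the continuity of eigenpairs with respect to coefficients, $(\mu_z(\cdot),\vev_z[\cdot])$ depends smoothly on $t$; moreover, the elliptic analogue of Proposition~\ref{prop:harnack_inequality} applied to the $t$-frozen system yields positivity estimates on $\vev_z[t]$ that are uniform in $t$, so the ratios $\partial_t v_{z,i}[t]/v_{z,i}[t]$ are uniformly bounded on $\clOmper$.

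I would then use the explicit quasi-eigenfunction
\[
\veu(t,x)=C(t)\vev_z[t](x),\quad C(t)=\exp\left(\frac{1}{\omega}\int_0^t(\bar\mu_z-\mu_z(s))\,ds\right),\quad \bar\mu_z=\frac{1}{T}\int_0^T\mu_z(s)\,ds.
\]
The mean-zero condition makes $C$ (hence $\veu$) space-time periodic. A direct computation using $\mathcal{E}_z(t)\vev_z[t]=\mu_z(t)\vev_z[t]$ then gives
\[
\frac{(\cbQ_{\omega,z}\veu)_i}{u_i}(t,x)=\bar\mu_z+\omega\,\frac{\partial_t v_{z,i}[t](x)}{v_{z,i}[t](x)}\qquad\text{for all }i\in[N],
\]
and plugging $\veu$ into both the max--min and min--max characterizations of $\lambda_{1,z}(\cbQ_\omega)=\lambda_{1,\upp}(\cbQ_{\omega,z})$ from Proposition~\ref{prop:max--min_characterization_lambdaz} sandwiches the eigenvalue between $\bar\mu_z\pm O(\omega)$, giving the first claimed convergence.

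For the convergence of $\lambda_1(\cbQ_\omega)=\max_{z\in\R^n}\lambda_{1,z}(\cbQ_\omega)$, I would follow the argument of Step~3 in the proof of Proposition~\ref{prop:continuity_eigenvalue_L}: the explicit quadratic upper bound provided by Corollary~\ref{cor:comparison_eigenvalues_decoupled_equations}, together with the uniform lower bound $\lambda_1(\cbQ_\omega)\geq\lambda_1'(\cbQ_\omega)$ (which itself is uniformly bounded below in $\omega$ by the first convergence), confines the unique maximizer of $z\mapsto\lambda_{1,z}(\cbQ_\omega)$ to a fixed compact set $Z\subset\R^n$ independent of $\omega$. Pointwise convergence of concave functions on a compact set automatically upgrades to uniform convergence, so the maxima pass to the limit and one concludes.

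The main obstacle I anticipate is the uniform-in-$t$ regularity and positivity of the elliptic eigenpair $(\mu_z(\cdot),\vev_z[\cdot])$, which underlies everything: $C$ must be $\caC^1$-periodic for the computation to make sense, and the remainder term $\omega\,\partial_t v_{z,i}[t]/v_{z,i}[t]$ must tend to zero uniformly in $(t,x)$. These facts follow from a fully coupled elliptic Harnack inequality combined with interior Schauder estimates applied to the $t$-differentiated eigenproblem, but some care is needed because pointwise irreducibility of $\veL(t)$ may fail even when $\overline{\veL}$ is irreducible, so the strategy of Proposition~\ref{prop:harnack_inequality} must be adapted to the elliptic setting on a ball strictly larger than the spatial periodicity cell.
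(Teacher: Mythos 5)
There is a genuine gap, and it is precisely the difficulty that makes this proposition delicate: your whole construction presupposes that for each frozen time $t$ the elliptic operator $\mathcal{E}(t)$ has a positive periodic principal eigenfunction $\vev_z[t]$ depending (at least) continuously on $t$, with ratios $\partial_t v_{z,i}[t]/v_{z,i}[t]$ uniformly bounded. Under the standing hypothesis \ref{ass:irreducible} this is false in general: irreducibility is only assumed for the matrix of space-time maxima $\overline{\veL}$, so $\veL(t,\cdot)$ can be reducible at \emph{every} fixed $t$, in which case $\mu_z(t)$ need not be simple and the nonnegative eigenvector need not be positive nor unique. Your proposed remedy --- an elliptic analogue of Proposition \ref{prop:harnack_inequality} on a ball strictly larger than the spatial cell --- cannot repair this, because the full coupling exploited in that Harnack inequality is achieved by mixing over the \emph{time} period as well; once $t$ is frozen, no enlargement of the spatial domain recreates it. The paper exhibits an explicit counter-example (a $3\times 3$ matrix, independent of $x$, strictly upper triangular on $[0,T/2]$ and strictly lower triangular on $[T/2,T]$): the parabolic problem is fully coupled, yet the frozen principal eigenvector is $(1,0,0)^{\upT}$ on $(0,T/2)$ and $(0,0,1)^{\upT}$ on $(T/2,T)$, so components of $\vev_z[t]$ vanish identically and $t\mapsto\vev_z[t]$ jumps; your factor $C(t)\vev_z[t]$ is then neither positive nor admissible in the max--min/min--max characterizations, and the quantity $\partial_t v_{z,i}/v_{z,i}$ is not even defined.

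The missing idea is a reduction to the pointwise irreducible case. The paper first replaces $\veL$ by $\veL[s]=\veL+(\upe^{s}-1)(\veo_{N\times N}-\vect{I})$, $s>0$, which is pointwise irreducible; by the concavity and monotonicity in $s$ (Proposition \ref{prop:concavity_eigenvalue_z_L}) the maps $s\mapsto\lambda_1'(\cbQ_{\omega}[s])$ are uniformly Lipschitz in $s$, uniformly in $\omega\in(0,1]$, so Arzel\`{a}--Ascoli lets one exchange the limits $s\to 0$ and $\omega\to 0$ and deduce the general case from the irreducible one. In that irreducible case your argument is essentially the paper's: the same exponential factor $\exp\bigl(\tfrac{1}{\omega}\int_0^t(\bar\mu_z-\mu_z)\bigr)$ and the same sandwich via sub- and super-solutions --- except that even there the eigenpair is only continuous in $t$, so the paper works with an $\varepsilon$-regularized $\caC^{1,2}$ approximation of $(t,x)\mapsto\vev[t](x)$ rather than differentiating the eigenfunction in $t$ directly. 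Your treatment of $\lambda_1=\max_z\lambda_{1,z}$ (compactness of the maximizers plus locally uniform convergence of concave functions) matches the paper and is fine once the first limit is secured.
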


\begin{proof}
It is sufficient to prove only the case $z=0$, since we can deduce the general case for $\lambda_{1,z}$ by applying the result to the operator $\cbQ_z$, and then we can deduce the result for $\lambda_1$ by
applying the same argument as in the proof of Proposition \ref{prop:continuity_eigenvalue_L}, using
Corollary \ref{cor:comparison_eigenvalues_decoupled_equations} and the strict concativity of $z\mapsto\lambda_{1,z}$.

The proof requires two steps.

\begin{proof}[Step 1: the pointwise irreducibility of $\veL$ can be assumed without loss of generality]
Assume the limit has been proved provided $\veL(t,x)$ is irreducible at all $(t,x)\in\clOmper$.

Define
\[
    \veL:s\in[0,+\infty)\mapsto\veL+(\upe^s-1)\veo_{N\times N}-(\upe^s-1)\vect{I}.
\]
Obviously, $\veL(0)=\veL$ and, for all $s\in(0,+\infty)$, $\veL(s,t,x)$ is irreducible at all $(t,x)\in\clOmper$. Moreover, by virtue of
Propositions \ref{prop:concavity_eigenvalue_z_L}, \ref{prop:max--min_characterization_lambdaz} and \ref{prop:continuity_eigenvalue_L}, 
the periodic principal eigenvalue $\lambda_1'(\omega,s)$ associated with the operator
\[
    \cbQ_{\omega,s}=\omega\partial_t-\diag(\nabla\cdot(A_i\nabla)-q_i\cdot\nabla)-\veL(s)
\]
is, as a function of $s$, continuous in $[0,+\infty)$, decreasing in $[0,+\infty)$, strictly concave in $[0,+\infty)$. 
By concavity, for all $\omega\in(0,1]$, the one-sided derivatives of $s\mapsto\lambda_1'(\omega,s)$ are well-defined
and satisfy:
\begin{align*}
    \lambda_1'(\omega,2)-\lambda_1'(\omega,1) & < \lim_{\substack{s'<1\\s'\to 1}}\frac{\lambda_1'(\omega,s')-\lambda_1'(\omega,1)}{s'-1} \\
    & \leq\lim_{\substack{s'<s\\s'\to s}}\frac{\lambda_1'(\omega,s')-\lambda_1'(\omega,s)}{s'-s} \\
    & \leq \lim_{\substack{s'>s\\s'\to s}}\frac{\lambda_1'(\omega,s')-\lambda_1'(\omega,s)}{s'-s} \\
    & \leq \lim_{\substack{s'>0\\s'\to 0}}\frac{\lambda_1'(\omega,s')-\lambda_1'(\omega,0)}{s'} \\
    & \leq 0. 
\end{align*}
By assumption, $\veL(s)$ being irreducible when $s>0$, $\lambda_1'(\omega,1)$ and $\lambda_1'(\omega,2)$ both converge as $\omega\to 0$, 
whence
\[
    \inf_{\omega\in(0,1]}\lambda_1'(\omega,2)-\lambda_1'(\omega,1)>-\infty.
\]
Therefore the family $\left(s\in[0,1]\mapsto\lambda_1'(\omega,s)\right)_{\omega\in(0,1]}$ is uniformly Lipschitz-continuous,
and \textit{a fortiori} equicontinuous. 
By virtue of the Arzel\`{a}--Ascoli theorem, it is relatively compact in $\caC([0,1])$.

Let $\lambda\in\caC([0,1])$ be any accumulation point of the family as $\omega\to 0$. Since we assumed the pointwise convergence
\[
    \lim_{\substack{\omega>0\\\omega\to 0}}\lambda_1'(\cbQ_{\omega,s})=\frac{1}{T}\int_0^T\lambda_1'\left(-\diag(\nabla\cdot(A_i(t)\nabla)-q_i(t)\cdot\nabla)-\veL(s,t)\right)\upd t,
\]
when $s>0$, it follows that $\lambda$ coincides in $(0,1]$ with 
\[
    s\mapsto\frac{1}{T}\int_0^T\lambda_{1,z}\left(-\diag(\nabla\cdot(A_i(t)\nabla)-q_i(t)\cdot\nabla)-\veL(s,t)\right)\upd t.
\]
By continuity of $\lambda$ and of the above function (due to Proposition \ref{prop:continuity_eigenvalue_L}), they also 
coincide at $s=0$. Hence there is a unique accumulation point for the sequence, whence the whole family
$\left(s\mapsto\lambda_1'(\omega,s)\right)_{\omega\in(0,1]}$ converges uniformly to the above function as $\omega\to 0$. 
This implies the pointwise convergence at $s=0$, and this ends the proof of this step.
\end{proof}

In the following step we assume, without loss of generality, that $\veL(t,x)$ is indeed irreducible at all $(t,x)\in\clOmper$.

\begin{proof}[Step 2: the proof in the pointwise irreducible case]
Let $\omega>0$ and $\varepsilon>0$.

Let 
\[
    \lambda:t\mapsto\lambda_1'\left(-\diag(\nabla\cdot(A_i(t)\nabla)-q_i(t)\cdot\nabla)-\veL(t)\right).
\]
By virtue of the pointwise irreducibility of $\veL$, which implies the full coupling of all operators in the family 
\[
    \left(-\diag(\nabla\cdot(A_i(t)\nabla)-q_i(t)\cdot\nabla)-\veL(t)\right)_{t\in[0,T]},
\]
the function $\lambda$ is continuous and periodic. 
Let $\vev:\R\times\R^n\to\R^N$ be the function such that, for any
$t\in[0,T]$, $x\in[0,L]\mapsto\vev(t,x)$ is
the periodic principal eigenfunction of $-\diag(\nabla\cdot(A_i(t)\nabla)-q_i(t)\cdot\nabla)-\veL(t)$,
with an appropriate normalization ensuring the continuity of $\vev$ in time, \textit{e.g.} $\max_{i\in[N]}v_i(t,0)=1$. 

Assuming sufficient time regularity of the coefficients $(A_i)_{i\in[N]}$, $(q_i)_{i\in[N]}$, $\veL$, we deduce from classical
regularity estimates \cite{Lieberman_2005} that $\vev\in\caC^{1,2}_\upp(\R\times\R^n,(\vez,\vei))$.
The proof in the general case with mere H\"{o}lder-continuity in time is not detailed -- it can be deduced by a standard 
regularization procedure and the continuity of periodic principal eigenvalues, \textit{cf.} Step 1.

Since $\vev$ is $\caC^1$ with respect to time, so is $(\ln v_i)_{i\in[N]}$, and therefore $(\partial_t v_i/v_i)_{i\in[N]}$
is globally bounded in $\clOmper$. Hence there exists $K>0$, independent of $\omega$, such that $-K\vev\leq\partial_t \vev\leq K\vev$.
Provided $\omega\leq\frac{\varepsilon}{K}$,
\[
     \left(-\varepsilon+\lambda\right)\vev\leq\cbQ_{\omega}\vev\leq\left(\varepsilon+\lambda\right)\vev
     \quad\text{in }\clOmper.
\]
Let 
\[
    v:t\mapsto\exp\left(\frac{1}{\omega}\left(\frac{t}{T}\int_0^T\lambda(t')\upd t'-\int_0^t\lambda(t')\upd t'\right)\right)
\]
which is positive, periodic and satisfies $\omega v'=\left(\frac{1}{T}\int_0^T\lambda -\lambda\right)v$. Then
\[
     \left(-\varepsilon+\frac{1}{T}\int_0^T\lambda\right)v\vev\leq\omega v'\vev+v\cbQ_{\omega}\vev\leq\left(\varepsilon+\frac{1}{T}\int_0^T\lambda\right)v\vev.
\]
Since $\omega v'\vev+v\cbQ_{\omega}\vev=\cbQ_{\omega}(v\vev)$ and $v\vev\in\caC^{1,2}_\upp(\R\times\R^n,(\vez,\vei))$, this shows that
$v\vev$ can be used both as a super-solution and as a sub-solution to derive from Proposition \ref{prop:max--min_characterization_lambdaz}
the following inequalities:
\[
    \frac{1}{T}\int_0^T\lambda(t)\upd t-\varepsilon\leq\lambda_1'(\cbQ_{\omega})\leq\frac{1}{T}\int_0^T\lambda(t)\upd t+\varepsilon.
\]

Passing to the limit $\varepsilon\to 0$ ends the proof of this step.
\end{proof}

Putting the two steps together, the claim is proved.
\end{proof}

Next we prove the limit $\omega\to+\infty$.

\begin{prop}\label{prop:high_time_frequency_limit}
For all $z\in\R^n$,
\[
    \lim_{\omega\to+\infty}\lambda_{1,z}(\cbQ_{\omega})=\lambda_{1,z}\left(-\diag(\nabla\cdot(\hat{A}_i\nabla)-\hat{q}_i\cdot\nabla)-\hat{\veL}\right),
\]
\[
    \lim_{\omega\to+\infty}\lambda_1(\cbQ_{\omega})=\lambda_1\left(-\diag(\nabla\cdot(\hat{A}_i\nabla)-\hat{q}_i\cdot\nabla)-\hat{\veL}\right),
\]
where
\[
    \left((\hat{A}_i,\hat{q}_i)_{i\in[N]},\hat{\veL}\right):x\mapsto
    \frac{1}{T}\int_0^T\left((A_i,q_i)_{i\in[N]},\veL\right)(t,x)\upd t.
\]
\end{prop}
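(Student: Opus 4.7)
The plan is to transpose the $\mathcal{L}^2$ energy and Aubin--Lions compactness strategy used in Proposition \ref{prop:limit_eigenvalue_large_diffusion}, now exploiting the high-frequency scaling $\omega\to+\infty$ to collapse the time dependence of the periodic principal eigenfunctions onto the averaged operator in the statement. I first treat the case $z=0$. Let $\veu_\omega\in\caC^{1,2}_\upp(\R\times\R^n,(\vez,\vei))$ denote the periodic principal eigenfunction of $\cbQ_\omega$ associated with $\lambda(\omega):=\lambda_1'(\cbQ_\omega)$, normalized by $(T|[0,L]|)^{-1}\int_{\clOmper}|\veu_\omega|^2=1$. Testing the min--max and max--min characterizations of Proposition \ref{prop:max--min_characterization_lambdaz} against positive Perron--Frobenius eigenvectors of $\underline{\veL}$ and $\overline{\veL}$ (on which $\omega\partial_t$ vanishes identically) gives a uniform $\omega$-independent bound $-\lambda_\upPF(\overline{\veL})\leq\lambda(\omega)\leq-\lambda_\upPF(\underline{\veL})$.

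Testing the $i$-th eigenvalue equation against $u_{\omega,i}$, summing over $i$, and integrating over $\clOmper$, the time-derivative contribution $\omega\int_{\clOmper}u_{\omega,i}\partial_t u_{\omega,i}=(\omega/2)\int_{\clOmper}\partial_t(u_{\omega,i}^2)=0$ vanishes by periodicity, leaving an elliptic energy identity that bounds $\sum_i\|\nabla u_{\omega,i}\|_{\mathcal{L}^2(\clOmper)}$ uniformly in $\omega$. Rewriting the equation as $\omega\partial_t\veu_\omega=\diag(\nabla\cdot(A_i\nabla u_{\omega,i})-q_i\cdot\nabla u_{\omega,i})+(\veL+\lambda(\omega)\vect{I})\veu_\omega$, the right-hand side is uniformly bounded in $\mathcal{L}^2(0,T;\mathcal{H}^{-1}_{x-\upp}([0,L]))$, whence $\partial_t\veu_\omega=O(1/\omega)$ in that negative-order space. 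Aubin--Lions then provides, up to extraction, a strong $\mathcal{L}^2(\clOmper)$-limit $\veu_\omega\to\veu_\infty$ that depends only on $x$, is $L$-periodic, and is $\mathcal{L}^2$-normalized; up to a further extraction, $\lambda(\omega)\to\lambda_\infty$ for some $\lambda_\infty\in\R$. Averaging the eigenvalue equation over $t\in[0,T]$ eliminates the $\omega\partial_t$ term by periodicity and, after passing to the limit against smooth $x$-periodic test functions, yields $-\diag(\nabla\cdot(\hat{A}_i\nabla u_{\infty,i})-\hat{q}_i\cdot\nabla u_{\infty,i})-\hat{\veL}\veu_\infty=\lambda_\infty\veu_\infty$ in the weak sense, upgraded to a classical identity by elliptic regularity. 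Since $\veu_\infty$ is nonnegative, nonzero, and the averaged operator inherits full coupling from \ref{ass:irreducible}, the strong maximum principle gives $\veu_\infty\gg\vez$, and uniqueness of the averaged periodic principal eigenpair identifies $\lambda_\infty$ as the value in the statement, forcing convergence of the whole family.

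The general case $z\in\R^n$ follows by applying the preceding argument to the conjugated operator $\cbQ_{\omega,z}=\upe_{-z}\cbQ_\omega\upe_z$, whose averaged version is precisely the operator on the right-hand side. For $\lambda_1(\cbQ_\omega)=\max_{z\in\R^n}\lambda_{1,z}(\cbQ_\omega)$, I would reuse Step 3 of the proof of Proposition \ref{prop:continuity_eigenvalue_L} verbatim: Corollary \ref{cor:comparison_eigenvalues_decoupled_equations} combined with the uniform ellipticity assumption \ref{ass:ellipticity} gives a quadratic-in-$z$ upper bound $\lambda_{1,z}(\cbQ_\omega)\leq-A|z|^2-B|z|-C$ independent of $\omega$, while $\lambda_1(\cbQ_\omega)\geq\lambda_1'(\cbQ_\omega)$ is bounded below uniformly; together these confine the optimal $z_\omega$ in a fixed compact set of $\R^n$, on which pointwise convergence combined with the strict concavity from Corollary \ref{cor:lambdaz_strictly_concave} upgrades to uniform convergence, and passing to the limit in the maximum yields the claim. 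The main technical obstacle, already encountered in the analogue Proposition \ref{prop:limit_eigenvalue_large_diffusion}, lies in the compactness step: because dividing the equation by $\omega$ would degenerate the spatial diffusion in the limit, parabolic Schauder estimates cannot deliver a uniform classical $\caC^{1,2}$-bound on $\veu_\omega$, and one must instead work with weak time-derivative estimates valued in negative-order Sobolev spaces, exploiting that the spatial derivatives appearing in $\omega\partial_t\veu_\omega$ can be absorbed into the duality pairing against test functions.
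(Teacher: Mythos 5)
Your proposal is correct, and its global skeleton (uniform bounds on $\lambda_1'(\cbQ_\omega)$, $\mathcal{L}^2$ gradient bound from testing with $\veu_\omega$, compactness, averaging the equation in time against $x$-periodic test functions, identification via full coupling and uniqueness of the averaged periodic principal eigenpair, then the concavity/compactness argument for $\lambda_1$) matches the paper's. Where you genuinely diverge is the time-compactness mechanism. The paper multiplies the $i$-th equation by $\partial_t u_{k,i}$ and integrates by parts, using the symmetry of $A_i$ and the term $\partial_t A_i$, to obtain the strong estimates $\|\partial_t u_{k,i}\|_{\mathcal{L}^2(\clOmper)}\to 0$ and $\|\omega_k\partial_t u_{k,i}\|_{\mathcal{L}^2(\clOmper)}$ bounded; it then gets strong $\mathcal{L}^2$ convergence from Rellich in space-time and kills the oscillating part $\veu_k-\hat{\veu}_k$ by a Poincar\'{e} inequality in time. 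You instead read off $\partial_t\veu_\omega=O(1/\omega)$ in $\mathcal{L}^2(0,T;\mathcal{H}^{-1}_{x-\upp})$ directly from the equation and invoke Aubin--Lions to obtain strong $\mathcal{L}^2(\clOmper)$ convergence to a time-independent limit. This is a legitimate and arguably more robust route: it avoids the multiplier trick, the symmetry of $A_i$ and the manipulation of $\partial_t A_i$, at the price of invoking the Aubin--Lions--Simon lemma rather than elementary energy identities; what it loses is the quantitative information on $\omega_k\partial_t u_{k,i}$, which the paper derives but in fact never uses, since the $\omega\partial_t$ term disappears anyway when testing against time-independent functions. Two small points to tighten: $\underline{\veL}$ need not be irreducible, so its Perron--Frobenius eigenvector is only nonnegative nonzero in general (the paper notes this and uses it as a sub-/super-solution by continuous extension, which suffices for your two-sided bound on $\lambda_1'(\cbQ_\omega)$); and in the gradient bound the advection term should be handled by Cauchy--Schwarz and Young's inequality rather than integrating $\nabla\cdot q_i$ by parts, since $q_i$ is only H\"{o}lder continuous under \ref{ass:smooth_periodic}. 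Neither affects the validity of your argument.
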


\begin{proof}
Similarly to the proof of Proposition \ref{prop:small_time_frequency_limit}, it is sufficient to prove only the case $z=0$.

By virtue of Corollary \ref{cor:comparison_eigenvalues_underline_overline_L},
$-\lambda_\upPF(\overline{\veL})\leq\lambda_1'(\cbQ_\omega)\leq-\lambda_\upPF(\underline{\veL})$,
whence there exists a sequence $(\omega_k)_{k\in\N}$ and $\lambda_\infty\in\R$ such that, as $k\to+\infty$, $\omega_k\to+\infty$ and
$\lambda_k=\lambda_1'(\cbQ_{\omega_k})\to\lambda_\infty$.

Let $\veu_k\in\caC^{1,2}_\upp(\R\times\R^n,(\vez,\vei))$ be the unique generalized principal eigenfunction associated with 
$\lambda_k$ satisfying the normalization $\int_{\clOmper}|\veu_k|^2=1$.

Multiplying $(\cbQ_{\omega_k}\veu_k)_i-\lambda_k u_{k,i}$ by $u_{k,i}$, integrating by parts over $\clOmper$, and using
the Cauchy--Schwarz inequality $\int_{\clOmper}u_{k,j}u_{k,i}\leq\|u_{k,j}\|_{\mathcal{L}^2(\clOmper)}\|u_{k,i}\|_{\mathcal{L}^2(\clOmper)}$,
we obtain the uniform boundedness of $(\nabla u_{k,i})_{k\in\N}$ in $\mathcal{L}^2(\clOmper)$ for each $i\in[N]$, just as in Nadin
\cite[Proof of Theorem 3.10]{Nadin_2007}.

From now on, we assume $\caC^1$ regularity in time of the coefficients $(A_i)_{i\in[N]}$.
The proof in the general case with mere H\"{o}lder-continuity in time is not detailed -- it can be deduced from the $\caC^1$ case 
by a standard regularization procedure and the continuity of periodic principal eigenvalues.

By integration by parts in time, for each $i\in[N]$ and $k\in\N$, 
\[
    \int_{\clOmper}A_i\nabla u_{k,i}\cdot\nabla\partial_t u_{k,i}=-\int_{\clOmper}\partial_t A_i\nabla u_{k,i}\cdot\nabla u_{k,i}-\int_{\clOmper}A_i\nabla\partial_t u_{k,i}\cdot\nabla u_{k,i},
\]
whence, by virtue of the symmetry of $A_i$, the following identity holds:
\[
    \int_{\clOmper}A_i\nabla u_{k,i}\cdot\nabla\partial_t u_{k,i}=-\frac12\int_{\clOmper}\partial_t A_i\nabla u_{k,i}\cdot\nabla u_{k,i}.
\]
From this identity and a space-time integration by parts of $\left((\cbQ_{\omega_k}\veu_k)_i-\lambda_k u_{k,i}\right)\partial_t u_{k,i}$,
we deduce
\begin{align*}
    \omega_k\int_{\clOmper}(\partial_t u_{k,i})^2 = & \frac12\int_{\clOmper}\partial_t A_i\nabla u_{k,i}\cdot\nabla u_{k,i} \\
    & -\int_{\clOmper}(q_i\cdot\nabla u_{k,i})\partial_t u_{k,i} \\
    & +\sum_{j=1}^N\int_{\clOmper}l_{i,j}u_{k,j}\partial_t u_{k,i}.
\end{align*}
By the Cauchy--Schwarz inequality and the Young inequality, there exist $A>0$ and $B>0$, that only depend 
on $\mathcal{L}^\infty$ bounds on $(A_i)_{i\in[N]}$, $(q_i)_{i\in[N]}$ and $\veL$, such that, for each $i\in[N]$,
\begin{equation*}
    (\omega_k-A)\|\partial_t u_{k,i}\|_{\mathcal{L}^2(\clOmper)}^2\leq B\|\nabla u_{k,i}\|_{\mathcal{L}^2(\clOmper)}^2.
\end{equation*}
Therefore, by finiteness of $\sup_{k\in\N}\|\nabla u_{k,i}\|_{\mathcal{L}^2(\clOmper)}$, 
\[
    \|\partial_t u_{k,i}\|_{\mathcal{L}^2(\clOmper)}\to 0\quad\text{as }k\to+\infty.
\]

Hence, for each $i\in[N]$, $(u_{k,i})_k$, $(\partial_t u_{k,i})_k$ and $(\nabla u_{k,i})_k$ are all 
uniformly bounded in $\mathcal{L}^2(\clOmper)$, with $\|\partial_t u_{k,i}\|_{\mathcal{L}^2(\clOmper)}\to 0$ as well. Therefore, up to 
extraction of a subsequence, $u_{k,i}$ converges in $\mathcal{L}^2(\clOmper)$ to a limit $u_{\infty,i}$ and $\nabla u_{k,i}$ and
$\partial_t u_{k,i}$ converge weakly in $\mathcal{L}^2(\clOmper)$ to limits $\nabla u_{\infty,i}$ and
$\partial_t u_{\infty,i}$ respectively. 
By weak lower-semicontinuity of the norm in $\mathcal{L}^2(\clOmper)$, the convergence $\partial_t u_{k,i}\to 0$ occurs in fact 
in the sense of the strong convergence in $\mathcal{L}^2(\clOmper)$. 

Let $\hat{\veu}_k:x\mapsto\frac{1}{T}\int_0^T\veu_k(t,x)\upd t$ and $\vev_k=\veu_k-\hat{\veu}_k$. By the Poincaré inequality,
there exists a constant $K>0$ such that, for each $i\in[N]$,
\[
    \int_0^T(\partial_t u_{k,i})^2=\int_0^T(\partial_t v_{k,i})^2\geq K\int_0^T v_{k,i}^2,
\]
whence
\[
    \|v_{k,i}\|_{\mathcal{L}^2(\clOmper)}\to 0\quad\text{as }k\to+\infty.
\]
Also, since
\begin{align*}
    \int_{[0,L]}\left|\hat{u}_{k,i}-u_{\infty,i}\right| & =\int_{[0,L]}\left|\frac{1}{T}\int_0^T u_{k,i}-\frac{1}{T}\int_0^T u_{\infty,i}\right| \\
    & \leq\frac{1}{T}\int_{\clOmper}|u_{k,i}-u_{\infty,i}| \\
    & \leq \sqrt{\frac{|[0,L]|}{T}}\left(\int_{\clOmper}(u_{k,i}-u_{\infty,i})^2\right)^{1/2}
\end{align*}
for each $i\in[N]$, $\hat{\veu}_{k}$ converges to $\veu_{\infty}$ in $\mathcal{L}^1([0,L])$. Similarly, for any test function
$\varphi\in\mathcal{L}^2_\upp(\R^n)$,
\[
    \left|\int_{[0,L]}(\nabla\hat{u}_{k,i}-\nabla u_{\infty,i})\varphi\right|\leq\frac{1}{T}\left|\int_{\clOmper}(\nabla u_{k,i}-\nabla u_{\infty,i})\varphi\right|,
\]
so that $\nabla \hat{\veu}_k\rightharpoonup \nabla\veu_\infty$ in $\mathcal{L}^2_\upp(\R^n)$.

Integrating for any $k\in\N$ the quantity $(\cbQ_{\omega_k}\veu_k)_i-\lambda_k u_{k,i}$ in $[0,T]$ and dividing by $T$, we deduce
\begin{align*}
    0 & = \frac{1}{T}\nabla\cdot\left(\int_0^T (A_i\nabla u_{k,i})\right)-\frac{1}{T}\int_0^T(q_i\cdot\nabla u_{k,i})+\frac{1}{T}\sum_{j=1}^N\int_0^T l_{i,j}u_{k,j}+\lambda_k\hat{u}_{k,i} \\
    & = \nabla\cdot(\hat{A}_i\nabla\hat{u}_{k,i})-\hat{q}_i\cdot\nabla\hat{u}_{k,i}+\sum_{i=1}^N\hat{l}_{i,j}\hat{u}_{k,j}+\lambda_k\hat{u}_{k,i} \\
    & \quad + \frac{1}{T}\nabla\cdot\left(\int_0^T (A_i\nabla v_{k,i})\right)-\frac{1}{T}\int_0^T(q_i\cdot\nabla v_{k,i})+\frac{1}{T}\sum_{j=1}^N\int_0^T l_{i,j}v_{k,j}.
\end{align*}
Testing this identity against a test function in $\caC^2_\upp(\R^n)$ and using 
the convergence of $(\vev_k)_k$ to $\vez$ in $\mathcal{L}^2(\clOmper)$ as well as the convergence of $(\hat{\veu}_k)_k$ to $\veu_\infty$ in
$\mathcal{L}^1(\clOmper)$, we deduce that $x\mapsto\veu_\infty(x)$ is a weak solution in the dual
of $\caC^2_\upp(\R^n)$ of
\[
    \diag(\nabla\cdot(\hat{A}_i\nabla)-\hat{q}_i\cdot\nabla)\veu_\infty+\hat{\veL}\veu_{\infty}+\lambda\veu_\infty=\vez.
\]
By density, this remains true with test functions in $\mathcal{H}^1_\upp(\R^n)$, or in other words
$\veu_\infty$ is a weak solution on $\mathcal{H}^{-1}_\upp(\R^n)$. By elliptic regularity \cite{Gilbarg_Trudin}, $\veu_\infty\in\mathcal{H}^1_\upp(\R^n)$ is in fact
a classical solution, in $\caC^2_\upp(\R^n)$. Since $\veu_\infty$ is nonnegative and satisfies the normalization
$\int_{\clOmper}|\veu_\infty|^2=|[0,L]|\int_0^T|\veu_\infty|^2=1$, it is nonnegative nonzero, and then positive by the maximum principle
(the elliptic operator under consideration is fully coupled in $[0,L]$ by \ref{ass:irreducible}), whence it is a generalized principal 
eigenfunction associated with $\lambda_1'(-\diag(\nabla\cdot(\hat{A}_i\nabla)-\hat{q}_i\cdot\nabla)-\hat{\veL})$. Thus
$\lambda=\lambda_1'(-\diag(\nabla\cdot(\hat{A}_i\nabla)-\hat{q}_i\cdot\nabla)-\hat{\veL})$. 

As a conclusion, the accumulation point of $(\lambda_k)_{k\in\N}$
is unique and therefore the whole sequence converges. Subsequently, the whole family $(\lambda_1'(\cbQ_\omega))_{\omega>0}$ converges.
\end{proof}

\subsection{Formulas and estimates in special cases: proof of Theorems \ref{thm:lambdaz_space_homogeneous}--\ref{thm:lambdaz_ari-geo_time_averages}}

We begin this subsection by recalling that space, time or space-time homogeneous coefficients in $\cbQ$
lead to the reduced formulas
\eqref{eq:reduction_lambdaz_space_homogeneous}, \eqref{eq:reduction_lambdaz_time_homogeneous}, 
\eqref{eq:reduction_lambdaz_space-time_homogeneous}, respectively.

\subsubsection{Formulas for operators with space homogeneity: proof of Theorem \ref{thm:lambdaz_space_homogeneous}}

Recall the notations $\hat{A}_i$, $\hat{q}_i$, $\hat{\veL}$ for the averages in time, 
$\langle A_i\rangle$, $\langle q_i\rangle$, $\langle\veL\rangle$ for the averages in space and
$\langle \hat{A}_i\rangle$, $\langle \hat{q}_i\rangle$, $\langle\hat{\veL}\rangle$ for the averages in space-time.

\begin{prop}\label{prop:functions_of_time_only}
Let $z\in\R^n$. If
\begin{enumerate}
    \item $A_1$, $q_1$ and $\veL$ do not depend on $x$,
    \item there exists a constant positive vector $\veu\in(\vez,\vei)$ such that $\veu$ is a Perron--Frobenius eigenvector of $\veL(t)$ 
    for all $t\in\R$,
    \item either $z=0$ or $(A_1,q_1)=(A_2,q_2)=\dots=(A_N,q_N)$,
\end{enumerate}
then 
\[
    \lambda_{1,z}=-z\cdot \hat{A}_1z+\hat{q}_1\cdot z-\lambda_{\upPF}(\hat{\veL}).
\]
\end{prop}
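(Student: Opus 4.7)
The plan is to construct explicitly a positive space-time periodic eigenpair for $\cbQ_z$ of the form $\bigl(\lambda,\ \alpha(t)\veu\bigr)$, where $\veu\gg\vez$ is the constant common Perron--Frobenius eigenvector supplied by hypothesis~(2) and $\alpha\in\caC^1_\upp(\R,(0,+\infty))$ is a scalar periodic function to be determined. Since this ansatz is constant in $x$, every spatial derivative vanishes when $\cbQ_z$ is applied to it, so both $\dcbP$ and the spatial perturbation $\cbQ_z-\cbQ$ reduce to multiplication operators in time. Under hypothesis~(3) the only surviving piece of the zeroth-order perturbation is $z\cdot A_1(t)z-q_1(t)\cdot z$: if $z=0$ the perturbation is identically zero, and if the $(A_i,q_i)$ coincide and do not depend on $x$ then $\nabla\cdot(A_1 z)=0$ and the perturbation acts as a scalar multiple of $\vect{I}$.

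Using $\veL(t)\veu=\lambda_\upPF(\veL(t))\veu$, which is exactly the content of hypothesis~(2) (the eigenvalue being necessarily the Perron--Frobenius one since $\veu\gg\vez$ and $\veL(t)$ is essentially nonnegative), the eigenvalue equation $\cbQ_z(\alpha\veu)=\lambda\alpha\veu$ collapses to the scalar ODE
\[
\frac{\alpha'(t)}{\alpha(t)}\;=\;\lambda+\lambda_\upPF(\veL(t))+z\cdot A_1(t)z-q_1(t)\cdot z.
\]
A positive $T$-periodic solution exists if and only if the right-hand side has zero mean over $[0,T]$, which uniquely fixes
\[
\lambda\;=\;-\frac{1}{T}\int_0^T\lambda_\upPF(\veL(t))\,\upd t\;-\;z\cdot\hat{A}_1 z\;+\;\hat{q}_1\cdot z,
\]
after which $\alpha$ is prescribed, up to a positive constant, by explicit integration. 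The resulting $\alpha\veu$ is then a positive, smooth, space-time periodic eigenfunction of $\cbQ_z$, and uniqueness of the periodic principal eigenpair (via Krein--Rutman, as recalled at the start of Section~3) forces $\lambda=\lambda_{1,z}$.

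The one step that genuinely uses the common constant eigenvector -- and which I expect to be the conceptual heart of the proof -- is the identification
\[
\frac{1}{T}\int_0^T\lambda_\upPF(\veL(t))\,\upd t\;=\;\lambda_\upPF(\hat{\veL}).
\]
This follows by averaging $\veL(t)\veu=\lambda_\upPF(\veL(t))\veu$ over $[0,T]$ to obtain $\hat{\veL}\veu=\bigl(\tfrac{1}{T}\int_0^T\lambda_\upPF(\veL(t))\,\upd t\bigr)\veu$; since $\hat{\veL}$ inherits essential nonnegativity from $\veL$ and, via \ref{ass:irreducible}, irreducibility from $\overline{\veL}$, the positive vector $\veu$ must be the Perron--Frobenius eigenvector of $\hat{\veL}$, and the displayed eigenvalue is therefore $\lambda_\upPF(\hat{\veL})$. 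Without hypothesis~(2) this ``average of eigenvalues equals eigenvalue of the average'' identity generally fails, which is precisely the obstruction flagged in Remark~\ref{rem:counter-example_mean_eigenvalue}.
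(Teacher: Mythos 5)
Your proposal is correct and follows essentially the same route as the paper: the paper also obtains $\lambda_\upPF(\hat{\veL})=\frac{1}{T}\int_0^T\lambda_\upPF(\veL(t))\,\upd t$ by averaging the identity $\veL(t)\veu=\lambda_\upPF(\veL(t))\veu$, and then exhibits the explicit space-independent eigenfunction $t\mapsto\exp\bigl(-\int_0^t f+\tfrac{t}{T}\int_0^T f\bigr)\veu$ with $f(t)=-z\cdot A_1(t)z+q_1(t)\cdot z-\lambda_\upPF(\veL(t))$, which is exactly your $\alpha(t)\veu$ with the mean-zero condition fixing the eigenvalue. The conclusion by uniqueness of the periodic principal eigenpair for $\cbQ_z$ is also the paper's argument, so no gap remains.
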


\begin{proof}
First, writing the equality satisfied by $\veu$ and taking the average in time, we obtain 
$\frac{1}{T}\int_0^T\lambda_\upPF(\veL(t))\upd t=\lambda_\upPF(\hat{\veL})$. Note that $\hat{\veL}$ is irreducible.

Next, let $f:t\mapsto -z\cdot A_1(t)z+q_1(t)\cdot z-\lambda_{\upPF}(\veL(t))$. 
By uniqueness of the periodic principal eigenpair of $\cbQ$, it suffices to verify that
the space-independent function
\[
    (t,x)\mapsto\exp\left(-\int_0^t f(t')\upd t'+\frac{t}{T}\int_0^T f\right)\veu
\]
is a $\caC^{1,2}$, periodic, positive eigenfunction of $\cbQ_z$ associated with the eigenvalue $\frac{1}{T}\int_0^T f(t)\upd t$.
The continuity of $t\mapsto\lambda_{\upPF}(\veL(t))$ follows from \ref{ass:smooth_periodic} and the continuity of 
the Perron--Frobenius eigenvalue as function of the entries of the matrix.
\end{proof}

\begin{cor}
If
\begin{enumerate}
    \item $A_1$, $q_1$ and $\veL$ do not depend on $x$,
    \item there exists a constant positive vector $\veu\in(\vez,\vei)$ such that $\veu$ is a Perron--Frobenius eigenvector of $\veL(t)$ 
    for all $t\in\R$,
\end{enumerate}
then 
\[
    \lambda_1'=-\lambda_{\upPF}(\hat{\veL}).
\]

Furthermore, if $(A_1,q_1)=(A_2,q_2)=\dots=(A_N,q_N)$, then $\lambda_1=\lambda_1'$ if and only if $\hat{q}_1=0$.
\end{cor}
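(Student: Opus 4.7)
The plan is to deduce this corollary directly from Proposition \ref{prop:functions_of_time_only}, combined with the identifications $\lambda_1'=\lambda_{1,0}$ of Proposition \ref{prop:lambda1prime_lambda0} and $\lambda_1=\max_{z\in\R^n}\lambda_{1,z}$ of Corollary \ref{cor:lambda1_max_lambdaz}.

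First, under the standing hypotheses, I apply Proposition \ref{prop:functions_of_time_only} with $z=0$. The terms $-z\cdot\hat{A}_1 z$ and $\hat{q}_1\cdot z$ vanish and we obtain $\lambda_{1,0}=-\lambda_\upPF(\hat{\veL})$. Since $\lambda_1'=\lambda_{1,0}$ by Proposition \ref{prop:lambda1prime_lambda0}, this proves the first statement.

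Now assume additionally that $(A_1,q_1)=(A_2,q_2)=\dots=(A_N,q_N)$. Then Proposition \ref{prop:functions_of_time_only} applies for every $z\in\R^n$ and yields
\[
    \lambda_{1,z}=-z\cdot\hat{A}_1 z+\hat{q}_1\cdot z-\lambda_\upPF(\hat{\veL}).
\]
By \ref{ass:ellipticity}, $A_1(t)$ is positive definite for every $t$, hence so is its time average $\hat{A}_1$. Consequently, $z\mapsto-z\cdot\hat{A}_1 z+\hat{q}_1\cdot z$ is a strictly concave quadratic, uniquely maximized at $z^\star=\tfrac{1}{2}\hat{A}_1^{-1}\hat{q}_1$ with maximal value $\tfrac{1}{4}\hat{q}_1\cdot\hat{A}_1^{-1}\hat{q}_1$. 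Using $\lambda_1=\max_{z\in\R^n}\lambda_{1,z}$ (Corollary \ref{cor:lambda1_max_lambdaz}), we get
\[
    \lambda_1=\tfrac{1}{4}\hat{q}_1\cdot\hat{A}_1^{-1}\hat{q}_1-\lambda_\upPF(\hat{\veL}).
\]
Combining with $\lambda_1'=-\lambda_\upPF(\hat{\veL})$, the equality $\lambda_1=\lambda_1'$ is equivalent to $\hat{q}_1\cdot\hat{A}_1^{-1}\hat{q}_1=0$. Since $\hat{A}_1^{-1}$ is positive definite, this happens if and only if $\hat{q}_1=0$, which concludes the argument.

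There is no real obstacle here: the statement is a direct unpacking of Proposition \ref{prop:functions_of_time_only} together with the two identifications of $\lambda_1$ and $\lambda_1'$ with values in the family $(\lambda_{1,z})_{z\in\R^n}$. The only minor point worth noting is the invocation of the uniform ellipticity of $\hat{A}_1$ to legitimately replace ``$\hat{q}_1\cdot\hat{A}_1^{-1}\hat{q}_1=0$'' by ``$\hat{q}_1=0$''.
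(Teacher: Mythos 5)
Your proof is correct and follows exactly the route the paper intends: the corollary is stated without a separate proof precisely because it is the immediate unpacking of Proposition \ref{prop:functions_of_time_only} at $z=0$ together with $\lambda_1'=\lambda_{1,0}$ and $\lambda_1=\max_{z\in\R^n}\lambda_{1,z}$, and your maximization of the strictly concave quadratic $z\mapsto -z\cdot\hat{A}_1z+\hat{q}_1\cdot z$ (using the symmetry and uniform ellipticity of $\hat{A}_1$) is the intended argument for the ``if and only if'' part.
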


\begin{rem}\label{rem:counter-example_average_eigenvalue}
Although we do not know if the second condition in the statement is truly optimal, we know that the first condition alone cannot be
sufficient. Indeed, simple counter-examples exist. 

For instance, consider in dimension $N=2$ the matrix
\[
\veL:t\mapsto\begin{pmatrix} 0 & \eta(t) \\ \eta(t-T/2) & 0\end{pmatrix}
\]
where $\eta$ is the continuous $T$-periodic function that coincides on $[0,T]$ with $t\mapsto\max\left(\sin\left(\frac{2\pi}{T}t\right),0\right)$.

Even though $\veL(t)$ is actually always reducible, its Perron--Frobenius eigenvalue, understood as the continuous extension 
of the Perron--Frobenius eigenvalue to essentially nonnegative matrices, is $0$ for all $t\in[0,T]$, it is always a geometrically simple 
eigenvalue and its unit Perron--Frobenius eigenvector is $(1,0)^\upT$ in $(0,T/2)$ and $(0,1)^\upT$ in $(T/2,T)$.
The matrix $\hat{\veL}$ is symmetric and admits $\veo$ as Perron--Frobenius eigenvector and $1/\pi$ as Perron--Frobenius eigenvalue.

Due to the uniqueness of the periodic principal eigenfunction and the symmetries of $\veL$, the periodic principal eigenfunction
of $\partial_t-\Delta-\veL$, namely that of $\frac{\upd}{\upd t}-\veL$, necessarily
has the form $\veu:t\mapsto (u(t),u(t-T/2))^\upT$. Moreover, we can choose to normalize it with $u(0)=1$. 
The scalar function $u$ satisfies $u'(t)=\eta(t)u(t-T/2)+\lambda_1' u(t)$ for all $t\in[0,T]$, \textit{i.e.} 
$u'(t)=\sin\left(\frac{2\pi}{T}t\right)u(t-T/2)+\lambda_1' u(t)$ for all $t\in[0,T/2]$ and 
$u'(t)=\lambda_1' u(t)$ for all $t\in[T/2,T]$. 
It follows that $u$ satisfies:
\[
    u(t)\upe^{-\lambda_1't}=1+\int_0^t\upe^{-\lambda_1't'}\sin\left(\frac{2\pi}{T}t'\right)u(t'-T/2)\upd t'\quad\text{for all }t\in[0,T/2],
\]
\[
    u(t)\upe^{-\lambda_1'(t-T/2)}=u(T/2)\quad\text{for all }t\in[T/2,T].
\]
Since $u(T/2)=u(T)\upe^{-\lambda_1' (T-T/2)}=u(0)\upe^{-\lambda_1' T/2}=\upe^{-\lambda_1' T/2}$
and since, for all $t'\in[0,T/2]$, $u(t'-T/2)=u(t'+T/2)=u(T/2)\upe^{\lambda_1'(t'+T/2-T/2)}=\upe^{\lambda_1'(t'-T/2)}$,
the first equality, for $t\in[0,T/2]$, is simplified as 
\begin{align*}
    u(t) & =\upe^{\lambda_1' t}+\upe^{\lambda_1'(t-T/2})\int_0^t\sin\left(\frac{2\pi}{T}t'\right)\upd t' \\
    & =\upe^{\lambda_1' t}+\upe^{\lambda_1'(t-T/2})\left(-\frac{T}{2\pi}\right)\left(\cos\left(\frac{2\pi}{T}t\right)-1\right)
\end{align*}
whence, evaluating at $t=T/2$,
\[
    \upe^{-\lambda_1' T/2}=u(T/2)=\upe^{\lambda_1' T/2}+\frac{T}{\pi}
\]
\textit{i.e.} $\sinh\left(\frac{\lambda_1' T}{2}\right)=-\frac{T}{2\pi}$, \textit{i.e.} 
\[
    \lambda_1'=\frac{2}{T}\sinh^{-1}\left(-\frac{T}{2\pi}\right)=-\frac{2}{T}\ln\left(\frac{T}{2\pi}+\sqrt{\frac{T^2}{4\pi^2}+1}\right).
\]
On one hand, the above equality shows that $\lambda_1'<0$, independently of the value of $T$. On the other hand, it is easily 
verified that $1$ is not in the image of $\tau\in(0,+\infty)\mapsto\frac{\ln(\tau+\sqrt{\tau^2+1})}{\tau}$, whence 
$\lambda_1'\neq-\frac{1}{\pi}$, also independently of the value of $T$.

Therefore this counter-example shows that in general, $\lambda_{1,\upp}\left(\frac{\upd}{\upd t}-\veL\right)$ coincides neither 
with $-\frac{1}{T}\int_0^T\lambda_\upPF(\veL)=0$ nor with $-\lambda_\upPF(\hat{\veL})=-\frac{1}{\pi}$.

Note also that this counter-example is consistent with the formulas for asymptotics $T\to 0$ and $T\to+\infty$ of Theorem \ref{thm:limits_eigenvalue_time_frequency}, that predict $\lambda_1'(T)\to \frac{1}{T}\int_0^T-\lambda_{\upPF}(\veL)$ as $T\to\infty$
and $\lambda_1'(T)\to -\lambda_{\upPF}\left(\hat{\veL}\right)$ as $T\to 0$.
\end{rem}

\begin{rem}
Consider a diagonal perturbation of $\veL$ of the form $\veL_\nu=\veL-\nu\vect{I}$ with $\nu>0$. 
At some arbitrary time $t_0\in[0,T]$, consider the ``frozen in time'' system of ordinary differential equations 
$\veu'(t)=\veL_\nu(t_0)\veu(t)$: its periodic principal eigenvalue is $-\lambda_\upPF(\veL_\nu(t_0))=\nu>0$. However, the periodic principal eigenvalue of the ``unfrozen'' nonautonomous
system $\veu'(t)=\veL_\nu(t)\veu(t)$ is $\lambda_1'+\nu$, that remains negative provided
$\nu>0$ is small enough. Therefore, although the trajectories of the ``frozen'' system
converge exponentially fast to $\vez$, the nonnegative nonzero solutions of the ``unfrozen'' system
diverge from $\vez$ exponentially fast. In this sense, the stability properties of the two systems are unrelated.

This fact should not surprise readers familiar with nonautonomous dynamical systems, since the existence of such counter-examples, 
relying strongly upon the non-symmetry, is classical. We highlight it here for other readers.
\end{rem}

\subsubsection{Formulas for operators with time homogeneity: proof of Theorem \ref{thm:lambdaz_time_homogeneous}}

Now we turn to the proof of Theorem \ref{thm:lambdaz_time_homogeneous}. We will use the following well-known property
concerning variational formulas in the self-adjoint elliptic case. 
\begin{prop}\label{prop:variational_formulas_when_self-adjoint}
If
\begin{enumerate}
    \item $(A_i)_{i\in[N]}$ and $\veL$ do not depend on $t$,
    \item $\veL(x)$ is symmetric for all $x\in\R^n$,
    \item $q_1=q_2=\dots=q_N=0$,
\end{enumerate}
then the periodic principal eigenvalue of $\vect{\mathcal{L}}=\diag\left(\nabla\cdot(A_i\nabla)\right)+\veL$ satisfies:
\[
    \lambda_1'(-\vect{\mathcal{L}})=\min_{\veu\in\caC^2_{\upp}(\R^n,\R^N)\backslash\{\vez\}}\frac{\displaystyle\int_{[0,L]}\left(\sum_{i=1}^N \nabla u_i\cdot A_i\nabla u_i-\veu^\upT\veL\veu\right)}{\displaystyle\int_{[0,L]}|\veu|^2}.
\]
and, for any nonempty bounded smooth open set $\Omega$, the Dirichlet principal eigenvalue satisfies:
\[
    \lambda_{1}(-\vect{\mathcal{L}},\Omega) = \min_{\veu\in\caC^1_0(\Omega,\R^N)\backslash\{\vez\}}\frac{\displaystyle\int_{\Omega}\left(\sum_{i=1}^N \nabla u_i\cdot A_i\nabla u_i-\veu^\upT\veL\veu\right)}{\displaystyle\int_{\Omega}|\veu|^2}.
\]
\end{prop}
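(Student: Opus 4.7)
The plan is to recognize the operator $\cbQ = \partial_t - \diag(\nabla\cdot(A_i\nabla)) - \veL$ as, after eliminating $\partial_t$ by time homogeneity, a genuinely self-adjoint elliptic operator on $\mathcal{L}^2([0,L],\R^N)$, respectively on $\mathcal{L}^2(\Omega,\R^N)$. Integration by parts, licit thanks to periodicity (resp.\ the Dirichlet boundary condition), together with $A_i = A_i^\upT$ and $\veL = \veL^\upT$, renders this symmetry explicit and identifies the associated Dirichlet form on $\mathcal{H}^1_\upp(\R^n,\R^N)$ (resp.\ $\mathcal{H}^1_0(\Omega,\R^N)$) as the numerator of the Rayleigh quotient written in the statement. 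Coercivity after a shift by $CI$ with $C$ large (using \ref{ass:ellipticity}), combined with the Rellich--Kondrachov embedding, yields compactness of the resolvent, hence a discrete real spectrum unbounded from above; its minimum $\lambda_{\min}$ therefore coincides with the infimum of the Rayleigh quotient and is attained by some $\veu_{\min}\in\mathcal{H}^1$ solving the Euler--Lagrange equation.

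It only remains to identify $\lambda_{\min}$ with $\lambda_1'$, respectively with $\lambda_1(\Omega)=\lambda_{1,\upDir}(\Omega)$ (the latter via Proposition~\ref{prop:eigenvalue_well_defined}). The inequality $\lambda_{\min}\leq\lambda_1'$ follows by plugging the positive principal eigenfunction into the Rayleigh quotient. For the reverse, I would replace $\veu_{\min}$ by its componentwise modulus $|\veu_{\min}|=(|u_{\min,i}|)_{i\in[N]}$: cooperativity \ref{ass:cooperative} guarantees
\[
    |\veu_{\min}|^\upT\veL|\veu_{\min}|-\veu_{\min}^\upT\veL\veu_{\min}
    =\sum_{i\neq j}l_{i,j}\left(|u_{\min,i}||u_{\min,j}|-u_{\min,i}u_{\min,j}\right)\geq 0,
\]
while Stampacchia's chain rule yields $\nabla|u_{\min,i}|=\pm\nabla u_{\min,i}$ almost everywhere, so that $\int\nabla|u_{\min,i}|\cdot A_i\nabla|u_{\min,i}|=\int\nabla u_{\min,i}\cdot A_i\nabla u_{\min,i}$ and $\int|u_{\min,i}|^2=\int u_{\min,i}^2$. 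Consequently $|\veu_{\min}|$ is also a minimizer, hence a nonnegative nonzero eigenfunction for $\lambda_{\min}$. Standard elliptic regularity promotes it to a classical solution, and the strong maximum principle of Proposition~\ref{prop:strong_maximum_principle} (resp.\ Proposition~\ref{prop:strong_maximum_principle_bounded_domains}), applicable by irreducibility \ref{ass:irreducible}, upgrades it further to a strictly positive classical eigenfunction. The uniqueness part of the Krein--Rutman theorem then forces $\lambda_{\min}=\lambda_1'$.

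The main obstacle is this replacement by absolute values, which is precisely where cooperativity truly enters; once it is handled, the rest of the argument is standard functional-analytic bookkeeping. The Dirichlet case is proved along the same lines, substituting $\mathcal{H}^1_0(\Omega,\R^N)$ for $\mathcal{H}^1_\upp(\R^n,\R^N)$ and invoking the Dirichlet version of the strong maximum principle. A minor caveat is that, as printed, the statement carries an unintended square on the expression $\nabla u_i\cdot A_i\nabla u_i$; the correct quadratic form one actually works with is the usual Dirichlet energy $\int[\sum_i \nabla u_i\cdot A_i\nabla u_i-\veu^\upT\veL\veu]$.
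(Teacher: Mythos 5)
Your argument is correct, but it is considerably more than what the paper itself does: after the statement, the paper simply declares the elliptic variational formula ``classical'' and admits it, and the only proof it records is the reduction of the parabolic quantity to the elliptic one — by uniqueness of the periodic (resp.\ Dirichlet) principal eigenpair, a positive eigenfunction of the time-independent elliptic operator $\vect{\mathcal{L}}=\diag(\nabla\cdot(A_i\nabla))+\veL$ is automatically the (time-independent) periodic principal eigenfunction of $\cbQ$, so $\lambda_1'=\lambda_{1,\upp}(-\vect{\mathcal{L}})$ and $\lambda_1(\Omega)=\lambda_{1,\upDir}(-\vect{\mathcal{L}},\Omega)$. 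You perform this reduction only implicitly (``eliminating $\partial_t$ by time homogeneity''); it would be worth stating it exactly as above, since uniqueness of the principal eigenpair is what licenses it. What your proposal adds is a complete proof of the admitted elliptic fact: the direct method (coercivity after a shift by $C\vect{I}$, Rellich--Kondrachov, attainment of the Rayleigh minimum), and then the identification of $\min$ of the quotient with the principal eigenvalue via the componentwise absolute-value trick, where cooperativity \ref{ass:cooperative} gives $|\veu|^\upT\veL|\veu|\geq\veu^\upT\veL\veu$ pointwise, Stampacchia's chain rule preserves the Dirichlet energy, and elliptic regularity plus the strong maximum principle (applied to the time-independent function viewed as a time-periodic solution of $(\cbQ-\lambda_{\min})|\veu|=\vez$, an operator of the same class) plus uniqueness of the principal eigenpair force $\lambda_{\min}=\lambda_1'$, resp.\ $\lambda_{1,\upDir}(\Omega)$, the latter being equated with $\lambda_1(\Omega)$ through Proposition \ref{prop:eigenvalue_well_defined} exactly as you say. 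This buys a self-contained argument that makes visible where cooperativity and irreducibility \ref{ass:irreducible} enter, at the cost of redoing standard material the authors chose to cite. Your final caveat is also right: the exponent $2$ on $\nabla u_i\cdot A_i\nabla u_i$ in the displayed quotients is a typo (the correct numerator is $\int\bigl(\sum_i\nabla u_i\cdot A_i\nabla u_i-\veu^\upT\veL\veu\bigr)$), and likewise the second display should read $\Omega$ where it prints $B_R$.
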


These formulas being recalled, we are in a position to prove Theorem \ref{thm:lambdaz_time_homogeneous}. We first focus on the 
case $z=0$, and will then deduce the general case as a consequence of this one. Also, since the statement of Theorem 
\ref{thm:lambdaz_time_homogeneous} involves the inverses of the diffusion matrices $A_i$, we recall that their invertibility follows
from their uniform ellipticity \ref{ass:ellipticity}. Moreover, the periodicity and regularity of the inverses follows from
the periodicity and regularity of the diffusion matrices \ref{ass:smooth_periodic}).

\begin{prop}\label{prop:equality_between_lambda1_and_lambda1prime_when_self-adjoint}
If
\begin{enumerate}
    \item $(A_i)_{i\in[N]}$ and $\veL$ do not depend on $t$,
    \item $\veL(x)$ is symmetric for all $x\in\R^n$, 
    \item there exists $Q\in\caC^2(\R^n,\R)$ such that $\int_{[0,L]}\nabla Q=0$ and 
    \[
        A_1^{-1}q_1=A_2^{-1}q_2=\dots=A_N^{-1}q_N=\nabla Q,
    \]
\end{enumerate}
then
\[
    \lambda_1=\lambda_1'=\min_{\veu\in\caC^2_{\upp}(\R^n,\R^N)\backslash\{\vez\}}\frac{\displaystyle\int_{[0,L]}\left(\sum_{i=1}^N \nabla u_i\cdot A_i\nabla u_i-\veu^\upT\veL_Q\veu\right)}{\displaystyle\int_{[0,L]}|\veu|^2},
\]
where
\[
    \veL_Q =\veL+\frac{1}{4}\diag\left(2\nabla\cdot q_i-A_i^{-1}q_i\cdot q_i\right) =\veL+\frac{1}{4}\diag\left(2\nabla\cdot(A_i\nabla Q)-\nabla Q\cdot A_i\nabla Q\right).
\]
\end{prop}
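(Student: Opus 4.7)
The plan is to reduce to the self-adjoint case via a Liouville-type gauge transformation, then apply Proposition~\ref{prop:variational_formulas_when_self-adjoint} and conclude with an evenness-plus-strict-concavity argument for the identity $\lambda_1=\lambda_1'$.

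First I would check that $Q$ is itself $L$-periodic. The relation $A_i^{-1}q_i=\nabla Q$ combined with the periodicity of $A_i$ and $q_i$ and the nondegeneracy of $A_i$ forces $\nabla Q$ to be periodic. Writing the defect $Q(x+L_\alpha e_\alpha)-Q(x)$ and differentiating, periodicity of $\nabla Q$ makes this defect a constant $C_\alpha$; integrating $\partial_\alpha Q$ over the cell identifies $C_\alpha$ as a positive multiple of $\int_{[0,L]}\partial_\alpha Q$, which vanishes by assumption. Hence $Q\in\caC^2_\upp(\R^n,\R)$ and $\upe^{Q/2}$ is a positive, $L$-periodic, $\caC^2$ function bounded away from $0$ and $+\infty$.

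Next I would carry out the gauge change $\veu=\upe^{Q/2}\vev$ componentwise. A direct expansion, using the symmetry of each $A_i$ and the crucial identity $q_i=A_i\nabla Q$, shows that the first-order terms coming from the gauge cancel against the advection and only a scalar potential correction survives:
\[
    \upe^{-Q/2}\cbQ\bigl(\upe^{Q/2}\vev\bigr)=\tilde{\cbQ}\vev,\quad \tilde{\cbQ}:=\diag\bigl(\partial_t-\nabla\cdot(A_i\nabla)\bigr)-\veL_Q.
\]
Since $\upe^{Q/2}$ is periodic and bilipschitz on the admissible positive cones, multiplication by $\upe^{Q/2}$ is a bijection between the admissible eigenfunction classes defining $\lambda_1$, $\lambda_1'$ and $\lambda_{1,z}$, so
\[
    \lambda_1(\cbQ)=\lambda_1(\tilde{\cbQ}),\quad\lambda_1'(\cbQ)=\lambda_1'(\tilde{\cbQ}),\quad\lambda_{1,z}(\cbQ)=\lambda_{1,z}(\tilde{\cbQ}).
\]

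The operator $\tilde{\cbQ}$ has no advection, symmetric time-independent potential $\veL_Q$, and time-independent symmetric diffusion $A_i$, so Proposition~\ref{prop:variational_formulas_when_self-adjoint} applies and furnishes the announced Rayleigh-type expression for $\lambda_1'(\tilde{\cbQ})=\lambda_1'(\cbQ)$. To upgrade this to $\lambda_1=\lambda_1'$, I would argue that by time-independence of the coefficients, the periodic principal eigenpair of $\tilde{\cbQ}_z$ is time-independent, so $\lambda_{1,z}(\tilde{\cbQ})$ equals the Krein--Rutman principal eigenvalue of the elliptic operator $\upe_{-z}(-\tilde{\mathcal{L}})\upe_z$ with $\tilde{\mathcal{L}}:=\diag(\nabla\cdot(A_i\nabla))+\veL_Q$. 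Self-adjointness of $\tilde{\mathcal{L}}$ gives $\bigl(\upe_{-z}(-\tilde{\mathcal{L}})\upe_z\bigr)^*=\upe_z(-\tilde{\mathcal{L}})\upe_{-z}$, and since an elliptic Krein--Rutman principal eigenvalue coincides with that of the adjoint, $\lambda_{1,z}(\tilde{\cbQ})=\lambda_{1,-z}(\tilde{\cbQ})$. Combined with the strict concavity from Corollary~\ref{cor:lambdaz_strictly_concave}, this evenness forces the unique maximum of $z\mapsto\lambda_{1,z}$ to be attained at $z=0$, so Corollary~\ref{cor:lambda1_max_lambdaz} yields $\lambda_1=\lambda_{1,0}=\lambda_1'$.

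The hard part will be the symbolic computation producing $\tilde{\cbQ}$: one must verify that the first-order contributions of $\nabla\cdot(A_i\nabla(\upe^{Q/2}v_i))$ and of $q_i\cdot\nabla(\upe^{Q/2}v_i)$ exactly cancel, leaving precisely the scalar correction $\tfrac{1}{4}\bigl(2\nabla\cdot(A_i\nabla Q)-\nabla Q\cdot A_i\nabla Q\bigr)v_i$ on the $i$-th component. This is where the symmetry of $A_i$ and, crucially, the \emph{common} gauge potential $Q$ across all components are used; without both, the first-order terms would not assemble into a pure gradient structure and the conjugated operator would not be self-adjoint.
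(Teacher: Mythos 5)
Your proposal is correct, and the first half (periodicity of $Q$, then the gauge conjugation $\upe^{-Q/2}\cbQ\,\upe^{Q/2}=\diag(\partial_t-\nabla\cdot(A_i\nabla))-\veL_Q$, using the symmetry of $A_i$ and the common potential $Q$) is exactly the paper's Step 2, merely performed before rather than after the advection-free case; the identification of the periodic eigenpair with the elliptic one and the appeal to Proposition \ref{prop:variational_formulas_when_self-adjoint} also match the paper. Where you genuinely diverge is the proof of $\lambda_1=\lambda_1'$. The paper proves $\lambda_1\leq\lambda_{1,0}$ \`a la Berestycki--Rossi: it tests the Dirichlet Rayleigh quotient in $B_R$ with $\chi_R\veu_0$, obtains $\lambda_{1,\upDir}(B_R)\leq\lambda_{1,0}+KR^{n-1}/(R-1)^n$, and lets $R\to+\infty$ using the convergence of Dirichlet eigenvalues in growing balls to $\lambda_1$ (Proposition \ref{prop:eigenvalue_limit_Dirichlet}). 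You instead prove the evenness of $z\mapsto\lambda_{1,z}$: for the conjugated operator, the formal adjoint over the periodicity cell of $\upe_{-z}(-\tilde{\mathcal{L}})\upe_z$ is $\upe_{z}(-\tilde{\mathcal{L}})\upe_{-z}$ (this uses both the symmetry of the $A_i$ and of $\veL_Q$), and the periodic principal eigenvalue of a fully coupled cooperative operator coincides with that of its adjoint; evenness plus the concavity of Corollary \ref{cor:lambdaz_strictly_concave} then places the maximum at $z=0$, so Corollary \ref{cor:lambda1_max_lambdaz} and Proposition \ref{prop:lambda1prime_lambda0} give $\lambda_1=\lambda_{1,0}=\lambda_1'$. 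This is sound: the adjoint still satisfies \ref{ass:cooperative}--\ref{ass:irreducible} (transposition preserves them), and the equality of the principal eigenvalues of an operator and its adjoint is the standard Krein--Rutman duality (spectral radius of the compact positive resolvent equals that of its adjoint), a fact the paper itself uses implicitly in the optimization proof; it would deserve one explicit line in a final write-up. Your route buys a cleaner conceptual statement -- symmetry of $\veL$ restores the evenness of $z\mapsto\lambda_{1,z}$, exactly the mechanism whose failure is exhibited by the Griette--Matano counter-example quoted in Remark \ref{rem:counter-example_evenness_without_advection} -- and it avoids cut-off functions and the growing-ball limit altogether, at the price of invoking the duality fact and the machinery $\lambda_1=\max_z\lambda_{1,z}$; the paper's argument is more self-contained at the elliptic level and only needs the variational formula plus the Dirichlet approximation of $\lambda_1$.
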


\begin{proof}
Let $\vect{\mathcal{L}}=\diag\left(\nabla\cdot(A_i\nabla)-q_i\cdot\nabla\right)+\veL$. 

\begin{proof}[Step 1: the case $q_1=q_2=\dots=q_N=0$, \textit{i.e.} $Q$ constant]

By \eqref{eq:reduction_lambdaz_time_homogeneous}, we only have to prove the statement for the elliptic operator $\vect{\mathcal{L}}$.
Also, we already know that $\lambda_1'=\lambda_{1,0}\leq \lambda_1=\max_{z\in\R^n}\lambda_{1,z}$.

Following Berestycki--Rossi \cite{Berestycki_Ros_1}, we consider an even function $\chi\in\caC^\infty(\R,[0,1])$ such that 
$\chi=0$ in $\R\backslash[-1,1]$ and $\chi(0)=1$. Next, we construct a family of radial smooth cut-off functions $\left(\chi_R\right)_{R>1}$ 
such that, for each $R>1$, $\chi_R=1$ in $B_{R-1}$ and $\chi_R(x)=\chi(|x|-(R-1))$ if $x\in\R^n\backslash B_{R-1}$, 
where $B_{R-1}=B(0,R-1)$. By construction, the family $\left(\|\chi_R\|_{\caC^\infty(\R^n,\R)}\right)_{R>1}$ 
is bounded.

Let $R>1$ and denote $\mu_R$ the Dirichlet principal eigenvalue $\lambda_{1,\upDir}\left(-\vect{\mathcal{L}},B_R\right)$,
where $B_R=B(0,R)$. By Proposition \ref{prop:variational_formulas_when_self-adjoint}, 
\[
\mu_R= \min_{\veu\in\caC^1_0(B_R,\R^N)\backslash\{\vez\}}\frac{\int_{B_R}\left(\sum_{i=1}^N\nabla u_i\cdot A_i\nabla u_i-\veu^\upT\veL\veu\right)}{\int_{B_R}|\veu|^2}.
\]
Taking $\chi_R\veu_0$ as test function (recall that $\veu_0$ is a positive periodic 
principal eigenfunction of $\vect{\mathcal{L}}$) and using the equality $-\vect{\mathcal{L}}\veu_0=\lambda_1'\veu_0$ satisfied
pointwise in $B_{R-1}$, we find:
\begin{align*}
    \mu_R & \leq \frac{\int_{B_{R-1}}\chi_R\veu_0^\upT(-\vect{\mathcal{L}})(\chi_R\veu_0)
    + \int_{B_R\backslash B_{R-1}}\chi_R\veu_0^\upT(-\vect{\mathcal{L}})(\chi_R\veu_0)}{\int_{B_R}\chi_R^2|\veu_0|^2} \\
    & = \frac{\int_{B_{R-1}}\veu_0^\upT(-\vect{\mathcal{L}})\veu_0
    - \int_{B_R\backslash B_{R-1}}\chi_R\veu_0^\upT\vect{\mathcal{L}}(\chi_R\veu_0)}{\int_{B_R}\chi_R^2|\veu_0|^2} \\
    & = \frac{\lambda_1'\int_{B_{R-1}}|\veu_0|^2
    - \int_{B_R\backslash B_{R-1}}\chi_R\veu_0^\upT\vect{\mathcal{L}}(\chi_R\veu_0)}{\int_{B_R}\chi_R^2|\veu_0|^2} \\
    & = \frac{\lambda_1'\int_{B_{R-1}}\chi_R^2|\veu_0|^2
    - \int_{B_R\backslash B_{R-1}}\chi_R\veu_0^\upT\vect{\mathcal{L}}(\chi_R\veu_0)}{\int_{B_R}\chi_R^2|\veu_0|^2} \\
    & = \lambda_1'-\frac{\lambda_1'\int_{B_R\backslash B_{R-1}}\chi_R^2|\veu_0|^2+\int_{B_R\backslash B_{R-1}}\chi_R\veu_0^\upT\vect{\mathcal{L}}(\chi_R\veu_0)}{\int_{B_R}\chi_R^2|\veu_0|^2}.
\end{align*}
Using now the triangle inequality, the inequality $\chi_R\geq (\chi_R)_{|B_{R-1}}$, the discrete Cauchy--Schwarz inequality and then pointwise upper and lower bounds, we find:
\begin{align*}
    \mu_R & \leq \lambda_1'+\frac{|\lambda_1'|\int_{B_R\backslash B_{R-1}}\chi_R^2|\veu_0|^2+\int_{B_R\backslash B_{R-1}}|\chi_R\veu_0^\upT\vect{\mathcal{L}}(\chi_R\veu_0)|}{\int_{B_{R-1}}\chi_R^2|\veu_0|^2} \\
    & \leq \lambda_1'+\frac{\int_{B_R\backslash B_{R-1}}\left(|\lambda_1'|\chi_R^2|\veu_0|^2+|\chi_R\veu_0^\upT\vect{\mathcal{L}}(\chi_R\veu_0)|\right)}{\int_{B_{R-1}}|\veu_0|^2} \\
    & \leq \lambda_1'+\frac{\int_{B_R\backslash B_{R-1}}\left(|\lambda_1'|\chi_R^2|\veu_0|^2+|\chi_R\veu_0||\vect{\mathcal{L}}(\chi_R\veu_0)|\right)}{\displaystyle\min_{x\in\clOmper}|\veu_0(x)|^2\int_{B_{R-1}}1} \\
    & \leq \lambda_1' + \frac{|\lambda_1'|\|\chi_R\veu_0\|^2+\|\chi_R\veu_0\|
    \|\vect{\mathcal{L}}(\chi_R\veu_0)\|}{\displaystyle\min_{x\in\clOmper}|\veu_0(x)|^2}\frac{\int_{B_R\backslash B_{R-1}}1}{\int_{B_{R-1}}1}
\end{align*}
where the norm $\|\ \|$ is defined as $\|\vev\|=\sup_{x\in\R^n}|\vev(x)|$ (appropriate for $\caC(\R^n,\R^N)$).
Thus, from the boundedness of the operator $\vect{\mathcal{L}}:\caC^2(\R^n,\R^N)\to\caC(\R^n,\R^N)$ and the boundedness in
$\caC^2(\R^n,\R^N)$ of the family $\left(\chi_R\veu_0\right)_{R>1}$, there exists a constant $K>0$, independent of $R$, such that
\[
    \mu_R\leq \lambda_1'+K\frac{R^{n-1}}{(R-1)^n},
\]
and, passing to the limit $R\to+\infty$, we deduce finally $\lambda_1\leq \lambda_1'$ 
(the proof of the convergence of the Dirichlet principal eigenvalues in balls of increasing radius 
to the generalized principal eigenvalue $\lambda_1$ for the elliptic operator $-\vect{\mathcal{L}}$ is done exactly as in
the parabolic case, see Proposition \ref{prop:eigenvalue_limit_Dirichlet}). Hence $\lambda_1'=\lambda_1$.

The conclusion of this step follows from Proposition \ref{prop:variational_formulas_when_self-adjoint}.
\end{proof}

\begin{proof}[Step 2: the general case]
From now on, for all $i\in[N]$, $q_i=A_i\nabla Q$ with $\int_{[0,L]}\nabla Q=0$. 
Following Berestycki--Hamel--Rossi \cite{Berestycki_Hamel_Rossi}, the idea is to change variables to reduce this case to the previous one.

Preliminarily, we check that $Q\in\caC^2(\R^n,\R)$ is necessarily space periodic. Fix $\alpha\in[n]$.
The function $x\mapsto Q(x+L_\alpha e_\alpha)-Q(x)$, where $e_\alpha=(\delta_{\alpha\beta})_{\beta\in[n]}$, is constant, since
\[
\nabla(Q(x+L_\alpha e_\alpha)-Q(x))=A_1^{-1}(x+L_\alpha e_\alpha)q_1(x+L_\alpha e_\alpha)-A_1^{-1}(x)q_1(x)=0.
\]
Then
\begin{align*}
Q(L_\alpha e_\alpha)-Q(0) & =\left(|[0,L]|\right)^{-1}\int_{[0,L]}Q(x+L_\alpha e_\alpha)-Q(x)\upd x \\
& =\left(|[0,L]|\right)^{-1}\int_0^{L_\alpha}\int_{[0,L]}\frac{\partial Q}{\partial x_\alpha}(x+se_\alpha)\upd x\upd s\\
& = 0
\end{align*}
Hence $Q$ is indeed periodic with respect to $x_\alpha$, and then with respect to $x$.

Then, introducing for any $\veu\in\caC^2(\R^n,\R^N)$ the transformation 
\[
    \vev:x\mapsto\exp(Q(x)/2)\veu(x))
\]
and following \cite{Berestycki_Hamel_Rossi}, we get:
\begin{align*}
-(\vect{\mathcal{L}}\vev)_i & =\upe^{Q/2}\left[-(\vect{\mathcal{L}}\veu)_i-\frac{1}{2}\left(u_i\nabla\cdot (A_i\nabla Q)+2\nabla u_i\cdot(A_i\nabla Q)+\frac{1}{2}u_i\nabla Q\cdot(A_i\nabla Q)-u_i q_i\cdot\nabla Q\right)\right] \\
& =\upe^{Q/2}\left[-(\vect{\mathcal{L}}\veu)_i-\frac{1}{2}\left(u_i\nabla\cdot q_i+2\nabla u_i\cdot q_i-\frac{1}{2}u_i\nabla Q\cdot q_i\right)\right]\\
& =\upe^{Q/2}\left[-\nabla\cdot(A_i\nabla u_i)-(\veL\veu)_i-\frac{1}{2}\left(\nabla\cdot q_i-\frac{1}{2}\nabla Q\cdot q_i\right)u_i\right].
\end{align*}
Therefore $\vev$ is an eigenfunction of $-\vect{\mathcal{L}}$ if and only if $\veu$ is an eigenfunction of the new periodic elliptic
operator:
\[
-\vect{\mathcal{L}}_Q=-\diag\left(\nabla\cdot(A_i\nabla)\right)-\veL_Q.
\]
Consequently, the periodic and Dirichlet principal eigenvalues coincide: for instance, with $\veu$ a periodic principal eigenfunction of 
$-\vect{\mathcal{L}}_Q$, $\vev$ satisfies $-\vect{\mathcal{L}}\vev=\lambda_1'(-\vect{\mathcal{L}}_Q)\vev$, and
since $\vev=\upe^{Q/2}\veu$ is periodic, it is then (by uniqueness) a periodic principal eigenfunction of $-\vect{\mathcal{L}}$, whence 
$\lambda_1'(-\vect{\mathcal{L}})=\lambda_1'(-\vect{\mathcal{L}}_Q)$. 

By virtue of Step 1,
\[
    \lim_{R\to+\infty}\lambda_1(-\vect{\mathcal{L}}_Q,B_R)=\lambda_1'(-\vect{\mathcal{L}}_Q)
\]
and consequently
\[
    \lim_{R\to+\infty}\lambda_1(-\vect{\mathcal{L}},B_R)=\lambda_1'(-\vect{\mathcal{L}}).
\]
Therefore
\[
    \lambda_1(-\vect{\mathcal{L}})=\lambda_1'(-\vect{\mathcal{L}})=\lambda_1'(-\vect{\mathcal{L}}_Q).
\]
The conclusion of this step follows from the variational formula for the operator $-\vect{\mathcal{L}}_Q$, see
Proposition \ref{prop:variational_formulas_when_self-adjoint}.
\end{proof}

The proof is ended.
\end{proof}

\begin{rem}\label{rem:counter-example_variational_formula}
The symmetry assumption on $\veL$ is crucial, both for the equality between $\lambda_1$ and $\lambda_1'$ (as explained above in Remark
\ref{rem:counter-example_evenness_without_advection}) and for the equality between $\lambda_1'$ and the minimized integral.

Denote
\[
    R=\min_{\veu\in\caC^2_\upp(\R^n,\R^N)\backslash\{\vez\}}\frac{\int_{[0,L]}\left(\sum_{i=1}^N|\nabla u_i|^2-\veu^\upT\veL\veu\right)}{\int_{[0,L]}|\veu|^2},
\]
which is the quotient appearing in the variational formula in the special case $q_i=0$ and $A_i=\textup{Id}$ for each $i\in[N]$.

It is well-known that for a general non-symmetric square matrix, the maximum of the Rayleigh quotient needs not coincide with the dominant
eigenvalue. More precisely, the maximum of the Rayleigh quotient of a matrix $\veL$ coincides with the dominant eigenvalue of the symmetric
part $\frac12(\veL+\veL^\upT)$. Similarly, $R$ is the periodic principal eigenvalue of the symmetrized operator 
$-\Delta-\frac12(\veL+\veL^\upT)$.

Therefore, using a constant irreducible non-symmetric matrix 
\[
\veL=\begin{pmatrix} 1 & 1 \\ \varepsilon & 1\end{pmatrix}\quad\text{with }\varepsilon>0,
\]
we obtain a counter-example of the equality between $\lambda_1'=-1-\sqrt{\varepsilon}$ and $R=-(3+\varepsilon)/2$.
\end{rem}

Subsequently, replacing $(q_i)_{i\in[N]}$ by $(q_i-2A_iz)_{i\in[N]}$ and $\veL$ by 
$\veL+\diag(z\cdot A_i z+\nabla\cdot\left(A_i z\right)-q_i\cdot z)$, we obtain the following corollary, which
is the full statement of Theorem \ref{thm:lambdaz_time_homogeneous}.

\begin{cor}
If
\begin{enumerate}
    \item $(A_i)_{i\in[N]}$ and $\veL$ do not depend on $t$,
    \item $\veL(x)$ is symmetric for all $x\in\R^n$, 
    \item there exists $z\in\R^n$ and $Q\in\caC^2(\R^n,\R)$ such that $\int_{[0,L]}\nabla Q=0$ and 
    \[
        A_1^{-1}q_1=A_2^{-1}q_2=\dots=A_N^{-1}q_N=2z+\nabla Q,
    \]
\end{enumerate}
then
\[
    \lambda_1=\lambda_{1,z}=\min_{\veu\in\caC^2_{\upp}(\R^n,\R^N)\backslash\{\vez\}}\frac{\displaystyle\int_{[0,L]}\left(\sum_{i=1}^N\nabla u_i\cdot A_i\nabla u_i-\veu^\upT\veL_{Q,z}\veu\right)}{\displaystyle\int_{[0,L]}|\veu|^2},
\]
where
\[
    \veL_{Q,z}=\veL_Q+\diag\left(\nabla\cdot\left(A_i z\right)-z\cdot A_i(z+\nabla Q)\right)
\]
and $\veL_Q$ is defined as in the statement of Proposition \ref{prop:equality_between_lambda1_and_lambda1prime_when_self-adjoint}.
\end{cor}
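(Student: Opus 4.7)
The plan is to apply Proposition~\ref{prop:equality_between_lambda1_and_lambda1prime_when_self-adjoint} to the conjugated operator $\cbQ_z$ defined in (\ref{def:Qz}) and then translate the resulting identities back to $\cbQ$. From the expansion (\ref{def:Qz}), $\cbQ_z$ has the same diffusion matrices $(A_i)_{i\in[N]}$ but with advection vectors $\tilde q_i=q_i-2A_iz$ and zeroth order matrix $\tilde\veL=\veL+\diag(z\cdot A_iz+\nabla\cdot(A_iz)-q_i\cdot z)$. Hypothesis (3) rewrites as $\tilde q_i=A_i\nabla Q$ for every $i$, so $A_i^{-1}\tilde q_i=\nabla Q$ is independent of $i$, is periodic, of zero mean, and admits the required $\caC^2$ antiderivative $Q$. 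Since $\veL(x)$ is symmetric and the added correction is diagonal, $\tilde\veL(x)$ remains symmetric; and no coefficient of $\cbQ_z$ depends on $t$. Thus all three hypotheses of Proposition~\ref{prop:equality_between_lambda1_and_lambda1prime_when_self-adjoint} are satisfied by $\cbQ_z$.

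Applying that proposition yields $\lambda_1(\cbQ_z)=\lambda_1'(\cbQ_z)$ and the associated Rayleigh-quotient formula, with the underlying potential matrix $\tilde\veL_Q:=\tilde\veL+\tfrac14\diag\bigl(2\nabla\cdot(A_i\nabla Q)-\nabla Q\cdot A_i\nabla Q\bigr)$. To pass from $\cbQ_z$ to $\cbQ$, I would use the elementary identity $(\cbQ_z)_{z'}=\cbQ_{z+z'}$, which gives $\lambda_{1,z'}(\cbQ_z)=\lambda_{1,z+z'}(\cbQ)$ for every $z'\in\R^n$; combined with Corollary~\ref{cor:lambda1_max_lambdaz} applied to both operators this provides
\[
\lambda_1(\cbQ_z)=\max_{z'\in\R^n}\lambda_{1,z'}(\cbQ_z)=\max_{z'\in\R^n}\lambda_{1,z+z'}(\cbQ)=\lambda_1(\cbQ).
\]
Proposition~\ref{prop:lambda1prime_lambda0} applied to $\cbQ_z$ similarly gives $\lambda_1'(\cbQ_z)=\lambda_{1,0}(\cbQ_z)=\lambda_{1,z}(\cbQ)$. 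Chaining these identities yields $\lambda_1=\lambda_{1,z}$, with common value equal to the Rayleigh quotient produced in the previous step.

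It only remains to check the purely algebraic identity $\tilde\veL_Q=\veL_{Q,z}$. Using the symmetry of $A_i$ and $q_i=2A_iz+A_i\nabla Q$, one gets $q_i\cdot z=2z\cdot A_iz+z\cdot A_i\nabla Q$, so the diagonal correction inside $\tilde\veL$ simplifies to $\diag\bigl(\nabla\cdot(A_iz)-z\cdot A_iz-z\cdot A_i\nabla Q\bigr)=\diag\bigl(\nabla\cdot(A_iz)-z\cdot A_i(z+\nabla Q)\bigr)$. Adding the further correction $\tfrac14\diag(2\nabla\cdot(A_i\nabla Q)-\nabla Q\cdot A_i\nabla Q)$ that distinguishes $\tilde\veL_Q$ from $\tilde\veL$ reproduces exactly the expression of $\veL_{Q,z}$ in the statement. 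No genuine obstacle is expected: the real content is already contained in Proposition~\ref{prop:equality_between_lambda1_and_lambda1prime_when_self-adjoint}, and the only care is in bookkeeping the two diagonal corrections, and in using the identification $\lambda_{1,z'}(\cbQ_z)=\lambda_{1,z+z'}(\cbQ)$ to propagate the self-adjoint identity from $\cbQ_z$ to $\cbQ$.
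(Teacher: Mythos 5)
Your proposal is correct and matches the paper's own argument: the paper obtains the corollary precisely by replacing $(q_i)_{i\in[N]}$ with $(q_i-2A_iz)_{i\in[N]}$ and $\veL$ with $\veL+\diag\left(z\cdot A_i z+\nabla\cdot\left(A_i z\right)-q_i\cdot z\right)$, i.e.\ by applying Proposition \ref{prop:equality_between_lambda1_and_lambda1prime_when_self-adjoint} to $\cbQ_z$, which is exactly what you do. Your extra bookkeeping (verifying the hypotheses for $\cbQ_z$, the identities $\lambda_1'(\cbQ_z)=\lambda_{1,z}(\cbQ)$ and $\lambda_1(\cbQ_z)=\lambda_1(\cbQ)$ via $(\cbQ_z)_{z'}=\cbQ_{z+z'}$, and the identity $\tilde{\veL}_Q=\veL_{Q,z}$) just makes explicit what the paper leaves implicit.
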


With no symmetry assumption on $\veL$ and more general advection terms, we can still compare $\lambda_{1,z}$ with the variational formula.

\begin{cor}
Let $z\in\R^n$. If
\begin{enumerate}
    \item $(A_i,q_i)_{i\in[N]}$ and $\veL$ do not depend on $t$,
    \item for all $i\in[N]$, $q_i\in\caC^1_{\upp}(\R^n,\R^n)$ and $\nabla\cdot(q_i-2A_iz)\leq 0$,
    \item for all $i\in[N]$, $q_i\cdot z\geq 0$,
\end{enumerate}
then
\[
    \lambda_{1,z}\geq\lambda_{1,z}\left(\partial_t-\diag(\nabla\cdot(A_i\nabla))-\frac12(\veL+\veL^\upT)\right).
\]
\end{cor}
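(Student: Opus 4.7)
The plan is to mimic the matrix inequality $\lambda_{\upPF}(\vect L)\leq\lambda_{\max}\bigl(\tfrac12(\vect L+\vect L^\upT)\bigr)$ by Rayleigh-testing the eigenvalue equation of $\cbQ_z$ against its own principal eigenfunction. Since $(A_i,q_i)_{i\in[N]}$ and $\veL$ do not depend on $t$, uniqueness of the periodic principal eigenpair forces the positive periodic eigenfunction $\veu$ of $\cbQ_z$ associated with $\lambda=\lambda_{1,z}$ to depend only on $x$. I would multiply the identity $(\cbQ_z\veu)_i=\lambda u_i$ by $u_i$, sum in $i\in[N]$, and integrate over the periodicity cell $[0,L]$.

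Integration by parts, term by term, uses periodicity (no boundary contribution), the symmetry of $A_i$, the identity $\sum_i\int u_i(q_i-2A_iz)\cdot\nabla u_i=-\tfrac12\sum_i\int\nabla\cdot(q_i-2A_iz)u_i^2$, and the vanishing $\int\veu^\upT(\veL-\veL^\upT)\veu=0$ of the skew part in any quadratic pairing. Collecting,
\[
    \lambda\int_{[0,L]}|\veu|^2 = Q(\veu) -\tfrac12\sum_i\int_{[0,L]}\nabla\cdot(q_i-2A_iz)\,u_i^2 + \sum_i\int_{[0,L]}(q_i\cdot z)\,u_i^2,
\]
where $M:=\tfrac12(\veL+\veL^\upT)$ and
\[
    Q(\vev):=\sum_i\int_{[0,L]}\nabla v_i\cdot A_i\nabla v_i - \sum_i\int_{[0,L]}(z\cdot A_iz)\,v_i^2 - \int_{[0,L]}\vev^\upT M\vev.
\]
The two standing hypotheses $\nabla\cdot(q_i-2A_iz)\leq 0$ and $q_i\cdot z\geq 0$ make the two correction terms non-negative, yielding $\lambda\!\int|\veu|^2\geq Q(\veu)$.

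It remains to identify this lower bound with $\lambda_{1,z}$ of the symmetric operator $\tilde\cbQ:=\partial_t-\diag(\nabla\cdot(A_i\nabla))-M$. A short integration by parts shows that $Q(\vev)=\int\vev^\upT(\tilde\cbQ)_z\vev$ for every periodic $\vev$; applying this identity to the positive periodic principal eigenfunction of $(\tilde\cbQ)_z$ furnished by Krein--Rutman, and combining with the variational/min--max characterisation of $\lambda_{1,z}(\tilde\cbQ)$ of Proposition \ref{prop:max--min_characterization_lambdaz}, closes the argument as $\lambda_{1,z}\geq\lambda_{1,z}(\tilde\cbQ)$.

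The main delicate point is this final identification. Although $Q$ is a symmetric bilinear form, the operator $(\tilde\cbQ)_z$ is not itself self-adjoint when $z\neq\vez$: its anti-self-adjoint part is $-\diag(2A_iz\cdot\nabla+\nabla\cdot(A_iz))$, which is generically nonzero, and so the equality $\lambda_{1,z}(\tilde\cbQ)=\min_{\vev\gg\vez} Q(\vev)/\!\int|\vev|^2$ does not follow from the spectral theorem. One must exploit the cooperativity of $M$ and the strict positivity of the Krein--Rutman eigenfunction of $(\tilde\cbQ)_z$ to certify that the Rayleigh minimum is attained at this positive eigenfunction, thereby converting the pointwise Rayleigh inequality $Q(\veu)\geq\lambda_{1,z}(\tilde\cbQ)\!\int|\veu|^2$ into the desired spectral comparison.
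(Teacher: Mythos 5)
Your overall strategy is the paper's: by time homogeneity the principal eigenfunction $\veu_z$ of $\cbQ_z$ depends only on $x$, and one pairs $\cbQ_z\veu_z=\lambda_{1,z}\veu_z$ with $\veu_z$ over $[0,L]$, replaces $\veu_z^\upT\veL\veu_z$ by $\veu_z^\upT M\veu_z$ with $M=\tfrac12(\veL+\veL^\upT)$, and uses the sign hypotheses. However, two of your steps do not hold as written.

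First, your energy identity is wrong by a signless term. The zeroth-order part of $\cbQ_z$ contains $-\nabla\cdot(A_iz)\,u_i$, contributing $-\sum_i\int_{[0,L]}\nabla\cdot(A_iz)u_i^2$; once you write the right-hand side through the quadratic form $Q$ of $(\widetilde{\cbQ})_z$ (in which the drift $-2A_iz\cdot\nabla$ exactly cancels that zeroth-order term), the remaining advection correction is $-\tfrac12\sum_i\int(\nabla\cdot q_i)u_i^2$, not $-\tfrac12\sum_i\int\nabla\cdot(q_i-2A_iz)u_i^2$. Since $-\tfrac12\nabla\cdot q_i=-\tfrac12\nabla\cdot(q_i-2A_iz)-\nabla\cdot(A_iz)$, your identity omits $-\sum_i\int\nabla\cdot(A_iz)u_i^2$, which has no sign (note also that, by periodicity, hypothesis (2) actually forces $\nabla\cdot(q_i-2A_iz)\equiv0$, so it cannot absorb it). Consequently $\lambda_{1,z}\int|\veu_z|^2\geq Q(\veu_z)$ does not follow when the $A_i$ depend on $x$; the paper's own display keeps precisely this $\nabla\cdot(A_iz)$ term.

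Second, and more seriously, the closing identification is based on a false claim. You want $\lambda_{1,z}(\widetilde{\cbQ})=\min_{\vev}Q(\vev)/\int|\vev|^2$, certified by showing the Rayleigh minimum is attained at the positive Krein--Rutman eigenfunction of $(\widetilde{\cbQ})_z$. But the Rayleigh minimum of $Q$ is attained at the principal eigenfunction of the \emph{self-adjoint} operator whose form is $Q$, not of the non-self-adjoint $(\widetilde{\cbQ})_z$, and the equality fails in general: take $N=1$, $A_1\equiv1$, $q_1\equiv0$, $\veL=l(x)$ nonconstant, $z\neq0$. Then $\min Q/\int v^2=\lambda_{1,0}-z^2$, while if $\lambda_{1,z}(\widetilde{\cbQ})$ equaled this value the eigenfunction $u_z$ would be a Rayleigh minimizer, hence a multiple of the eigenfunction $u_0$ of $-\partial_{xx}-l$, and the eigenvalue equation would force $-2zu_0'=0$, i.e.\ $l$ constant; so $\lambda_{1,z}(\widetilde{\cbQ})>\min Q/\int v^2$ strictly. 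Cooperativity and positivity cannot repair this: what is needed is $Q(\veu_z)\geq\lambda_{1,z}(\widetilde{\cbQ})\int|\veu_z|^2$ at the \emph{specific} eigenfunction $\veu_z$, and bounding $Q(\veu_z)$ from below by the global Rayleigh minimum lands strictly below the target; the min--max characterization of Proposition \ref{prop:max--min_characterization_lambdaz} does not help either, because $(\widetilde{\cbQ}_z\vev)_i/v_i$ carries the signless term $-2A_iz\cdot\nabla v_i/v_i$. The paper concludes at this point by invoking the variational formula for the symmetrized operator stemming from Theorem \ref{thm:lambdaz_time_homogeneous} and the corollary of Proposition \ref{prop:equality_between_lambda1_and_lambda1prime_when_self-adjoint}; your substitute argument for that step is a genuine gap, and as it stands your proof only yields a lower bound on $\lambda_{1,z}(\cbQ)$ that is in general strictly weaker than $\lambda_{1,z}(\widetilde{\cbQ})$.
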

\begin{proof}
By time homogeneity of the coefficients, the periodic principal eigenfunction of the parabolic operator $\cbQ_z$ is time homogeneous.
Taking the scalar product between the periodic principal eigenfunction $\veu_z\in\caC^2_\upp(\R^n,(\vez,\vei))$ associated with 
$\lambda_{1,z}$ and $\cbQ_z\veu_z$ and then integrating in $[0,L]$, we get immediately:
\begin{align*}
    \lambda_{1,z}\int_{[0,L]}|\veu_z|^2 & =\sum_{i=1}^N\int_{[0,L]}\nabla u_{z,i}\cdot A_i\nabla u_{z,i}-\int_{[0,L]}\veu_z^\upT\veL\veu_z \\
    & \quad -\sum_{i=1}^N\int_{[0,L]}\left(\frac12\nabla\cdot(q_i-2A_iz)+(z\cdot A_iz) +\nabla\cdot(A_iz)-(q_i\cdot z)\right)u_{z,i}^2.
\end{align*}
The conclusion follows from $\veu_z^\upT\veL\veu_z=\veu_z^\upT\frac12(\veL+\veL^\upT)\veu_z$, 
the sign assumptions on $\nabla\cdot(q_i-2A_iz)$ and $q_i\cdot z$ and the variational formula of Theorem
\ref{thm:lambdaz_time_homogeneous} applied to the operator $\partial_t-\diag(\nabla\cdot(A_i\nabla))-\frac12(\veL+\veL^\upT)$.
\end{proof}

\begin{rem}
In the nonnegative square matrix context, the inequality $\lambda_\upPF(\veL)\leq\lambda_\upPF\left(\frac12(\veL+\veL^\upT)\right)$ 
is a consequence of a theorem by Levinger which states that $t\in[0,1]\mapsto\lambda_\upPF\left(t\veL+(1-t)\veL^\upT)\right)$ is
nondecreasing in $[0,1/2]$, nonincreasing in $[1/2,1]$, and that the function is constant if and only if the unit Perron--Frobenius
eigenvectors of $\veL$ and $\veL^\upT$ coincide. There are many works on this theorem and on its extension to Banach spaces. We 
refer for instance to the recent paper of Altenberg--Cohen \cite{Altenberg_Cohen_2020} and references therein.
\end{rem}

\begin{rem}
The second and third assumptions are obviously satisfied if $q_i$ is divergence-free and $z=0$, but it is also interesting to consider
for instance the case $z\neq 0$ with shear flows $q_i:x\mapsto(\alpha_i(x_2,\dots,x_n),0,\dots,0)^\upT$ with $\alpha_i$ of constant sign.
In biological applications (climate change at constant speed towards the north, fish populations living in a river, etc.) or when
studying planar spreading, such shear flows appear naturally.
\end{rem}

\begin{rem}
    We emphasize that the estimate of the above corollary on $\lambda_{1,z}$ is a lower estimate, contrarily to most estimates
    in this work which are upper estimates (see Subsection \ref{sec:theorems_explicit_formulas}).
\end{rem}

Taking as a test function in the variational formula of Theorem \ref{thm:lambdaz_time_homogeneous} any constant eigenvector of $\veL_{Q,z}(x)$, 
we obtain that the average of the corresponding eigenvalue is smaller than or equal to $-\lambda_1=-\lambda_{1,z}$. In particular, 
noting that a constant Perron--Frobenius eigenvector implies 
$\frac{1}{|[0,L]|}\displaystyle\int_{[0,L]}\lambda_\upPF(\veL_{Q,z}(x))\upd x=\lambda_\upPF(\langle\veL_{Q,z}\rangle)$, the 
following corollary holds.

\begin{cor}\label{cor:self-adjoint_case_and_average_value}
If
\begin{enumerate}
    \item $(A_i)_{i\in[N]}$ and $\veL$ do not depend on $t$,
    \item $\veL(x)$ is symmetric for all $x\in\R^n$, 
    \item there exists $z\in\R^n$ and $Q\in\caC^2(\R^n,\R)$ such that $\int_{[0,L]}\nabla Q=0$ and 
    \[
        A_1^{-1}q_1=A_2^{-1}q_2=\dots=A_N^{-1}q_N=2z+\nabla Q,
    \]
    \item there exists a constant positive vector $\veu\in(\vez,\vei)$ such that $\veu$ is a Perron--Frobenius eigenvector of $\veL_{Q,z}(x)$ 
    for all $x\in\R^n$,
\end{enumerate}
then
\[
    \lambda_1=\lambda_{1,z}\leq -\lambda_\upPF(\langle\veL_{Q,z}\rangle).
\]
\end{cor}

\begin{rem}\label{rem:counter-example_self-adjoint_average_value}
Again, denote
\[
    R=\min_{\veu\in\caC^2_\upp(\R^n,\R^N)\backslash\{\vez\}}\frac{\int_{[0,L]}\left(\sum_{i=1}^N|\nabla u_i|^2-\veu^\upT\veL\veu\right)}{\int_{[0,L]}|\veu|^2}.
\]
Let us construct a counter-example where all the conditions of the statement are satisfied but where, due to heterogeneities in $\veL(x)$,
\[
    R < -\lambda_\upPF(\langle\veL\rangle)=-\frac{1}{|[0,L]|}\displaystyle\int_{[0,L]}\lambda_\upPF\left(\veL(x)\right)\upd x.
\]
(The existence of such counter-examples in the scalar setting is well-known, we provide a vector counter-example just for the sake of completeness.)

In a spirit similar to that of Remark \ref{rem:counter-example_average_eigenvalue}, we set
\[
\veL:x\mapsto\begin{pmatrix} 1 & \eta(x_1) \\ \eta(x_1) & 1\end{pmatrix}
\]
where $\eta$ is the continuous $L_1$-periodic function that coincides on $[0,L_1]$ with $x_1\mapsto\max(L_1/4-|x_1-L_1/4|,0)$.

For all $x\in[0,L]$, $\lambda_\upPF(\veL(x))=1+\eta(x_1)$ with constant eigenvector $\veo$, whence
\[
    R \leq -\frac{1}{L_1}\int_0^{L_1}\left(1+\eta(x_1)\right)\upd x_1 = -1 -\frac{L_1}{16}.
\]
Considering test functions of the form $\veu(x)=u(x_1)\veo$, we get
\[
R\leq\min_{u\in\caC^2_\upp(\R), \|u\|_{\mathcal{L}^2}=1}\int_0^{L_1}\left(|u'(x_1)|^2-\left(1+\eta(x_1)\right)u(x_1)^2\right)\upd x_1
\]
Testing against (a $\caC^2$ approximation of) $u:x_1\mapsto\frac{8\sqrt{3}}{L_1^{3/2}}\eta(x_1)$, we find:
\begin{align*}
R & \leq \frac{96}{L_1^3}\int_0^{L_1}\left(1-\eta(x_1)^2-\eta(x_1)^3\right)\upd x_1 \\
& = -1+\frac{96}{L_1^2}-\frac{3L_1}{16} \\
& < -1 -\frac{L_1}{16}\quad\text{if }L_1>768^{1/3}.
\end{align*}
\end{rem}

\begin{rem}
More interestingly, in the vector setting, the inequality 
\[
-R\geq \frac{1}{|[0,L]|}\int_{[0,L]}\lambda_\upPF(\veL)
\]
might not be satisfied if the fourth assumption of the statement, regarding the existence of a constant positive eigenvector, fails.

To verify this claim, we consider the counter-example $\cbQ\veu=\partial_t \veu-\Delta\veu-\veL\veu$, where
\[
\veL:x\mapsto\frac{1}{1+\eta(x_1)+\eta\left(x_1-\frac{L_1}{3}\right)+\eta\left(x_1-\frac{2L_1}{3}\right)}
\begin{pmatrix} 1+\eta(x_1-L_1/3) & \eta(x_1) \\ \eta(x_1) & 1+\eta(x_1-2L_1/3)\end{pmatrix},
\]
where, this time, $\eta$ is the continuous $L_1$-periodic function that coincides on $[-L_1/2,L_1/2]$ with $x_1\mapsto\max(L_1/6-|x_1|,0)$.
Interval by interval, $\veL$ satisfies:
\[
    \veL(x)=\frac{1}{1+\eta\left(x_1-\frac{2L_1}{3}\right)}
    \begin{pmatrix}
        1 & 0 \\
        0 & 1+\eta\left(x_1-\frac{2L_1}{3}\right)
    \end{pmatrix}
    \quad\text{if }x_1\in\left[-\frac{L_1}{2},-\frac{L_1}{6}\right]+\mathbb{Z}L_1,
\]
\[
    \veL(x)=\frac{1}{1+\eta\left(x_1\right)}
    \begin{pmatrix}
        1 & \eta(x_1) \\
        \eta(x_1) & 1
    \end{pmatrix}
    \quad\text{if }x_1\in\left[-\frac{L_1}{6},\frac{L_1}{6}\right]+\mathbb{Z}L_1,
\]
\[
    \veL(x)=\frac{1}{1+\eta\left(x_1-\frac{L_1}{3}\right)}
    \begin{pmatrix}
        1+\eta\left(x_1-\frac{L_1}{3}\right) & 0 \\
        0 & 1
    \end{pmatrix}
    \quad\text{if }x_1\in\left[\frac{L_1}{6},\frac{L_1}{2}\right]+\mathbb{Z}L_1.
\]
Hence the Perron--Frobenius eigenvalue of $\veL(x)$ is $1$, for all $x\in[0,L]$, and the unique one-dimensional left-continuous unit 
Perron--Frobenius eigenvector is:
\[
    \veu_\upPF:x\mapsto
    \begin{cases}
        \frac{1}{\sqrt{2}}\begin{pmatrix}1\\1\end{pmatrix} & \text{if }x_1\in\left(0,\frac{L_1}{6}\right]\cup\left(\frac{5L_1}{6},L_1\right]+\mathbb{Z}L_1, \\
        \begin{pmatrix}1\\0\end{pmatrix} & \text{if }x_1\in\left(\frac{L_1}{6},\frac{L_1}{2}\right]+\mathbb{Z}L_1, \\
        \begin{pmatrix}0\\1\end{pmatrix} & \text{if }x_1\in\left(\frac{L_1}{2},\frac{5L_1}{6}\right]+\mathbb{Z}L_1.
    \end{cases}
\]
All other unit Perron--Frobenius eigenvectors coincide with this one at all continuity points.

Now, let $\veu\in\caC^2_\upp(\R^n,[\vez,\vei))$ and $\lambda\leq -1$ such that $-\Delta\veu-\veL\veu\leq\lambda\veu$.
Taking the scalar product in $\R^2$ with $\veu$ and integrating by parts in $[0,L]$, we obtain:
\[
0\leq \int_{[0,L]}\sum_{i=1}^2|\nabla u_i|^2\leq\int_{[0,L]}\veu^\upT(\veL-\vect{I})\veu.
\]
Since, at all $x\in[0,L]$, $\veL(x)-\vect{I}$ is a symmetric matrix with nonpositive eigenvalues, $\veu(x)^\upT(\veL(x)-\vect{I})\veu(x)\leq 0$.
Therefore all inequalities above are actually equalities, and in particular $\veu$ is a constant vector satisfying $\veu^\upT(\veL(x)-\vect{I})\veu=0$.
Since no Perron--Frobenius eigenvector is constant, necessarily $\veu=\vez$.

Therefore no $\lambda\leq -1$ can satisfy $-\Delta\veu-\veL\veu=\lambda\veu$ for some positive periodic $\caC^2$ eigenfunction $\veu$, 
and this fact, together with the reduction \eqref{eq:reduction_lambdaz_time_homogeneous},
directly implies that $\lambda_1'>-1$. 

Independently of this observation, Theorem \ref{thm:lambdaz_time_homogeneous} implies that $\lambda_1'=R$. Therefore we have indeed
proved that, for this counter-example,
\[
    -R<\frac{1}{|[0,L]|}\int_{[0,L]}\lambda_\upPF(\veL).
\]
\end{rem}

\subsubsection{Upper estimates: proof of Theorems \ref{thm:lambdaz_divergence-free}, \ref{thm:lambdaz_ari-geo_space_averages},
\ref{thm:lambdaz_ari-geo_time_averages}}

In a similar spirit, the following property requires a line-sum-symmetry assumption ($\veL\veo=\veL^\upT\veo$) and uses
the property described in Eaves--Hoffman--Rothblum--Schneider \cite[Corollary 3]{Eaves_1985}.

\begin{prop}\label{prop:lambdaz_line-sum-symmetry}
Let $z\in\R^n$. Assume:
\begin{enumerate}
    \item for all $i\in[N]$, $q_i\in\caC^1_{\upp}(\R^n,\R^n)$ and $\nabla\cdot(q_i-2A_i z)=0$,
    \item $\veL(t,x)$ is line-sum-symmetric for all $(t,x)\in\clOmper$.
\end{enumerate}

Then
\[
    \lambda_{1,z} \leq -\frac{1}{N}\left(\sum_{i,j=1}^N\langle\hat{l}_{i,j}\rangle+z\cdot\sum_{i=1}^N\left(\langle\hat{A}_i\rangle z-\langle\hat{q}_i\rangle\right)\right)
\]
and this inequality is an equality if $\veL+\diag(\nabla\cdot(A_i z)+z\cdot(A_i z -q_i))$ is irreducible at all $(t,x)\in\clOmper$
with Perron--Frobenius eigenvector $\veo$ and constant Perron--Frobenius eigenvalue.
\end{prop}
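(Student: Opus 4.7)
The plan is to work with the periodic principal eigenfunction $\veu_z\in\caC^{1,2}_\upp(\R\times\R^n,(\vez,\vei))$ of $\cbQ_z$ through a logarithmic substitution. Setting $v_i=\log u_{z,i}$ and dividing the eigenvalue equation $(\cbQ_z\veu_z)_i=\lambda_{1,z}\,u_{z,i}$ componentwise by $u_{z,i}$, together with the identity $u_{z,i}^{-1}\nabla\cdot(A_i\nabla u_{z,i})=\nabla\cdot(A_i\nabla v_i)+\nabla v_i\cdot A_i\nabla v_i$ (valid since $A_i=A_i^\upT$), one obtains the pointwise identity
\[
\partial_t v_i-\nabla\cdot(A_i\nabla v_i)-\nabla v_i\cdot A_i\nabla v_i+(q_i-2A_iz)\cdot\nabla v_i-\sum_j l_{i,j}\upe^{v_j-v_i}-\bigl(z\cdot A_iz+\nabla\cdot(A_iz)-q_i\cdot z\bigr)=\lambda_{1,z}
\]
for each $i\in[N]$ on $\R\times\R^n$. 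I would then integrate this identity over $\clOmper$ and sum over $i$: space-time periodicity annihilates $\int\partial_t v_i$, $\int\nabla\cdot(A_i\nabla v_i)$ and $\int\nabla\cdot(A_iz)$, while integration by parts combined with the divergence-free assumption $\nabla\cdot(q_i-2A_iz)=0$ kills $\int(q_i-2A_iz)\cdot\nabla v_i$. What remains, after introducing mean-value notation, is the exact identity
\[
N\lambda_{1,z}=-\frac{1}{|\clOmper|}\sum_i\int_{\clOmper}\nabla v_i\cdot A_i\nabla v_i-\frac{1}{|\clOmper|}\sum_{i,j}\int_{\clOmper}l_{i,j}\upe^{v_j-v_i}-\sum_i\bigl(z\cdot\langle\hat{A}_i\rangle z-\langle\hat{q}_i\rangle\cdot z\bigr).
\]

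The core estimate is the pointwise bound $\sum_{i,j}l_{i,j}(t,x)\upe^{v_j-v_i}(t,x)\geq\sum_{i,j}l_{i,j}(t,x)$, derived from the convexity inequality $\upe^x\geq 1+x$: this yields $\sum_{i,j}l_{i,j}\upe^{v_j-v_i}\geq\sum_{i,j}l_{i,j}+\sum_{i,j}l_{i,j}(v_j-v_i)$, and the linear remainder vanishes pointwise thanks to line-sum-symmetry, since
\[
\sum_{i,j}l_{i,j}(v_j-v_i)=\sum_j v_j(\veL^\upT\veo)_j-\sum_i v_i(\veL\veo)_i=0.
\]
Combined with the nonnegativity of $\sum_i\nabla v_i\cdot A_i\nabla v_i$ (uniform ellipticity), this directly produces the announced upper bound on $\lambda_{1,z}$.

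For the equality case, suppose $\vect{M}:=\veL+\diag(\nabla\cdot(A_iz)+z\cdot(A_iz-q_i))$ is pointwise irreducible with $\vect{M}(t,x)\veo=c\veo$ for some constant $c$ independent of $(t,x)$. A direct computation using $\caP_i 1=0$ gives $(\cbQ_z\veo)_i=-(\vect{M}\veo)_i=-c$, so $\cbQ_z\veo=-c\veo$; uniqueness of the periodic principal eigenpair of the fully coupled $\cbQ_z$ forces $\lambda_{1,z}=-c$, and averaging the identity $\vect{M}\veo=c\veo$ over $\clOmper$ (noting $\int\nabla\cdot(A_iz)=0$) identifies $c$ as exactly $N^{-1}[\sum_{i,j}\langle\hat{l}_{i,j}\rangle+z\cdot\sum_i(\langle\hat{A}_i\rangle z-\langle\hat{q}_i\rangle)]$, matching the bound. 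The conceptual point, and the main thing to notice, is that line-sum-symmetry is \emph{precisely} the condition that makes the first-order Taylor remainder $\sum_{i,j}l_{i,j}(v_j-v_i)$ vanish; keeping only the nonnegative quadratic-and-higher part $\upe^x-1-x$ is then what allows one to recover the exact arithmetic average $\sum_{i,j}\langle\hat{l}_{i,j}\rangle$ on the right-hand side rather than a weaker geometric-mean estimate.
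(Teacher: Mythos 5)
Your proof is correct and follows essentially the same route as the paper: the logarithmic substitution $v_i=\log u_{z,i}$ followed by integration over the periodicity cell is exactly the paper's pairing of $\cbQ_z\veu_z-\lambda_{1,z}\veu_z$ with $\veu_z^{\circ -1}=(1/u_{z,i})_{i\in[N]}$, combined with integration by parts and the divergence-free hypothesis on $q_i-2A_iz$. The only differences are minor but welcome: where the paper invokes Eaves--Hoffman--Rothblum--Schneider for the inequality $\sum_i(\veL\veu)_i/u_i\geq\sum_{i,j}l_{i,j}$, you reprove it inline via $\upe^x\geq 1+x$ and the cancellation $\sum_{i,j}l_{i,j}(v_j-v_i)=0$ forced by line-sum-symmetry (legitimate, since the strict part of the inequality only involves the off-diagonal, hence nonnegative, entries); and in the equality case you check directly that $\veo$ is a positive space-time periodic eigenfunction of $\cbQ_z$ with eigenvalue $-c$ and identify $-c$ with the stated bound by averaging, whereas the paper argues through the equality case of the cited matrix inequality. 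Both versions of the equality argument rest on the simplicity/uniqueness of the periodic principal eigenpair of the fully coupled operator $\cbQ_z$, which holds under the standing assumptions, so your variant is sound.
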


\begin{proof}
Denote, for all $i\in[N]$, $q_i-2A_i z=b_i$ and recall
\begin{align*}
    \cbQ_z & =\cbQ-\diag\left(2A_i z\cdot\nabla+z\cdot A_i z+\nabla\cdot\left(A_i z\right)-q_i\cdot z\right) \\
    & = \diag\left(\partial_t - \nabla\cdot(A_i\nabla) +b_i\cdot\nabla-\nabla\cdot(A_i z)-z\cdot\left(A_i z-q_i\right)\right)-\veL
\end{align*}

Denote $\veu=\veu_z$ the unit periodic principal eigenfunction associated with $\lambda_{1,z}$.
Taking the scalar product in $\R^N$ between $\left(1/u_{i}\right)_{i\in[N]}$ and $\cbQ_z\veu-\lambda_{1,z}\veu$, 
integrating by parts in $\clOmper$, using the fact that all $b_i$ are divergence-free and using 
\begin{equation}\label{eq:equality_line-sum-symmetry_gradient}
    \sum_{i=1}^N\frac{(\veL\veu)_i}{u_i}\geq \sum_{i,j=1}^N l_{i,j}
    \quad\text{and}\quad\int_{\clOmper}\frac{\nabla u_i}{u_i}\cdot A_i\frac{\nabla u_i}{u_i}\geq 0,
\end{equation}
we get
\[
    \lambda_{1,z} \leq -\frac{1}{NT|[0,L]|}\left(\sum_{i,j=1}^N\int_{\clOmper}l_{i,j}+z\cdot\sum_{i=1}^N\int_{\clOmper}\left(A_i z-q_i\right)\right).
\]
From the equality case in \eqref{eq:equality_line-sum-symmetry_gradient}, we deduce that this inequality is an equality if
$\veu_z\in\vspan(\veo)$ and $\veL(t,x)$ is irreducible at all $(t,x)\in\clOmper$. These conditions are satisfied if and only if
$\veL+\diag(\nabla\cdot(A_i z)+z\cdot(A_i z -q_i))$ is irreducible at all $(t,x)\in\clOmper$ with 
Perron--Frobenius eigenvector $\veo$ and Perron--Frobenius eigenvalue $\lambda_{1,z}$, both constant. Finally, by uniqueness of the
periodic principal eigenvalue, the assumption that the Perron--Frobenius eigenvalue is $\lambda_{1,z}$ can be replaced without loss
of generality by the assumption that the Perron--Frobenius eigenvalue is constant.
\end{proof}

\begin{rem}
Circulant matrices and doubly stochastic matrices are line-sum-symmetric and always admit $\veo$ as eigenvector. 
Hence all inequalities on $\lambda_{1,z}$ are equalities if:
\begin{enumerate}
    \item all $A_i$ are constant and coincide and all $q_i$ are constant and coincide,
    \item $\veL$ is, at all $(t,x)$, irreducible and either circulant or doubly stochastic,
    \item its Perron--Frobenius eigenvalue $\lambda_\upPF(\veL(t,x))=\sum_{j=1}^N l_{1,j}(t,x)$ is constant (this condition being automatically satisfied in the doubly stochastic case). 
\end{enumerate}
This shows in particular that the inequalities can all be equalities even if $\veL$ is not spatio-temporally constant.
\end{rem}

The following two corollaries are concerned with special cases.

\begin{cor}\label{cor:lambda1prime_line-sum-symmetry}
Assume:
\begin{enumerate}
    \item for all $i\in[N]$, $q_i\in\caC^1_{\upp}(\R^n,\R^n)$ and $\nabla\cdot q_i=0$,
    \item $\veL(t,x)$ is line-sum-symmetric for all $(t,x)\in\clOmper$.
\end{enumerate}

Then
\[
    \lambda_1' \leq -\frac{1}{N}\sum_{i,j=1}^N\langle\hat{l}_{i,j}\rangle=-\frac{1}{N}\veo^\upT\langle\hat{\veL}\rangle\veo
\]
and this inequality is an equality if $\veL$ is irreducible at all $(t,x)\in\clOmper$ with Perron--Frobenius eigenvector $\veo$ 
and constant Perron--Frobenius eigenvalue.
\end{cor}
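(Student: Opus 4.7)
The plan is to obtain this corollary as a direct specialization of Proposition \ref{prop:lambdaz_line-sum-symmetry} (Theorem \ref{thm:lambdaz_divergence-free}) at the particular value $z=0$, combined with the identification $\lambda_1'=\lambda_{1,0}$ already established in Proposition \ref{prop:lambda1prime_lambda0}.

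First, I would check that the assumptions of Proposition \ref{prop:lambdaz_line-sum-symmetry} are satisfied when one sets $z=0$. The divergence-free condition $\nabla\cdot(q_i-2A_i z)=0$ reduces to $\nabla\cdot q_i=0$, which is exactly the first hypothesis of the corollary; the line-sum-symmetry of $\veL(t,x)$ is the second hypothesis and is unchanged. Then I would apply the inequality provided by Proposition \ref{prop:lambdaz_line-sum-symmetry}: setting $z=0$, all terms involving $z$ vanish, so
\[
    \lambda_{1,0}\leq-\frac{1}{N}\sum_{i,j=1}^N\langle\hat{l}_{i,j}\rangle.
\]
Invoking $\lambda_1'=\lambda_{1,0}$ from Proposition \ref{prop:lambda1prime_lambda0} immediately yields the claimed bound, and the rewriting as $-\frac{1}{N}\veo^\upT\langle\hat{\veL}\rangle\veo$ is a purely algebraic identity.

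For the equality case, I would again specialize the sufficient condition given at the end of Proposition \ref{prop:lambdaz_line-sum-symmetry}: the matrix $\veL+\diag(\nabla\cdot(A_i z)+z\cdot(A_i z-q_i))$ at $z=0$ is just $\veL$ itself, so the irreducibility, Perron--Frobenius eigenvector $\veo$, and constant Perron--Frobenius eigenvalue conditions are precisely those stated in the corollary.

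There is essentially no obstacle here; the corollary is flagged as a direct consequence in the text preceding it, and the only mildly delicate point is making sure that the $z=0$ substitution eliminates exactly the right terms in both the inequality and the equality criterion. No additional estimates or constructions are needed beyond what has already been done in the proof of Proposition \ref{prop:lambdaz_line-sum-symmetry}.
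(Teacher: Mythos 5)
Your proposal is correct and coincides with the paper's (implicit) argument: the corollary is obtained precisely by setting $z=0$ in Proposition \ref{prop:lambdaz_line-sum-symmetry}, under which the divergence-free hypothesis becomes $\nabla\cdot q_i=0$, the $z$-dependent terms in the bound and in the equality criterion vanish, and the identification $\lambda_1'=\lambda_{1,0}$ of Proposition \ref{prop:lambda1prime_lambda0} gives the stated inequality. No gaps.
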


\begin{cor}\label{cor:lambda1_line-sum-symmetry}
Assume:
\begin{enumerate}
    \item for all $i\in[N]$, $q_i\in\caC^1_{\upp}(\R^n,\R^n)$ and $q_i$ and each column of $A_i$ are divergence-free,
    \item $\veL(t,x)$ is line-sum-symmetric for all $(t,x)\in\clOmper$.
\end{enumerate}

Denote
\[
[A]=\frac{1}{N}\sum_{i=1}^N\langle\hat{A}_i\rangle,\quad [q]=\frac{1}{N}\sum_{i=1}^N\langle\hat{q}_i\rangle.
\]

Then
\[
    \lambda_1\leq -\frac{1}{N}\veo^\upT\langle\hat{\veL}\rangle\veo+\frac{1}{4}[q]\cdot [A][q]
\]
with equality if 
\[
    \veL+\frac12\diag\left(\nabla\cdot(A_i[A]^{-1}[q]+[A]^{-1}[q]\cdot\left(\frac12 A_i[A]^{-1}[q] -q_i\right)\right)
\]
is irreducible at all $(t,x)\in\clOmper$ with Perron--Frobenius eigenvector $\veo$ and constant Perron--Frobenius eigenvalue.
\end{cor}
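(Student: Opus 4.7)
The plan is to reduce this to Theorem~\ref{thm:lambdaz_divergence-free} (= Proposition~\ref{prop:lambdaz_line-sum-symmetry}) by noting that the divergence-free assumptions on $q_i$ and on the columns of $A_i$ imply that, for \emph{every} $z\in\R^n$, one has $\nabla\cdot(q_i-2A_iz)=\nabla\cdot q_i-2\sum_{\beta=1}^n z_\beta\,\nabla\cdot(A_i)_{\cdot,\beta}=0$. Together with the line-sum-symmetry hypothesis on $\veL$, this lets us apply Theorem~\ref{thm:lambdaz_divergence-free} simultaneously at every $z$ and obtain the family of inequalities
\[
\lambda_{1,z}\ \le\ -\frac{1}{N}\veo^\upT\langle\hat{\veL}\rangle\veo\ -\ z\cdot[A]z\ +\ z\cdot[q] \qquad\text{for all }z\in\R^n.
\]

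Next, I would invoke the identification $\lambda_1=\max_{z\in\R^n}\lambda_{1,z}$ from Theorem~\ref{thm:existence_characterization_Rn} and maximize the right-hand side above. The matrix $[A]=\frac{1}{N}\sum_i\langle\hat{A}_i\rangle$ is symmetric positive definite (as an average of symmetric positive definite matrices, via \ref{ass:ellipticity} and \ref{ass:smooth_periodic}), so the quadratic $z\mapsto -z\cdot[A]z+z\cdot[q]$ admits a unique maximizer $z^\star=\tfrac{1}{2}[A]^{-1}[q]$, at which its value equals $\tfrac{1}{4}[q]\cdot[A]^{-1}[q]$. This produces the claimed upper bound on $\lambda_1$.

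For the equality case, I would observe that the quantity $\veL+\diag(\nabla\cdot(A_iz)+z\cdot(A_iz-q_i))$ evaluated at $z=z^\star=\tfrac{1}{2}[A]^{-1}[q]$ coincides with
\[
\veL+\tfrac{1}{2}\diag\!\left(\nabla\cdot(A_i[A]^{-1}[q])+[A]^{-1}[q]\cdot\bigl(\tfrac{1}{2}A_i[A]^{-1}[q]-q_i\bigr)\right),
\]
which is exactly the matrix appearing in the statement. Under the stated irreducibility/constancy condition, the equality case in Theorem~\ref{thm:lambdaz_divergence-free} gives $\lambda_{1,z^\star}$ equal to the claimed right-hand side, while the inequality $\lambda_1\ge\lambda_{1,z^\star}$ together with the upper bound obtained above yields equality for $\lambda_1$.

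There is essentially no serious obstacle here: the corollary is a routine optimization over $z$ of the bound from Theorem~\ref{thm:lambdaz_divergence-free}, combined with the max-characterization of $\lambda_1$. The only mild technical points are (i) checking that the two divergence-freeness hypotheses really do kill $\nabla\cdot(q_i-2A_iz)$ for every $z$ simultaneously, and (ii) verifying that the algebraic identity at $z^\star$ matches the diagonal perturbation written in the statement; both are direct verifications.
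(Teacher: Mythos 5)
Your proof is correct and is essentially the paper's own argument: apply Proposition~\ref{prop:lambdaz_line-sum-symmetry} at every $z$ (the two divergence-freeness hypotheses make $\nabla\cdot(q_i-2A_iz)=0$ for all $z$), then use $\lambda_1=\max_z\lambda_{1,z}$ and optimize the concave quadratic, the paper merely phrasing the optimization as the shift $z\mapsto z+\frac12[A]^{-1}[q]$, with the same equality-case matrix at $z^\star=\frac12[A]^{-1}[q]$. One remark: the value you compute, $\frac14\,[q]\cdot[A]^{-1}[q]$, is indeed what the argument yields (as a scalar sanity check confirms), so the $[A]$ appearing in the stated bound is a typo for $[A]^{-1}$; your assertion that this ``produces the claimed upper bound'' should be read with that correction rather than as literally matching the displayed formula.
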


\begin{proof}
The assumption that all $q_i$ and all columns of all $A_i$ are divergence-free implies that, for all $i\in[N]$ 
and $z\in\R^n$, $q_i-2A_i z$ is divergence-free (and actually the converse implication is also true: consider 
the special cases $z=0,e_{1},\dots,e_{n}$, where $e_\alpha=(\delta_{\alpha\beta})_{\beta\in[n]}$).

By \ref{ass:ellipticity}, $[A]$ is invertible, so that the inequality of Proposition 
\ref{prop:lambdaz_line-sum-symmetry} reads
\[
    \lambda_{1,z+\frac12[A]^{-1}[q]} \leq -\frac{1}{N}\veo^\upT\langle\hat{\veL}\rangle\veo
    -z\cdot[A]z+\frac{1}{4}[q]\cdot [A][q].
\]
The inequality on $\lambda_1=\max_{z\in\R^n}\lambda_{1,z}$ and the associated equality case follow directly.
\end{proof}

Now we turn to the proof of Theorem \ref{thm:lambdaz_ari-geo_space_averages}.

\begin{prop}
Let $z\in\R^n$. If, for all $i\in[N]$, $q_i\in\caC^1_{\upp}(\R^n,\R^n)$ and $\nabla\cdot(q_i-2A_iz)=0$, then
\[
    \lambda_{1,z}\leq \lambda_{1,\upp}\left(\partial_t-\veL^\#-\diag\left(z\cdot\left(\langle A_i\rangle z-\langle q_i\rangle\right)\right)\right),
\]
where the entries of the matrix $\veL^\#=\left(l_{i,j}^\#\right)_{(i,j)\in[N]^2}$ are defined by:
\[
    l_{i,j}^\#=
    \begin{cases}
        \frac{1}{|[0,L]|}\int_{[0,L]}l_{i,i} & \text{if }i=j, \\
        \exp\left(\frac{1}{|[0,L]|}\int_{[0,L]}\ln l_{i,j}\right) & \text{if }i\neq j\text{ and }\displaystyle\min_{(t,x)\in\clOmper}l_{i,j}(t,x)> 0, \\
        0 & \text{otherwise}.
    \end{cases}
\]
\end{prop}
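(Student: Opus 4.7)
The plan is to apply a Hopf--Cole transform to the periodic principal eigenfunction of $\cbQ_z$, integrate the resulting identity over the spatial period, and conclude by Jensen's inequality and the max--min characterization.

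Concretely, I let $\veu_z\in\caC^{1,2}_\upp(\R\times\R^n,(\vez,\vei))$ be the periodic principal eigenfunction associated with $\lambda_{1,z}$ and set $\varphi_i=\ln u_{z,i}$, which belongs to $\caC^{1,2}_\upp(\R\times\R^n,\R)$. Using the pointwise identity $\nabla\cdot(A_i\nabla u_{z,i})/u_{z,i}=\nabla\cdot(A_i\nabla\varphi_i)+\nabla\varphi_i\cdot A_i\nabla\varphi_i$, the eigenequation $\cbQ_z\veu_z=\lambda_{1,z}\veu_z$ rewrites, componentwise, as
\[
\lambda_{1,z}=\partial_t\varphi_i-\nabla\cdot(A_i\nabla\varphi_i)-\nabla\varphi_i\cdot A_i\nabla\varphi_i+(q_i-2A_iz)\cdot\nabla\varphi_i-\nabla\cdot(A_iz)-z\cdot A_iz+q_i\cdot z-\sum_{j=1}^N l_{i,j}\upe^{\varphi_j-\varphi_i}.
\]
I then integrate this identity over $[0,L]$: spatial periodicity kills $\int\nabla\cdot(A_i\nabla\varphi_i)$ and $\int\nabla\cdot(A_iz)$, and the hypothesis $\nabla\cdot(q_i-2A_iz)=0$ combined with an integration by parts kills $\int(q_i-2A_iz)\cdot\nabla\varphi_i$. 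Dividing by $|[0,L]|$ and writing $\langle\cdot\rangle$ for the spatial mean, I obtain
\[
\lambda_{1,z}=\partial_t\langle\varphi_i\rangle-\langle\nabla\varphi_i\cdot A_i\nabla\varphi_i\rangle-z\cdot\langle A_i\rangle z+z\cdot\langle q_i\rangle-\sum_{j=1}^N\langle l_{i,j}\upe^{\varphi_j-\varphi_i}\rangle.
\]

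Next, I discard the nonnegative Dirichlet-type term $\langle\nabla\varphi_i\cdot A_i\nabla\varphi_i\rangle\geq 0$ granted by \ref{ass:ellipticity}, and apply Jensen's inequality to each off-diagonal exponential sum. For $j\neq i$ with $l_{i,j}$ pointwise positive on $\clOmper$,
\[
\langle l_{i,j}\upe^{\varphi_j-\varphi_i}\rangle=\langle\upe^{\ln l_{i,j}+\varphi_j-\varphi_i}\rangle\geq\upe^{\langle\ln l_{i,j}\rangle+\langle\varphi_j\rangle-\langle\varphi_i\rangle}=l_{i,j}^\#\upe^{\langle\varphi_j\rangle-\langle\varphi_i\rangle};
\]
in the remaining off-diagonal cases one has $l_{i,j}^\#=0$ by the convention in the theorem statement, and the required bound reduces to the trivial nonnegativity $\langle l_{i,j}\upe^{\varphi_j-\varphi_i}\rangle\geq 0$; for $j=i$ the identity $\langle l_{i,i}\rangle=l_{i,i}^\#$ holds by definition. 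Setting $w_i=\upe^{\langle\varphi_i\rangle}$, so that $\partial_t w_i/w_i=\partial_t\langle\varphi_i\rangle$, and multiplying by $w_i$, these estimates combine into the componentwise super-solution inequality
\[
\partial_t w_i-\sum_{j=1}^N l_{i,j}^\# w_j-(z\cdot\langle A_i\rangle z-z\cdot\langle q_i\rangle)w_i\geq\lambda_{1,z}w_i,\quad i\in[N].
\]

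To conclude, I regard $\vect{w}\in\caC^1_\upp(\R,(\vez,\vei))$ as a space-homogeneous positive test function on $\R\times\R^n$ for the averaged spatially homogeneous operator $\partial_t-\diag(\nabla\cdot(\langle A_i\rangle\nabla)-\langle q_i\rangle\cdot\nabla)-\veL^\#$. Since the spatial derivatives annihilate $\vect{w}$, the above inequality says exactly that $\vect{w}$ is a positive super-solution of the conjugated operator associated with this averaged operator, and the max--min characterization of Proposition \ref{prop:max--min_characterization_lambdaz} delivers $\lambda_{1,z}\leq\lambda_{1,\upp}$ of that conjugated operator. By the spatial homogeneity identification recalled immediately after the theorem statement, this $\lambda_{1,\upp}$ is precisely the $\lambda_{1,z}$ of the averaged operator appearing in the claim. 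The only delicate point I anticipate is that $\veL^\#$ need not satisfy \ref{ass:irreducible}, since by convention $l_{i,j}^\#=0$ whenever $l_{i,j}$ vanishes somewhere on $\clOmper$; in that case one perturbs $\veL^\#$ upward by a small constant positive matrix to recover full coupling, applies the max--min argument to the perturbed operator, and passes to the limit via the continuity result of Theorem \ref{thm:continuity_eigenvalue_L}. The remaining steps are routine bookkeeping with integration by parts and Jensen's inequality.
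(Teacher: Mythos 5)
Your argument is essentially the paper's own proof: dividing each component of the eigenequation by $u_{z,i}$ (your Hopf--Cole substitution is the same computation), integrating over the spatial period using periodicity and $\nabla\cdot(q_i-2A_iz)=0$, discarding the nonnegative Dirichlet-type term, applying Jensen's inequality to the off-diagonal terms, and using the spatial geometric means $w_i=\upe^{\langle\ln u_{z,i}\rangle}$ as a positive time-periodic super-solution in the max--min/min--max characterization of the periodic principal eigenvalue. Your explicit perturbation-and-continuity step to handle a possibly reducible $\veL^\#$ is a welcome detail that the paper's sketch leaves implicit.
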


\begin{proof}
The proof is quite similar to that of Proposition \ref{prop:lambdaz_line-sum-symmetry}, we only sketch it.

Multiply each line of $\cbQ_z\veu-\lambda_{1,z}\veu$ by $1/u_i$, integrate by parts in $[0,L]$, divide by $|[0,L]|$, 
define $J_i=\{j\in[N]\backslash\{i\}\ |\ \min_{\clOmper}l_{i,j}>0\}$, use the Jensen inequality:
\begin{align*}
    \frac{1}{|[0,L]|}\int_{[0,L]}\sum_{j\in[N]\backslash\{i\}}\frac{l_{i,j}u_j}{u_i} & 
    \geq \frac{1}{|[0,L]|}\int_{[0,L]}\sum_{j\in J_i}\upe^{-\ln(u_i)+\ln(l_{i,j})+\ln(u_j)} \\
    & \geq \sum_{j\in J_i}\upe^{\frac{1}{|[0,L]|}\int_{[0,L]}(-\ln(u_i)+\ln(l_{i,j})+\ln(u_j))} \\
    & = \sum_{j\in J_i}\frac{\upe^{\frac{1}{|[0,L]|}\int_{[0,L]}\ln(l_{i,j})}\upe^{\frac{1}{|[0,L]|}\int_{[0,L]}\ln(u_j)}}{\upe^{\frac{1}{|[0,L]|}\int_{[0,L]}\ln(u_i)}},
\end{align*}
define the positive function $\vev:t\mapsto\left(\exp\left(\frac{1}{|[0,L]|}\int_{[0,L]}\ln(u_i(t,x))\upd x\right)\right)_{i\in[N]}$, find 
\[
    \vev'-\left[\veL^\#+\diag\left(z\cdot\frac{1}{|[0,L]|}\int_{[0,L]}(A_iz-q_i)\right)\right]\vev\geq\lambda_{1,z}\vev,
\]
and subsequently use the min--max formula for the periodic principal eigenvalue. The result follows.
\end{proof}

Repeating the exact same procedure but this time with averages in $[0,T]$, we also find the estimate of Theorem 
\ref{thm:lambdaz_ari-geo_time_averages}, recalled below. 

\begin{prop}
Let $z\in\R^n$. If $(A_i)_{i\in[N]}$, $(q_i)_{i\in[N]}$ and $\veL$ do not depend on $x$, then
\[
    \lambda_{1,z}\leq -\lambda_{\upPF}\left(\veL^\flat+\diag\left(z\cdot\hat{A}_i z-\hat{q}_i\cdot z\right)\right),
\]
where the entries of the matrix $\veL^\flat=\left(l_{i,j}^\flat\right)_{(i,j)\in[N]^2}$ are defined by:
\[
    l_{i,j}^\flat=
    \begin{cases}
        \frac{1}{T}\int_0^T l_{i,i} & \text{if }i=j, \\
        \exp\left(\frac{1}{T}\int_0^T\ln l_{i,j}\right) & \text{if }i\neq j\text{ and }\displaystyle\min_{t\in[0,T]}l_{i,j}(t)> 0, \\
        0 & \text{otherwise}.
    \end{cases}
\]
\end{prop}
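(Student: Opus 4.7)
The plan is to repeat the argument of the preceding proposition (for the spatial geometric-mean matrix $\veL^\#$), replacing spatial integration by temporal integration. The essential simplification afforded by the assumption that all coefficients are $x$-independent is that the periodic principal eigenfunction $\veu_z$ of $\cbQ_z$ must itself depend only on $t$. Indeed, for any $y\in\R^n$, the translated function $(t,x)\mapsto\veu_z(t,x+y)$ is again a positive periodic eigenfunction for the eigenvalue $\lambda_{1,z}$, hence proportional to $\veu_z$ by uniqueness; the multiplicative cocycle $y\mapsto c(y)>0$ must be of the form $\upe^{\alpha\cdot y}$, and the $L$-periodicity of $\veu_z$ in $x$ then forces $\alpha=\vez$. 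Using $A_i=A_i^\upT$ and the $x$-independence of $A_i$, the eigenvalue PDE collapses to the ODE system
\[
\veu_z'(t)=\tilde{\veL}(t)\veu_z(t)+\lambda_{1,z}\veu_z(t),\qquad \tilde{\veL}(t):=\veL(t)+\diag\bigl(z\cdot A_i(t)z-q_i(t)\cdot z\bigr).
\]

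Next I would divide the $i$-th component by $u_{z,i}$, producing $(\ln u_{z,i})'$ on the LHS, and integrate over $[0,T]$; periodicity annihilates this boundary term, yielding
\[
-\lambda_{1,z}=\hat{l}_{i,i}+z\cdot\hat{A}_iz-\hat{q}_i\cdot z+\frac{1}{T}\int_0^T\sum_{j\neq i}l_{i,j}(t)\frac{u_{z,j}(t)}{u_{z,i}(t)}\upd t.
\]
For $j\in J_i:=\{j\neq i:\min_{[0,T]}l_{i,j}>0\}$, writing the integrand as $\exp(\ln l_{i,j}+\ln u_{z,j}-\ln u_{z,i})$ and applying Jensen's inequality to the exponential gives
\[
\frac{1}{T}\int_0^T l_{i,j}\frac{u_{z,j}}{u_{z,i}}\upd t\geq l_{i,j}^\flat\frac{v_j}{v_i},\qquad v_i:=\exp\left(\frac{1}{T}\int_0^T\ln u_{z,i}(t)\upd t\right),
\]
where the log-integrals are finite thanks to the positive lower and upper pointwise bounds on $\veu_z$ supplied by Proposition \ref{prop:harnack_inequality}. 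Indices $j\notin J_i\cup\{i\}$ contribute nonnegatively while satisfying $l_{i,j}^\flat=0$, so they may be discarded from the lower bound.

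Assembling these componentwise estimates into vector form, and setting $\vect{M}:=\veL^\flat+\diag(z\cdot\hat{A}_iz-\hat{q}_i\cdot z)$, we obtain
\[
\vect{M}\vev\leq -\lambda_{1,z}\vev\quad\text{componentwise, with }\vev\gg\vez.
\]
Since $\vect{M}$ is essentially nonnegative, the Collatz--Wielandt inequality (extended by continuity to the possibly reducible case, consistent with the convention used throughout the paper for $\lambda_\upPF$) delivers $\lambda_\upPF(\vect{M})\leq -\lambda_{1,z}$, which is exactly the claimed estimate. The only nonroutine step is the spatial-homogeneity reduction of paragraph one; everything else is a clean transposition of the spatial averaging argument already carried out for $\veL^\#$.
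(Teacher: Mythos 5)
Your proof is correct and takes essentially the same route as the paper, which simply repeats the spatial-mean procedure with time averages: divide each line by $u_i$, average over $[0,T]$, apply Jensen's inequality to the off-diagonal terms, and conclude via the Perron--Frobenius (min--max) characterization. The only step the paper leaves implicit is the one you spell out in your first paragraph, namely that spatial homogeneity of the coefficients forces the periodic principal eigenfunction of $\cbQ_z$ to depend on $t$ alone, reducing the eigenproblem to the cooperative ODE system.
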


The estimate of Corollary \ref{cor:lambdaz_ari-geo_space-time_averages} follows directly.

\subsection{Optimization: proof of Theorems \ref{thm:optim_doubly_stochastic}--\ref{thm:optimization_spatial_distribution}}

\subsubsection{Optimization of the normalized mutation matrix: proof of Theorem \ref{thm:optim_doubly_stochastic}}

In this section we prove Theorem \ref{thm:optim_doubly_stochastic}.

We recall that a doubly stochastic matrix $\vect{S}\in\R^{N\times N}$ is a nonnegative matrix such that $\vect{S}\veo=\vect{S}^\upT\veo=\veo$.
Denote $\vect{\mathcal{S}}\subset\mathcal{L}^\infty_\upp(\R\times\R^n,\R^{N\times N})$ the set of all periodic functions whose values are doubly 
stochastic matrices almost everywhere and $\vect{\mathcal{S}}_{\{0,1\}}$ the subset of all functions valued almost
everywhere in the set of permutation matrices.

Although we assumed until now that the zeroth order term $\veL$ of $\cbQ$ is H\"{o}lder-continuous, it can be
verified that the family $(\lambda_{1,z})_{z\in\R^n}$ can still be defined if $\veL$ has only an $\mathcal{L}^\infty$ regularity, 
using for instance a standard regularization procedure not detailed here.

We begin with the following decomposition lemma.

\begin{lem}\label{lem:decomposition_L}
    Let $\vect{A}\in\R^{N\times N}$ be a non-diagonal, essentially nonnegative matrix and $\vect{r}=\vect{A}^\upT\veo$.
    
    Assume $\vect{A}-\diag\left(\vect{r}\right)$ admits a positive Perron--Frobenius eigenvector $\vev\in(\vez,\vei)$
    and let $\vect{\nu}=(1/v_i)_{i\in[N]}$.

    Then there exists $m_0>0$ such that, for any $m\in(0,m_0]$, the matrix 
    $\vect{S}=m(\vect{A}-\diag(\vect{r}))\diag(\vect{\nu})^{-1}+\vect{I}$
    is doubly stochastic.

    In other words, $\vect{A}$ can be decomposed as $\vect{A}=\diag(\vect{r})+(\vect{S}-\vect{I})\diag(\vect{\mu})$,
    with $\vect{r}\in\R^N$, $\vect{S}\in[0,1]^{N\times N}$ doubly stochastic, $\vect{\mu}\in(\vez,\vei)$.
\end{lem}

\begin{proof}
    By definition of $\vect{r}$, the matrix $\vect{A}-\diag(\vect{r})$ admits $\veo$ as left Perron--Frobenius
    eigenvector with Perron--Frobenius eigenvalue $0$. Hence 
    \[
        \lambda_\upPF(\vect{A}-\diag(\vect{r}))=\lambda_\upPF\left((\vect{A}-\diag(\vect{r}))^\upT\right)=0
    \]
    and $\vev$ satisfies $(\vect{A}-\diag(\vect{r}))\vev=\vez$.

    For any $m>0$, the essentially nonnegative matrix 
    \[
        \vect{S}=m(\vect{A}-\diag(\vect{r}))\diag(\vect{\nu})^{-1}+\vect{I}
    \]
    satisfies:
    \[
        \veo^\upT\vect{S}=m\veo^\upT\left[(\vect{A}-\diag(\vect{r}))\diag(\vect{\nu})^{-1}+\vect{I}\right]=\vez^\upT\diag(\vect{\nu})^{-1}+\veo^\upT=\veo^\upT,
    \]
    \[
        \vect{S}\veo=\left[m(\vect{A}-\diag(\vect{r}))\diag(\vect{\nu})^{-1}+\vect{I}\right]\veo=
        m(\vect{A}-\diag(\vect{r}))\vev+\veo=\veo,
    \]
    \textit{i.e.} $\veo$ is a left and right Perron--Frobenius eigenvector with Perron--Frobenius
    eigenvalue $1$. 
    
    Note that, by definition of $\vect{r}$, $r_i-a_{i,i}\geq 0$ for each $i\in[N]$. Since $\vect{A}$
    is non-diagonal, $\max_{i\in[N]}(r_i-a_{i,i})v_i>0$ and $\max_{i,j\in[N],i\neq j} a_{i,j}v_j>0$. 
    Therefore, for $\vect{S}$ to be doubly stochastic, it only remains to choose $m\in(0,m_0]$
    with $m_0$ defined as
    \[
        m_0=\min\left[\frac{1}{\displaystyle\max_{i,j\in[N],i\neq j} a_{i,j}v_j},\frac{1}{\displaystyle\max_{i\in[N]}\left(r_i-a_{i,i}\right)v_i}\right].
    \]
\end{proof}

\begin{rem}
    It can be easily verified that any other decomposition 
    $\vect{A}=\diag(\widetilde{\vect{r}})+(\widetilde{\vect{S}}-\vect{I})\diag(\widetilde{\vect{\mu}})$
    with $\widetilde{\vect{r}}\in\R^N$, $\widetilde{\vect{S}}\in[0,1]^{N\times N}$ doubly stochastic, 
    $\widetilde{\vect{\mu}}\in(\vez,\vei)$ satisfies, for some $m>0$, $\vect{r}=\widetilde{\vect{r}}$,
    $\vect{\mu}=\frac{1}{m}\widetilde{\vect{\mu}}$ and $\vect{S}=\vect{I}+m(\widetilde{\vect{S}}-\vect{I})$.
\end{rem}

\begin{rem}
    When such a decomposition $\vect{A}=\diag(\vect{r})+(\vect{S}-\vect{I})\diag(\vect{\mu})$ exists, 
    with $\vect{r}\in\R^N$, $\vect{S}\in[0,1]^{N\times N}$ doubly stochastic and $\vect{\mu}\in[\vez,\vei)$,
    the positivity of all $\mu_i$ is equivalent with the positivity of the 
    Perron--Frobenius eigenvector of $\vect{A}-\diag(\vect{r})$.
    On the contrary, when $\vect{A}$ is reducible and $\vect{A}-\diag(\vect{r})$ has 
    Perron--Frobenius eigenvectors only in $\partial(\vez,\vei)$, the existence of such a 
    decomposition can be both a true or a false statement, as shown by the following two examples:
    \[
	\vect{A}-\diag(\vect{r})=
	\begin{pmatrix}
	    0 & 1 & 1 \\
	    0 & -1 & 1 \\
	    0 & 0 & -2
	\end{pmatrix}
	=\left(
	\begin{pmatrix}
	    0 & 1/2 & 1/2 \\
	    0 & 1/2 & 1/2 \\
	    1 & 0 & 0
        \end{pmatrix}-\vect{I}\right)\diag(0,2,2)
    \]
    \[
	\vect{A}-\diag(\vect{r})=
	\begin{pmatrix}
	    -1 & 1 & 1/3 \\
	    1 & -1 & 2/3 \\
	    0 & 0 & -1
	\end{pmatrix}.
    \]
\end{rem}

\begin{prop}\label{prop:optimization_mutation_matrix}
Assume $\veL$ has the form 
\[
    \veL=\diag(\vect{r})+(\vect{S}-\vect{I})\diag(\vect{\mu})
\]
with $\vect{S}\in\vect{\mathcal{S}}$, $\vect{r}\in\mathcal{L}^\infty_\upp(\R\times\R^n,\R^N)$ and
$\vect{\mu}\in\mathcal{L}^\infty_\upp(\R\times\R^n,[\vez,\vei))$.

Then, for all $z\in\R^n$,
\[
    \inf_{\vect{S}\in\vect{\mathcal{S}}_{\{0,1\}}}\lambda_{1,z}(\vect{S})=\inf_{\vect{S}\in\vect{\mathcal{S}}}\lambda_{1,z}(\vect{S})
    \leq\sup_{\vect{S}\in\vect{\mathcal{S}}}\lambda_{1,z}(\vect{S})=\inf_{\vect{S}\in\vect{\mathcal{S}}_{\{0,1\}}}\lambda_{1,z}(\vect{S}).
\]
Furthermore, all $\inf$ and $\max$ above are actually $\min$ and $\max$ respectively.
\end{prop}

\begin{proof}
It suffices to prove that there exists an element of $\vect{\mathcal{S}}_{\{0,1\}}$ that minimizes 
$\vect{S}\in\vect{\mathcal{S}}\mapsto\lambda_{1,z}$, the property on the maximum being proved similarly. 
Also, it is sufficient to prove only the case $z=0$.

\begin{proof}[Step 1: exhibiting a minimizer in $\vect{\mathcal{S}}$]
The closed and bounded set 
\[
    \vect{\mathcal{S}}=\left\{ \vect{S}\in\mathcal{L}^\infty(\Omega_\upp,\R^{N\times N})\ |\ \vect{S}\geq\vez,\ \vect{S}\veo=\vect{S}^\upT\veo=\veo\ \text{a.e.}\right\}
\]
is, by virtue of the Banach--Alaoglu theorem, compact in the weak-$\star$ topology of
$\mathcal{L}^\infty(\Omega_\upp)=(\mathcal{L}^1(\Omega_\upp))'$. 
Hence a minimizing sequence $(\vect{S}_k)_{k\in\N}$ converges, up to extraction, to a weak-$\star$ limit 
$\vect{S}_\infty\in\mathcal{L}^\infty(\Omega_\upp,\R^{N\times N})$. 
Extending periodically $\vect{S}_\infty\in\mathcal{L}^\infty_\upp(\R\times\R^n,\R^{N\times N})$,
it only remains to verify $\vect{S}\in\vect{\mathcal{S}}$ and $\lambda_1'(\vect{S}_\infty)=\lim_{k\to+\infty}\lambda_1'(\vect{S}_k)$. 

The nonnegativity of $\vect{S}_\infty$ in the sense of linear forms is immediate, testing the convergence against arbitrary nonnegative
functions in $\mathcal{L}^1(\Omega_\upp)$. Subsequently, testing for all $(i,j)\in[N]^2$ against $\vect{e}_j$ multiplied by the 
indicator of $\{ s_{\infty,i,j}<0 \}\cap\clOmper$, we deduce the nonnegativity almost everywhere. Testing for any 
$(t_0,x_0)\in\clOmper$, $\rho>0$, against $\veo$ multiplied by the indicator of $B((t_0,x_0),\rho)$ and divided by $|B((t_0,x_0),\rho)|$, 
we deduce by virtue of the Lebesgue differentiation theorem $\vect{S}_\infty\veo=\veo$ almost everywhere. 
Next, testing the convergence against all $(\vect{e}_j)_{j\in[N]}$, we find that all entries $s_{k,i,j}$ converge in the weak-$\star$
topology of $\mathcal{L}^\infty(\Omega_\upp,\R)$, so that $(\vect{S}^\upT_k)_{k\in\N}$ also converges. 
Similarly, $\vect{S}_\infty^\upT\veo=\veo$ almost everywhere. Therefore $\vect{S}_\infty\in\vect{\mathcal{S}}$ indeed.

By continuity of $\lambda_1'$ with respect to the weak-$\star$ topology of $\mathcal{L}^\infty_\upp(\R\times\R^n,\R^{N\times N})$ 
(see Theorem \ref{thm:continuity_eigenvalue_L} and Remark \ref{rem:continuity_weak_topology}),
$\lim_{k\to+\infty}\lambda_1'(\vect{S}_k)=\lambda_1'(\vect{S}_\infty)$. This ends this step of the proof.
\end{proof}

Define the function:
\[
    \begin{matrix}
        \Phi: & \vect{\mathcal{S}}\times\R\times\R^n & \to & [N] \\
         & (\vect{S},t,x) & \mapsto & \#\{ (i,j)\in[N]^2\ |\ s_{i,j}(t,x)=1\}
    \end{matrix}
\]
and remark that, for any $\vect{S}\in\vect{\mathcal{S}}$, 
\[
    \vect{S}\in\vect{\mathcal{S}}_{\{0,1\}}\Longleftrightarrow |\{(t,x)\in\clOmper\ |\ \Phi(\vect{S},t,x)=N\}|=|\clOmper|=T|[0,L]|.
\]
In other words, defining $\Omega_\upp(\vect{S},N_0)=\{(t,x)\in\clOmper\ |\ \Phi(\vect{S},t,x)=N_0\}$,
\[
    \vect{S}\in\vect{\mathcal{S}}_{\{0,1\}}\Longleftrightarrow \forall N_0\in[N-1]\quad |\Omega_\upp(\vect{S},N_0)|=0.
\]

Let $\vect{S}^\wedge\in\vect{\mathcal{S}}$ be a minimizer. Assume $\vect{S}^\wedge\notin\vect{\mathcal{S}}_{\{0,1\}}$. This means that
there exists $N_0\in[N-1]$ such that $\Omega_{N_0}=\Omega_\upp(\vect{S}^\wedge,N_0)$ has a positive measure. 
We are now going to correct this minimizer step by step. 

In what follows, we first consider the case where $\vect{S}^\wedge$ is irreducible on average in $\Omega_{\upp}$. 
Of course this is not necessarily the case, and we will generalize afterward.
In order to ease the reading, we denote $\vect{B}=\diag(\vect{r})-\diag(\vect{\mu})$.

\begin{proof}[Step 2: when $\vect{S}^\wedge$ is irreducible on average, correcting the minimizer in a large subset of $\Omega_{N_0}$]
Let $\veu,\vev\in\caC^{1,2}_\upp(\R\times\R^n,(\vez,\vei))$
be respectively a periodic principal eigenfunction of $\cbQ$ and a periodic principal eigenfunction of the adjoint operator 
\[
    \cbQ^\star=-\partial_t-\diag(\nabla\cdot(A_i\nabla)+q_i\cdot\nabla+\nabla\cdot q_i)-\vect{B}^\upT-\diag(\vect{\mu})(\vect{S}^\wedge)^\upT.
\]
By full coupling of the operator $\cbQ$, $\veu$ and $\vev$ are positive.
With the normalizations $\int_{\clOmper}|\veu|^2=\int_{\clOmper}\vev^\upT\veu=1$ (the second one is possible because, by positivity,
$\veu$ and $\vev$ cannot be orthogonal), $\veu$ and $\vev$ are uniquely defined.

Let $(t,x)\in\Omega_{N_0}$. Following exactly the construction of Neumann--Sze \cite{Neumann_Sze_2007},
there exist two permutation matrices $\vect{P}(t,x)\in\vect{\mathcal{S}}_{\{0,1\}}$ and
$\vect{Q}(t,x)\in\vect{\mathcal{S}}_{\{0,1\}}$ such that:
\begin{enumerate}
    \item the matrix $\widetilde{\vect{S}}^\wedge(t,x)=\vect{P}(t,x)\vect{S}^\wedge(t,x)\vect{Q}(t,x)^\upT$ is doubly 
    stochastic and has a block diagonal form:
    \[
        \begin{pmatrix}
            \widetilde{\vect{S}}^\wedge_{\textup{top}}(t,x) & \vez \\ \vez & \widetilde{\vect{S}}^\wedge_{\textup{bottom}}(t,x)
        \end{pmatrix}
    \]
    with $\widetilde{\vect{S}}^\wedge_{\textup{bottom}}(t,x)\in\{0,1\}^{N_0\times N_0}$ empty (if $N_0=0$) or a permutation matrix (if $N_0>0$)
    and all entries in $\widetilde{\vect{S}}^\wedge_{\textup{top}}(t,x)$ smaller than $1$;
    \item for all $i\in[N-N_0]$, 
    \[
	\vect{e}_i^\upT\vect{P}(t,x)\vev(t,x)\leq \vect{e}_1^\upT\vect{P}(t,x)\vev(t,x);
    \]
    \item for all $j\in[N-N_0]$, 
    \[
	\vect{e}_j^\upT\vect{Q}(t,x)\diag(\vect{\mu}(t,x))\veu(t,x)\leq \vect{e}_1^\upT\vect{Q}(t,x)\diag(\vect{\mu}(t,x))\veu(t,x).
    \]
\end{enumerate}
The three properties together imply that, 
\begin{equation}\label{eq:doubly_stochastic_perm_1}
    (\widetilde{s}^\wedge_{i,1}(t,x))_{i\in[N]}^\upT\vect{P}(t,x)\vev(t,x)\leq \vect{e}_1^\upT\vect{P}(t,x)\vev(t,x),
\end{equation}
\begin{equation}\label{eq:doubly_stochastic_perm_2}
    (\widetilde{s}^\wedge_{1,j}(t,x))_{j\in[N]}^\upT\vect{Q}(t,x)\diag(\vect{\mu}(t,x))\veu(t,x)\leq \vect{e}_1^\upT\vect{Q}(t,x)\diag(\vect{\mu}(t,x))\veu(t,x).
\end{equation}

Next, define, for the same $(t,x)\in\Omega_{N_0}$,
\[
    \vect{a}(t,x)=-\vect{e}_1+(\widetilde{s}^\wedge_{i,1}(t,x))_{i\in[N]}=\begin{pmatrix}-(1-\widetilde{s}^\wedge_{1,1}(t,x)) \\ \widetilde{s}^\wedge_{2,1}(t,x) \\ \vdots \\ \widetilde{s}^\wedge_{N-N_0,1}(t,x) \\ 0 \\ \vdots \\ 0\end{pmatrix},
\]
\[
    \vect{b}(t,x)=-\vect{e}_1+(\widetilde{s}^\wedge_{1,j}(t,x))_{j\in[N]}=\begin{pmatrix}-(1-\widetilde{s}^\wedge_{1,1}(t,x)) \\ \widetilde{s}^\wedge_{1,2}(t,x) \\ \vdots \\ \widetilde{s}^\wedge_{1,N-N_0}(t,x) \\ 0 \\ \vdots \\ 0\end{pmatrix},
\]
\[
    \vect{T}^{\wedge}(t,x)=\frac{1}{1-\widetilde{s}^\wedge_{1,1}(t,x)}\vect{a}(t,x)\vect{b}(t,x)^\upT.
\]
Let us verify that $\widetilde{\vect{S}}^\wedge+\vect{T}^\wedge$ is doubly stochastic at $(t,x)$. Since
$\widetilde{\vect{S}}^\wedge(t,x)$ is doubly stochastic, we only have to verify that $\widetilde{\vect{S}}^\wedge(t,x)+\vect{T}^\wedge(t,x)$ is nonnegative
and that $\vect{a}(t,x)\vect{b}(t,x)^\upT\veo=\vect{b}(t,x)\vect{a}(t,x)^\upT\veo=\vez$. Both properties turn out to be obvious.
By construction, $\Phi(\widetilde{\vect{S}}^\wedge+\vect{T}^\wedge,t,x)=N_0+1$. Indeed, $\widetilde{\vect{S}}^\wedge(t,x)+\vect{T}^\wedge(t,x)$ has 
$N_0$ entries equal to $1$ in its bottom right block and its upper left entry satisfies
\[
    \widetilde{s}^{\wedge}_{1,1}(t,x)+t^\wedge_{1,1}(t,x)=\widetilde{s}^\wedge_{1,1}(t,x)+\frac{1}{1-\widetilde{s}^\wedge_{1,1}(t,x)}(1-\widetilde{s}^\wedge_{1,1}(t,x))^2=1.
\]

Let $\omega\subset\Omega_{N_0}$ be a measurable subset. Setting 
\[
    \vect{T}^\wedge_\omega:(t,x)\in\clOmper\mapsto
    \begin{cases}
        \vect{T}^\wedge(t,x) & \text{if }(t,x)\in\omega, \\
        \vez & \text{if }(t,x)\in\clOmper\backslash\omega,
    \end{cases}
\]
extending $\vect{T}^\wedge_\omega$ periodically in $\R\times\R^n$ and verifying routinely that $\vect{T}^\wedge_\omega$ 
is measurable, we are now in a position to verify that this construction does not modify the periodic principal eigenvalue $\lambda_1'$, 
namely $\widetilde{\vect{S}}^\wedge+\vect{T}^\wedge_\omega\in\mathcal{L}^\infty_\upp(\R\times\R^n,\R^{N\times N})$ is still a minimizer, 
provided $\omega$ is appropriately chosen.

Denote, for any $\alpha\in[0,1]$, 
\[
    \cbQ_\alpha=\diag(\caP_i)-\vect{B}-\vect{P}^\upT\left(\alpha\vect{T}^{\wedge}_\omega+\widetilde{\vect{S}}^\wedge\right)\vect{Q}\diag(\vect{\mu}),
\]
$\lambda(\alpha)=\lambda_1'(\cbQ_\alpha)$, and let $\veu_\alpha$ and $\vev_\alpha$ be two positive periodic principal eigenfunctions
of respectively $\cbQ_\alpha$ and of the adjoint operator 
\[
    \cbQ_\alpha^\star=-\partial_t-\diag(\nabla\cdot(A_i\nabla)+q_i\cdot\nabla+\nabla\cdot q_i)-\vect{B}^\upT-\diag(\vect{\mu})\vect{Q}^\upT\left(\alpha\vect{T}^{\wedge}_\omega+\widetilde{\vect{S}}^\wedge\right)^\upT\vect{P},
\]
normalized so that $\int_{\clOmper}|\veu_\alpha|^2=\int_{\clOmper}\vev_\alpha^\upT\veu_\alpha=1$.
For any $\alpha,\beta\in[0,1]$, $\alpha\neq\beta$,
\begin{align*}
    \frac{\lambda(\beta)-\lambda(\alpha)}{\beta-\alpha} & = \frac{\lambda(\beta)\int_{\clOmper}\vev_\beta^\upT\veu_\alpha-\lambda(\alpha)\int_{\clOmper}\vev_\beta^\upT\veu_\alpha}{(\beta-\alpha)\int_{\clOmper}\vev_\beta^\upT\veu_\alpha} \\
    & =\frac{1}{(\beta-\alpha)\int_{\clOmper}\vev_\beta^\upT\veu_\alpha}\left(\int_{\clOmper}\veu_\alpha^\upT(\cbQ_\beta^\star\vev_\beta)-\int_{\clOmper}\vev_\beta^\upT(\cbQ_\alpha\veu_\alpha)\right) \\
    & =\frac{1}{(\beta-\alpha)\int_{\clOmper}\vev_\beta^\upT\veu_\alpha}\left(\int_{\clOmper}\vev_\beta^\upT(\cbQ_\beta\veu_\alpha)-\int_{\clOmper}\vev_\beta^\upT(\cbQ_\alpha\veu_\alpha)\right) \\
    & =\frac{1}{(\beta-\alpha)\int_{\clOmper}\vev_\beta^\upT\veu_\alpha}\left(\int_{\clOmper}\vev_\beta^\upT((\cbQ_\beta-\cbQ_\alpha)\veu_\alpha)\right) \\
    & =\frac{1}{(\beta-\alpha)\int_{\clOmper}\vev_\beta^\upT\veu_\alpha}\left(\int_{\clOmper}\vev_\beta^\upT(((\beta-\alpha)\cbQ_1-(\beta-\alpha)\cbQ_0)\veu_\alpha)\right) \\
    & =-\frac{1}{\int_{\clOmper}\vev_\beta^\upT\veu_\alpha}\left(\int_{\clOmper}(\vect{P}\vev_\beta)^\upT\vect{T}^{\wedge}_\omega\vect{Q}\diag(\vect{\mu})\veu_\alpha\right) \\
    & =-\frac{1}{\int_{\clOmper}\vev_\beta^\upT\veu_\alpha}\int_{\omega}\frac{(\vect{a}^\upT\vect{P}\vev_\beta)(\vect{b}^\upT\vect{Q}\diag(\vect{\mu})\veu_\alpha)}{1-\widetilde{s}^\wedge_{1,1}}.
\end{align*}
Taking the limit $\beta\to\alpha$, this leads to 
\[
    \lambda'(\alpha)=-\int_{\omega}\frac{(\vect{a}^\upT\vect{P}\vev_\alpha)(\vect{b}^\upT\vect{Q}\diag(\vect{\mu})\veu_\alpha)}{1-\widetilde{s}^\wedge_{1,1}}.
\]
In view of this equality and of \eqref{eq:doubly_stochastic_perm_1}--\eqref{eq:doubly_stochastic_perm_2}, $\lambda'(0)\leq 0$. We claim that in fact $\lambda'(0)=0$.
Indeed, if this is not the case, then there exists a small $\alpha>0$ such that $\lambda(\alpha)<\lambda(0)$. Then the minimality of
$\lambda(0)=\lambda_1'(\vect{P}^\upT\widetilde{\vect{S}}^\wedge\vect{Q})=\lambda_1'(\vect{S}^\wedge)$ in $\vect{\mathcal{S}}$ is contradicted\footnote{
Note that we cannot in general extend $\lambda$ on the left of $\alpha=0$, since for $\alpha<0$, the matrix
$\alpha\vect{T}^{\wedge}+\widetilde{\vect{S}}^\wedge$ might loose the crucial property of essential nonnegativity. 
Thus the minimizer of $\lambda(\alpha)$, $\alpha=0$, is not in general an interior critical point and $\lambda'(0)=0$ cannot be deduced 
only from the first-order optimality condition. The role played by \eqref{eq:doubly_stochastic_perm_1}--\eqref{eq:doubly_stochastic_perm_2} 
is indeed crucial. By reversing one of the two inequalities, we obtain the proof of the complementary result on maximizers.}.

Since $\lambda'(0)=0$ for any choice of $\omega$, using the Lebesgue differentiation theorem, we obtain:
\[
-\frac{\left(\vect{a}^\upT\vect{P}\vev\right)\left(\vect{b}^\upT\vect{Q}\diag\left(\vect{\mu}\right)\veu\right)}{1-\widetilde{s}^\wedge_{1,1}}=0\quad\text{almost everywhere in }\Omega_{N_0}.
\]

Subsequently, for almost every $(t,x)\in \Omega_{N_0}$,
\[
    \vect{T}^{\wedge}_{\Omega_{N_0}}(t,x)\vect{Q}(t,x)\diag(\vect{\mu}(t,x))\veu(t,x)=\vez\quad\text{or}\quad
    \vect{T}^{\wedge}_{\Omega_{N_0}}(t,x)^\upT\vect{P}(t,x)\vev(t,x)=\vez.
\]
Let
\[
    \omega_{\veu}=\left\{(t,x)\in\Omega_{N_0}\ |\ \vect{T}^{\wedge}_{\Omega_{N_0}}(t,x)\vect{Q}(t,x)\diag(\vect{\mu}(t,x))\veu(t,x)=\vez\right\},
\]
\[
    \omega_{\vev}=\left\{(t,x)\in\Omega_{N_0}\ |\ \vect{T}^{\wedge}_{\Omega_{N_0}}(t,x)^\upT\vect{P}(t,x)\vev(t,x)=\vez\right\}.
\]
The subsets $\omega_{\veu}$ and $\omega_{\vev}\backslash\omega_{\veu}$ are measurable, disjoint and satisfy
$|\omega_{\veu}\cup(\omega_{\vev}\backslash\omega_{\veu})|=|\Omega_{N_0}|$.
One of the two, denoted below $\omega$, satisfies $|\omega|\geq\frac12|\Omega_{N_0}|$. Choosing this $\omega$ in the definition of
$\vect{T}^{\wedge}_\omega$, we deduce directly that the corresponding eigenvector at $\alpha=0$ ($\veu$ if
$\omega=\omega_{\veu}$, $\vev$ if $\omega=\omega_{\vev}$) remains a periodic principal eigenvector for any $\alpha\in[0,1]$,
with in addition $\lambda(\alpha)=\lambda(0)$ for any $\alpha\in[0,1]$.
In particular,
\[
    \lambda_1'(\vect{S}^\wedge)=\lambda(0)=\lambda(1)=\lambda_1'(\vect{S}^\wedge+\vect{P}^\upT\vect{T}^{\wedge}_{\omega}\vect{Q}).
\]
\end{proof}

\begin{proof}[Step 3: when $\vect{S}^\wedge$ is irreducible on average, correcting the minimizer in $\Omega_{N_0}$ almost everywhere]
Let $\vect{S}^\wedge_1=\vect{S}^\wedge+\vect{P}^\upT\vect{T}^\wedge_{\omega}\vect{Q}$ and
$\Omega_{N_0,1}=\Omega_\upp(\vect{S}^\wedge_1,N_0)$. Note that $\omega=\Omega_{N_0}\backslash\Omega_{N_0,1}$ up to a negligible set.
The new minimizer $\vect{S}^\wedge_1$ satisfies, by construction:
\begin{itemize}
    \item $|\Omega_\upp(\vect{S}^\wedge_1,N_0+1)|=|\omega|+|\Omega_\upp(\vect{S}^\wedge,N_0+1)|$,
    \item $(\vect{S}^\wedge_1)_{|\clOmper\backslash\Omega_{N_0}}=(\vect{S}^\wedge)_{|\clOmper\backslash\Omega_{N_0}}$,
    \item $|\Omega_{N_0,1}|\leq\frac12|\Omega_{N_0}|$,
    \item $\Omega_{N_0,1}\subset\Omega_{N_0}$.
\end{itemize}
Iterating the construction, we obtain a sequence $(\vect{S}^\wedge_k)_{k\in\N}\in\vect{\mathcal{S}}^\N$ of minimizers and a sequence of 
measurable sets $(\Omega_{N_0,k})_{k\in\N}$ such that, for each $k\geq 2$,
\begin{enumerate}
    \item $\Omega_\upp(\vect{S}^\wedge_k,N_0)=\Omega_{N_0,k}$,
    \item $|\Omega_\upp(\vect{S}^\wedge_k,N_0+1)|=|\Omega_{N_0,k-1}\backslash\Omega_{N_0,k}|+|\Omega_\upp(\vect{S}^\wedge_{k-1},N_0+1)|$,
    \item $(\vect{S}^\wedge_k)_{|\clOmper\backslash\Omega_{N_0,k-1}}=(\vect{S}^\wedge_{k-1})_{|\clOmper\backslash\Omega_{N_0,k-1}}$,
    \item $|\Omega_{N_0,k}|\leq\frac12|\Omega_{N_0,k-1}|$,
    \item $\Omega_{N_0,k}\subset\Omega_{N_0,k-1}$.
\end{enumerate}
In particular, 
\[
    0\leq|\Omega_{N_0,k}|\leq\frac{1}{2^k}|\Omega_{N_0}|,\quad
    |\Omega_\upp(\vect{S}^\wedge_k,N_0+1)|=|\Omega_{N_0}|-|\Omega_{N_0,k}|+|\Omega_\upp(\vect{S}^\wedge,N_0+1)|,
\]
whence, as $k\to+\infty$,
\[
    |\Omega_{N_0,k}|\to 0,\quad|\Omega_\upp(\vect{S}^\wedge_k,N_0+1)|\to|\Omega_{N_0}|+|\Omega_\upp(\vect{S}^\wedge,N_0+1)|.
\]

Let
\[
    \vect{S}^\wedge_{\infty}:(t,x)\in\clOmper\mapsto
    \begin{cases}
        \vect{S}^\wedge(t,x) & \text{if }(t,x)\in\clOmper\backslash\Omega_{N_0}, \\
        \vect{S}^\wedge_1(t,x) & \text{if }(t,x)\in\Omega_{N_0}\backslash\Omega_{N_0,1}, \\
        \vect{S}^\wedge_2(t,x) & \text{if }(t,x)\in\Omega_{N_0,1}\backslash\Omega_{N_0,2}, \\
        \vdots & \\
        \vect{I} & \text{if }(t,x)\in\bigcap_{k\in\N}\Omega_{N_0,k}.
    \end{cases}
\]
and extend it periodically in $\R\times\R^n$, so that $\vect{S}^\wedge_\infty\in\vect{\mathcal{S}}$. Note that $|\bigcap_{k\in\N}\Omega_{N_0,k}|=0$.

Then the sequence $(\vect{S}^\wedge_k)_{k\in\N}$ converges almost everywhere, and in any $\mathcal{L}^p_\upp(\R\times\R^n,\R^{N\times N})$
with $p\in[1,+\infty)$, to $\vect{S}^\wedge_\infty$. Moreover, by continuity of the mapping $\vect{S}\mapsto\lambda_1'(\vect{S})$ 
with respect to the topology of, say, $\mathcal{L}^2_\upp(\R\times\R^n,\R^{N\times N})$, $\vect{S}^\wedge_\infty$ is still a minimizer. 
Finally, it satisfies 
\[
    |\Omega_\upp(\vect{S}^\wedge_\infty,N_0)|=0\quad\text{and}\quad
    |\Omega_\upp(\vect{S}^\wedge_\infty,N_0+1)|=|\Omega_\upp(\vect{S}^\wedge,N_0)|+|\Omega_\upp(\vect{S}^\wedge,N_0+1)|.
\]
\end{proof}

\begin{proof}[Step 4: when $\vect{S}^{\wedge}$ is irreducible on average, correcting the minimizer in all possible sets $\Omega_{N_0}$]
Performing the construction of Steps 3 and 4, first for 
\[
    N_0^\wedge=\min\{N_0\in[N-1]\ |\ |\Omega_\upp(\vect{S}^\wedge,N_0)|>0\},
\]
and then for $N_0^\wedge+1$, etc., up to $N-1$, we obtain in the end a new minimizer whose restriction to $\clOmper$ is valued 
in the set of permutation matrices almost everywhere, that is a new minimizer in $\vect{\mathcal{S}}_{\{0,1\}}$. 
\end{proof}

\begin{proof}[Step 5: when $\vect{S}^\wedge$ is reducible on average]
The key tool for this generalization is a Frobenius normal form of the matrix with constant coefficients $\frac{1}{T|[0,L]|}\int_{\clOmper}\vect{S}^\wedge$.

This matrix is doubly stochastic. Indeed, its entries are valued in $[0,1]$ and
\begin{align*}
    \veo =\frac{1}{T|[0,L]|}\int_{\clOmper}\veo & =\frac{1}{T|[0,L]|}\int_{\clOmper}\left(\vect{S}^\wedge\veo\right) =\left(\frac{1}{T|[0,L]|}\int_{\clOmper}\vect{S}^\wedge\right)\veo \\
    & =\frac{1}{T|[0,L]|}\int_{\clOmper}\left(\left(\vect{S}^\wedge\right)^\upT\veo\right) =\left(\frac{1}{T|[0,L]|}\int_{\clOmper}\vect{S}^\wedge\right)^\upT\veo.
\end{align*}
    
By nonnegativity, there exists a permutation matrix $\vect{F}\in\{0,1\}^{N\times N}$ such that 
\[
    \vect{S}^{\textup{F}}=\vect{F}^{\upT}\left[\frac{1}{T|[0,L]|}\left(\int_{\clOmper}\vect{S}^\wedge\right)\right]\vect{F}
\]
is a block upper triangular matrix whose diagonal blocks are 
irreducible nonnegative square matrices (recall that $1\times 1$ matrices are by convention referred to as irreducible even if zero) 
and whose off-diagonal blocks are nonnegative.

Let us verify that this Frobenius normal form $\vect{S}^{\textup{F}}$ 
is actually block diagonal, with doubly stochastic diagonal blocks.

Indeed, since $\frac{1}{T|[0,L]|}\int_{\clOmper}\vect{S}^\wedge$ is doubly stochastic and since $\vect{F}$ is a permutation matrix, then $\vect{S}^\upF$
is also doubly stochastic. Moreover, the first diagonal block is a left-stochastic matrix (all columns sum to $1$) and, since 
all off-diagonal blocks are nonnegative, its lines sum to at most $1$. Let $N_1\in[N]$ such that this first block is a
$N_1\times N_1$ matrix; then the sum of all entries of the block is exactly $N_1$. Consequently, each line-sum is actually exactly $1$,
and the block is doubly stochastic. This, in turn, implies that all entries indexed by $(i,j)\in[N_1]\times[N]\backslash[N_1]$ 
are zero. Iterating on each diagonal block, we deduce that $\vect{S}^\upF$ has indeed the claimed form.

Now, remark that each diagonal block of $\vect{S}^\upF$ corresponds to a fully coupled subsystem in $\Omega_{\upp}$.

Hence, up to permutations that are constant in space-time (for regularity reasons, this matters), we can assume without loss of generality
that the operator $\dcbP-\vect{B}-\vect{S}^\wedge\diag(\vect{\mu})$ is in block diagonal form with each block fully coupled, and with
each block of $\vect{S}^\wedge$ a doubly stochastic matrix.

To conclude, it only remains to apply the correction of Steps 1--4 block by block. In the end,
we obtain indeed a minimizer in $\vect{\mathcal{S}}_{\{0,1\}}$. 
\end{proof}

This ends the proof.
\end{proof}

\begin{rem}\label{rem:optimization_more_general_decomposition}
Consistently with Neumann--Sze \cite{Neumann_Sze_2007}, the decomposition
$\veL=\diag(\vect{r})+(\vect{S}-\vect{I})\diag(\vect{\mu})$ can be replaced by a more general decomposition 
$\veL=\vect{B}+\vect{S}\vect{A}$ with $\vect{A}$ nonnegative and $\vect{B}$ essentially nonnegative. The generalization of the proof
is straightforward.
\end{rem}

\begin{rem}
From Proposition \ref{prop:optimization_mutation_matrix} and the fact that $\max_z$ and $\max_{\vect{S}}$ commute,
we can deduce a similar result on the maximizers of $\max_{z\in\R^n}\lambda_{1,z}(\vect{S})$. Yet we do not insist on it, for two reasons:
\begin{enumerate}
    \item from the discussion in Subsection \ref{sec:extension_coupling_default}, we know that $\max_{z\in\R^n}\lambda_{1,z}(\vect{S})$ is 
    not a satisfying generalization of $\lambda_1(\vect{S})$ when $\vect{S}$ ceases to satisfy \ref{ass:irreducible}, and clearly there are
    many $\vect{S}\in\vect{S}_{\{0,1\}}$ that do not satisfy \ref{ass:irreducible};
    \item since there is no reason why $\min_{\vect{S}}$ and $\max_z$ should commute (in particular, $(\vect{S},z)\mapsto\lambda_{1,z}(\vect{S})$ 
    is not convex--concave), the argument does not apply to minimizers.
\end{enumerate}
The problem of optimizing $\vect{S}\in\vect{\mathcal{S}}\mapsto\lambda_1(\vect{S})$, that needs both a unambiguous definition of $\lambda_1$
when $\veL$ ceases to satisfy \ref{ass:irreducible} and a new method of proof that applies to minimizers, remains therefore open.
\end{rem}

\subsubsection{Optimization of the mutation rate: proof of Theorem \ref{thm:generalized_karlin_time_homogeneous}}
Next we prove Theorem \ref{thm:generalized_karlin_time_homogeneous}. The proof relies on a dual convexity lemma of Altenberg 
\cite[Lemma 1]{Altenberg_2012} whose statement is recalled below.

\begin{lem}[Altenberg's dual convexity lemma]\label{lem:dual_convexity}
Let $f:(0,+\infty)\times[0,+\infty)\to\R$ be a function of two variables $r$ and $s$, positively homogeneous of degree $1$,
and convex with respect to its second variable $s$.

Then:
\begin{enumerate}
    \item $f$ is convex with respect to its first variable $r$; furthermore, $r\mapsto f(r,s)$ is strictly convex if $s\neq 0$ and if the 
    convexity ewith respect to $s$ is strict;
    \item for all $(r,s)\in(0,+\infty)\times[0,+\infty)$, $z\mapsto f(r,s)+zf(1,0)-f(r+z,s)$ is either identically zero or positive;
    furthermore it is positive if $s\neq 0$ and if $f$ is strictly convex with respect to $s$;
    \item for all $r\in(0,+\infty)$, 
    \[
        \lim_{\substack{r'\to r\\r'<r}}\frac{f(r',s)-f(r,s)}{r'-r}\leq\lim_{\substack{r'\to r\\r'>r}}\frac{f(r',s)-f(r,s)}{r'-r}\leq f(1,0)\quad\text{for all }s\in[0,+\infty),
    \]
    and the first inequality is an equality except possibly at a countable number of values of $r$.
\end{enumerate}
\end{lem}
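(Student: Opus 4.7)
The key idea is to exploit the positive homogeneity of degree $1$: setting $g(t):=f(1,t)$, which is convex by hypothesis, we have $f(r,s)=rg(s/r)$ for every $r>0$ and $s\geq 0$. The three claims then become standard facts about the perspective of a convex function, and I plan to derive them successively from this single representation.

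For (1), fix $s\geq 0$, pick $0<r_1<r_2$ and $\lambda\in(0,1)$, and set $r=\lambda r_1+(1-\lambda)r_2$ together with $\tilde\lambda=\lambda r_1/r\in(0,1)$. A direct check gives $\tilde\lambda(s/r_1)+(1-\tilde\lambda)(s/r_2)=s/r$ and $1-\tilde\lambda=(1-\lambda)r_2/r$, so the convexity of $g$ yields
\[
    r\,g(s/r)\leq r\bigl[\tilde\lambda\, g(s/r_1)+(1-\tilde\lambda)g(s/r_2)\bigr]=\lambda r_1 g(s/r_1)+(1-\lambda)r_2 g(s/r_2),
\]
which is exactly the convexity of $r\mapsto f(r,s)$. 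When $s>0$ and $g$ is strictly convex, $s/r_1\neq s/r_2$ forces the inequality to be strict, yielding strict convexity.

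For (2), I would use the subadditivity of convex positively homogeneous functions. For $z>0$, apply the convexity of $g$ to the identity $s/(r+z)=\tfrac{r}{r+z}(s/r)+\tfrac{z}{r+z}\cdot 0$ to obtain $g(s/(r+z))\leq\tfrac{r}{r+z}g(s/r)+\tfrac{z}{r+z}g(0)$, and multiply by $r+z$ to conclude $f(r+z,s)\leq f(r,s)+zf(1,0)$; for $z=0$ the claim is trivial. The alternative in the statement corresponds to whether $g$ is affine on the relevant segments: the inequality is an equality for every $z\geq 0$ iff $g$ is affine on $[0,s/r]$ (in particular when $s=0$), and it is strict at every $z>0$ when $s>0$ and $g$ is strictly convex, since the convex combination defining $s/(r+z)$ is then nontrivial. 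I expect the identification of this subadditivity structure to be the main conceptual step; everything else is routine one-variable convex analysis.

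For (3), the convexity of $r\mapsto f(r,s)$ established in (1) guarantees the existence of the one-sided derivatives and the inequality $D^-f(r,s)\leq D^+f(r,s)$ at every $r>0$, together with the classical fact that equality fails on at most a countable set (the set of non-differentiability points of a one-variable convex function). Dividing the inequality of (2) by $z>0$ and letting $z\to 0^+$ then yields $D^+f(r,s)\leq f(1,0)$, completing the proof.
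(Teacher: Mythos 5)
The paper does not actually prove this lemma: it is quoted verbatim from Altenberg \cite[Lemma 1]{Altenberg_2012} and then applied, so there is no in-paper argument to compare yours against. Judged on its own, your proof is correct and self-contained. The perspective representation $f(r,s)=r\,g(s/r)$ with $g(t)=f(1,t)$ is exactly the right way to exploit the homogeneity, the reweighting computation in (1) is accurate (and the strictness claim follows since $s/r_1\neq s/r_2$ when $s>0$), the subadditivity inequality $f(r+z,s)\leq f(r,s)+zf(1,0)$ in (2) is a correct application of convexity to the decomposition $s/(r+z)=\tfrac{r}{r+z}\,\tfrac{s}{r}+\tfrac{z}{r+z}\cdot 0$, and (3) follows from (1), (2) and the standard facts about one-sided derivatives of a one-variable convex function.

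Two small points deserve to be made explicit. First, the dichotomy ``identically zero or positive'' in (2) is slightly stronger than what you wrote: you characterize when the expression vanishes for \emph{every} $z\geq 0$ ($g$ affine on $[0,s/r]$), but you must also rule out the mixed case where it vanishes at some $z_0>0$ and is positive elsewhere. This follows from the equality case in convexity: if $g$ equals the chord value at the interior point $s/(r+z_0)$ of the segment $[0,s/r]$, then $g$ is affine on all of $[0,s/r]$, and then a direct computation gives vanishing for every $z\geq 0$; one sentence to this effect closes the argument. Second, your tacit restriction to $z\geq 0$ is the right reading of the statement (for $z\in(-r,0)$ the same inequality applied at the base point $r+z$ shows the expression is nonpositive, so the claim as literally written only makes sense for nonnegative $z$); it is worth saying so, since the lemma as recalled in the paper leaves the range of $z$ implicit.
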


In our context, this lemma brings forth Theorem \ref{thm:generalized_karlin_time_homogeneous}, as shown below.

\begin{cor}\label{cor:generalized_karlin_time_homogeneous}
Assume $(A_i)_{i\in[N]}$ is independent of $t$, $(q_i)_{i\in[N]}=0$, and $\veL$ has the form
$\veL=\diag(\vect{r})+(\vect{S}-\vect{I})\diag(\vect{\mu})$ with $\vect{r}\in\caC^{\delta/2,\delta}_{\upp}(\R^n,\R^N)$, 
$\vect{\mu}\in\caC^{\delta/2,\delta}_{\upp}(\R^n,(\vez,\vei))$ and $\vect{S}\in\vect{\mathcal{S}}$ all independent of $t$.

For any $\rho>0$, let $\cbQ_\rho$ be the operator with $(A_i)_{i\in[N]}$ and $\veL$ replaced by $(\rho A_i)_{i\in[N]}$
and $\diag(\vect{r})+\rho(\vect{S}-\vect{I})\diag(\vect{\mu})$ respectively.

Then $\rho\in[0,1]\mapsto\lambda_1'(\cbQ_\rho)$ is concave and nondecreasing. Furthermore, if $\vect{r}$ depends on $x$ and $s>0$, 
then it is strictly concave and increasing.
\end{cor}

\begin{proof}
We reduce the eigenvalue thanks to \eqref{eq:reduction_lambdaz_time_homogeneous} and apply Lemma \ref{lem:dual_convexity} to the function
\[
    f:(r,s)\mapsto -\lambda_1'\left(-r\diag(\nabla\cdot(A_i\nabla))-s\diag(\vect{r})-r(\vect{S}-\vect{I})\diag(\vect{\mu})\right)
\]
which is, by virtue of Theorem \ref{thm:concavity_eigenvalue_L}, convex with respect to $s\in(0,+\infty)$, strictly if
$\vect{r}$ depends on $x$, and which is of class $\caC^1$ in $(r,s)$ away from $r=0$.
Subsequently, $f(1,0)=0$ yields the monotonicity of $f$ with respect to $r$.
\end{proof}

Note that a version of the above corollary appropriate for $\lambda_{1,z}$, with an assumption 
$A_i^{-1}q_i=2z+\nabla Q$ reminiscent of Theorem \ref{thm:lambdaz_time_homogeneous}, could be just as easily established. 
For the sake of brevity, we focus here on $\lambda_1'$ only.

\subsubsection{Optimization of the spatial distribution of $\veL$ in dimension $1$: proof of Theorem \ref{thm:optimization_spatial_distribution}}

In this section, we prove Theorem \ref{thm:optimization_spatial_distribution}.

First, we investigate a Talenti inequality for cooperative elliptic systems, as such estimates 
are milestones to proving spectral comparison \cite[Theorem 3.9]{Nadin_2007}. Let us recall that the core idea underlying 
these estimates is to compare some $\mathcal{L}^p$ norms (here, the $\mathcal{L}^\infty$ norms) of the solution of an elliptic 
problem with that of a related equation, the coefficients of which have been replaced by their symmetrization. 
It should be noted that our results would also hold for boundary conditions of Dirichlet type in the spatial domain $B(0,R)$, $R>0$.

\begin{rem}
In what follows, we will use a few specific notations.

It will be convenient to identify $[0,L_1]$ with $\left[-\frac{L_1}2,\frac{L_1}2\right]$; this amounts to translating the functions, and has the advantage of having ${0}$ as a symmetry point. In this context,  let us recall the fundamental ordering on the set of functions:
for two functions $\vect{f},\vect{g}\in \mathcal{L}^2([0,L_1],[\vez,\vei))$, the notation $\vect{f}\prec \vect{g}$ stands for:
\[
    \int_{-\frac{r}{2}}^{\frac{r}{2}}\vect{f}\leq \int_{-\frac{r}{2}}^{\frac{r}{2}}\vect{g}\quad\text{for all } r\leq \frac{L_1}{2}.
\]
In particular, these inequalities hold component wise.

For any non-negative scalar function, we may identify its rearrangement $u^\dagger$ with a non-increasing mapping $\overline u:[0,L_1/2]\to \mathbb{R}$. For any non-negative $\veu$, we denote by $\veu^\dagger$ its periodic rearrangement.
\end{rem}

The first step in the proof of Theorem \ref{thm:optimization_spatial_distribution} is the following comparison result for elliptic systems.

\begin{prop}\label{prop:Talenti_Dirichlet}
Assume $\dcbP=\partial_t-\vect{D}\Delta$ for some diagonal matrix $\vect{D}$ with constant, positive diagonal entries and assume
that $\veL$ is nonnegative and depends only on $x$.

Let $c>0$ and $\vect{\phi},\vect{\psi}\in\mathcal{L}^2(\left[-\frac{L_1}2,\frac{L_1}2\right],[\vez,\vei))$ such 
that $\vect{\phi}\prec\vect{\psi}$. Let $\veu$ and $\vev$ be the (unique) solutions of 
\begin{equation*}
\begin{cases}
    -\vect{D}\Delta\veu+c\veu =\veL \vect{\phi} & \text{ in }\left[-\frac{L_1}2,\frac{L_1}2\right]\,, 
    \\\veu\in \mathcal W^{1,2}_{\upp{}}
\end{cases}
\end{equation*} and 
\begin{equation*}
\begin{cases}
    -\vect{D}\Delta\vev+c\vev =\veL^\dagger\vect{\psi}^\dagger & \text{ in }\left[-\frac{L_1}2,\frac{L_1}2\right]
    \\\vev\in \mathcal W^{1,2}_\upp
\end{cases}
\end{equation*}

Then $\veu\prec\vev$.
\end{prop}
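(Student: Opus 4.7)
The plan is to decouple the vector system into $N$ scalar periodic elliptic problems via the diagonal structure of $\vect{D}$, and then to compare each scalar component by combining a Green's function representation with rearrangement inequalities of Hardy--Littlewood type.

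First, since $\vect{D}$ is diagonal with constant positive entries, for each $i \in [N]$ the component $u_i$ uniquely solves the scalar periodic equation $-d_i u_i'' + c u_i = (\veL \vect{\phi})_i$ on $[-L_1/2, L_1/2]$, and analogously $v_i$ solves its own scalar equation with right-hand side $(\veL^\dagger \vect{\psi}^\dagger)_i$. Since $c > 0$, each scalar problem is uniquely solvable, and its solution can be written as the periodic convolution $u_i = G_{d_i, c} * (\veL \vect{\phi})_i$ against the positive periodic Green's function $G_{d_i, c}$ of the operator $-d_i \Delta + c$. An explicit computation gives $G_{d_i, c}$ in terms of hyperbolic cosines on the fundamental interval; in particular $G_{d_i, c}$ is non-negative, even, and symmetric-decreasing on $[-L_1/2, L_1/2]$.

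Next, to establish $\veu \prec \vev$, I would expand, via Fubini,
\[
\int_{-r/2}^{r/2} u_i(x)\, dx = \int_{-L_1/2}^{L_1/2} H_r^{d_i}(y)\, (\veL \vect{\phi})_i(y)\, dy,
\]
where $H_r^{d_i}(y) := \int_{-r/2}^{r/2} G_{d_i, c}(x-y)\, dx$ is, as a periodic convolution of two symmetric-decreasing non-negative functions, itself non-negative and symmetric-decreasing. The desired conclusion then reduces termwise to the scalar bound
\[
\int H_r^{d_i}(y)\, l_{i,j}(y)\, \phi_j(y)\, dy \;\leq\; \int H_r^{d_i}(y)\, l_{i,j}^\dagger(y)\, \psi_j^\dagger(y)\, dy, \qquad j \in [N].
\]

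This scalar comparison would be obtained by chaining two rearrangement arguments. First, the Hardy--Littlewood triple-product inequality $\int fgh \leq \int f^\dagger g^\dagger h^\dagger$ for non-negative functions (proved via the layer-cake identity together with the elementary estimate $|A \cap B \cap C| \leq \min(|A|, |B|, |C|)$ and the fact that centered rearrangements of sets maximize intersection measures with centered intervals), applied with $H_r^{d_i}$ already symmetric-decreasing, exchanges $l_{i,j} \to l_{i,j}^\dagger$ at the cost of replacing $\phi_j$ by $\phi_j^\dagger$. Second, a layer-cake decomposition of the symmetric-decreasing non-negative product $H_r^{d_i} l_{i,j}^\dagger$ into a superposition of indicators of centered intervals, combined with the hypothesis $\vect{\phi} \prec \vect{\psi}$ and a further Hardy--Littlewood step promoting $\psi_j$ to $\psi_j^\dagger$, carries the intermediate integral through to the target expression.

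The main obstacle is the orchestration of these rearrangement steps: because the symmetrization of $\veL$ and the concentration comparison $\vect{\phi} \prec \vect{\psi}$ act on different factors of the integrand, they must be chained so that, at every intermediate stage, the integrator against which a $\prec$-comparison is invoked remains symmetric-decreasing and non-negative. It is precisely this bookkeeping, rather than any isoperimetric ingredient, that captures the purely one-dimensional, scalar-by-scalar nature of the estimate and bypasses the absence of a genuinely vector-valued rearrangement theory.
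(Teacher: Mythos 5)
Your reduction to the termwise scalar inequality $\int H_r^{d_i}\, l_{i,j}\phi_j \le \int H_r^{d_i}\, l_{i,j}^\dagger\psi_j^\dagger$ via the periodic Green's function is sound, and your Step 1 (the three-function Hardy--Littlewood inequality, with $H_r^{d_i}$ already symmetric-decreasing) is correct. The gap is in Step 2. After Step 1 you are holding $\int H_r^{d_i}\, l_{i,j}^\dagger\phi_j^\dagger$, and the layer-cake decomposition of the symmetric-decreasing weight $H_r^{d_i} l_{i,j}^\dagger$ reduces what remains to $\int_{-\rho/2}^{\rho/2}\phi_j^\dagger\le\int_{-\rho/2}^{\rho/2}\psi_j^\dagger$ for every $\rho$, i.e.\ to the comparison of the \emph{rearrangements}, $\phi_j^\dagger\prec\psi_j^\dagger$. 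The hypothesis $\vect{\phi}\prec\vect{\psi}$ compares the unrearranged functions on centred intervals and does not imply this: on $\left[-\frac12,\frac12\right]$ take $\phi_j=M\chi_{(0.4,0.45)}$ and $\psi_j\equiv M/18$; then $\int_{-r/2}^{r/2}\phi_j\le\int_{-r/2}^{r/2}\psi_j$ for every $r\le 1$, yet $\int_{-r/2}^{r/2}\phi_j^\dagger=M\min(r,0.05)$ exceeds $\int_{-r/2}^{r/2}\psi_j^\dagger=Mr/18$ for small $r$. The facts available to you are $\int_E\phi_j\le\int_E\psi_j\le\int_E\psi_j^\dagger$ for centred intervals $E$, but after Step 1 the quantity you must control is $\int_E\phi_j^\dagger\ge\int_E\phi_j$, and the missing link $\int_E\phi_j^\dagger\le\int_E\psi_j^\dagger$ is exactly what can fail. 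So the ``orchestration'' you single out as the heart of the proof is precisely where the chain breaks: once $\phi_j$ has been symmetrized, the hypothesis, which concerns $\phi_j$ itself, can no longer be invoked, and no ``further Hardy--Littlewood step promoting $\psi_j$ to $\psi_j^\dagger$'' repairs this.

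What your argument genuinely needs is the concentration comparison $\phi_j^\dagger\prec\psi_j^\dagger$ (equivalently $\int_0^s\phi_j^*\le\int_0^s\psi_j^*$ for all $s$), which is the ordering used in the symmetrization literature the paper invokes (Alvino--Lions--Trombetti); note that the paper's own intermediate claim \eqref{eq:Talenti_intermediate}, through the assertion $\chi_{[-s,s]}\phi_j^\dagger\prec\chi_{[-s,s]}\psi_j^\dagger$, relies on exactly the same comparison of rearrangements, so $\prec$ must be understood in that sense for the proposition to be meaningful (with the literal reading the statement itself can fail, e.g.\ taking $l_{1,1}$ and $\phi_1$ both concentrated near the same off-centre point and $\psi_1$ a small constant). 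Under that reading your Step 2 closes at once (layer cake plus the hypothesis, no extra rearrangement step), and your Green's-function route then becomes a correct and genuinely different proof of the centred-interval comparison: it bypasses the co-area formula, the one-dimensional isoperimetric inequality and the maximum-principle argument on the concentration functions $k_i^{\veu}$ used in the paper. One caveat: if the conclusion $\veu\prec\vev$ is also meant in the concentration sense (which is what the time-discretized iteration in the parabolic proposition feeds back into the hypothesis), your weights $H_r^{d_i}$ only test centred intervals of $\veu$ itself; to control $\int_E u_i$ over arbitrary measurable sets $E$ you would additionally need a Riesz-type rearrangement inequality on the circle, which your argument does not currently contain.
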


\begin{proof}
First, let us verify that if $\vect{\phi},\vect{\psi}$ are nonnegative and satisfy $\vect{\phi}\prec\vect{\psi}$, then
\begin{equation}\label{eq:Talenti_intermediate}
    \veL^\dagger\vect{\phi}^\dagger\prec\veL^\dagger\vect{\psi}^\dagger.
\end{equation}
First of all, for any $s\in \left[0,\frac{L_1}2\right]$ and any $j\in [N]$,
\begin{equation*}
    \chi_{[-s,s]} \phi_j^\dagger \prec \chi_{[-s,s]}\psi_j^\dagger,
\end{equation*}
where $\chi$ denotes the characteristic function of a set. This property is stable by addition and multiplication by a nonnegative constant \cite{Alvino_Lions_Trombetti_1991}. 
As a consequence, since any nonnegative nonincreasing function can be approximated from below by a nonnegative step function,
\eqref{eq:Talenti_intermediate} follows from the monotone convergence theorem.

For the sake of simplicity, assume the level sets of each $u_i$ have zero Lebesgue measure 
-- should this not be the case, we can argue exactly as in \cite{Talenti_1976}. 
Let $\tau \geq 0$ be a fixed real number and let $i\in[N]$. 
Integrating the $i$-th equation on the level set $\left\{u_i \geq \tau\right\}$, we get 
\begin{equation*}
d_i\int_{\left\{u_i = \tau\right\}}|\nabla u_i|=-c\int_{\left\{u_i \geq \tau\right\}}u_i+\sum_{j=1}^N\int_{\left\{u_i \geq \tau\right\}}l_{i,j} \phi_j.
\end{equation*} 

Since $u$ and $u^\dagger$ are equimeasurable by the definition of the periodic rearrangement, there holds \[
    c\int_{\left\{u_i \geq \tau\right\}}u_i=c\int_{\left\{u_i^\dagger \geq \tau\right\}}u_i^\dagger.
\]
By the Hardy--Littlewood inequality, since all the $l_{i,j}$ are nonnegative, 
\[
    \sum_{j=1}^N\int_{\left\{u_i \geq \tau\right\}}l_{i,j} \phi_j\leq \sum_{j=1}^N\int_{\left\{u_i^\dagger \geq \tau\right\}}l_{i,j}^\dagger\phi_j^\dagger.
\]

At this point, we have obtained 
\[d_i\int_{\left\{u_i = \tau\right\}}|\nabla u_i|\leq-c\int_{\left\{u_i^\dagger \geq \tau\right\}}u_i^\dagger+\sum_{j=1}^N\int_{\left\{u_i^\dagger \geq \tau\right\}}l_{i,j}^\dagger \phi_j^\dagger.
\]
By \eqref{eq:Talenti_intermediate}, we thus conclude that 
\[d_i\int_{\left\{u_i = \tau\right\}}|\nabla u_i|\leq-c\int_{\left\{u_i^\dagger \geq \tau\right\}}u_i^\dagger+\sum_{j=1}^N\int_{\left\{u_i^\dagger \geq \tau\right\}}l_{i,j}^\dagger \psi_j^\dagger.
\]

We introduce the distribution function $\mu_i$ of $u_i$,
\[
    \mu_i(\tau)=|\{u_i >\tau\}|.
\]
From the co-area formula,
\[
-\mu_i'(\tau)=\int_{\{u_i=\tau\}}\frac1{|\nabla u_i|}.
\]

Since the periodic rearrangement decreases the perimeter of level-sets, we have 
\begin{equation*}
    \operatorname{Per}\left(\{u_i^\dagger=\tau\}\right)\leq \operatorname{Per}\left(\{u_i=\tau\}\right).
\end{equation*}
 From the Cauchy-Schwarz inequality, we obtain
\begin{align*}
    \operatorname{Per}\left(\{u_i^\dagger=\tau\}\right)^2 & \leq \operatorname{Per}\left(\{u_i=\tau\}\right)^2 \\
    & \leq \int_{\{u_i=\tau\}}\frac1{|\nabla u_i|}\int_{\{u_i=\tau\}}|\nabla u_i| \\
    & \leq -\mu_{i}'(\tau)\int_{\{u_i=\tau\}}|\nabla u_i| \\
    & \leq -\frac{\mu_{i}'(\tau)}{d_i}\left(-c\int_{\left\{u_i^\dagger \geq \tau\right\}}u_i^\dagger+\sum_{j=1}^N\int_{\left\{u_i^\dagger \geq \tau\right\}}l_{i,j}^\dagger \psi_j^\dagger\right).
\end{align*}  
Since we are working in one dimension, for any $\tau\in\left(\min(u_i),\max(u_i)\right)$, there holds
\[ 4\leq  \operatorname{Per}\left(\{u_i^\dagger=\tau\}\right)^2.\] Furthermore, by definition of the rearrangement,
\[ \int_{\{u_i>\tau\}}u_i=\int_0^{\mu_i(\tau)}\overline u_i.\]
We define, for $i\in[N]$, 
\[
    k^{\veu}_i:\xi\in\left[0,\frac{L_1}{2}\right]\mapsto\int_0^{\xi}\overline{u_i}.
\]
From this definition, we obtain
\begin{equation*}
    (k^{\veu}_i)''(\mu_i(\tau))=\overline{u_i}'(\mu_i(\tau))=\frac1{\mu_i'(\tau)}.
\end{equation*}

With these notations, we obtain the following differential inequality: for any $\xi \in [0,L_1/2]$, 
\[
-4 \left(k_i^{\veu}\right)''(\xi)\leq -\frac{c}{d_i}k_i^{\veu}(\xi)+\sum_{j=1}^N \int_0^{\xi} l_{i,j}^\dagger \psi_j^\dagger.
\] 
Furthermore, 
\[
k_i^{\veu}(0)=0.
\]

Working with $\vect{\psi}$ instead of $\vect{\phi}$ and with $\vev$ instead of $\veu$, all the previous inequalities are equalities: indeed, by the variational formulation of the equation on each of the coordinates $v_i$, it appears that $v_i^\dagger=v_i$, so that, for any $\tau\in \left(\min(v_i),\max(v_i)\right)$, $\mathrm{Per}(\{v_i=\tau\})^2=4$.
Thus, with transparent notations, $k_i^{\vev}$ solves the differential equation
\[
-4 \left(k_i^{\vev}\right)''(\xi)= -\frac{c}{d_i}k_i^{\vev}(\xi)+\sum_{j=1}^N \int_0^{\xi} l_{i,j}^\dagger \psi_j^\dagger.
\] 
Similarly, 
\[
k_i^{\vev}(0)=0.
\]

Hence, from \eqref{eq:Talenti_intermediate}, the vector $\vect{k}=\vect{k}^{\vev}-\vect{k}^{\veu}$ satisfies
\[
-4\diag(\vect{d})\vect{k}''+c\vect{k}\geq \vez,\quad \vect{k}(0)=\vez.
\]
Finally, integrating both equations in $\veu$ and $\vev$ on the domain we obtain 
\[
\int_{[0,L_1/2]}\veu_i=\frac1c\int_{[0,L_1/2]} \left(\veL \vect{\phi}\right)_i\leq\frac1c\int_{[0,L_1/2]} \left(\veL \vect{\psi}\right)_i=\int_{[0,L_1/2]}\vev_i,
\] 
so that
\[
\vect{k}\left(\frac{L_1}{2}\right)\geq \vez.
\]
From the maximum principle, 
\[
\vect{k}\geq \vez\quad\text{in }\left(0,\frac{L_1}{2}\right).
\]
However, this is exactly the desired conclusion.
\end{proof}

We now apply Proposition \ref{prop:Talenti_Dirichlet} to derive a comparison principle. 

\begin{prop}\label{prop:Talenti_Dirichlet_Parabolic}
Assume $\dcbP=\partial_t-\vect{D}\Delta$ for some diagonal matrix $\vect{D}$ with constant, positive diagonal entries.

Let $\veu_0\in \mathcal{L}^\infty_\upp(\R,[\vez,\vei))$ and let $\veu,\vev$ be the respective space periodic solutions of
\begin{equation*}
    \begin{cases}
        \cbQ\veu=\vez & \text{in }(0,T)\times\R, \\
        \veu=\veu_0 & \text{on }\{0\}\times \R
    \end{cases}
\end{equation*}
and
\begin{equation*}
    \begin{cases}
        \dcbP\vev-\veL^\dagger\vev=\vez & \text{in }(0,T)\times\R, \\
        \vev=\veu_0^\dagger & \text{on }\{0\}\times\R.
    \end{cases}
\end{equation*}

Then, for all $t\in[0,T]$, $\veu(t,\cdot)\prec\vev(t,\cdot)$.
\end{prop}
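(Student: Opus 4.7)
The plan is to reduce the parabolic comparison to the elliptic one of Proposition \ref{prop:Talenti_Dirichlet} by an implicit--explicit time-discretization in which diffusion is treated implicitly and the coupling through $\veL$ explicitly; each time step is then precisely an elliptic problem of the form handled by that proposition.

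First, a preliminary reduction removes the essential (rather than pointwise) nonnegativity of $\veL$: pick $K>0$ large enough that $\veL+K\vect{I}\geq\vez$ on $\clOmper$ and set $\tilde\veu(t,x)=\upe^{Kt}\veu(t,x)$, $\tilde\vev(t,x)=\upe^{Kt}\vev(t,x)$. Then $\tilde\veu$ and $\tilde\vev$ solve the same problems as $\veu$ and $\vev$ with $\veL$ and $\veL^\dagger$ replaced respectively by $\veL+K\vect{I}$ and $\veL^\dagger+K\vect{I}=(\veL+K\vect{I})^\dagger$; the initial data and the conclusion are unaffected. So I may assume $\veL\geq\vez$. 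Fix $M\in\N$, $h=T/M$, $t_k=kh$, and define recursively $\veu^0=\veu_0$, $\vev^0=\veu_0^\dagger$, and $\veu^{k+1},\vev^{k+1}\in\mathcal{W}^{1,2}_\upp$ as the unique nonnegative periodic solutions of
\begin{align*}
-\vect{D}\Delta \veu^{k+1}+\tfrac{1}{h}\veu^{k+1} &= \bigl(\tfrac{1}{h}\vect{I}+\veL(t_k,\cdot)\bigr)\veu^k, \\
-\vect{D}\Delta \vev^{k+1}+\tfrac{1}{h}\vev^{k+1} &= \bigl(\tfrac{1}{h}\vect{I}+\veL^\dagger(t_k,\cdot)\bigr)\vev^k.
\end{align*}

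Next I would establish by induction on $k$ that (a) $\vev^k$ is symmetric about $0$ and nonincreasing on $[0,L_1/2]$, so that $(\vev^k)^\dagger=\vev^k$, and (b) $\veu^k\prec\vev^k$. The base case $k=0$ is precisely the definition of the periodic rearrangement. For the inductive step, (a) follows from the fact that the right-hand side of the equation for $\vev^{k+1}$ is entrywise symmetric and nonincreasing on $[0,L_1/2]$ (sums and products of such nonnegative factors are again such), together with the classical scalar fact that the periodic resolvent $(-D_i\partial_{xx}+h^{-1})^{-1}$ preserves symmetric nonincreasing data. Granting (a) at step $k$, Proposition \ref{prop:Talenti_Dirichlet} applied with $c=h^{-1}$, matrix $\tfrac{1}{h}\vect{I}+\veL(t_k,\cdot)\geq\vez$, $\vect{\phi}=\veu^k$ and $\vect{\psi}=\vev^k=(\vev^k)^\dagger$ (using that the rearrangement of a sum of constants and of $\veL(t_k,\cdot)$ is $\tfrac{1}{h}\vect{I}+\veL^\dagger(t_k,\cdot)$) yields $\veu^{k+1}\prec\vev^{k+1}$.

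Finally I would pass to the limit $h\to 0$. Standard convergence of the implicit--explicit scheme for linear cooperative parabolic systems with smooth periodic coefficients (relying on uniform $\mathcal{L}^2$ energy estimates and $\caC^{\delta/2,\delta}$ Schauder bounds on the true solutions $\veu$ and $\vev$) ensures that, for each $t\in[0,T]$, the interpolants $\veu^{\lfloor t/h\rfloor}$, $\vev^{\lfloor t/h\rfloor}$ converge in $\mathcal{L}^1([-L_1/2,L_1/2])$ to $\veu(t,\cdot)$, $\vev(t,\cdot)$, and $\prec$ is stable under $\mathcal{L}^1$-convergence of nonnegative functions. The main subtle point is the symmetric-decreasing preservation in (a); it is classical at the scalar level and here only has to be checked componentwise, whereas the convergence of the scheme and the stability of $\prec$ under limits are routine.
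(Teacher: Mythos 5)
Your proof is correct and follows essentially the same route as the paper: an implicit--explicit time discretization in which each step is exactly the elliptic comparison of Proposition \ref{prop:Talenti_Dirichlet}, followed by passage to the limit (the paper averages $\veL+c\vect{I}$ over each subinterval rather than sampling at $t_k$ and multiplying by $\upe^{Kt}$, which is only a cosmetic difference). Your explicit inductive claim that the $\vev$-iterates coincide with their own rearrangements is a welcome clarification of a point the paper's argument uses implicitly when invoking the elliptic proposition with right-hand side $\veL^\dagger\vect{\psi}^\dagger$.
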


\begin{proof}
We use a classical time discretization of the system, following \cite[Proof of Theorem 3]{Alvino_Trombetti_Lions_1990}.

Let $c>0$ so large that $l_{i,i}+c\geq 0$ for all $i\in[N]$.
Let $K\in\N$ and $\delta=\frac{T}{K}>0$. We define, for any $\omega\in [K]$, 
\[
    \veL_\omega=K\int_{(\omega-1) \delta}^{\omega\delta} (\veL(\tau,\cdot)+c\vect{I})\upd\tau,
    \quad \veL_{\omega,\dagger}=K\int_{(\omega-1)\delta}^{\omega\delta} (\veL^\dagger(\tau,\cdot)+c\vect{I})\upd\tau.
\]
Clearly $\veL_\omega+K\vect{I}\prec \veL_{\omega,\dagger}+K\vect{I}$ and both are nonnegative and only depend on space. 
We set $\veu^0=\veu_0$, $\vev^0=\veu_0^\dagger$ and consider, for any $\omega\in[K]$, the space periodic solutions of the elliptic systems
\begin{equation*}
(K+c) \veu^{\omega}-\vect{D}\Delta \veu^{\omega}=K\veu^{\omega-1}+\veL_{\omega}\veu^{\omega-1},
\end{equation*}
\begin{equation*}
(K+c) \vev^{\omega}-\vect{D}\Delta \vev^{\omega}=K\vev^{\omega-1}+\veL_{\omega,\dagger}\vev^{\omega-1}.
\end{equation*}

By an immediate recursion, and since the coefficients of the system satisfied by $\vev^{\omega}$ are spatially rearranged, 
$\vev^\omega$ itself is rearranged, for any $\omega\in [K]$. 
Indeed, this follows from the uniqueness of the solutions of the above systems and from the existence of radial solutions by 
using radial coordinates. 
Subsequently, by Proposition \ref{prop:Talenti_Dirichlet}, for any $\omega\in [K]$, $\veu^\omega\prec\vev^\omega$. 

Proceeding as in \cite{Alvino_Trombetti_Lions_1990}, we may pass to the limit $K\to \infty$ to conclude the proof. 
\end{proof}

\begin{rem}
Regarding the convergence of the sequence of elliptic problems to the parabolic one, although we do not detail it for the sake of conciseness, the easiest way to proceed here is to assume that $\veL$ is $\mathcal{C}^1$ in the time variable, which is always possible through a standard approximation argument. From this point of view, a Taylor expansion shows that, if $\veu$ is the solution of the parabolic equation,  if $\bar\veu^\omega:=K\int_{(\omega-1)\delta}^{\omega\delta} \veu$  and if  $\veu$ is the (piecewise) affine interpolation of $(\veu^\omega)_{\omega\in[K]}$, then we have $\|\bar \veu-\veu\|_{\mathcal{L}^2((0,T),\mathcal{W}^{1,2})}\leq C\delta$. Although the idea of such a discretization was used in a systematic way in \cite{Alvino_Trombetti_Lions_1990}, it was introduced in \cite{Vazquez_1982} where the discretization procedure was justified by the use of the (more abstract) Crandall--Liggett theorem. 
\end{rem}

\begin{prop}\label{prop:optimization_spatial_distribution}
Assume $\dcbP=\partial_t-\vect{D}\Delta$ for some diagonal matrix $\vect{D}$ with constant, positive diagonal entries.

Then
\[
\lambda_{1,\upp}(\cbQ)\geq \lambda_{1,\upp}(\dcbP-\veL^\dagger)
\]
where $\veL^\dagger$ is the entry-wise periodic rearrangement of $\veL$.
\end{prop}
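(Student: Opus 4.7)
The plan is to derive $\lambda_{1,\upp}(\cbQ) \geq \lambda_{1,\upp}(\dcbP - \veL^\dagger)$ by reading both eigenvalues off the exponential rate of suitable parabolic semigroups and then comparing those rates via Proposition~\ref{prop:Talenti_Dirichlet_Parabolic}, extended to all positive times. Set $\lambda := \lambda_{1,\upp}(\cbQ)$ and $\lambda^\dagger := \lambda_{1,\upp}(\dcbP - \veL^\dagger)$; the latter is well-defined as a continuous extension for $\mathcal{L}^\infty$ coefficients, exactly as invoked at the start of Section~\ref{sec:theorems_optimization}, since entry-wise rearrangement preserves essential nonnegativity and the irreducibility of $\overline{\veL}$ but not H\"older regularity.

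First, I pick a positive periodic principal eigenfunction $\veu^\upp$ of $\cbQ$ and set $\veu(t,x) := \upe^{-\lambda t} \veu^\upp(t,x)$, which solves $\cbQ \veu = \vez$ with positive initial datum $\veu(0,\cdot) = \veu^\upp(0,\cdot)$. Let $\vev$ be the solution of $(\dcbP - \veL^\dagger) \vev = \vez$ starting from $(\veu^\upp(0,\cdot))^\dagger$. Proposition~\ref{prop:Talenti_Dirichlet_Parabolic} gives $\veu(t,\cdot) \prec \vev(t,\cdot)$ on $[0,T]$, and I claim that this ordering in fact persists for every $t \geq 0$. The argument is to run the time-discretization of the proof of Proposition~\ref{prop:Talenti_Dirichlet_Parabolic} globally in time and to carry inductively the property that each iterate $\vev^\omega$ is symmetric decreasing in $x$: indeed, the elliptic operator $-\vect{D}\Delta + (K+c)\vect{I}$ and the right-hand side $(K\vect{I} + \veL_{\omega,\dagger}) \vev^{\omega-1}$ are both invariant under $x \mapsto -x$ and, by Proposition~\ref{prop:Talenti_Dirichlet} applied with $\vect{\phi} = \vect{\psi} = \vev^{\omega-1}$, one has $\vev^\omega \prec (\vev^\omega)^\dagger$, whence $\vev^\omega = (\vev^\omega)^\dagger$. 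Thus at each step the hypothesis $\vect{\phi} \prec \vect{\psi}^\dagger$ of Proposition~\ref{prop:Talenti_Dirichlet} remains valid with $\vect{\psi} = \vev^{\omega-1}$, the inequality $\veu^\omega \prec \vev^\omega$ propagates for all $\omega \in \N$, and passing to the limit $K \to \infty$ yields $\veu(kT,\cdot) \prec \vev(kT,\cdot)$ for every $k \in \N$. Specializing to any $r \in (0, L_1/2]$ and any index $i_0 \in [N]$,
\[
\upe^{-\lambda kT} \int_{-r/2}^{r/2} u^\upp_{i_0}(0, x) \upd x \leq \int_{-r/2}^{r/2} v_{i_0}(kT, x) \upd x.
\]

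Next, I identify the exponential rate of the right-hand side as $-\lambda^\dagger$ through a Krein--Rutman argument. The period map $\vew_0 \mapsto \vew(T,\cdot)$ attached to the Cauchy problem for $\dcbP - \veL^\dagger$ on the space of nonnegative periodic continuous functions is compact by parabolic regularity and strongly positive by Proposition~\ref{prop:strong_maximum_principle}. Its spectral radius is therefore a simple algebraic eigenvalue, and feeding a positive periodic principal eigenfunction of $\dcbP - \veL^\dagger$ into the map identifies this radius as $\upe^{-\lambda^\dagger T}$. For any positive initial datum the iterates admit an expansion $\vew(kT,\cdot) = \upe^{-\lambda^\dagger k T}(c_0 \vew_\star + o(1))$ with $c_0 > 0$ and a positive leading eigenfunction $\vew_\star$; applied to $\vew_0 = (\veu^\upp(0,\cdot))^\dagger$ this yields
\[
\frac{1}{kT} \ln \int_{-r/2}^{r/2} v_{i_0}(kT, x) \upd x \xrightarrow[k \to \infty]{} -\lambda^\dagger,
\]
and combining with the previous display gives $-\lambda \leq -\lambda^\dagger$, which is the desired conclusion.

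The main obstacle is the global-in-time extension of Proposition~\ref{prop:Talenti_Dirichlet_Parabolic}: as it stands, the statement holds only on one period, and a naive re-application on $[kT, (k+1)T]$ fails because $\vev(kT,\cdot)$ is not a priori the rearrangement of $\veu(kT,\cdot)$. The resolution described above crucially uses the symmetric-decreasing propagation for $\vev^\omega$, which is where the specific homogeneous structure $\dcbP = \partial_t - \vect{D}\Delta$ plays its role; without it, the elliptic correction step would not preserve the symmetric-decreasing profile required to re-invoke Proposition~\ref{prop:Talenti_Dirichlet}.
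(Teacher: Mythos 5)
Your route is genuinely different from the paper's. The paper fixes $c>0$ with $\veL+c\vect{I}\geq\vez$, introduces the Poincar\'{e} (period) maps $\mathcal{G}_{\veL+c\vect{I}}$ and $\mathcal{G}_{\veL^\dagger+c\vect{I}}$, expresses the principal eigenvalues through the spectral radii of these maps via the Gelfand formula $r=\lim_k\|\mathcal{G}^k\|^{1/k}$, and reduces the inequality to the comparison $\|\mathcal{G}_{\veL+c\vect{I}}\veu_0\|_\infty\leq\|\mathcal{G}_{\veL^\dagger+c\vect{I}}\veu_0^\dagger\|_\infty$ supplied by Proposition \ref{prop:Talenti_Dirichlet_Parabolic}. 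You instead launch the flow of $\cbQ$ from the periodic principal eigenfunction, so that the left-hand side has the exact rate $\upe^{-\lambda kT}$, compare with the rearranged flow over arbitrarily many periods, and read the rate $\upe^{-\lambda^\dagger kT}$ of the right-hand side from a Krein--Rutman expansion of the period map of $\dcbP-\veL^\dagger$. This is a legitimate alternative, and you correctly identify the real crux that both arguments share: Proposition \ref{prop:Talenti_Dirichlet_Parabolic} as stated compares only the pair $(\veu_0,\veu_0^\dagger)$ on one period, so iterating it (for you, over $k$ periods; for the paper, over the powers $\mathcal{G}^k$ needed in the Gelfand formula) requires knowing that the rearranged-side iterates remain symmetric decreasing, so that the elliptic comparison of Proposition \ref{prop:Talenti_Dirichlet} can be re-invoked with $\vect{\psi}=\vev^{\omega-1}$ and $\vect{\psi}^\dagger=\vect{\psi}$.

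However, your justification of that propagation step is a non sequitur. The relation $\vev^\omega\prec(\vev^\omega)^\dagger$ holds trivially for \emph{every} nonnegative function (the rearrangement maximizes mass on symmetric intervals), so it can never yield $\vev^\omega=(\vev^\omega)^\dagger$; equality would require the reverse domination, which is exactly what is at stake. Moreover, applying Proposition \ref{prop:Talenti_Dirichlet} with $\vect{\phi}=\vect{\psi}=\vev^{\omega-1}$ only compares $\vev^\omega$ with the solution driven by $\veL^\dagger(\vev^{\omega-1})^\dagger$, i.e.\ (under the induction hypothesis) with $\vev^\omega$ itself; it says nothing about the rearrangement of the solution. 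The correct repair is elementary but different: in dimension one, the resolvent kernel of $-d_i\partial_{xx}+(K+c)$ on the circle is even and non-increasing in the distance to the origin, the class of nonnegative, even, non-increasing-in-$|x|$ periodic functions is stable under sums and products (hence under multiplication by the entries of $\veL_{\omega,\dagger}$ and addition of $K\vev^{\omega-1}$), and the circular convolution of two functions of this class stays in the class (check it on indicators of symmetric arcs and use the layer-cake formula). With this lemma in place of your sentence, the induction $\veu^\omega\prec\vev^\omega$, $\vev^\omega=(\vev^\omega)^\dagger$ closes, your long-time comparison and the Krein--Rutman rate identification are sound, and the conclusion $\lambda_{1,\upp}(\cbQ)\geq\lambda_{1,\upp}(\dcbP-\veL^\dagger)$ follows; note that the same preservation property is also what implicitly underpins the paper's own iteration of its norm comparison.
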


\begin{proof}
The proof relies on Proposition \ref{prop:Talenti_Dirichlet_Parabolic}.

We proceed as in \cite{Nadin_2007} and introduce, for some $c>0$ so large that $\veL+c\vect{I}\geq\vez$, the Poincaré mapping 
\begin{equation*}
\mathcal{G}_{\veL+c\vect{I}}: 
\begin{cases} 
\mathcal{L}^\infty_\upp(\R\times\R,\R^N) & \to \mathcal{L}^\infty_\upp(\R\times\R,\R^N) \\
\veu_0 & \mapsto \veu(\veu_0,\veL+c\vect{I};T,\cdot)
\end{cases}
\end{equation*}
where $(t,x)\mapsto\veu(\veu_0,\veL+c\vect{I};t,x)$ is the solution of $\dcbP\veu+c\veu=(\veL+c\vect{I})\veu$ with initial condition $\veu_0$.

We define $r(\veL+c\vect{I})$ as the principal eigenvalue of the operator $\mathcal{G}_{\veL+c\vect{I}}$. 
As is classical, this eigenvalue can be obtained as 
\[
r(\veL+c\vect{I})=\lim_{k\to \infty}\left\| \mathcal{G}_{\veL+c\vect{I}}^k\right\|^{\frac{1}{k}},
\]
where the notation $\|\ \|$ stands for the norm on the vector space of linear mappings from $\mathcal{L}^\infty_\upp(\R\times\R,\R^N)$ into
itself (for the $\mathcal L^\infty-\mathcal L^\infty$ norm), and the two quantities $r(\veL+c\vect{I})$ and 
$\lambda_{1,\upp}(\dcbP-\veL)$ are immediately related through $r(\veL+c\vect{I})=-\frac{1}{T}\lambda_{1,\upp}(\dcbP-\veL)$.
As a consequence, in order to obtain the required comparison result, it suffices to establish that, for any 
$\veu_0\in\mathcal{L}^\infty_\upp(\R\times\R,\R^N)$, 
\begin{equation*}
\| \mathcal{G}_{\veL+c\vect{I}} \veu_0\|_{\mathcal{L}^\infty_\upp(\R\times\R,\R^N)}
\leq \| \mathcal{G}_{\veL^\dagger+c\vect{I}}\veu_0^\dagger\|_{\mathcal{L}^\infty_\upp(\R\times\R,\R^N)}.
\end{equation*}
Yet this is a direct consequence of Proposition \ref{prop:Talenti_Dirichlet_Parabolic}.
\end{proof}

\begin{rem}
The previous result established the optimality of periodically rearranged entries of $\veL$ when rearrangements are all 
centered at the same arbitrary spatial position, $x=0$ in our construction. The spatially periodic entries of $\veL$ must be 
``in phase''. If rearrangements of different entries of $\veL$ are centered at different positions, the optimality fails, as 
the following counter-example shows.

We consider, in the one-dimensional case $n=1$ with $L_1=2$, a space periodic function $\chi$ whose restriction to $[-1,1]$ 
is the indicator function of $(-y,y)$, $0<y<1$, a real number $\eta$, and the matrix 
\[
    \veL:x\mapsto\begin{pmatrix}-1+\chi(x) & 1+\chi(x-\eta) \\ 1+\chi(x-\eta) & -1+\chi(x)\end{pmatrix}.
\]
The vector $\veo$ is a Perron--Frobenius eigenvector of $\veL(x)$ with Perron--Frobenius eigenvalue $\mu_{\eta}(x)=\chi(x)+\chi(x-\eta)$. 
Let $\veu$ be a periodic principal eigenfunction of $\cbQ=\partial_t-\Delta-\veL$.
The function $u_{\eta}=\veo^\upT\veu$ is positive, time homogeneous, space periodic and solves
\begin{equation*}
-\Delta u_{\eta}=\mu_{\eta}u_{\eta}+\lambda_{1,\upp}(\cbQ)u_{\eta}.
\end{equation*}
Therefore $\lambda_{1,\upp}(\cbQ)=\lambda_{1,\upp}(-\Delta-\mu_{\eta})$, where
the last operator is a scalar space periodic elliptic operator.
Note that $\chi$ and $\chi(\cdot-\eta)$ are invariant by periodic rearrangement centered at $x=0$ and $x=\eta$ respectively.
Nonetheless, $\veL$ is not optimal as soon as $\eta\neq 0$. Indeed,
\[
\mu_{\eta}(x)=
\begin{cases}
0 & \text{if }x\in (-1,-y)\cup (y+\eta,1),\\ 
1 & \text{if }x\in (-y,-y+\eta)\cup(y,y+\eta),\\ 
2 & \text{if }x\in (-y+\eta,y).
\end{cases}
\]
Hence all $(\mu_{\eta})_{\eta\in\R}$ are piecewise-constant, space periodic functions of total mass equal to $4y>0$. It is well-known \cite{Nadin_2007} that among these the one that minimizes $\lambda_{1,\upp}(-\Delta-\mu_{\eta})$ is 
the one corresponding to $\eta=0$. This is of course consistent with our optimization result.
\end{rem}

\section{Acknowledgements}
The authors are grateful to the anonymous reviewer for a careful reading of the manuscript and insightful comments
that lead to substantial clarifications and improvements. They are also grateful to G. Nadin for several inspiring 
discussions and useful advice.

L. G. acknowledges support from the ANR via the project Indyana under grant agreement ANR-21-CE40-0008 and via the project Reach under grant agreement ANR-23-CE40-0023-01. I. M.-F. was partially funded by a PSL Young Researcher Starting Grant 2023.

%\typeout{}
\bibliographystyle{plain}
\bibliography{ref}

\begin{thebibliography}{10}

\bibitem{Alfaro_Griette}
Matthieu Alfaro and Quentin Griette.
\newblock Pulsating fronts for {F}isher-{KPP} systems with mutations as models
  in evolutionary epidemiology.
\newblock {\em Nonlinear Anal. Real World Appl.}, 42:255--289, 2018.

\bibitem{Allaire_Hutridurga_2015}
Gr\'{e}goire Allaire and Harsha Hutridurga.
\newblock On the homogenization of multicomponent transport.
\newblock {\em Discrete Contin. Dyn. Syst. Ser. B}, 20(8):2527--2551, 2015.

\bibitem{Altenberg_2012}
Lee Altenberg.
\newblock Resolvent positive linear operators exhibit the reduction phenomenon.
\newblock {\em Proc. Natl. Acad. Sci. USA}, 109(10):3705--3710, 2012.

\bibitem{Altenberg_Cohen_2020}
Lee Altenberg and Joel~E. Cohen.
\newblock Nonconcavity of the spectral radius in {L}evinger's theorem.
\newblock {\em Linear Algebra Appl.}, 606:201--218, 2020.

\bibitem{Alvino_Trombetti_Lions_1990}
Alvino Alvino, Guido Trombetti, and Pierre-Louis Lions.
\newblock Comparison results for elliptic and parabolic equations via {S}chwarz
  symmetrization.
\newblock {\em Ann. Inst. H. Poincar\'{e} C Anal. Non Lin\'{e}aire},
  7(2):37--65, 1990.

\bibitem{Alvino_Lions_Trombetti_1991}
Angelo Alvino, Pierre-Louis Lions, and Guido Trombetti.
\newblock Comparison results for elliptic and parabolic equations via
  symmetrization: a new approach.
\newblock {\em Differential Integral Equations}, 4(1):25--50, 1991.

\bibitem{Alziary_Fleckinger_Lecureux}
B\'{e}n\'{e}dicte Alziary, Jacqueline Fleckinger, and Marie-H\'{e}l\`ene
  L\'{e}cureux.
\newblock Principal eigenvalue and maximum principle for some elliptic systems
  defined on general domains with refined {D}irichlet boundary condition.
\newblock {\em Commun. Math. Anal.}, 7(2):1--11, 2009.

\bibitem{Anton_Lopez-Gomez_1996}
Inmaculada Ant\'{o}n and Juli\'{a}n L\'{o}pez-G\'{o}mez.
\newblock The strong maximum principle for cooperative periodic-parabolic
  systems and the existence of principal eigenvalues.
\newblock In {\em World {C}ongress of {N}onlinear {A}nalysts '92, {V}ol.
  {I}--{IV} ({T}ampa, {FL}, 1992)}, pages 323--334. de Gruyter, Berlin, 1996.

\bibitem{Arapostathis_Biswas_Pradhan_2020}
Ari Arapostathis, Anup Biswas, and Somnath Pradhan.
\newblock {On the monotonicity property of the generalized eigenvalue for
  weakly-coupled cooperative elliptic systems}.
\newblock {\em J. Differential Equations}, 351:156--193, 2023.

\bibitem{Araposthathis_}
Ari Arapostathis, Mrinal~K. Ghosh, and Steven~I. Marcus.
\newblock Harnack{'}s inequality for cooperative weakly coupled elliptic
  systems.
\newblock {\em Comm. Partial Differential Equations}, 24(9-10):1555--1571,
  1999.

\bibitem{Bai_He_2020}
Xueli Bai and Xiaoqing He.
\newblock Asymptotic behavior of the principal eigenvalue for cooperative
  periodic-parabolic systems and applications.
\newblock {\em J. Differential Equations}, 269(11):9868--9903, 2020.

\bibitem{Benaim_Lobry_Sari_Strickler_2023}
Michel {Benaim}, Claude {Lobry}, Tewfik {Sari}, and Edouard {Strickler}.
\newblock {When can a population spreading across sink habitats persist ?}
\newblock {\em arXiv e-prints}, page arXiv:2311.04706, November 2023.

\bibitem{Berestycki_Ham_1}
Henri Berestycki, Fran{\c{c}}ois Hamel, and Lionel Roques.
\newblock Analysis of the periodically fragmented environment model. {I}.
  {S}pecies persistence.
\newblock {\em J. Math. Biol.}, 51(1):75--113, 2005.

\bibitem{Berestycki_Hamel_Rossi}
Henri Berestycki, Fran{\c{c}}ois Hamel, and Luca Rossi.
\newblock Liouville-type results for semilinear elliptic equations in unbounded
  domains.
\newblock {\em Ann. Mat. Pura Appl. (4)}, 186(3):469--507, 2007.

\bibitem{Berestycki_Nir}
Henri Berestycki, Louis Nirenberg, and SR~Srinivasa Varadhan.
\newblock The principal eigenvalue and maximum principle for second-order
  elliptic operators in general domains.
\newblock {\em Communications on Pure and Applied Mathematics}, 47(1):47--92,
  1994.

\bibitem{Berestycki_Ros_1}
Henri Berestycki and Luca Rossi.
\newblock On the principal eigenvalue of elliptic operators in {$\mathbb{R}^N$}
  and applications.
\newblock {\em Journal of the European Mathematical Society}, 8(2):195--215,
  2006.

\bibitem{Berestycki_Ros}
Henri Berestycki and Luca Rossi.
\newblock Generalizations and properties of the principal eigenvalue of
  elliptic operators in unbounded domains.
\newblock {\em Comm. Pure Appl. Math.}, 68(6):1014--1065, 2015.

\bibitem{Birindelli_Mitidieiri_Sweers}
Isabeau Birindelli, Enzo Mitidieri, and Guido Sweers.
\newblock Existence of the principal eigenvalue for cooperative elliptic
  systems in a general domain.
\newblock {\em Differ. Uravn.}, 35(3):325--333, 1999.

\bibitem{Bouguima_Feikh_Hennaoui_2008}
Sidi~Mohammed Bouguima, S.~Fekih, and W.~Hennaoui.
\newblock Spacial structure in a juvenile-adult model.
\newblock {\em Nonlinear Anal. Real World Appl.}, 9(3):1184--1201, 2008.

\bibitem{Brenier_1991}
Yann Brenier.
\newblock Polar factorization and monotone rearrangement of vector-valued
  functions.
\newblock {\em Comm. Pure Appl. Math.}, 44(4):375--417, 1991.

\bibitem{Brown_Zhang_03}
K.~J. Brown and Yanping Zhang.
\newblock On a system of reaction-diffusion equations describing a population
  with two age groups.
\newblock {\em J. Math. Anal. Appl.}, 282(2):444--452, 2003.

\bibitem{Cantrell_Cosner_03}
Robert~Stephen Cantrell and Chris Cosner.
\newblock {\em Spatial ecology via reaction-diffusion equations}.
\newblock Wiley Series in Mathematical and Computational Biology. John Wiley \&
  Sons, Ltd., Chichester, 2003.

\bibitem{Cantrell_Cosner_Martinez_2020}
Robert~Stephen Cantrell, Chris Cosner, and Salom\'{e} Martínez.
\newblock Persistence for a two-stage reaction-diffusion system.
\newblock {\em Mathematics}, 8(3), 2020.

\bibitem{Cantrell_Cosner_Yu_2018}
Robert~Stephen Cantrell, Chris Cosner, and Xiao Yu.
\newblock Dynamics of populations with individual variation in dispersal on
  bounded domains.
\newblock {\em Journal of Biological Dynamics}, 12(1):288--317, 2018.

\bibitem{Cantrell_Cosner_Yu_2019}
Robert~Stephen Cantrell, Chris Cosner, and Xiao Yu.
\newblock Populations with individual variation in dispersal in heterogeneous
  environments: Dynamics and competition with simply diffusing populations.
\newblock {\em Science China Mathematics}, Jan 2020.

\bibitem{Capdeboscq_2002}
Yves Capdeboscq.
\newblock Homogenization of a neutronic critical diffusion problem with drift.
\newblock {\em Proc. Roy. Soc. Edinburgh Sect. A}, 132(3):567--594, 2002.

\bibitem{Chen_Zhao_97}
Zhen-Qing Chen and Zhong~X. Zhao.
\newblock Harnack principle for weakly coupled elliptic systems.
\newblock {\em J. Differential Equations}, 139(2):261--282, 1997.

\bibitem{Figueiredo_1994}
Djairo~G. de~Figueiredo.
\newblock Monotonicity and symmetry of solutions of elliptic systems in general
  domains.
\newblock {\em NoDEA Nonlinear Differential Equations Appl.}, 1(2):119--123,
  1994.

\bibitem{Figueiredo_Mit}
Djairo~G. de~Figueiredo and Enzo Mitidieri.
\newblock Maximum principles for linear elliptic systems.
\newblock {\em Rend. Istit. Mat. Univ. Trieste}, 22(1-2):36--66, 1990.

\bibitem{Eaves_1985}
B.~Curtis Eaves, Alan~J. Hoffman, Uriel~G. Rothblum, and Hans Schneider.
\newblock Line-sum-symmetric scalings of square nonnegative matrices.
\newblock Number~25, pages 124--141. 1985.
\newblock Mathematical programming, II.

\bibitem{Elliott_Cornel}
Elizabeth~C. Elliott and Stephen~J. Cornell.
\newblock Dispersal polymorphism and the speed of biological invasions.
\newblock {\em PLOS ONE}, 7(7):1--10, 07 2012.

\bibitem{Fisher_1937}
Ronald~Aylmer Fisher.
\newblock The wave of advance of advantageous genes.
\newblock {\em Annals of eugenics}, 7(4):355--369, 1937.

\bibitem{Foldes_Polacik_2009}
Juraj F{\"{o}}ldes and Peter Pol{\'{a}}{\v{c}}ik.
\newblock On cooperative parabolic systems: Harnack inequalities and asymptotic
  symmetry.
\newblock {\em Discrete Contin. Dyn. Syst.}, 25(1):133--157, 2009.

\bibitem{Gilbarg_Trudin}
David Gilbarg and Neil~S. Trudinger.
\newblock {\em Elliptic Partial Differential Equations of Second Order}.
\newblock Classics in Mathematics. Springer-Verlag, 2001.

\bibitem{Girardin_2017}
L\'{e}o Girardin.
\newblock Non-cooperative {Fisher{--}KPP} systems: Asymptotic behavior of
  traveling waves.
\newblock {\em Mathematical Models and Methods in Applied Sciences},
  28(06):1067--1104, 2018.

\bibitem{Girardin_2016_2}
L\'{e}o Girardin.
\newblock Non-cooperative {Fisher{--}KPP} systems: traveling waves and
  long-time behavior.
\newblock {\em Nonlinearity}, 31(1):108, 2018.

\bibitem{Girardin_2016_2_add}
L\'{e}o Girardin.
\newblock Addendum to {`}{N}on-cooperative {Fisher{--}KPP} systems: traveling
  waves and long-time behavior{'}.
\newblock {\em Nonlinearity}, 32(1):168, 2019.

\bibitem{Girardin_2018}
L\'{e}o Girardin.
\newblock Two components is too simple: an example of oscillatory {Fisher--KPP}
  system with three components.
\newblock {\em Proceedings of the Royal Society of Edinburgh: Section A
  Mathematics}, pages 1--24, 2019.

\bibitem{Girardin_2023}
L{\'e}o {Girardin}.
\newblock Persistence, extinction and spreading properties of non-cooperative
  {Fisher--KPP} systems in space-time periodic media.
\newblock {\em arXiv e-prints}, page arXiv:2305.18857, May 2023.

\bibitem{Girardin_Griette_2020}
L{\'e}o Girardin and Quentin Griette.
\newblock A {L}iouville-type result for non-cooperative {Fisher--KPP} systems
  and nonlocal equations in cylinders.
\newblock {\em Acta Applicandae Mathematicae}, 170(1):123--139, Dec 2020.

\bibitem{Griette_Matano_2021}
Quentin Griette and Hiroshi Matano.
\newblock Propagation dynamics of solutions to spatially periodic
  reaction-diffusion systems with hybrid nonlinearity.
\newblock {\em arXiv e-prints}, page arXiv:2108.10862, August 2021.

\bibitem{Griette_Raoul}
Quentin Griette and Ga{\"{e}}l Raoul.
\newblock Existence and qualitative properties of travelling waves for an
  epidemiological model with mutations.
\newblock {\em J. Differential Equations}, 260(10):7115--7151, 2016.

\bibitem{Griette_Raoul_}
Quentin Griette, Ga{\"{e}}l Raoul, and Sylvain Gandon.
\newblock Virulence evolution at the front line of spreading epidemics.
\newblock {\em Evolution}, 69(11):2810--2819, 2015.

\bibitem{Hamel_Nadin_Roques_2011}
Fran\c~cois Hamel, Gr{\'{e}}goire Nadin, and Lionel Roques.
\newblock A viscosity solution method for the spreading speed formula in slowly
  varying media.
\newblock {\em Indiana University Mathematics Journal}, 60:1229--1247, 2011.

\bibitem{Hei_Wu_2005}
Li~Jun Hei and Jian~Hua Wu.
\newblock Existence and stability of positive solutions for an elliptic
  cooperative system.
\newblock {\em Acta Math. Sin. (Engl. Ser.)}, 21(5):1113--1120, 2005.

\bibitem{Henaoui_2012}
Ouassila Henaoui.
\newblock An elliptic system modeling two subpopulations.
\newblock {\em Nonlinear Anal. Real World Appl.}, 13(6):2447--2458, 2012.

\bibitem{Hess_1991}
Peter Hess.
\newblock {\em Periodic-parabolic boundary value problems and positivity},
  volume 247 of {\em Pitman Research Notes in Mathematics Series}.
\newblock Longman Scientific \& Technical, Harlow; copublished in the United
  States with John Wiley \& Sons, Inc., New York, 1991.

\bibitem{Hutson_Mischaikow_Polacik}
Vivian Hutson, Konstantin Mischaikow, and Peter Pol{\'{a}}{\v{c}}ik.
\newblock The evolution of dispersal rates in a heterogeneous time-periodic
  environment.
\newblock {\em J. Math. Biol.}, 43(6):501--533, 2001.

\bibitem{Karlin_1982}
Samuel Karlin.
\newblock Classifications of selection-migration structures and conditions for
  a protected polymorphism.
\newblock {\em Evol. Biol}, 14(61):204, 1982.

\bibitem{KPP_1937}
Andrei~N. Kolmogorov, I.~G. Petrovsky, and N.~S. Piskunov.
\newblock {\'Etude} de l'\'equation de la diffusion avec croissance de la
  quantit\'e de mati\`ere et son application \`a un probl\`eme biologique.
\newblock {\em Bulletin Universit\'e d'\'Etat {\`{a}} Moscou}, 1:1--25, 1937.

\bibitem{Lam_Lou_2015}
King-Yeung Lam and Yuan Lou.
\newblock Asymptotic behavior of the principal eigenvalue for cooperative
  elliptic systems and applications.
\newblock {\em J. Dynam. Differential Equations}, 28(1):29--48, 2016.

\bibitem{Lam_Lou_2022}
King-Yeung Lam and Yuan Lou.
\newblock {\em Introduction to reaction-diffusion equations. {Theory} and
  applications to spatial ecology and evolutionary biology}.
\newblock Lect. Notes Math. Model. Life Sci. Cham: Springer, 2022.

\bibitem{Liang_Zhang_Zhao_2017}
Xing Liang, Lei Zhang, and Xiao-Qiang Zhao.
\newblock The principal eigenvalue for degenerate periodic reaction-diffusion
  systems.
\newblock {\em SIAM J. Math. Anal.}, 49(5):3603--3636, 2017.

\bibitem{Lieberman_2005}
Gary~M. Lieberman.
\newblock {\em Second order parabolic differential equations}.
\newblock World Scientific Publishing Co., Inc., River Edge, NJ, 1996.

\bibitem{Liu_Lou_Peng_Zhou}
Shuang Liu, Yuan Lou, Rui Peng, and Maolin Zhou.
\newblock Monotonicity of the principal eigenvalue for a linear time-periodic
  parabolic operator.
\newblock {\em Proc. Amer. Math. Soc.}, 147(12):5291--5302, 2019.

\bibitem{Liu_Lou_Peng_Zhou-1}
Shuang {Liu}, Yuan {Lou}, Rui {Peng}, and Maolin {Zhou}.
\newblock Asymptotics of the principal eigenvalue for a linear time-periodic
  parabolic operator {I}: large advection.
\newblock {\em SIAM J. Math. Anal.}, 53(5):5243--5277, 2021.

\bibitem{Liu_Lou_Peng_Zhou-2}
Shuang {Liu}, Yuan {Lou}, Rui {Peng}, and Maolin {Zhou}.
\newblock Asymptotics of the principal eigenvalue for a linear time-periodic
  parabolic operator {II}: {S}mall diffusion.
\newblock {\em Trans. Amer. Math. Soc.}, 374(7):4895--4930, 2021.

\bibitem{Ma_Chen_Lu_2014}
Ruyun Ma, Ruipeng Chen, and Yanqiong Lu.
\newblock Steady states of a nonlinear elliptic system arising from population
  dynamics.
\newblock {\em Bound. Value Probl.}, pages 2014:185, 7, 2014.

\bibitem{Mirrahimi_Souganidis_2013}
Sepideh Mirrahimi and Panagiotis~E. Souganidis.
\newblock A homogenization approach for the motion of motor proteins.
\newblock {\em NoDEA Nonlinear Differential Equations Appl.}, 20(1):129--147,
  2013.

\bibitem{Morris_Borger_Crooks}
Aled Morris, Luca B{\"{o}}rger, and Elaine C.~M. Crooks.
\newblock Individual variability in dispersal and invasion speed.
\newblock {\em Mathematics}, 7(9), 2019.

\bibitem{Nadin_2007}
Gr{\'{e}}goire Nadin.
\newblock The principal eigenvalue of a space{--}time periodic parabolic
  operator.
\newblock {\em Annali di Matematica Pura ed Applicata}, 188(2):269--295, 2009.

\bibitem{Nadin_2009}
Gr\'{e}goire Nadin.
\newblock Traveling fronts in space-time periodic media.
\newblock {\em J. Math. Pures Appl. (9)}, 92(3):232--262, 2009.

\bibitem{Nadin_2010}
Gr{\'{e}}goire Nadin.
\newblock Existence and uniqueness of the solution of a space-time periodic
  reaction-diffusion equation.
\newblock {\em J. Differential Equations}, 249(6):1288--1304, 2010.

\bibitem{Neumann_Sze_2007}
Michael Neumann and Nung-Sing Sze.
\newblock Optimization of the spectral radius of nonnegative matrices.
\newblock {\em Oper. Matrices}, 1(4):593--601, 2007.

\bibitem{Nussbaum_1986}
Roger~D. Nussbaum.
\newblock Convexity and log convexity for the spectral radius.
\newblock {\em Linear Algebra Appl.}, 73:59--122, 1986.

\bibitem{Protter_Weinberger}
Murray~H. Protter and Hans~F. Weinberger.
\newblock {\em Maximum Principles in Differential Equations}.
\newblock Springer-Verlag, 1984.

\bibitem{Slomczynski_1993}
Wojciech S\l{}omczy\'{n}ski.
\newblock Irreducible cooperative systems are strongly monotone.
\newblock {\em Univ. Iagel. Acta Math.}, (30):159--163, 1993.

\bibitem{Sweers_1992}
Guido Sweers.
\newblock Strong positivity in {$C(\overline\Omega)$} for elliptic systems.
\newblock {\em Math. Z.}, 209(2):251--271, 1992.

\bibitem{Talenti_1976}
Giorgio Talenti.
\newblock Elliptic equations and rearrangements.
\newblock {\em Ann. Scuola Norm. Sup. Pisa Cl. Sci. (4)}, 3(4):697--718, 1976.

\bibitem{Vazquez_1982}
Juan~Luis V\'{a}zquez.
\newblock Sym\'{e}trisation pour {$u\sb{1}=\Delta \varphi (u)$} et
  applications.
\newblock {\em C. R. Acad. Sci. Paris S\'{e}r. I Math.}, 295(2):71--74, 1982.

\end{thebibliography}

\end{document}